\let\mathcal\mathscr
\numberwithin{equation}{section}
\newtheorem{theorem}{Theorem}[section]
\newtheorem{lemma}[theorem]{Lemma}
\newtheorem{corollary}[theorem]{Corollary}
\newtheorem{proposition}[theorem]{Proposition}
\theoremstyle{definition}
\newtheorem*{ack}{Acknowledgements}
\newcommand{\e}{\operatorname{e}}
\renewcommand{\tilde}{\widetilde}
\newcommand{\BF}{{\mathbb {F}}}
\newcommand{\BN}{{\mathbb {N}}}
\newcommand{\BP}{{\mathbb {P}}}
\newcommand{\BR}{{\mathbb {R}}}
\newcommand{\BZ}{{\mathbb {Z}}}
\newcommand{\CD}{{\mathcal {D}}}
\newcommand{\CL}{{\mathcal {L}}}
\newcommand{\CM}{{\mathcal {M}}}
\newcommand{\CO}{{\mathcal {O}}}
\newcommand{\CS}{{\mathcal {S}}}
\newcommand{\BZprim}{\BZ_{\operatorname{prim}}}
\newcommand{\deltabad}{\varDelta_{\operatorname{bad}}}
\newcommand{\xx}{\mathbf{x}}
\newcommand{\yy}{\mathbf{y}}
\newcommand{\ww}{{\mathbf{w}}}
\newcommand{\bb}{{\mathbf{b}}}
\newcommand{\cc}{{\mathbf{c}}}
\newcommand{\dd}{{\mathbf{d}}}
\newcommand{\bfx}{\mathbf{x}}
\newcommand{\bfy}{\mathbf{y}}
\newcommand{\bfv}{\mathbf{v}}
\DeclareMathOperator{\Res}{\operatorname{Res}}
\DeclareMathOperator{\Pic}{Pic}
\DeclareMathOperator{\vol}{vol}
\DeclareMathOperator{\Vol}{vol}
\DeclareMathOperator{\rank}{rank}
\DeclareMathOperator{\Eff}{Eff}
\renewcommand{\phi}{\varphi}
\renewcommand{\rho}{\varrho}
\renewcommand{\epsilon}{\varepsilon}
\renewcommand{\leq}{\leqslant}
\renewcommand{\le}{\leqslant}
\renewcommand{\geq}{\geqslant}
\renewcommand{\ge}{\geqslant}
\renewcommand{\bar}{\overline}
\newcommand{\PP}{\mathbb{P}}
\newcommand{\ZZ}{\mathbb{Z}}
\newcommand{\NN}{\mathbb{N}}
\newcommand{\QQ}{\mathbb{Q}}
\newcommand{\RR}{\mathbb{R}}
\newcommand{\ZZp}{\mathbb{Z}_{\text{prim}}}
\newcommand{\cB}{\mathcal{B}}
\newcommand{\cM}{\mathcal{M}}
\newcommand{\x}{\mathbf{x}}
\newcommand{\y}{\mathbf{y}}
\newcommand{\uu}{\mathbf{u}}
\newcommand{\vv}{\mathbf{v}}
\newcommand{\w}{\mathbf{w}}
\newcommand{\DD}{\mathbf{D}}
\newcommand{\ve}{\varepsilon}
\begin{document}
\title[Rational points on a family of quadric bundle threefolds]{Density of rational points on some \\ quadric bundle threefolds}

\author{Dante Bonolis}
\author{Tim Browning}
\author{Zhizhong Huang}
\address{IST Austria\\
Am Campus 1\\
3400 Klosterneuburg\\
Austria}
\email{dante.bonolis@ist.ac.at, tdb@ist.ac.at, zhizhong.huang@ist.ac.at}

\subjclass[2010]{11D45 (11G35, 11G50,  11P55,  14G05, 14G25)}

\begin{abstract}
	We prove the Manin--Peyre conjecture for the number of rational points of bounded height outside of a thin subset on a family of  Fano threefolds of bidegree $(1,2)$.
\end{abstract}

\date{\today}

\maketitle

\thispagestyle{empty}
\setcounter{tocdepth}{1}
\tableofcontents

\section{Introduction}\label{s:intro}

The primary purpose of this paper is
 to resolve the Manin--Peyre conjecture 
for a family of smooth Fano threefolds in $\BP^1\times \BP^3.$
Let $L_1,\dots,L_4\in \BZ[x_1,x_2]$ be binary linear forms which are pairwise non-proportional. 
Let $V\subset\BP^1\times\BP^3$ be given by 
\begin{equation}\label{eq:quadbundle}
	L_1(x_1,x_2)y_1^2+L_2(x_1,x_2)y_2^2+L_3(x_1,x_2)y_3^2+L_4(x_1,x_2)y_4^2=0,
\end{equation}
defining  a smooth Fano threefold of bidegree $(1,2)$. The  Picard group is $\Pic(V)\cong \ZZ^2$ and $V$ is equipped with two morphisms, corresponding to the 
projections
  $\pi_1:\BP^1\times\BP^3\to\BP^1$ and $\pi_2:\BP^1\times\BP^3\to\BP^3$.
Let $|\cdot|$ denote the sup-norm on $\RR^d$, for any $d\in \NN$.
We can associate an anticanonical height function on $V(\QQ)$ via
$
H(x,y)=|\x||\y|^2,
$
if $(x,y)\in V(\QQ)$ is represented by a vector $(\x,\y)\in \ZZp^2\times \ZZp^4$.
The Manin conjecture \cite{fmt} predicts that there should exist a thin subset $\Omega\subset V(\QQ)$,
the sense of 
Serre  \cite[\S 3.1]{Ser08}, 
 such that 
\begin{equation}\label{eq:manin}
\#\{(x,y)\in V(\QQ)\setminus \Omega: H(x,y)\leq B\} \sim c_V B\log B,
\end{equation}
where $c_V$ is a constant whose value has been conjectured by Peyre \cite{Peyre}.  
Addressing a question of Peyre,  raised in his lecture at the 2009 conference {\em Arithmetic and algebraic geometry of higher-dimensional varieties} at the University of Bristol, numerical evidence towards this conjecture has been supplied by Elsenhans \cite{Elsenhans}.
We can write  \eqref{eq:quadbundle} as
\begin{equation}\label{eq:xQ}
x_1Q_1(y_1,\dots,y_4)=x_2Q_2(y_1,\dots,y_4),
\end{equation}
for suitable diagonal quadratic forms $Q_1,Q_2\in \ZZ[y_1,\dots,y_4]$. Consider the subvariety $Z\subset \PP^3$ given by 
$
Q_1=Q_2=0.
$
Our assumptions on $L_1,\dots,L_4$ ensure
that $Z$ is a smooth genus one curve and so the closed subvariety 
$\pi_2^{-1}(Z)\cong \BP^1\times Z \subset V$ defines 
an elliptic cylinder.  If $Z(\QQ)\neq\emptyset$ then there are $\gg B^2$
rational points of anticanonical height $\leq B$ on 
$\pi_2^{-1}(Z)$. Thus, in general,   we should certainly demand that 
$\Omega$ contains 
$\pi_2^{-1}(Z)(\QQ)$. 

The restriction of $\pi_1$ to $V$ gives a fibration into quadrics $\pi_1:V\to \PP^1$.  
If $x\in \PP^1(\QQ)$ is represented by $\x=(x_1,x_2)\in\BZprim^2$, then $\pi_1^{-1}(x)$ is split if and only if 
\begin{equation}\label{eq:square}
\prod_{i=1}^{4}L_i(\x)=\square.
\end{equation}
For such a point $x$ the fibre will contribute $\sim c_x B\log B$ points, as $B\to \infty$, for an appropriate constant $c_x>0$ that depends on $x$. Based on numerical investigation, 
Elsenhans  suggests that the  conjectured asymptotic  \eqref{eq:manin}
 holds when $\Omega$ is taken to be union of 
$\pi_2^{-1}(Z)(\QQ)$ and the set of $(x,y)\in V(\QQ)$ for which 
\eqref{eq:square} holds. In particular, 
 $\Omega\subset V(\QQ)$ is  a thin set of rational points, since 
 $\pi_2^{-1}(Z)(\QQ)$  lies on a proper subvariety of $V$ and the set of rational points satisfying 
\eqref{eq:square} correspond to rational points on a double covering.
Note that  \eqref{eq:square} defines an elliptic curve  $E\subset \PP(2,1,1)$ and so   $\Omega$ is Zariski dense in $V$ if $E$ has positive rank.  In fact, as discussed by Skorobogatov \cite[\S~3.3]{skoro}, 
$E$ is the Jacobian of the genus 1 curve $Z$.
An explicit example is given by taking $V$ to be 
\begin{equation}\label{eq:example}
x_1y_1^2+x_2y_2^2+(x_1+2x_2)y_3^2+(x_1+6x_2)y_4^2=0,
\end{equation}
for which  $Z(\RR)=\emptyset$ and    $E(\QQ)\cong (\ZZ/2\ZZ)^2 \times \ZZ$ (with 
Cremona label 192a2).
Our main result settles the thin set version of the  Manin--Peyre conjecture under mild assumptions on $V$.

\begin{theorem}\label{t:main}
Assume that  $L_1,\dots,L_4\in \BZ[x_1,x_2]$ are
pairwise non-proportional
 linear forms, each with 
 coprime 
 coefficients,   such that 
 $Z(\RR)=\emptyset$.
Then, if   $\Omega$ is the set of 
 $(x,y)\in V(\QQ)$ for which 
\eqref{eq:square} holds, we  have 
$$
\#\{(x,y)\in V(\QQ)\setminus \Omega: H(x,y)\leq B\} \sim c_V B\log B,
$$
where $c_V$ is the constant predicted by Peyre \cite{Peyre}.
\end{theorem}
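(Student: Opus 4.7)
The natural approach is to exploit the quadric bundle structure $\pi_1 : V \to \BP^1$ and count fibrewise. For $\x \in \BZprim^2$ representing a base point, the fibre is the quadric surface
\[
Q_\x(\y) := \sum_{i=1}^{4} L_i(\x)\, y_i^2 = 0
\]
in $\BP^3$. After accounting for the sign ambiguities on primitive representatives, the quantity of interest becomes
\begin{equation*}
\tfrac{1}{4} \sum_{\substack{\x \in \BZprim^2,\ |\x| \leq B \\ \prod_{i} L_i(\x) \neq \square}} N_\x\!\bigl(\sqrt{B/|\x|}\bigr),
\end{equation*}
where $N_\x(T) := \#\{\y \in \BZprim^4 : Q_\x(\y) = 0,\ |\y| \leq T\}$.

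For each admissible $\x$ the quadric $Q_\x$ is smooth and non-split over $\BQ$. The Hardy--Littlewood circle method applied to $Q_\x$ (for instance via Heath--Brown's smooth $\delta$-symbol) should produce an asymptotic of the shape
\begin{equation*}
N_\x(T) = \sigma_\infty(\x)\, \mathfrak{S}(\x)\, T^2 + O\!\bigl(T^{2-\delta}\, |\x|^{\eta}\bigr),
\end{equation*}
with $\sigma_\infty(\x)$ the real density and $\mathfrak{S}(\x) = \prod_p \sigma_p(\x)$ the singular series. The assumption $Z(\BR) = \emptyset$ controls the signs of the $L_i(\x)$ and ensures $\sigma_\infty(\x)$ is uniformly bounded on the relevant cone of $\x$, while the non-square condition on $\prod_i L_i(\x)$ keeps $Q_\x$ non-split so that no $\log T$ inflation appears at the level of the fibre.

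Substituting $T = \sqrt{B/|\x|}$ and summing, the main term becomes
\begin{equation*}
B \sum_{\substack{\x \in \BZprim^2,\ |\x| \leq B \\ \prod_{i} L_i(\x) \neq \square}} \frac{\sigma_\infty(\x)\, \mathfrak{S}(\x)}{|\x|} \sim c_V B \log B,
\end{equation*}
the asymptotic following from an Abel-type summation combined with equidistribution of the local densities as $\x$ varies. Matching the leading constant with Peyre's prediction requires disintegrating the Tamagawa measure on $V(\BA_\BQ)$ along $\pi_1$ and identifying the fibral product $\sigma_\infty(\x) \prod_p \sigma_p(\x)$ with Peyre's local factors; the $\alpha$-invariant then arises from a volume computation on the effective cone in $\Pic(V)_\BR \cong \BR^2$, and the Brauer contribution is trivial.

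The principal obstacle is the uniformity of the error in $N_\x(T)$ with respect to $\x$. A pointwise application of the circle method produces an error polynomial in the discriminant of $Q_\x$, which is of size $|\x|^4$ and thus swamps the main term $T^2 = B/|\x|$ as soon as $|\x|$ is moderately large. Recovering cancellation requires averaging over $\x$: one expands $\mathfrak{S}(\x)$ in terms of character sums in $\x$, and bilinear-form estimates (together with quadratic reciprocity to handle the square-detecting characters) should deliver a power saving after $\x$-summation. A subsidiary delicate point is the analysis of $\sigma_p(\x)$ at primes $p$ dividing one of the $L_i(\x)$, where the local density exhibits spikes that must be reconciled with Peyre's constant; the exclusion of $\Omega$ is then accommodated by noting that integer $\x$ with $\prod_i L_i(\x) = \square$ come from $\BQ$-points on the elliptic curve $E$, hence are very sparse and contribute only an admissible error.
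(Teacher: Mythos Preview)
Your fibrewise circle-method strategy is in fact exactly what the paper does for part of the range, and your identification of the main difficulty (uniformity of the error in $N_\x(T)$ as $|\x|$ grows) is correct. But the resolution you sketch---averaging over $\x$ and invoking bilinear estimates---is not enough on its own, and this is where the real gap lies.

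The circle method, even after averaging in $\x$, only yields an acceptable error when $|\x|\leq B^{1/4}$ (roughly: one needs $\|Q_\x\|\leq T^{2/3}$ with $T=\sqrt{B/|\x|}$, which forces $|\x|\ll B^{1/4}$). Since $Z(\BR)=\emptyset$ implies $|\x|\ll|\y|^2$, the full counting problem ranges over $|\x|$ up to about $B^{1/2}$, so a substantial interval $B^{1/4}\lesssim|\x|\lesssim B^{1/2}$ remains untouched by your approach. The paper handles this complementary range by a genuinely different mechanism: one rewrites the equation as $x_1Q_1(\y)=x_2Q_2(\y)$, \emph{eliminates} $\x$ (recovering it as $\pm(Q_2(\y),Q_1(\y))/d$ with $d=\gcd(Q_1(\y),Q_2(\y))$), and is reduced to counting primitive $\y$ with $d\mid Q_i(\y)$ for varying $d$. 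This becomes a lattice point count in lattices $\Lambda_{[\uu],d}$ of determinant $d^3$, and the heart of the matter is controlling the successive minima on average over $d$ and over residue classes $[\uu]$ on $Z$ modulo $d$. None of this appears in your proposal.

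There is a further issue you have not anticipated: the two methods do not quite overlap, so a small transitional range (roughly $|\x|$ near $B^{1/4}$ and near $B^{1/2}$) must be covered by an auxiliary upper bound of the correct order $\ll B\log B$. Obtaining such an upper bound is itself nontrivial---the paper combines uniform estimates for rational points on quadric surfaces, character-sum bounds, and a separate Poisson-style argument---and without it the two asymptotic regimes cannot be glued. Finally, the matching of the leading constant with Peyre's prediction is more delicate than you suggest, because the two regimes produce superficially different local factors ($\mathfrak{S}_1$ from the lattice count, $\mathfrak{S}_2$ from the circle method) that must each be identified with the Tamagawa density; the effective cone $\Eff_V$ being strictly larger than the nef cone also affects $\alpha(V)$ in a way that is reflected in the lattice-count contribution.
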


The example \eqref{eq:example} satisfies the assumptions of the theorem. 
With further work, it is possible to remove the 
 hypotheses that each $L_i$ has coprime coordinates and 
 $Z(\RR)=\emptyset$. The latter is a particularly convenient assumption that allows 
 us to assume that the regions we work in are not too lopsided. 
Since these hypotheses  simplify an already lengthy argument, we have chosen not to attempt their removal  here. 

As discussed in \cite{brian2}, 
a geometric framework for
identifying  problematic  thin sets in the Manin--Peyre conjecture has been developed by 
 Lehmann, Sengupta and Tanimoto.
In private communication with the authors, Professor Tanimoto has  indicated that similar arguments to those in \cite[\S~12]{brian2} show  that the thin set $\Omega$ in Theorem \ref{t:main} agrees with their prediction.

Manin \cite{manin} used height machinery to establish a lower bound supporting  linear growth for all smooth Fano threefolds, possibly after an extension of the ground field. More recently, 
Tanimoto \cite{sho} has produced a range of upper bounds for various classes of Fano threefolds, but his work does not cover 
\eqref{eq:quadbundle}. 
The classification of 
Fano threefolds with  Picard number $2$ goes back to Mori and Mukai \cite{MM}, but it is convenient to appeal to the summary 
 of 
Iskovskikh and Prokhorov  \cite[Table~12.3]{isk}. 
Over $\overline{\mathbb{Q}}$, there are 
36 isomorphism types of  Fano threefold of Picard number $2$.
The expectation is that the arithmetic of these varieties becomes harder the higher up the table they appear.
As explained in Remark (i) before  \cite[Table~12.3]{isk}, varieties numbered 33--36 are toric. Thus, 
for each of these, the Manin--Peyre conjecture follows from work of Batyrev and Tschinkel \cite{BT}.
Equivariant
compactifications of the additive group $\mathbb{G}_a^3$ are also known to satisfy the Manin--Peyre conjecture, thanks to work of Chambert-Loir and Tschinkel \cite{ct}, and 
the  smooth Fano threefolds that arise 
as equivariant
compactifications of $\mathbb{G}_a^3$ have been identified by 
 Huang  and Montero
\cite{HM}. These cover varieties numbered 28, 30, 31  and 33--36.
Variety number 32 is a bilinear  hypersurface in $\PP^2\times\PP^2$, which is 
a flag variety and so covered by \cite{fmt}. 
Finally, 
in a recent tour de force \cite{blomer,blomer2},  Blomer, Br\"udern, Derenthal and Gagliardi 
have shown that the Manin--Peyre conjecture holds
for many  spherical Fano threefolds 
of semisimple rank one. Among those of Picard number $2$, this covers a quadric in $\PP^4$ blown-up along a conic, which corresponds to variety number 29
in \cite[Table~12.3]{isk}. 
Our variety $V\subset \PP^1\times\PP^3$ 
has Picard number $2$ and 
can be viewed as the blow-up 
of  $\BP^3$ along the genus one curve $Z$, corresponding to  variety number 25.
In particular it is neither spherical, nor  toric, nor an 
equivariant
compactification of $\mathbb{G}_a^3$.

The proof of Theorem \ref{t:main} is inspired  by recent  work by Browning and Heath-Brown \cite{duke}, which resolves the Manin--Peyre conjecture for the Fano fivefold
 \begin{equation}\label{eq:duke}
 x_1y_1^2+ x_2y_2^2+ x_3y_3^2+ x_4y_4^2=0,
\end{equation}
of bidegree $(1,2)$ in $\PP^3\times \PP^3$. 
In this setting, the anticanonical height is $|\x|^3|\y|^2$, if $(x,y)$ is represented by 
$(\x,\y)\in \ZZp^4\times \ZZp^4$.
 The basic line of attack in  \cite{duke} involves counting points  as a union of
planes when $|\y|\leq B^{\frac{1}{4}}$, and as a union of quadrics when
$|\x|\leq B^{\frac{1}{6}-\eta}$, for any fixed $\eta>0$. 
 In the first case, geometry of numbers arguments are used to count the relevant vectors $\x$, whereas the circle method underpins the second case.
In terms of 
the inequality $|\x|^3|\y|^2\leq B$, this leaves 
a small range 
uncovered, for which it is necessary to have an upper bound of the
correct order of magnitude.

Our work follows a similar strategy, but with  substantial extra difficulties. 
This is reflected in the geometry of the 
effective cone  of divisors $\Eff_V$. 
Let  $H_1=\pi_1^{*}\CO_{\PP^1}(1)$ and 
$H_2=\pi_2^{*}\CO_{\PP^3}(1)$. As discussed by Ottem \cite[Thm.~1.1]{Ottem}, the effective cone is
$\Eff_V= \BR_{\geqslant 0}H_1+\BR_{\geqslant 0}C$,
where  $C=-H_1+2H_2$ is the class of the exceptional divisor $\pi_2^{-1}(Z)$.
 This is   larger than  the nef cone
$\operatorname{Nef}_V=\BR_{\geqslant 0} H_1+\BR_{\geqslant 0} H_2$, 
meaning that 
  $\Eff_V^{\vee}$ is smaller than 
$\operatorname{Nef}_V^\vee$, which strongly 
influences the asymptotic behaviour in 
Theorem \ref{t:main}. This is in stark contrast to the situation in 
\eqref{eq:duke}, where the effective cone is equal to the nef cone.
When $|\x|\leq B^{\frac{1}{4}}$, we 
view $V$ as a family of quadrics via \eqref{eq:quadbundle}. We will then reapply the circle method arguments worked out in \cite{duke}, 
but 
we shall face  extra challenges in dealing with 
a family over $\PP^1$ rather than over $\PP^3$.
When $|\x|\geq B^{\frac{1}{4}+\eta}$, we  view $V$ as being given by 
\eqref{eq:xQ}, which we can use to eliminate $x_1$ and $x_2$, on extracting common 
divisors. This ultimately leads to a
counting problem of the form
$$
\sum_{d\leq D} \#\left\{\y\in \ZZp^4: |\y|\leq Y, ~Q_1(\y)\equiv Q_2(\y) \equiv 0 \bmod{d}\right\},
$$
for $D,Y\geq 1$. On interpreting the inner sum as a disjoint union of lattice conditions, the main work  is to show that the successive minima have the expected order of magnitude,  as one averages over $d$ and over the different lattices. This counting problem is  rather different to the one appearing \cite{duke}, 
and it  seems likely the methods developed could be useful in the study of other quadric bundles, including  del Pezzo surfaces with a conic bundle structure.  It is in this part of the argument that the 
difference between 
$\Eff_V$ and $\operatorname{Nef}_V$ manifests itself.  If
$|\x|\geq B^{\frac{1}{4}+\eta}$ and $|\x||\y|^2\leq B$, then we are only interested in the range $|\y|\leq B^{\frac{3}{8}-\frac{\eta}{2}}$. However, it turns out that the contribution from $|\y|\leq B^{\frac{1}{4}}$ is negligible, which thereby 
reduces the size  of the leading constant.  In \eqref{eq:duke}, by comparison, 
the contribution from $|\y|\leq B^\delta$ makes a positive proportional contribution for any $\delta>0$.

Finally, we  are left with a small range to cover via an auxiliary upper bound of the correct order.
The  completely diagonal structure of \eqref{eq:duke} renders it easier  to obtain  the  necessary upper bound via a modification of Hua's inequality. Lacking this diagonal structure, our approach involves an 
array of inputs, from 
character sum estimates and point counting on $Z$ modulo prime powers, to 
various circle method applications \cite{BrowningM, duke} and a general upper bound
for the number of rational points of bounded height on diagonal quadric surfaces \cite{DA}.


\begin{ack}
The authors are grateful to Florian Wilsch for useful comments. 
While working on this paper the  authors were
supported by FWF grant P~32428-N35.  
\end{ack}

\section{Roadmap of the proof}\label{s:road}

We proceed to summarise  some of the  steps in the proof of 
Theorem~\ref{t:main}. 
We shall frequently switch between the  representation of $V$ given by \eqref{eq:quadbundle}, involving the pairwise  non-proportional linear forms
$L_1,\dots,L_4\in \BZ[x_1,x_2]$ with coprime coefficients, and the representation
 \eqref{eq:xQ}, involving diagonal quadratic forms $Q_1,Q_2\in \ZZ[y_1,\dots,y_4]$ such that 
  $Z(\RR)=\emptyset$. 
All of the estimates in our work are allowed to depend implicitly on the coefficients of the polynomials $L_1,\dots,L_4$. Any other dependence will be indicated by an appropriate subscript.  

For each $1\leq i\leq 4$, 
we may  assume that 
$
L_i(x_1,x_2)=a_ix_1+b_ix_2, 
$
for $a_i,b_i\in \ZZ$ such that $\gcd(a_i,b_i)=1$.
Define the set of primes
$
\mathcal{P}=\{p: p\mid (a_{i}b_{j}-a_{j}b_{i})\text{ for some } i\neq j\}\cup\{2\}.
$
Then  $\mathcal{P}$ is a finite set, since $L_i$ and $L_j$ are assumed to be non-proportional, for distinct $1\leq i,j\leq 4$. It will be convenient to set
\begin{equation}\label{eq:DELTA}
\Delta=\prod_{p\in \mathcal{P}}p.
\end{equation}
Our  diagonal quadratic forms take the shape
\begin{equation}\label{eq:Q1Q2}
Q_{1}(\y)=\sum_{i=1}^{4}a_{i}y_{i}^{2},\quad Q_{2}(\y)=-\sum_{i=1}^{4}b_{i}y_{i}^{2}.
\end{equation}
Moreover, $Q_1=Q_2=0$ defines a smooth genus $1$ curve  $Z\subset \PP^3$ such that 
$Z(\RR)=\emptyset$.

Let $N_V(\Omega,B)$ denote the counting function whose asymptotics we are trying to determine.
We shall avoid the set $\Omega$ by stipulating that 
\begin{equation}\label{eq:notsquare}
\prod_{i=1}^{4}L_i(\x)\neq \square,
\end{equation}
for any rational point $(x,y)\in V(\QQ)$ that is to be counted.
On taking into account the action of the units in $\PP^1(\QQ)\times\PP^3(\QQ)$, we have 
\begin{equation}\label{eq:step1}
N_V(\Omega,B)=\frac{1}{4}\#\left\{
(\xx,\yy)\in\BZprim^2\times\BZprim^4:
\begin{array}{l}
\text{\eqref{eq:quadbundle} and \eqref{eq:notsquare} hold}\\ 
|\xx||\yy|^2\leqslant B
\end{array}
\right\}.
\end{equation}
Following the line  of attack in \cite{duke},
we will use different techniques  to estimate the size of $N_V(\Omega,B)$, according to the relative sizes of $|\x|$ and $|\y|$. 
When $|\x|$ is small,  we will  fix $\x$ and use the circle method to estimate the number of $\y$. In fact the relevant application of the circle method carried out in \cite[\S~4]{duke} is directly in a form that can be applied to our own setting. 
On the other hand,
when $|\x|$ is large, we will use \eqref{eq:xQ} to eliminate $\x$ and reduce to a problem about counting integer vectors which reduce to $Z$ modulo $d$, for varying moduli $d$.
There   remains an annoying middle range  which requires a sufficiently sharp upper bound.

Let
\begin{equation}\label{eq:MX}
	\cM(X,Y)=\left\{(\xx,\yy)\in\BZprim^2\times\BZprim^4:
	\begin{array}{l}
	\text{\eqref{eq:quadbundle} and \eqref{eq:notsquare} hold}\\ 
	X\leq |\xx|< 2X,~|\yy|\leq  Y\end{array}\right\},
\end{equation}
for  $X,Y\geqslant 1$. Since $Z(\RR)=\emptyset$ it is easy to deduce from the alternative representation \eqref{eq:xQ} of \eqref{eq:quadbundle} that 
$\cM(X,Y)$ is empty unless $X\ll Y^2$ for a suitable implied constant depending only on $V$.
In Section \ref{s:upper} we shall prove the following general upper bound for 
the cardinality of $\cM(X,Y)$.

\begin{theorem}\label{t:upper}
Let  $X,Y\gg 1$.  Then
$$
\#\cM(X,Y)\ll  XY^2
+ \min\left\{X^2Y^{4/3},   X^{\frac{1}{4}}Y^{\frac{5}{2}}
\frac{\log Y}{\log\log Y}\right\}.
$$
\end{theorem}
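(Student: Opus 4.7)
The plan is to view $V$ as a quadric bundle via $\pi_1\colon V\to \BP^1$ and reduce the count fibre-wise: writing $Q_\x(\y)=\sum_i L_i(\x)y_i^2$, one has
\[
\#\cM(X,Y)\ \le\ \sum_{\substack{\x\in\BZprim^2\\ X\le|\x|<2X\\ \prod_i L_i(\x)\ne\square}} N(\x,Y),\qquad N(\x,Y):=\#\{\y\in\BZprim^4:|\y|\le Y,\ Q_\x(\y)=0\}.
\]
The three terms in the statement will arise from three independent bounds on $N(\x,Y)$ (or its average), and the final estimate follows by adding the first bound and taking the minimum of the other two.

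For the main $XY^2$ term, I would apply the Hardy--Littlewood circle method to the quaternary diagonal form $Q_\x$. Since $|L_i(\x)|\asymp X$ uniformly for $|\x|\asymp X$, the singular integral has size $\asymp Y^2/X$ and the singular series is $O(1)$, leading to the expected bound $N(\x,Y)\ll Y^2/X$. Uniform circle-method estimates of exactly this shape for families of diagonal quaternary forms are already provided in \cite{BrowningM,duke}; summing over the $\asymp X^2$ admissible $\x$ produces $\ll XY^2$.

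For the $X^2Y^{4/3}$ term, the plan is to establish the pointwise bound $N(\x,Y)\ll Y^{4/3+o(1)}$ by an elementary slicing argument and then sum trivially over $\x$. A natural route is to fix two of the variables, say $(y_3,y_4)$, and bound the number of $(y_1,y_2)$ solving the binary Diophantine equation $L_1(\x)y_1^2+L_2(\x)y_2^2=-L_3(\x)y_3^2-L_4(\x)y_4^2$ using divisor-function estimates for representations by binary quadratic forms, then optimise the range of $(y_3,y_4)$ against that of $(y_1,y_2)$. For the $X^{1/4}Y^{5/2}\log Y/\log\log Y$ term, I would invoke the general upper bound of \cite{DA} for primitive rational points of bounded height on a diagonal quadric surface; applied to $Q_\x$ with coefficients of size $\asymp X$, this should deliver $N(\x,Y)\ll Y^{5/2}/X^{7/4}\cdot\log Y/\log\log Y$, and summation over $\x$ yields the stated term, with the $\log Y/\log\log Y$ factor tracking the divisor-type input in that result.

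The hard part will be ensuring that each of these bounds holds uniformly in $\x$, with no hidden dependence on the coefficients of $Q_\x$. For the circle-method input this requires uniform control of both the singular series and the singular integral, together with separate treatment of the exceptional locus where some $L_i(\x)$ is abnormally small---this is where the coprimality of the coefficients of each $L_i$ and the finite set of primes dividing the discriminantal quantity $\Delta$ from~\eqref{eq:DELTA} enter the picture. Analogously, one must absorb divisor-function losses uniformly for the elementary bound and extract a clean $X$-dependence from \cite{DA} for the third. Once each of the three bounds is in place, their combination immediately yields the theorem.
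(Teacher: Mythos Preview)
Your plan has the right high-level architecture, but two of your three ingredients are misidentified and the first glosses over the real difficulty.

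The term $X^{1/4}Y^{5/2}\log Y/\log\log Y$ does \emph{not} come from \cite{DA}. With coefficients $\asymp X$, the bound in Lemma~\ref{thm:discreteanalysis} yields (up to $L$-function and $\deltabad$ factors) only the two terms $Y^{4/3}$ and $Y^2/X$; there is no $Y^{5/2}X^{-7/4}$ there. The paper obtains this contribution by a completely different route (Proposition~\ref{prop:poisson}): one rewrites the equation as $x_1Q_1(\y)=x_2Q_2(\y)$, so that $\x$ is determined by $\y$ up to sign and $d=\gcd(Q_1(\y),Q_2(\y))\asymp Y^2/X$, and then counts $\y\in\BZprim^4$ with $d\mid Q_i(\y)$ via the lattices $\Lambda_{[\uu],d}$. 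The $X^{1/4}Y^{5/2}$ arises as $Y^3/D^{1/4}$ with $D\asymp Y^2/X$, coming from control of the smallest successive minimum (Lemma~\ref{lem : sucmin}), and the $\log Y/\log\log Y$ enters through the divisor argument in Lemma~\ref{lem : gcd}. This lattice step in turn relies on the very estimate $\#\cM^*(X,Y)\ll XY^2+X^2Y^{4/3}$ (Corollary~\ref{cor:M*}), so the two halves of the theorem are not independent.

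For $X^2Y^{4/3}$, your slicing argument gives at best $Y^{2+\ve}$: fixing $(y_3,y_4)$ costs $Y^2$ and divisor bounds on the residual binary form contribute $Y^\ve$. A pointwise bound $N(\x,Y)\ll Y^{4/3+o(1)}$ is in fact false in the relevant regime, since the main term is $\asymp Y^2/X$. The paper instead uses \cite{DA} (via Proposition~\ref{prop:M1Xsmall}), whose $Y^{4/3}$ term comes weighted by $L(\sigma_Y,\chi_{Q_\x})$ and $\deltabad(\x)^{1/4+\ve}$; getting rid of these requires averaging the character sum over $\x$ (Burgess and the P\'olya--Vinogradov variant in Lemma~\ref{lem:PV-prep}) and a separate treatment of large-$\deltabad$ fibres (Proposition~\ref{prop:M2Xbig}). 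The same issue undermines your $XY^2$ term: the singular series is not $O(1)$ but $\ll_\ve\deltabad(\x)^\ve L(1,\chi_{Q_\x})$, and your assertion that $|L_i(\x)|\asymp X$ uniformly is false near the zeros of the $L_i$. Both of these are handled in the paper by the same character-sum averaging and by Lemma~\ref{le:smallvalueLi}.
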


On breaking the ranges for $|\x|$ and $|\y|$ into dyadic intervals, Theorem \ref{t:upper}  easily implies the optimal upper bound $
N_V(\Omega,B)=O( B\log B).
$
In fact, not only does it help cover an awkward  range for the relative sizes of $|\x|$ and $|\y|$, but 
certain steps in the proof of Theorem \ref{t:upper} also  play a vital role in the proof of the asymptotic formula, where it used to show that certain  lattices are rarely  lopsided.

Let us now summarise the proof of Theorem \ref{t:main} in a little more detail. Let
\begin{equation}\label{eq:L}
\CL(B)=
\left\{
(\xx,\yy)\in\BZprim^2\times\BZprim^4:
\begin{array}{l}
\text{\eqref{eq:quadbundle} and \eqref{eq:notsquare} hold}\\ 
|\xx||\yy|^2\leqslant B
\end{array}
\right\},
\end{equation}
so that $N_V(\Omega,B)=\frac{1}{4}\#\CL(B)$ in \eqref{eq:step1}. 
We will decompose $\CL(B)$ into three sets
\begin{equation}\label{eq:cl1-3}
\CL(B)=\CL_1(B)\sqcup \CL_2(B)\sqcup \CL_3(B),
\end{equation}
where
\begin{align*}
\CL_1(B)&=\left\{(\xx,\yy)\in \CL(B): B^{\frac{1}{4}+\eta}\leq |\x|\leq 
B^{\frac{1}{2}-\eta} 
\right\},\\
\CL_2(B)&=\left\{(\xx,\yy)\in \CL(B):|\x|\leq B^\frac{1}{4}\right\},\\
\CL_3(B)&=\left\{(\xx,\yy)\in \CL(B):B^\frac{1}{4}<|\x|<B^{\frac{1}{4}+\eta} \text{ or }
 |\x|> 
B^{\frac{1}{2}-\eta} 
 \right\},
\end{align*}
for any $\eta>0$.  The parameter $\eta$ will ultimately be taken to be arbitrarily small, but it is fixed 
at each appearance. 
We now reveal our estimates for the cardinality of these sets.

The treatment of $\#\CL_1(B)$ rests on 
rewriting \eqref{eq:quadbundle} as \eqref{eq:xQ} and then appealing to the geometry of numbers. 
In order to record our result, we  first need to define some auxiliary  quantities.
For any prime $p$ and $a\in \NN$, we  can define an equivalence relation on 
$\left(\mathbb{Z}/p^a\mathbb{Z}\right)^{4}$, by saying $\mathbf{u}$ is equivalent to $\mathbf{v}$
 if and only if there exists $\lambda\in\left(\mathbb{Z}/p^a\mathbb{Z}\right)^{\times}$ such that $\lambda\uu\equiv \vv \bmod{p^a}$. An important role will be played in our work by the 
set  of equivalence classes
$$
V_{p^a}^{\times}=\{\uu\in\left(\mathbb{Z}/p^a\mathbb{Z}\right)^{4}: p\nmid \uu,~Q_{1}(\uu)\equiv Q_{2}(\uu)\equiv 0 \bmod{p^a}\}/\left(\mathbb{Z}/p^a\mathbb{Z}\right)^{\times}.
$$
We may now define
\begin{equation}\label{eq:series}
\mathfrak{S}_1=\prod_{p}\left(1-\frac{1}{p}\right)
\left(1-\frac{1}{p^4}+\left(1-\frac{1}{p}\right)^2\sum_{a=1}^{\infty}\frac{\#V_{p^{a}}^{\times}}{p^{2a}}
\right).
\end{equation}
The absolute convergence of  $\mathfrak{S}_1$ is ensured 
by Corollary \ref{cor:stone}, which implies that 
$$
1-\frac{1}{p^4}+\left(1-\frac{1}{p}\right)^2\sum_{a=1}^{\infty}\frac{\#V_{p^{a}}^{\times}}{p^{2a}}
=1+\frac{1}{p}+O\left(\frac{1}{p^{\frac{3}{2}}}\right),
$$
if  $p\nmid \Delta$.
We can now state our first asymptotic formula, which will be the object of Section \ref{sec:geometry}

\begin{proposition}\label{pro:L1}
Let $\eta>0$. There exists absolute constants $c_2>c_1>0$ such that 
$$
\#\CL_1(B)=2
\mathfrak{S}_1 B 
\int_{\substack{\y\in \RR^4\\ 
c_1B^{\frac{1}{4}+\frac{\eta}{2}}
\leq |\y|<
c_2B^{\frac{3}{8}-\frac{\eta}{2}}}}
 \frac{\mathrm{d}\y}{|\y|^2\max(|Q_1(\y)|,|Q_2(\y)|)}
+O_\eta\left(B\sqrt{\log B}\right).
$$
\end{proposition}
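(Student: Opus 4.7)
The plan is to use \eqref{eq:xQ} to eliminate $\x$. Since $Z(\RR)=\emptyset$, for any nonzero $\y$ the pair $(Q_1(\y),Q_2(\y))$ is nonzero, and setting $d=\gcd(Q_1(\y),Q_2(\y))$, the primitive solutions $\x\in\BZprim^2$ to $x_1Q_1(\y)=x_2Q_2(\y)$ are exactly $\pm(Q_2(\y)/d,Q_1(\y)/d)$. Thus
$$
\#\CL_1(B)=2\,\#\left\{\y\in\ZZp^4 : d B^{\frac{1}{4}+\eta}\leq M(\y)\leq d B^{\frac{1}{2}-\eta},\ |\y|^2 M(\y)\leq dB,\ \text{(iii)}\right\},
$$
where $M(\y)=\max(|Q_1(\y)|,|Q_2(\y)|)$, $d=\gcd(Q_1(\y),Q_2(\y))$, and (iii) is the translation of \eqref{eq:notsquare} to a condition on $\y$. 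The task is then to obtain a sharp asymptotic for this count, summed over $d$.

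For each fixed $d$, the set $\{\y\in\ZZ^4:d\mid Q_1(\y),\ d\mid Q_2(\y)\}$ decomposes by CRT into a disjoint union of lattice cosets indexed by a set $V_d^\times\cong\prod_{p^a\|d}V_{p^a}^\times$ of primitive projective solutions modulo $d$; call the corresponding lattices $\Lambda_{\bnu,d}$, each of determinant $d^2$. A standard Davenport-type lattice-point estimate gives
$$
\#\{\y\in\Lambda_{\bnu,d}\cap R_d(B)\}=\frac{\vol R_d(B)}{\det\Lambda_{\bnu,d}}+(\text{error in terms of the successive minima of }\Lambda_{\bnu,d}),
$$
where $R_d(B)\subset\RR^4$ is the region carved out by the shape conditions. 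Summing $\bnu$ over $V_d^\times$, then $d$ over positive integers, and applying M\"obius inversion both to enforce primitivity of $\y$ and to pin down the exact gcd $d=\gcd(Q_1(\y),Q_2(\y))$, the local densities factor across primes and reassemble precisely into the Euler product $\mathfrak{S}_1$: at each $p$, the $(1-1/p)$ comes from the $\x$-primitivity, the $(1-1/p^4)$ from the $\y$-primitivity correction, and the $(1-1/p)^2\sum_a\#V_{p^a}^\times/p^{2a}$ from the singular-series contribution.

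To convert the main term into the integral $B\int d\y/(|\y|^2 M(\y))$, I would interchange the sum over $d$ with the integration over $\y\in\RR^4$. For fixed $\y$, the compatible range of $d$ is
$$
d\in\left[\max\!\left(\tfrac{|\y|^2 M(\y)}{B},\tfrac{M(\y)}{B^{1/2-\eta}}\right),\ \tfrac{M(\y)}{B^{1/4+\eta}}\right],
$$
and summing the local density $1/d^2$ across this range produces the kernel $B/(|\y|^2 M(\y))$ precisely when the first lower bound dominates, i.e.\ when $|\y|\gg B^{1/4+\eta/2}$; the upper constraint $|\y|\ll B^{3/8-\eta/2}$ is forced by the requirement that this range of $d$ be non-empty. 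The smaller regime $|\y|\lesssim B^{1/4+\eta/2}$ gives a kernel $\sim B^{1/2-\eta}/M(\y)$ whose total contribution is $O(B)$, absorbable into the error.

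The principal obstacle will be controlling the error terms, specifically the contribution of lopsided lattices $\Lambda_{\bnu,d}$ whose successive minima are highly unbalanced: for these, the naive Davenport bound is worthless. Generic lattices are sufficiently balanced that summing the error with square-root cancellation over averages of $\#V_d^\times$ yields an acceptable $O_\eta(B\sqrt{\log B})$. For the bad lattices, I would invoke Theorem~\ref{t:upper}, applied dyadically in $X$ and $Y$: the bound $\#\cM(X,Y)\ll XY^2+X^{1/4}Y^{5/2}\log Y/\log\log Y$ is precisely of the correct order to control the contribution of lopsided cases without loss. The non-square exclusion (iii) and the special primes $p\in\mathcal{P}$ dividing $\Delta$ require separate but routine handling (for (iii), a character-sum argument applied to the ternary form $\prod_i L_i(Q_2(\y),Q_1(\y))$), and in each case the contribution is negligible compared to the main term.
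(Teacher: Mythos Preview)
Your high-level strategy---eliminate $\x$ via \eqref{eq:xQ}, stratify by $d=\gcd(Q_1(\y),Q_2(\y))$, decompose $\{d\mid Q_i(\y)\}$ into lattices indexed by $V_d^\times$, and apply a Davenport--Schmidt lattice-point count---matches the paper's. Two technical points deserve correction, however, and one of them is a genuine gap.

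First, a slip: the lattice $\Lambda_{[\uu],d}$ has determinant $d^3$, not $d^2$ (it is the preimage in $\ZZ^4$ of a line in $(\ZZ/d\ZZ)^4$). Your later ``local density $1/d^2$'' is nonetheless correct, since $\#V_d^\times\sim d$ and $d/d^3=1/d^2$.

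The real gap is in ``M\"obius inversion \ldots\ to pin down the exact gcd''. If you write $\mathbf{1}_{d=\gcd}=\sum_e\mu(e)\mathbf{1}_{de\mid\gcd}$ with $e$ ranging over all integers, the main term converges, but the lattice-point error does not: Schmidt's bound contributes $O(Y^3/s_{1,[\uu],de}^3)$ per class, and Lemma~\ref{lem : sucmin} gives only $\sum_{d\asymp D}\sum_{[\uu]\in V_{de}^\times}s_1^{-3}\ll (De)^{-1/4}$, so the sum over squarefree $e$ diverges like $\sum_e e^{-1/4}$. The paper avoids this by a sieve truncation: one only demands coprimality of $d^{-1}Q_i(\y)$ to $P(z)=\prod_{p\le z}p$ (so the M\"obius sum is over $e\mid P(z)$, contributing at worst $2^z$), and separately bounds the contribution of primes in $(z,M]$ and $>M$ dividing $d^{-1}\gcd(Q_1,Q_2)$ via Lemma~\ref{lem : upp} and Corollary~\ref{cor : M}. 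The choice $z=\log B/(\log\log B)^2$, $M=B^{\eta/10}$ balances these. The final error $B/\sqrt{\log B}$ per dyadic $Y$ (hence $B\sqrt{\log B}$ overall) comes from this sieve parameter, not from any cancellation in $\#V_d^\times$.

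Two smaller remarks. Your invocation of Theorem~\ref{t:upper} for ``bad lattices'' is right in spirit: in the paper the successive-minima control (Lemma~\ref{lem : sucmin}) rests on Lemma~\ref{lem : gcd}, which in turn uses Corollary~\ref{cor:M*}, built from the same ingredients as Theorem~\ref{t:upper}. And the non-square condition \eqref{eq:notsquare} needs no character sums: by Lemma~\ref{lem:neron} the $\x$ with $\prod L_i(\x)=\square$ lie on a genus-one curve and contribute $O_\ve(X^\ve Y^{2+\ve})$, which is negligible.
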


Next, we show that  $\#\CL_2(B)$ can be estimated asymptotically, as $B\to \infty$.
This will be achieved using the Hardy--Littlewood circle method.
Let 
\begin{equation}\label{eq:tau-inf}
\tau_\infty= \int_{-\infty}^\infty \int_{[-1,1]^6} \operatorname{e}\left(\theta\left(L_1(\x)y_1^2+\cdots + L_4(\x)y_4^2\right)\right) \mathrm{d}\x \mathrm{d}\y \mathrm{d}\theta
\end{equation}
and 
\begin{equation}\label{eq:defAq}
A_q=\sum_{\substack{a\bmod{q}\\ \gcd(a,q)=1}}
 \sum_{\mathbf{b}\in (\ZZ/q\ZZ)^4}\sum_{\substack{\cc\in (\BZ/q\BZ)^2\\ \gcd(q,\cc)=1}} \operatorname{e}_q\left(a\sum_{i=1}^{4}L_i(\cc)b_i^2\right),
\end{equation}
for any $q\in \NN$, where $\e_q=\e(\frac{\cdot}{q})$. We may then define
\begin{equation}\label{eq:def-SS2}
\mathfrak{S}_2=\sum_{q=1}^\infty \frac{A_q}{q^6}\prod_{p\mid q}\left(1-\frac{1}{p^{2}}\right)^{-1}.
\end{equation}
The convergence of $\tau_\infty$ and $\mathfrak{S}_2$ are  established in
\eqref{eq:tau-inf-upper} and \eqref{eq:SS2}, respectively. 
We may now record the following result, which will be proved  in Section \ref{sec:circle-a}.

\begin{proposition}\label{pro:L2}
Let $\eta>0$. Then 
$$
\#\CL_2(B)=\frac{\tau_\infty\mathfrak{S}_2}{4\zeta(2)^2} B\log B +O(\eta^{\frac{1}{2}}\log B) +O_\eta(1).
$$
\end{proposition}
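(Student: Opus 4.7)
The plan is to apply the Hardy--Littlewood circle method to the equation $F(\x,\y)=\sum_{i=1}^4L_i(\x)y_i^2=0$ viewed as a single Diophantine equation in the six variables $(\x,\y)\in\BZ^2\times\BZ^4$, following the framework of \cite[\S~4]{duke}. Viewed as a quadratic form in the four variables $\y$ with coefficients $L_i(\x)$, the inner problem has the critical number of variables for the classical circle method to resolve a non-singular indefinite form, while the two-dimensional parameter $\x$ is carried as an outer averaging variable that provides additional cancellation on the minor arcs. The singular integral and singular series generated by the major arcs factorise as the $\tau_\infty$ and $\mathfrak{S}_2$ of \eqref{eq:tau-inf} and \eqref{eq:def-SS2}, whose absolute convergence has already been recorded.

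Concretely, I would first decompose $|\x|$ into dyadic ranges $X\leq|\x|<2X$ for $X=2^k\leq B^{\frac{1}{4}}$; within each range $|\y|\leq Y:=\sqrt{B/X}$ and $XY^2\asymp B$. On each range, M\"obius inversion reduces the primitivity constraints $(\x,\y)\in\BZprim^2\times\BZprim^4$ to unrestricted integer counting, producing the factor $1/\zeta(2)^2$ in the limit. The square-avoiding condition \eqref{eq:notsquare} is imposed by subtracting off the (separately estimated) contribution from those $\x$ on the elliptic curve $E:\prod_iL_i(\x)=\square$, whose number with $|\x|\leq B^{\frac{1}{4}}$ is $O((\log B)^{r/2})$ by N\'eron--Tate (where $r=\rank E(\QQ)$). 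The six-variable circle method, applied with smooth weights supported in $|\x|\asymp X$, $|\y|\leq Y$, then yields a major arc main term of the form $\frac{\tau_\infty\mathfrak{S}_2}{\zeta(2)^2}\cdot XY^2$ up to a dyadic constant, with a power-saving error; accumulating over the $\asymp(\log B)/4$ dyadic values of $X$ produces the claimed leading term $\frac{\tau_\infty\mathfrak{S}_2}{4\zeta(2)^2}B\log B$.

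Two points demand care. First, the minor arc bounds degrade as $X$ approaches $B^{\frac{1}{4}}$ from below, since the inner quadratic form then has coefficients of size comparable to $Y^2$: this is where the parameter $\eta$ enters, as restricting to $X\leq B^{\frac{1}{4}-O(\eta)}$ loses an $O(\eta^{\frac{1}{2}})$ fraction of the main term on the boundary but secures the uniform power-saving needed in the bulk. Second, and most importantly, the minor arc bound must be uniform in the outer parameter $\x$, in particular at those $\x$ where some $L_i(\x)$ is small or where $\prod_iL_i(\x)$ is close to (but not equal to) a square. The remedy, as in \cite[\S~4]{duke}, is to pool the cancellation from the $\x$--summation with that from the $q$--summation on the minor arcs by treating the joint sum $A_q$ of \eqref{eq:defAq}---which already averages over $\cc\bmod q$---as the basic unit of Kloosterman-type estimation. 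This pooled averaging is the principal obstacle of the argument.
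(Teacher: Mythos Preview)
Your sketch has the right skeleton---dyadic decomposition in $|\x|$, M\"obius for primitivity, circle method in the inner variables, the boundary behaviour near $X=B^{1/4}$ producing the $\eta^{1/2}$ loss---but it misidentifies both the structure and the principal obstacle of the argument.

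The paper does \emph{not} apply a six-variable circle method. Instead it follows \cite[\S 5]{duke} and applies the circle method to the diagonal quadratic form $Q_\x(\y)=\sum_i L_i(\x)y_i^2$ in the four variables $\y$, for each \emph{fixed} $\x$. The output is Proposition~\ref{thm:countingxsmall}:
\[
\#\CL_2(B)=\frac{B}{\zeta(2)}\sum_{\substack{\x\in\BZprim^2\\ B^{2\eta}\leq|\x|\leq B^{1/4}\\ \varDelta(\x)\neq\square}}\frac{\sigma_\infty(\x)\mathfrak{S}(\x)}{|\x|}+O(\eta^{1/2}B\log B)+O_\eta(B^{1-\eta^2/32}),
\]
where $\sigma_\infty(\x)$ and $\mathfrak{S}(\x)$ are the singular integral and singular series of $Q_\x$. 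The minor arcs here are \emph{not} the main obstacle: they are handled by citing \cite[Lemma~5.2]{duke} verbatim for ``good'' $\x$, together with the upper bounds of \S\ref{s:upper} (Lemma~\ref{lem:auxbounddelta} and Proposition~\ref{prop:M2Xbig}) to dispose of the sparse set of $\x$ with some $|L_i(\x)|$ small or $\deltabad(\x)$ large.

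The genuine difficulty---and this is what your sketch glosses over---is the second step (Proposition~\ref{prop:lid}): showing that the average over $\x$ of $\sigma_\infty(\x)\mathfrak{S}(\x)/|\x|$ is asymptotic to $\tfrac{\tau_\infty\mathfrak{S}_2}{4\zeta(2)}\log B$. This requires truncating the singular series $\mathfrak{S}(\x)=\sum_q q^{-4}S_q(\x)$ to $q\leq B^{\eta/10}$ (Proposition~\ref{prop:fksum}), and only \emph{after} truncation does one interchange the $\x$- and $q$-sums so that $A_q$ emerges. That truncation demands cancellation in sums of the form $\sum_\x \big(\frac{\varDelta(\x)}{q_1}\big)$ weighted by $\sigma_\infty(\x)/|\x|$, and is substantially harder here than in \cite{duke} because there are only two $\x$-variables rather than four; the large sieve used in \cite[Lemma~6.6]{duke} is no longer adequate, and the paper instead uses a P\'olya--Vinogradov-type bound (Lemma~\ref{lem:PV-prep}) after a careful decomposition according to $\deltabad(\x)$. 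Your ``pooled averaging via $A_q$ on the minor arcs'' is not how this works: $A_q$ is a feature of the \emph{main-term} evaluation, not the minor-arc estimate, and the passage from $\sum_\x\sigma_\infty(\x)\mathfrak{S}(\x)/|\x|$ to $\tau_\infty\mathfrak{S}_2$ is precisely the part of the argument that your proposal does not address.
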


Finally, for $\#\CL_3(B)$ we shall produce the following upper bound.

\begin{proposition}\label{pro:L3}
Let $\eta>0$. Then 
$$
\#\CL_3(B)=
O\left( \eta B\log B +\frac{B\log B}{\log\log B}\right).
$$
\end{proposition}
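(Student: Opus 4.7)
The plan is to deduce the bound directly from Theorem~\ref{t:upper} via a dyadic decomposition in $|\x|$. First I would partition $\CL_3(B)$ according to the two subranges in its definition: the lower range $B^{1/4}<|\x|<B^{1/4+\eta}$ and the upper range $|\x|>B^{1/2-\eta}$. In each, I would further decompose into dyadic slices $X\le|\x|<2X$, and observe that, because $|\x||\y|^2\le B$, any such point satisfies $|\y|\le Y:=C(B/X)^{1/2}$ for an absolute constant $C$ and so is counted by $\cM(X,Y)$. The consequence of $Z(\RR)=\emptyset$ recorded immediately after \eqref{eq:MX} tells us that $\cM(X,Y)=\emptyset$ unless $X\ll Y^2$, which together with $XY^2\ll B$ forces $X\ll B^{1/2}$ throughout; hence only $O(\eta\log B)$ dyadic values of $X$ are relevant in each subrange.

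For any such $X>B^{1/4}$, substituting $Y=C(B/X)^{1/2}$ gives $X^{3/2}>Y$, so the minimum in Theorem~\ref{t:upper} is realised by its second argument. This produces the per-slice bound
$$
\#\cM(X,Y)\ll XY^2+X^{1/4}Y^{5/2}\frac{\log Y}{\log\log Y}\ll B+\frac{B^{5/4}\log B}{X\log\log B}.
$$
The first summand contributes $\ll B$ per slice, hence $O(\eta B\log B)$ in total once summed over the $O(\eta\log B)$ relevant dyadic values of $X$ in either subrange.

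The second summand gives a geometric series in $X^{-1}$. In the lower range, $\sum_{X>B^{1/4}}X^{-1}\ll B^{-1/4}$, contributing $O\!\left(\frac{B\log B}{\log\log B}\right)$. In the upper range, $\sum_{X>B^{1/2-\eta}}X^{-1}\ll B^{-1/2+\eta}$, contributing $O\!\left(\frac{B^{3/4+\eta}\log B}{\log\log B}\right)$, which is comfortably absorbed into the main error. Summing the two ranges yields the proposition.

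I expect this to be a largely formal consequence of Theorem~\ref{t:upper}, with no serious obstacle beyond careful bookkeeping of the geometric sums. The only subtle structural point is that the $\log Y/\log\log Y$ improvement in the second argument of the minimum in Theorem~\ref{t:upper} is precisely what distinguishes $\frac{B\log B}{\log\log B}$ from the unacceptable $B(\log B)^2$ that a cruder $X^{1/4}Y^{5/2}\log Y$ upper bound would have produced; the substance of the argument has already been absorbed into the statement of Theorem~\ref{t:upper}.
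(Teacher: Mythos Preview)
Your proposal is correct and follows essentially the same route as the paper: a dyadic decomposition in $|\x|$ over the two subranges, the observation $X\ll B^{1/2}$ from $Z(\RR)=\emptyset$, and an application of the second argument of the minimum in Theorem~\ref{t:upper}, summed as a geometric series in $X^{-1}$. The only cosmetic difference is that the paper also decomposes $|\y|$ dyadically (your taking the maximal admissible $Y=C(B/X)^{1/2}$ amounts to the same thing), and your remark that $X^{3/2}>Y$ identifies which branch of the minimum is active is unnecessary since $\min(a,b)\le b$ always.
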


\begin{proof}
Recalling \eqref{eq:MX}, have already remarked that  $\#\CM(X,Y)=0$ unless $X\ll Y^2$, which implies that $X\ll B^{\frac{1}{2}}$.
Let $\mathcal{X}_1$ denote the set of $(X,Y)\in \NN^2$,  where $X,Y$
 run over non-negative powers of two such that $XY^{2}\ll  B$ and
 $
 B^{\frac{1}{4}}\ll X\ll  B^{\frac{1}{4}+\eta}. 
$
Similarly, let $\mathcal{X}_2$ denote the corresponding set in which the final inequalities are replaced by 
$
B^{\frac{1}{2}-\eta}\ll X\ll B^{\frac{1}{2}}.
$
Then it follows that 
\[
\#\CL_3(B)\leq 
\sum_{\substack{(X,Y)\in \mathcal{X}_1}}
\#\mathcal{M}(X,Y)+ 
\sum_{\substack{(X,Y)\in \mathcal{X}_2}}
\#\mathcal{M}(X,Y).
\]
But Theorem \ref{t:upper} implies that 
\begin{align*}
\sum_{\substack{(X,Y)\in \mathcal{X}_1}}
\#\mathcal{M}(X,Y)
&\ll
\sum_{\substack{(X,Y)\in \mathcal{X}_1}}
\left(XY^2
+ X^{\frac{1}{4}}Y^{\frac{5}{2}}
\frac{\log Y}{\log\log Y}\right)\\
&\ll B\sum_{X}1 +
\frac{B^{\frac{5}{4}}\log B}{\log\log B}
\sum_X 
\frac{1}{X} \\
&\ll \eta B\log B +\frac{B\log B}{\log\log B},
\end{align*}
on summing over  $X$ and $Y$.
Similarly, we obtain
\begin{align*}
\sum_{\substack{(X,Y)\in \mathcal{X}_2}}
\#\mathcal{M}(X,Y)
&\ll \eta B\log B+B.
\end{align*}
The statement of the lemma follows.
\end{proof}

We shall combine these estimates in Section \ref{sec:finale}, which is where the proof of Theorem~\ref{t:main} will be drawn to a close.

\subsection*{Notation}

For any $D>0$, we shall write $d\sim D$ to  mean $D\leq d<2D$, and we shall write $d\asymp D$ 
to mean that there exist constants $c_2>c_1>0$ (depending only on the linear forms $L_1,\dots,L_4$ in \eqref{eq:quadbundle}) such that 
 $c_{1}D\leq d \leq c_{2}D$.
Moreover,  we shall often adopt the notation 
$$
X_1 \swarrow S\nearrow X_2
$$ 
within a sum, in order to denote that a dyadic parameter $S$ runs over an interval $X_1\ll S\ll X_2$, 
with  implied constants depending only on the setting. We shall write 
$S\nearrow X$ to mean that the dyadic parameter $S$ runs over an interval $S\ll X$.

\section{Preliminary technical results}\label{s:pre}

\subsection{Character sum estimates}

The following result is a straightforward consequence of combining partial summation 
with  Burgess bound \cite{burgess2}.
 
 \begin{lemma}[Burgess]\label{le:Burgess}
 Let $\chi$ be a non-principal Dirichlet character modulo $q$, let  $\theta>\frac{3}{16}$ and let 
$\sigma\geqslant 1$. Then 
$$
		\sum_{n>N}\frac{\chi(n)}{n^\sigma}\ll_\theta N^{\frac{1}{2}-\sigma}q^\theta.
$$
\end{lemma}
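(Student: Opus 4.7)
The plan is to derive the estimate by combining Abel summation with the classical Burgess bound on the partial character sum. Set $S(x) = \sum_{n \leq x} \chi(n)$. First I would perform Abel summation on $\sum_{N < n \leq X} \chi(n)/n^\sigma$, which yields
$$\sum_{N < n \leq X} \frac{\chi(n)}{n^\sigma} = \frac{S(X)}{X^\sigma} - \frac{S(N)}{N^\sigma} + \sigma \int_N^X \frac{S(t)}{t^{\sigma+1}} \, \mathrm{d}t.$$
Since $\chi$ is non-principal, orthogonality of characters gives the uniform bound $|S(X)| \leq q$ for every $X$, and the hypothesis $\sigma \geq 1$ then forces $S(X)/X^\sigma \to 0$ as $X \to \infty$. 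Letting $X \to \infty$ reduces the infinite tail to
$$\sum_{n > N} \frac{\chi(n)}{n^\sigma} = -\frac{S(N)}{N^\sigma} + \sigma \int_N^\infty \frac{S(t)}{t^{\sigma+1}} \, \mathrm{d}t.$$

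Given any fixed $\theta > 3/16$, set $\epsilon = \theta - 3/16 > 0$. Next I would apply the Burgess bound with parameter $r = 2$, which furnishes $|S(t)| \ll_{\epsilon} t^{1/2} q^{3/16 + \epsilon} = t^{1/2} q^\theta$ in the range where it is effective. In the complementary regime of small $t$ where Burgess is not advantageous (say $t \ll q^{3/8}$), the trivial bound $|S(t)| \leq t = t^{1/2}\cdot t^{1/2} \leq t^{1/2} q^{3/16}$ already suffices. Combining both yields the uniform inequality $|S(t)| \ll_\theta t^{1/2} q^\theta$ for every $t \geq 1$.

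Substituting this into the identity above, the boundary term is $\ll_\theta N^{1/2 - \sigma} q^\theta$, while the integral is bounded by
$$q^\theta \int_N^\infty t^{-\sigma - 1/2} \, \mathrm{d}t = \frac{q^\theta N^{1/2 - \sigma}}{\sigma - 1/2} \ll N^{1/2 - \sigma} q^\theta,$$
where both the convergence and the final estimate rely on $\sigma \geq 1 > 1/2$. Adding the two contributions delivers the claimed bound. I do not anticipate any real obstacle here, as the argument is a textbook application of two standard ingredients; the only detail requiring care is to interpolate between the ranges where Burgess is or is not directly effective, the two bounds matching seamlessly at $t \asymp q^{3/8}$, while the dependence of the implied constant on $\theta$ in the statement arises precisely from the $\epsilon$-loss in Burgess's bound as $\theta$ approaches $3/16$.
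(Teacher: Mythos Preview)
Your proof is correct and follows exactly the route the paper indicates: the paper does not spell out the argument but simply remarks that the lemma is ``a straightforward consequence of combining partial summation with the Burgess bound,'' which is precisely what you do. Your interpolation with the trivial bound for small $t$ is not strictly necessary (the Burgess inequality $|S(t)|\ll_\epsilon t^{1/2}q^{3/16+\epsilon}$ holds for all $t\geq 1$, it merely becomes weaker than the trivial bound when $t$ is small), but it is harmless and the rest is exactly as expected.
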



We will also require a generalisation of  the P\'olya--Vinogradov bound that involves products of linear polynomials.

\begin{lemma}\label{lem:PV-prep}
Let
$J_1,\dots,J_{k}\in \ZZ[x_1,x_2]$ be pairwise non-proportional  linear forms. Given $n\in \ZZ$ we write  
$$
J_{i,n}(x)=J_i(n,x) \in \ZZ[x],
$$ 
for $1\leq i\leq k$.
Let $\dd=(d_{1},\dots,d_{k})\in\BN^{k}$
and put $D=d_1\cdots d_{k}$.
Suppose that there exists $A\in \NN$ such that 
$\gcd(d_i,d_j)\mid A$, for all  $i\neq j$. 
Let $r\in\BN$ be square-free, let  $q\in \NN$, let $a\in \ZZ/q\ZZ$ and let 
$I\subset \RR$ be an interval. 
Assume that  $\gcd(r,qD)=1$. 
Then for any $n\in \ZZ$ we have 
$$
\sum_{\substack{x\in I\cap \ZZ\\
x\equiv a\bmod{q}\\
d_i\mid J_{i,n}(x)}}
\left(\frac{J_{1,n}(x)\cdots J_{k,n}(x)}{r}\right)\ll_\ve
 \left(
		\frac{\vol(I)}{r^{\frac{1}{2}}[q, D]} 
+				r^{\frac{1}{2}} \log(qD)	\right)
				r^{\ve}\gcd(rD,n),
$$
for any $\ve>0$, 
where the implied constant is only allowed to depend on $A, J_1,\dots,J_{k}$ and the choice of $\ve$.
\end{lemma}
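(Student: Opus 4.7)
The plan is to convert the divisibility conditions into a single congruence on $x$, reducing the sum to a one-variable character sum over an arithmetic progression, and then apply the Pólya--Vinogradov completion technique together with the Weil bound for complete sums. Writing $J_i(x_1,x_2)=\alpha_i x_1+\beta_i x_2$, the condition $d_i\mid J_{i,n}(x)=\alpha_i n+\beta_i x$ is the linear congruence $\beta_i x\equiv -\alpha_i n \pmod{d_i}$. Setting $g_i=\gcd(\beta_i,d_i)$, solvability forces $g_i\mid \alpha_i n$, hence $g_i\mid n$ up to a factor bounded in terms of the coefficients of $J_i$; when solvable the solution set is a single residue class modulo $d_i/g_i$, and the product $\prod g_i$ is exactly what will feed the $\gcd(D,n)$ factor of the bound.

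Under the hypothesis $\gcd(d_i,d_j)\mid A$, the reduced moduli $d_i/g_i$ are pairwise coprime outside the fixed finite set of primes dividing $A$. Combined with $x\equiv a\pmod q$, and using $\gcd(r,qD)=1$, the Chinese Remainder Theorem produces a single congruence $x\equiv x_0\pmod{M}$ with $M$ coprime to $r$ and $M\asymp [q,D]/\prod g_i$; the $A$-contribution is harmless and absorbed into an $O_\varepsilon(r^\varepsilon)$-loss. Substituting $x=x_0+Mt$ turns the sum into
$$
\sum_{t\in I'\cap\ZZ}\left(\frac{\widetilde P(t)}{r}\right),\qquad \widetilde P(t)=\prod_{i=1}^{k}(c_i+\beta_i M t),
$$
where $c_i=\alpha_i n+\beta_i x_0$ and $I'\subset\RR$ is an interval with $\vol(I')=\vol(I)/M$.

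To this sum I would apply the standard Pólya--Vinogradov completion, expanding the indicator of $I'$ in additive characters modulo $r$. This expresses the sum as a linear combination of $O(\log r)$ complete character sums of the form $\sum_{t\bmod r}\e_r(ht)\bigl(\tfrac{\widetilde P(t)}{r}\bigr)$, each bounded by $O_k(\sqrt r)$ via Weil's theorem provided $\widetilde P$ is not proportional to a perfect square modulo every prime $p\mid r$. The pairwise non-proportionality of the $J_i$ ensures that $\beta_i c_j-\beta_j c_i=(\alpha_i\beta_j-\alpha_j\beta_i)n$ is nonzero unless $p\mid n$, so for a degenerate prime $p\mid r$ one necessarily has $p\mid n$; the corresponding contribution is absorbed into $\gcd(r,n)\le\gcd(rD,n)$. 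The frequency-zero term then contributes the main piece $\vol(I)/(\sqrt r\,[q,D])$ (after absorbing $\prod g_i$ into $\gcd(D,n)$), while the nonzero frequencies sum to $\sqrt r\log(qD)$, yielding the stated estimate.

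The main obstacle will be the bookkeeping of degenerate primes: those $p\mid r$ for which either $p\mid \beta_i$ for some $i$, or the polynomial $\widetilde P \bmod p$ degenerates (becomes proportional to a perfect square) due to $p\mid n$ or $p$ dividing a pairwise cross-term. I would handle this by a multiplicative decomposition $r=r_{\text{good}}r_{\text{bad}}$ with $r_{\text{bad}}$ supported on the degenerate primes, using the trivial bound on $r_{\text{bad}}$ and checking that the loss is controlled by $\gcd(r,n)\cdot r^\varepsilon$; it is this step that forces the $r^\varepsilon$ factor and ultimately produces the $\gcd(rD,n)$ term in the stated bound.
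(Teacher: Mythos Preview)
Your proposal is correct and takes essentially the same approach as the paper: both arguments rest on P\'olya--Vinogradov completion together with Weil's bound for the complete sums over primes $p\mid r$ (with the trivial bound at the degenerate primes $p\mid n$), and both extract the factor $\gcd(D,n)$ from the multiplicity of solutions to the linear congruences $d_i\mid J_{i,n}(x)$. The only cosmetic difference is the order of operations: you first combine the congruence conditions into a single arithmetic progression modulo $M$ and then complete modulo $r$, whereas the paper completes directly modulo $r[q,D]$ and afterwards factors $S(\alpha)=T(\alpha;r)N(q;\mathbf d)$ via the Chinese Remainder Theorem, bounding $T(\alpha;r)\ll r^{1/2+\varepsilon}\gcd(r,n)^{1/2}$ and $N(q;\mathbf d)\ll\gcd(D,n)$ separately.
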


\begin{proof}
Let $\Sigma_n(I)$ denote the sum that we are trying to estimate.
Suppose that 
$
J_i(x_1,x_2)=a_ix_1+b_ix_2$ for $1\leq i\leq k$, 
for $a_i,b_i\in \ZZ$.
Put 
$$
\Delta_0= \prod_{1\leq i<j\leq k}|a_ib_j-a_jb_i|.
$$
Then $\Delta_0\neq 0$ since the linear forms are assumed to be non-proportional. 
Put
$$
G(x)=J_{1,n}(x)\cdots J_{k,n}(x)
$$
and let 
  $D=d_{1}\cdots d_{k}$. Recalling that $\gcd(r,qD)=1$, we may complete the sum by breaking into residue classes modulo $r[q,D]$. This yields
\begin{equation}\label{eq : pv3}
	\begin{split}
		\Sigma_n(I)&=\sum_{\substack{y\bmod r[q,D]\\ 
		y\equiv a\bmod{q}\\
		d_i\mid J_{i,n}(y)}}
		\left(\frac{G(y)}{r}\right)
		\sum_{\substack{x\in I\cap \ZZ}}
		\frac{1}{r[q,D]}\sum_{\alpha\bmod r[q,D]}
		\operatorname{e}_{r [q,D]}(\alpha(y-x))\\
		&= \frac{1}{r[q, D]}\sum_{\alpha\bmod r[q,D]}S(\alpha)
		\sum_{\substack{x\in I\cap \ZZ}}\operatorname{e}_{r[q,D]}(-\alpha x),
	\end{split}
\end{equation}
where 
$$
 	S(\alpha)=
	\sum_{\substack{y\bmod r[q,D]\\y\equiv a\bmod{q}\\
		d_i\mid J_{i,n}(z)}}\left(\frac{G(y)}{r}\right)\operatorname{e}_{r[q,D]}(\alpha y).
$$
Any  $\alpha\in\BZ/r[q,D]\BZ$
has a representative satisfying $|\alpha|\leq \frac{1}{2}r[q,D]$. 
Thus we  clearly have 
\begin{equation}\label{eq:geom}
\left|\sum_{\substack{x\in I\cap \ZZ}}\operatorname{e}_{r[q,D]}(-\alpha x)\right|\ll\begin{cases}
	\vol(I)& \text{ if } \alpha=0,\\ 
	\frac{r[q,D]}{|\alpha|} &\text{ if } \alpha\neq 0.
\end{cases}
\end{equation}

We now proceed with a detailed study of  $S(\alpha)$.
Since $\gcd(r,[q,D])=1$,  any $y\bmod r[q,D]$ can be decomposed as
$
	y= y_{1}[q,D]\overline{[q,D]} +y_{2}r\bar{r},
$
for  $y_{1}\bmod r$ and $y_{2}\bmod [q,D]$.
(Here, $\overline{[q,D]}\in \ZZ$ is the multiplicative inverse of  $[q,D]$ modulo $r$, and  $\overline{r}\in \ZZ$ is the inverse of  $r$ modulo $ [q,D]$.) 
Under this change of variables we obtain
\begin{align*}
	S(\alpha)&=
T(\alpha;r)	
		\sum_{\substack{y_2\bmod [q,D]\\
		y_2\equiv a \bmod{q}\\
		d_i\mid J_{i,n}(y_2)}}
\operatorname{e}_{D}(\alpha y_{2}\overline{r}),
\end{align*}
where
$$
T(\alpha;r)=	\sum_{y_{1}\bmod r}\left(\frac{G(y_1)}{r}\right)
	\operatorname{e}_{r }(\alpha y_{1}\overline{[q,D]} ).
$$
But then it follows that 
\begin{equation}\label{eq:white-flag}
|S(\alpha)|\leq |T(\alpha;r)| N(q;\mathbf{d}),
\end{equation}
where
$$
N(q;\mathbf{d})=
\#\left\{
y_2\bmod [q,D]:
		y_2\equiv a \bmod{q}, ~
		d_i\mid J_{i,n}(y_2) \text{ for $1\leq i\leq k$}\right\}.
$$
It remains to estimate $T(\alpha;r)$ and $N(q,\mathbf{d})$.

We begin by estimating $T(\alpha;r)$ for a square-free integer $r\in \NN$.
We shall prove that 
\begin{equation}\label{eq:white-T}
T(\alpha;r)\ll_\ve r^{\frac{1}{2}+\ve}\gcd(r,n)^{\frac{1}{2}},
\end{equation}
for any $\ve>0$, uniformly in $\alpha\in \ZZ/r\ZZ$.
By multiplicativity, it will  suffice to prove that 
\[
\sum_{y\bmod p}\left(\frac{G(y)}{p}\right)\operatorname{e}_{p}(\alpha y)\ll 
\begin{cases}
\sqrt{p} & \text{ if $p\nmid n$,}\\
p & \text{ if $p\mid n$,}
\end{cases}
\]
for any prime $p\mid r$, 
where the implied constant only depends on  $\Delta_0$. 
The result is trivial if $p\mid \Delta_0$ and so we can assume that $p\nmid \Delta_0$. But then it follows that the two linear polynomials  $a_in+b_iT$ and $a_jn+b_jT$ are non-proportional modulo $p$ if and only if $p\nmid n$. Thus $G$ is separable modulo $p$ if and only if $p\nmid n$. If $p\mid n$ we take the trivial bound for the exponential sum. If $p\nmid n$, on the other hand, the desired bound follows from  
Weil's resolution of the Riemann hypothesis for curves \cite{weil}.
 This completes the proof of \eqref{eq:white-T}.

Turning to $N(q;\mathbf{d})$ we shall prove that 
\begin{equation}\label{eq:white-U}
N(q;\mathbf{d})=O(\gcd(D,n)),
\end{equation}
for an implied constant that is only allowed to depend on $A$ and $J_1,\dots,J_{k}$.
Before doing so, let us see how it suffices to complete the proof of the lemma. 
Combining it with \eqref{eq:white-T} in \eqref{eq:white-flag}, we deduce that 
$$
S(\alpha)\ll_\ve r^{\frac{1}{2}+\ve}\gcd(r,n)^{\frac{1}{2}} \gcd(D,n)\leq 
r^{\frac{1}{2}+\ve}\gcd(rD,n),
$$
since $r$ and $D$ are coprime. 
Once coupled with \eqref{eq:geom} in \eqref{eq : pv3}, we are finally led to the bound 
\begin{align*}
		\Sigma_n(I)
		&\ll_\ve 
		\frac{1}{r[q, D]} 
		\left(\vol(I)+ 
				\sum_{0<|\alpha|\leq \frac{1}{2}r[q,D]} \frac{r[q,D]}{|\alpha|}\right)
				r^{\frac{1}{2}+\frac{\ve}{2}}\gcd(rD,n)\\
		&\ll_\ve \left(
		\frac{\vol(I)}{r^{\frac{1}{2}}[q, D]} 
+				r^{\frac{1}{2}} \log(qD)	\right)
				r^{\ve}\gcd(rD,n),
\end{align*}
as claimed in the lemma.

Returning to \eqref{eq:white-U}. It suffices to examine the case
$
N_p=N(p^\alpha;p^{\beta_1},\dots,p^{\beta_{k}})$, for any prime $p$, by the Chinese remainder theorem.
We may suppose without loss of generality that $\alpha\geq 0$ and $0\leq\beta_1\leq \cdots\leq \beta_{k}$. It then follows from the hypotheses of the lemma that $p^{\beta_{k-1}}\mid A$. 
We now have 
$$
N_p
\leq
\#\left\{
y\bmod{p^{\max(\alpha,\beta_1+\cdots+\beta_{k})}}:
		y\equiv a \bmod{p^\alpha}, ~
		b_{k}y\equiv -a_{k}n\bmod{p^{\beta_{k}}}
\right\}.
$$
If $\alpha\geq \beta_1+\cdots+\beta_{k}$,  then we  trivially have  $N_p\leq 1$.
If $\alpha< \beta_1+\cdots+\beta_{k}$, then
$$
N_p
\leq p^{\beta_1+\cdots+\beta_{k-1}}
\#\left\{
y\bmod{p^{\beta_{k}}}:
		b_{k}y\equiv -a_{k}n\bmod{p^{\beta_{k}}}
\right\}.
$$
Our remark above shows that 
$p^{\beta_1+\cdots+\beta_{k-1}}\leq p^{(k-1)v_p(A)}$.
Moreover, 
$$
\#\left\{
y\bmod{p^{\beta_{k}}}:
		b_{k}y\equiv -a_{k}n\bmod{p^{\beta_{k}}}
\right\}\leq \gcd(p^{\beta_{k}}, a_{k}n, b_{k}).
$$
If $b_{k}\neq 0$ this is at most $p^{v_p(b_{k})}$. On the other hand, if $b_{k}=0$, then this is at most $p^{v_p(a_k)}\gcd(p^{\beta_{k}},n)\leq p^{v_p(a_k)}\gcd(p^{\beta_1+\cdots+\beta_{k}},n)$.
Taking a product over all primes, the bound in \eqref{eq:white-U} easily follows. This completes the proof of the lemma. 
\end{proof}

\subsection{Pairs of quadrics modulo prime powers}

Our first result concerns the roots  of  the pair of diagonal quadratic forms $Q_1,Q_2$ in 
\eqref{eq:Q1Q2}. 
Recalling the definition \eqref{eq:DELTA} of $\Delta$, we have  the following result. 

\begin{lemma}
Let $p$ be a prime  and let $a\in \mathbb{N}$.
Let 
$$
\nu_p=\begin{cases}
1 &\text{ if $p=2$,}\\
0 &\text{ if $p>2$.}
\end{cases}
$$
Let $\uu\in \left(\mathbb{Z}/p^{a}\mathbb{Z}\right)^{4}$ such that $p\nmid \uu$ and $Q_{1}(\uu)\equiv Q_{2}(\uu)\equiv 0 \bmod{p^{a}} $. Then,
for any  integer $b>a$, we have 
\[
\#\left\{\vv\in \left(\mathbb{Z}/p^{b}\mathbb{Z}\right)^{4}:\begin{array}{l}Q_{1}(\vv)\equiv Q_{2}(\vv)\equiv 0 \bmod{p^{b}} \\ \vv\equiv\uu \bmod{p^{a}} \end{array}\right\}\leq
\begin{cases}
p^{2(b-a)}						&\text{if $p\nmid \Delta$,}\\ 
p^{2\nu_p+3(b-a)}		&\text{if $p\mid \Delta$.}
\end{cases}
\]
\label{lem : loc}
\end{lemma}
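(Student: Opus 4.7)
The natural approach is to parametrise lifts by writing $\vv=\uu+p^{a}\w$ with $\w\in(\ZZ/p^{b-a}\ZZ)^{4}$ and studying the resulting congruences on $\w$. Taylor expansion gives
\[
Q_{i}(\vv)=Q_{i}(\uu)+p^{a}\,\nabla Q_{i}(\uu)\cdot \w+p^{2a}Q_{i}(\w),
\]
and after dividing by $p^{a}$ the problem reduces to counting $\w$ modulo $p^{b-a}$ satisfying two congruences whose linear parts are controlled by the Jacobian of $(Q_{1},Q_{2})$ at $\uu$. This $2\times 4$ matrix has columns $(2a_{j}u_{j},-2b_{j}u_{j})^{t}$, so its nondegeneracy, up to powers of $2$, is governed precisely by the conditions $\gcd(a_{j},b_{j})=1$ and by the resultants $a_{i}b_{j}-a_{j}b_{i}$ that define $\Delta$. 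The two cases of the lemma correspond to the two regimes of this rank.

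In the unramified case $p\nmid\Delta$, which forces $p$ odd and $p\nmid a_{i}b_{j}-a_{j}b_{i}$ for all $i\neq j$, I would first observe that at least two coordinates of $\uu$ must be $p$-adic units: if only $u_{1}$, say, were a unit, then $Q_{1}(\uu)\equiv Q_{2}(\uu)\equiv 0\bmod{p}$ would force $p\mid a_{1}$ and $p\mid b_{1}$, contradicting $\gcd(a_{1},b_{1})=1$. Picking two such indices $i\neq j$, the corresponding $2\times 2$ Jacobian minor equals $-4u_{i}u_{j}(a_{i}b_{j}-a_{j}b_{i})$, a $p$-adic unit. Hensel's lemma for systems then lets one solve uniquely for $w_{i},w_{j}$ in terms of the remaining two variables, giving the sharp count $p^{2(b-a)}$.

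In the ramified case $p\mid\Delta$ with $p$ odd, the clean systems argument fails, so my plan is to drop the $Q_{2}$ constraint and count only lifts satisfying $Q_{1}(\vv)\equiv 0\bmod{p^{b}}$. Choose $j$ with $u_{j}$ a unit; since $\gcd(a_{j},b_{j})=1$ and $p$ is odd, one of $a_{j},b_{j}$ is a unit modulo $p$, and after possibly swapping $Q_{1}$ with $Q_{2}$ I may take $a_{j}$ to be the unit. Then $\partial Q_{1}/\partial y_{j}(\uu)=2a_{j}u_{j}$ is a $p$-adic unit, and Hensel applied to the single variable $v_{j}$ yields exactly one lift for each choice of the other three coordinates, producing the bound $p^{3(b-a)}$.

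The genuinely delicate case is $p=2$, where the factor $2$ appearing in every partial derivative is itself non-invertible; this is where the fudge $2^{2\nu_{p}}$ appears. Here I would fix $v_{2},v_{3},v_{4}$ (contributing $2^{3(b-a)}$ choices) and count odd $v_{1}\equiv u_{1}\bmod{2^{a}}$ with $a_{1}v_{1}^{2}\equiv-\sum_{i>1}a_{i}v_{i}^{2}\bmod{2^{b}}$, taking $a_{1}$ odd without loss of generality. This reduces to counting square roots modulo $2^{b}$ of a prescribed residue; if $v_{1},v_{1}'$ are two such lifts then
\[
v_{1}^{2}-v_{1}'^{2}=(v_{1}-v_{1}')(v_{1}+v_{1}')\equiv 0\bmod{2^{b}},
\]
and a short case analysis of the $2$-adic valuations of $v_{1}\pm v_{1}'$ (both even since $v_{1},v_{1}'$ are odd) confines $v_{1}'$ to the set $\{\pm v_{1},\pm v_{1}+2^{b-1}\}\bmod{2^{b}}$. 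The congruence $v_{1}'\equiv u_{1}\bmod{2^{a}}$ then eliminates the minus-sign elements precisely when $a\geq 2$, leaving at most four lifts of $v_{1}$ in all cases, which gives the claimed bound $2^{2+3(b-a)}$.
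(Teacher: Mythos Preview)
Your proof is correct and follows essentially the same approach as the paper's: both linearise via the Jacobian of $(Q_1,Q_2)$ at $\uu$ and use Hensel-type lifting, with the rank analysis governed by the resultants defining $\Delta$ and the coprimality $\gcd(a_j,b_j)=1$. The only cosmetic differences are that the paper argues inductively one level at a time (writing $\vv'=\vv+p^{b-1}\w$) for odd $p$, whereas you lift from $p^a$ to $p^b$ in one step, and for $p=2$ the paper applies Hensel to the quadratic $f(w_1)=2^{a-1}a_1w_1^2+a_1u_1w_1+C$ modulo $2^{b-a-1}$ rather than counting square roots directly; both yield the same factor of $4$.
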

\begin{proof}
Let us write $S_a(p^b)$ for the set whose cardinality is  to be estimated. 
We begin by treating the case  $p>2$, in which case $\nu_p=0$.
We claim that 
$$
\#S_a(p^{b})\leq \#S_a(p^{b-1}) \times 
\begin{cases}
p^{2} &\text{ if $p\nmid \Delta$,}\\
p^{3} &\text{ if $p\mid \Delta$,}
\end{cases}
$$
for any $b> a$. 
In particular, this implies that $b\geq 2$, since $a\geq 1$.
Noting that $\#S_a(p^a)=1$, 
an inductive argument completes the proof when $p>2$.
To check the claim we note that any 
 $\vv'\in S_a(p^b)$ can be written  $\vv'=\vv+p^{b-1}\w$ for 
$\vv\in S_a(p^{b-1})$ and $\w\in (\ZZ/p\ZZ)^4$.
In particular $\vv'\equiv \uu\bmod{p^a}$.
Moreover, the condition $p^b\mid Q_{i}(\vv')$ for $i=1,2$ implies that
\[
p^{-b+1}Q_{i}(\vv)+\nabla Q_{i}(\vv)\cdot\ww\equiv 0\bmod{p},
\]
for $i=1,2$. Note that 
$
\nabla Q_1(\vv)=2(a_1v_1,\dots, a_4v_4)$ and
$\nabla Q_2(\vv)=-2(b_1v_1,\dots, b_4v_4)$.
Moreover, we have $\vv\equiv \uu\bmod{p}$, since $\vv\in S_a(p^{b-1})$.
It follows that we are interested in counting 
$\ww\in (\ZZ/p\ZZ)^4$ for which 
$p^{-b+1}Q_{i}(\vv)+\cc_i\cdot\ww\equiv 0\bmod{p}$,
where $\cc_i=\nabla Q_{i}(\uu)$. 
If  $p\nmid \Delta$ then $\cc_1$ and $\cc_2$ are non-proportional modulo $p$ and there are  $p^{2}$ choices for $\ww$. If $p\mid \Delta$ but 
$p\nmid \cc_1$ then  we get at most $p^{3}$ choices for $\ww$. Similarly, if $p\mid \Delta$ but
$p\nmid \cc_2$. Finally, we note that $p\mid (\cc_1,\cc_2)$ is impossible, since it would then follow that 
$a_iu_i\equiv b_iu_i\equiv 0 \bmod{p}$ for $1\leq i\leq 4$, which contradicts the assumption  $p\nmid \uu$,  since $\gcd(a_i,b_i)=1$ for $1\leq i\leq 4$. 

It remains to deal with the case $p=2$.  This time we note that any $\vv\in S_a(2^b)$ can be written 
$\vv=\uu+2^a \ww$, where $\w\in (\ZZ/2^{b-a}\ZZ)^4$. If $2^b\mid Q_1(\vv)$ then it easily follows 
$2^{a+1}\mid Q_1(\uu)$, whence
\[
A+\sum_{i=1}^4 a_iu_iw_i +2^{a-1}Q_1(\ww) \equiv 0\bmod{2^{b-a-1}},
\]
where $A=Q_1(\uu)/2^{a+1}\in \ZZ$. Similarly, 
\[
B+\sum_{i=1}^4 b_iu_iw_i +2^{a-1}Q_2(\ww) \equiv 0\bmod{2^{b-a-1}},
\]
where $B=Q_2(\uu)/2^{a+1}\in \ZZ$. Since $2\nmid \uu$, we may assume without loss of generality that $2\nmid u_1$. Moreover, since $\gcd(a_1,b_1)=1$, we may further assume that $2\nmid a_1$. 
Hence, for $2^{3(b-a)}$ choices of $w_2,w_3,w_4$, we are left with counting the number of $w_1 \in
\ZZ/2^{b-a}\ZZ$ such that $f(w_1)\equiv 0 \bmod{2^{b-a-1}}$,
where
$
f(x)=2^{a-1}a_1x^2+a_1u_1x+C,
$
for an appropriate integer $C$. Since $f'(x)=2^a a_1x+a_1u_1$ is always odd, so it follows from Hensel's lemma that the congruence
$f(x)\equiv 0 \bmod{2^{b-a-1}}$ has at most $2$ roots modulo $2^{b-a-1}$, for fixed $w_2,w_3,w_4$. 
This therefore implies that 
$
\#S_a(2^b)\leq 2^{2+3(b-a)},
$
which completes the proof of the lemma.
\end{proof}

Of special interest in our work will be the functions
\begin{equation}\label{eq:rho-q}	\varrho(q)=\#\{\yy \in (\ZZ/q\ZZ)^4:Q_1(\yy)\equiv Q_2(\yy)\equiv 0\bmod q\}
\end{equation}
and 
\begin{equation}\label{eq:rho-q*}	\varrho^*(q)=\#\{\yy \in (\ZZ/q\ZZ)^4:
\gcd(\yy,q)=1,~
Q_1(\yy)\equiv Q_2(\yy)\equiv 0\bmod q\},
\end{equation}
for any $q\in \NN$.
These counting functions have already featured in work of Browning and Munshi
 \cite{BM13}, leading to the following result.

\begin{lemma}
Let $p$ be a prime and let $r\in \NN$. Then we have 
\begin{itemize}
\item[(i)] 
$\rho^*(p^r)=
p^{2r}(1+O(p^{-\frac{1}{2}}))$ if $p\nmid\Delta$.
\item[(ii)] 
$\rho(p^r)=O(rp^{2r})$.
\end{itemize}
\label{lem : loc2'}
\end{lemma}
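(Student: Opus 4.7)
My plan is to prove the two parts separately: (i) via the Hasse--Weil bound coupled with Hensel's lemma, and (ii) via a $p$-adic stratification that feeds (i) back at the good primes, supplemented by a direct diagonal Gauss-sum estimate at the finitely many bad primes dividing $\Delta$.

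For part (i), the hypothesis $p \nmid \Delta$ amounts to the four points $P_i := (b_i : a_i)$ being distinct in $\PP^1(\FF_p)$; since $\prod_i(\alpha a_i - \beta b_i)$ is the discriminant of the pencil $\alpha Q_1 + \beta Q_2$, this is precisely the statement that $Z$ remains a smooth genus-one curve modulo $p$. Each primitive solution represents a point of $Z(\FF_p)$ up to scaling by $\FF_p^\times$, so $\rho^*(p) = (p - 1)\#Z(\FF_p)$, and the Hasse--Weil estimate $|\#Z(\FF_p) - (p + 1)| \leq 2\sqrt{p}$ yields $\rho^*(p) = p^2(1 + O(p^{-1/2}))$. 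To pass to higher powers I invoke the proof of Lemma \ref{lem : loc}: when $p \nmid \Delta$ the Jacobian $\bigl(\nabla Q_1(\uu);\, \nabla Q_2(\uu)\bigr) \bmod p$ has rank exactly $2$ at every primitive solution $\uu$, so the inequality there becomes an equality by Hensel's lemma, giving precisely $p^{2(r-1)}$ lifts of each such $\uu$ to a primitive solution mod $p^r$. Multiplying gives $\rho^*(p^r) = p^{2r}(1 + O(p^{-1/2}))$.

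For part (ii), I stratify $\rho(p^r)$ by the exact $p$-adic content $k = v_p(\gcd(\yy, p^r)) \in \{0, \dots, r\}$. Writing $\yy = p^k \zz$ with $\zz$ primitive modulo $p$ (determined modulo $p^{r-k}$), the conditions $p^r \mid Q_i(\yy) = p^{2k}Q_i(\zz)$ reduce to $Q_i(\zz) \equiv 0 \bmod p^{\max(r-2k,0)}$. A telescoping computation bounds the total contribution from $k \geq \lceil r/2 \rceil$ by $p^{4 \lfloor r/2 \rfloor} \leq p^{2r}$, while for $k < r/2$ the count is $p^{4k}\rho^*(p^{r-2k})$, yielding
\[
\rho(p^r) \leq p^{2r} + \sum_{k = 0}^{\lceil r/2 \rceil - 1} p^{4k}\rho^*(p^{r - 2k}).
\]
When $p \nmid \Delta$, part (i) gives $\rho^*(p^{r-2k}) \ll p^{2(r-2k)}$, so $p^{4k}\rho^*(p^{r-2k}) \ll p^{2r}$; summing over the $O(r)$ values of $k$ yields the desired $\rho(p^r) \ll r p^{2r}$ uniformly in $p \nmid \Delta$.

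The main obstacle is the finitely many bad primes $p \mid \Delta$, where singular points on $Z \bmod p$ render the lifting bound $p^{2\nu_p + 3(b-a)}$ of Lemma \ref{lem : loc} too weak to control $\rho^*$ individually. Here I bypass Lemma \ref{lem : loc} and bound $\rho(p^r)$ directly through its diagonal Gauss-sum expansion
\[
\rho(p^r) = p^{-2r} \sum_{a, b \bmod p^r} \prod_{i=1}^4 G(p^r, a a_i - b b_i),
\qquad
|G(p^r, c)| \ll p^{(r + \min(v_p(c),\, r))/2}.
\]
Stratifying $(a, b)$ by $u = \min(v_p(a), v_p(b))$ and writing the primitive direction projectively as $(\alpha : \beta) \in \PP^1(\ZZ/p^{r-u}\ZZ)$, the weighted $\PP^1$-sum concentrates on the $p$-adic cluster points $P_i = (b_i : a_i)$. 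Even when several $P_i$ collide modulo $p$, the ultrametric relation $v_p(P - P_j) = v_p(P_i - P_j)$, valid once $v_p(P - P_i)$ exceeds the cluster diameter, confines the excess weight to a bounded $p$-adic ball, and a convergent geometric series yields $\sum_{P \in \PP^1(\ZZ/p^{r-u}\ZZ)} p^{\sum_i w_i(P)/2} \ll_p p^{r-u}$, where $w_i(P) = \min(v_p(\alpha a_i - \beta b_i),\, r-u)$. This gives $\rho(p^r) \ll_p r p^{2r}$ for each $p \mid \Delta$, and since $\Delta$ has only finitely many prime divisors the implied constant depends only on $V$.
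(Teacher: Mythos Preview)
Your proof is correct. Part (i) is essentially the paper's argument: the paper combines Lemma~\ref{lem : loc} (which is exactly the Hensel lifting count you invoke) with the Weil bound, and this is what you do.

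For part (ii) the paper simply cites \cite[Lemma~2]{BM13}, whereas you supply a self-contained argument. Your stratification
\[
\rho(p^r)=p^{4\lfloor r/2\rfloor}+\sum_{0\le k<r/2}p^{4k}\rho^*(p^{r-2k})
\]
is the standard content decomposition (and is in fact the identity behind \cite[Eq.~(2.4)]{BM13}, quoted later in the paper in the proof of Corollary~\ref{cor:cor}); combined with (i) it handles $p\nmid\Delta$ immediately. For $p\mid\Delta$ you take a genuinely different route via the diagonal Gauss-sum expansion. The key inequality
\[
\sum_{P\in\PP^1(\ZZ/p^{s}\ZZ)}\prod_{i=1}^{4}p^{w_i(P)/2}\ll_p p^{s}
\]
is correct: once $\delta_i(P):=v_p(\alpha a_i-\beta b_i)>e:=\max_{j\ne k}v_p(a_jb_k-a_kb_j)$, primitivity of $(\alpha,\beta)$ and of $(a_i,b_i)$ forces the relevant coordinate to be a unit, and the identity $a_i(\alpha a_j-\beta b_j)=a_j(\alpha a_i-\beta b_i)-\beta(a_ib_j-a_jb_i)$ then pins $\delta_j(P)=e_{ij}\le e$ for every $j\ne i$. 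Thus $\sum_j w_j(P)\le w_i(P)+3e$, and the geometric series in $w_i$ gives the bound. Summing over $u$ yields $\rho(p^r)\ll_p rp^{2r}$, which is enough since only finitely many primes divide $\Delta$.

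What your approach buys is independence from the external reference, at the cost of spelling out the bad-prime analysis (which \cite{BM13} handles by a closely related but differently organised Gauss-sum argument). Either route gives the same uniform bound; yours makes the dependence on the cluster geometry of the points $P_i=(b_i:a_i)$ in $\PP^1(\QQ_p)$ explicit.
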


\begin{proof}
If $p\nmid\Delta$, then the curve 
$Q_{1}=Q_{2}=0$ defines a smooth curve over $\mathbb{F}_{p}$ and (i) follows from combining Lemma 
\ref{lem : loc} with the Weil bound. Alternatively, for any $p$,  (ii) follows from the proof of  \cite[Lemma~2]{BM13}. 
\end{proof}

\subsection{Geometry of numbers and a special lattice} 

We shall also care deeply about the shape of a certain lattice that features in our work.
For any  $d\in \NN$, 
we can define an equivalence relation on 
$\left(\mathbb{Z}/d\mathbb{Z}\right)^{4}$, by saying 
$\uu$ is equivalent to $\vv$ if and only if there exists $\lambda\in\left(\mathbb{Z}/d\mathbb{Z}\right)^{\times}$ such that $\lambda\uu\equiv \vv \bmod{d}$.
We shall be interested in the set  of equivalence classes
\begin{equation}\label{eq:def-Vd}
V_{d}^{\times}=\{\uu\in\left(\mathbb{Z}/d\mathbb{Z}\right)^{4}: \gcd(\uu,d)=1,~Q_{1}(\uu)\equiv Q_{2}(\uu)\equiv 0 \bmod{d}\}/\left(\mathbb{Z}/d\mathbb{Z}\right)^{\times}.
\end{equation}
For any $\uu\in\left(\mathbb{Z}/d\mathbb{Z}\right)^{4}$ such that $\gcd(\uu,d)=1$ and $Q_{1}(\uu)\equiv Q_{2}(\uu)\equiv 0 \bmod{d}$, we will denote by $[\uu]$ its class in $V_{d}^{\times}$. For any $[\uu]\in V_{d}^{\times}$, and any $k\mid d$, the main goal of this section is to discuss various properties of the lattice
\begin{equation}\label{eq:lattice-def}
\Lambda_{[\uu],k}=\left\{\bfy\in\mathbb{Z}^{4}:\text{ }\exists\lambda\in\mathbb{Z}\text{ such that }
\bfy \equiv \lambda\uu\bmod{k}\right\}.
\end{equation}
This definition is clearly  independent of the particular choice of  representative $\uu\in[\uu]$.
The lattice $\Lambda_{[\uu],k}$ has rank $4$ and determinant $k^{3}$. We denote by 
$
1\leq s_{1,[\uu],k}\leq \cdots \leq s_{4,[\uu],k}$
the 
associated successive minima. It follows from Minkowski's second theorem that
\begin{equation}\label{eq:mink}
k^3\ll s_{1,[\uu],k} \cdots s_{4,[\uu],k}\ll k^3.
\end{equation}

We will need good estimates for the size of $V_d^\times$ and also for the number of classes in 
$V_d^\times$ which reduce modulo $k$ to a given class in $V_k^\times$, for any $k\mid d$. 
First we introduce some notation. 
 For any $n\in \NN$, we write
\begin{equation}\label{eq:delta-notation}
n_{\Delta}=\prod_{p\mid \Delta}p^{v_{p}(n)}.
\end{equation}
With this in mind, we shall prove the following result.

\begin{lemma}\label{lem : loc''}
Let $d,k\in \NN$ such that $k\mid d$.  Let 
$[\uu]\in V_k^\times$. Then 
$$
\#\{[\vv]\in V_{d}^{\times}:[\vv\bmod{k}]=[\uu]\}\ll \left(\frac{d}{k}\right)_{\Delta}\cdot \frac{d}{k}.
$$
Moreover, we have 
$$
\#V_{d}^{\times}\ll_{\varepsilon}d \cdot d_{\Delta}^{\varepsilon} \cdot \prod_{\substack{p\mid d\\p\nmid\Delta}}(1+O(p^{-\frac{1}{2}})).
$$
\end{lemma}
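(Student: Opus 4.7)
The plan is to prove both estimates by a prime-by-prime analysis via the Chinese remainder theorem. For any $n\in\NN$, reduction modulo prime powers yields a natural bijection $V_n^\times \cong \prod_{p\mid n} V_{p^{v_p(n)}}^\times$ that is compatible with reduction modulo $k$, so I would reduce the problem to a local count at each prime $p\mid d$ and assemble the global bound as a product. Crucially, any primitive vector $\uu$ has trivial stabiliser under scaling by $(\ZZ/p^b\ZZ)^\times$ (some coordinate is coprime to $p$, and that coordinate forces $\lambda\equiv 1$), so every equivalence class contains exactly $\phi(p^b)$ representatives and we have the clean identity $\#V_{p^b}^\times = \rho^*(p^b)/\phi(p^b)$.

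For the second claim, I would feed this identity into Lemma~\ref{lem : loc2'} to obtain $\#V_{p^b}^\times = p^b(1+O(p^{-1/2}))$ when $p\nmid\Delta$, and $\#V_{p^b}^\times \ll b\,p^b$ when $p\mid\Delta$. Multiplying over primes $p\mid d$ isolates the claimed product $\prod_{p\mid d,\,p\nmid\Delta}(1+O(p^{-1/2}))$ on the non-$\Delta$ side, while on the $\Delta$-side the extra factor $\prod_{p\mid d_\Delta} v_p(d)$ is absorbed using the standard divisor bound $\tau(d_\Delta)\ll_\varepsilon d_\Delta^\varepsilon$.

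For the first claim I would set $a=v_p(k)$ and $b=v_p(d)$ and, for each prime, count classes $[\vv_p]\in V_{p^b}^\times$ whose reduction modulo $p^a$ lies in $[\uu_p]\in V_{p^a}^\times$. When $a\ge 1$, the set of representatives $\vv\in(\ZZ/p^b\ZZ)^4$ matching $\lambda\uu$ modulo $p^a$ for some $\lambda\in(\ZZ/p^a\ZZ)^\times$ splits as a disjoint union over $\lambda$ (again by primitivity). Passing from vectors to classes introduces the factor $\phi(p^a)/\phi(p^b)=p^{-(b-a)}$, so the class count equals
$$\frac{1}{p^{b-a}}\cdot\#\bigl\{\vv\bmod p^b:\vv\equiv \uu\bmod p^a,\ Q_1(\vv)\equiv Q_2(\vv)\equiv 0\bmod p^b\bigr\}.$$
Inserting the bounds $p^{2(b-a)}$ (resp.\ $p^{2\nu_p+3(b-a)}$) from Lemma~\ref{lem : loc} in the cases $p\nmid\Delta$ (resp.\ $p\mid\Delta$) yields local estimates of $p^{b-a}$ and $O(p^{2(b-a)})$ respectively. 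In the remaining case $a=0$ the constraint is vacuous and I would simply invoke the bound on $\#V_{p^b}^\times$ established above, which is of the same order. Taking the product over all $p\mid d$ then produces exactly $(d/k)\cdot(d/k)_\Delta$, as desired.

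The main delicate point is the bookkeeping between vector counts from Lemma~\ref{lem : loc} and the equivalence-class counts defining $V_{p^b}^\times$: the factor $\phi(p^a)/\phi(p^b)=p^{-(b-a)}$ must cancel one full power of the vector-lift bound $p^{2(b-a)}$, leaving a clean $p^{b-a}$ in the generic case and a single excess power $p^{b-a}$ at the primes $p\mid\Delta$, which is precisely what generates the $(d/k)_\Delta$ factor. Once this local accounting is set up, both estimates are straightforward products over primes.
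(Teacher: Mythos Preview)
Your proposal is correct and follows essentially the same route as the paper. Both arguments reduce to prime powers via the Chinese remainder theorem, use the identity $\#V_{p^b}^\times=\rho^*(p^b)/\phi(p^b)$ coming from the triviality of the stabiliser of a primitive vector, and then feed the vector-lift bounds of Lemma~\ref{lem : loc} through the ratio $\phi(p^a)/\phi(p^b)=p^{-(b-a)}$ to obtain the class count; the paper packages this as the identity \eqref{eq:chloe}, but the content is identical to your disjoint-union-over-$\lambda$ step.
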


\begin{proof}
By the Chinese remainder theorem it will suffice to treat  the case that 
$k=p^a$ and $d=p^b$ for a prime $p$ and integers $0\leq a<b$. For any 
$[\uu]\in V_{p^a}^\times$, we observe that 
\begin{equation}\label{eq:chloe}
\begin{split}
\#&\left\{\vv\in \left(\mathbb{Z}/p^{b}\mathbb{Z}\right)^{4}:\begin{array}{l}Q_{1}(\vv)\equiv Q_{2}(\vv)\equiv 0\bmod{p^b}\\ \vv\bmod{p^{a}}\in[\uu] \end{array}\right\}\\
&\qquad\qquad= \varphi(p^{b})\#\left\{[\vv]\in V_{p^{b}}^{\times}: [\vv\bmod{p^{a}}]=[\uu]\right\}.
\end{split}\end{equation}
Taking $a=0$, we deduce that 
\begin{equation}\label{eq:stone}
\#V_{p^b}^\times =\frac{\rho^*(p^b)}{p^{b}}\left(1-\frac{1}{p}\right)^{-1},
\end{equation}
for any $b\in \NN$, where $\rho^*(p^b)$ is defined in  \eqref{eq:rho-q*}. 
The  second part of the lemma is now  a  consequence of Lemma~\ref{lem : loc2'}.

To handle the first part of the lemma, we may clearly assume that $a\geq 1$.
We observe that the left hand side of \eqref{eq:chloe} is
$$
\sum_{\uu'\in[\uu]}\#\left\{\vv\in \left(\mathbb{Z}/p^{b}\mathbb{Z}\right)^{4}:\begin{array}{l}Q_{1}(\vv)\equiv Q_{2}(\vv)\equiv 0\bmod{p^{b}}\\ \vv\equiv\uu ' \bmod{p^{a}} 
\end{array}\right\}.
$$
The number of $\uu'$ in the outer sum is $\phi(p^a)$. Moreover, we can use Lemma \ref{lem : loc} to estimate the remaining cardinality, which easily completes the proof of the lemma, since 
$\phi(p^b)=\phi(p^a)p^{b-a}$ if $a\geq 1$.
\end{proof}

We take this opportunity to record 
the following facts about $\#V_{p^b}^\times$ for generic primes. 

\begin{corollary}\label{cor:stone}
For any prime $p\nmid \Delta$ and $b\in \NN$, we have 
$\#V_{p^b}^\times=p^b(1+O(p^{-\frac{1}{2}}))$.
\end{corollary}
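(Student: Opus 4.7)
The plan is to derive this directly from the two identities already established in the proof of Lemma~\ref{lem : loc''}, combined with the counting estimate in Lemma~\ref{lem : loc2'}(i). In other words, the corollary should be an immediate bookkeeping consequence of those inputs.

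First, I would recall the exact formula \eqref{eq:stone} obtained just above, namely
\[
\#V_{p^b}^\times \;=\; \frac{\rho^*(p^b)}{p^b}\left(1-\frac{1}{p}\right)^{-1},
\]
which is valid for every prime $p$ and every $b\in\NN$. This converts the question about equivalence classes into a question about primitive solutions counted by $\rho^*(p^b)$ as defined in \eqref{eq:rho-q*}.

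Next, since we are assuming $p\nmid\Delta$, I would invoke Lemma \ref{lem : loc2'}(i), which gives
\[
\rho^*(p^b) \;=\; p^{2b}\bigl(1+O(p^{-1/2})\bigr).
\]
Substituting this into the previous formula yields
\[
\#V_{p^b}^\times \;=\; p^{b}\bigl(1+O(p^{-1/2})\bigr)\left(1-\frac{1}{p}\right)^{-1}.
\]
Finally, I would absorb the Euler factor by writing $(1-1/p)^{-1}=1+O(1/p)=1+O(p^{-1/2})$, giving the claimed estimate $\#V_{p^b}^\times = p^b(1+O(p^{-1/2}))$.

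There is no real obstacle here: everything we need has already been derived in the proof of Lemma \ref{lem : loc''}. The only minor point worth noting is that the implicit constant is uniform in $b$, which follows because the implicit constant in Lemma \ref{lem : loc2'}(i) is uniform in $r$ (it comes from the Weil bound applied to the smooth genus~$1$ curve $Z$ over $\FF_p$ via the lifting argument of Lemma~\ref{lem : loc}, both of which give estimates independent of the exponent).
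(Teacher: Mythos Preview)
Your proof is correct and follows exactly the same approach as the paper, which simply says the result follows from combining \eqref{eq:stone} with Lemma~\ref{lem : loc2'} in the case $p\nmid\Delta$. You have merely spelled out the substitution and the absorption of the factor $(1-1/p)^{-1}$ in more detail than the paper does.
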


\begin{proof}
This 
follows on combining 
 \eqref{eq:stone}
with  Lemma \ref{lem : loc2'} in the case
 $p\nmid \Delta$ .
\end{proof}

\section{Counting points on quadric surfaces}\label{s:quadric}
For given non-zero $A_1,\dots,A_4\in \ZZ$, let  
$
Q(\yy)=A_1y_1^2+A_2y_2^2+A_3y_3^2+A_4y_4^2
$ 
be a fixed diagonal  quadratic form.
In this section we record some estimates for the  counting function 
\begin{equation}\label{eq:NQB}
	N(Q;B)=\#\{\yy\in\BZprim^4:Q(\yy)=0,|\yy|\leqslant B\},
\end{equation} 
which have the key feature that they depend uniformly on $A_1,\dots,A_4$.
Our first estimate is based on the geometry of numbers arguments used in \cite{DA}, and our second is based on a circle method analysis  \cite{BrowningM}.

Let 
$
\varDelta_Q=A_1A_2A_3A_4
$ 
be the discriminant of $Q$ and let 
$
\|Q\|=\max_{1\leqslant i\leqslant 4}|A_i|
$ be its height. We define    a Dirichlet character  $\chi_Q$ induced by the Kronecker symbol $(\frac{\varDelta_Q}{\cdot})$. 
Let $\varpi$ be the multiplicative arithmetic function defined by \begin{equation}\label{eq:pim}
	\varpi(m)=\prod_{p\mid m}\left(1+\frac{1}{p}\right)
\end{equation}
and set  
\begin{equation}\label{eq:yellow-bag}
\deltabad=\prod_{\substack{p^e\| \varDelta_Q\\ e\geqslant 2}}p^e.
\end{equation}
Combining the argument in \cite[p.3]{DA} with the  main result of \cite{DA}, we obtain the following bound

\begin{lemma}\label{thm:discreteanalysis}
Let $\ve>0$.
	If $\deltabad\leqslant B^\frac{1}{20}$, then 
	$$N(Q;B)\ll_{\varepsilon} \varpi(\varDelta_Q)\deltabad^{\frac{1}{4}+\varepsilon}\left(\frac{\|Q\|^4}{|\varDelta_Q|}\right)^\frac{5}{8}\left(B^\frac{4}{3}+\frac{B^2}{|\varDelta_Q|^\frac{1}{4}}\right)
	L(\sigma_B,\chi_Q),
	$$ 
	where $\sigma_B=1+\frac{1}{\log B}$.
\end{lemma}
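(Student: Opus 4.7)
The approach is to deduce Lemma \ref{thm:discreteanalysis} from the main counting theorem of \cite{DA}, which gives an analogous upper bound under the cleaner hypothesis that the discriminant is essentially squarefree. The reduction, already sketched on \cite[p.~3]{DA}, consists in absorbing the square part of the coefficients into the variables, thereby passing to an auxiliary quadric whose bad discriminant $\deltabad$ is trivial.

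Concretely, I would write each coefficient as $A_i=\alpha_i^2\beta_i$ with $\beta_i$ squarefree, so that $\prod_i\alpha_i^2$ divides $\deltabad$ up to a bounded amount. Under the substitution $z_i=\alpha_i y_i$, the equation $Q(\yy)=0$ is transformed into $Q'(\zz)=\sum_i\beta_i z_i^2=0$ with box constraints $|z_i|\leq \alpha_i B$ and divisibility conditions $\alpha_i\mid z_i$; dropping the primitivity gives $N(Q;B)\leq N'(Q';B\max_i\alpha_i)$, where $N'$ is the weighted count incorporating the divisibility constraints. The quadric $Q'$ satisfies $\|Q'\|\leq \|Q\|$, $|\varDelta_{Q'}|\geq |\varDelta_Q|/\deltabad$, and $\deltabad_{Q'}=O(1)$, so it falls squarely within the regime of the main theorem of \cite{DA}. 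One then invokes that theorem at the inflated height $B'=B\max_i\alpha_i \leq B\deltabad^{1/2}$, using the divisibility conditions $\alpha_i\mid z_i$ to recover savings, and observing that $\chi_{Q'}$ and $\chi_Q$ agree outside primes dividing $\deltabad$, which accounts for the $\varpi(\varDelta_Q)\deltabad^{\varepsilon}$ factor via bounded Euler-factor discrepancies.

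The main obstacle is the careful bookkeeping of the $\deltabad$ dependence as it filters through the various exponents in the main theorem, namely the $5/8$ attached to $(\|Q'\|^4/|\varDelta_{Q'}|)^{5/8}$, the $4/3$ attached to $(B')^{4/3}$, and the $1/4$ attached to $|\varDelta_{Q'}|^{1/4}$. The divisibility savings from $\alpha_i\mid z_i$ and the inflation of height $B\mapsto B'$ must conspire so that the net power of $\deltabad$ collapses to the mild $\deltabad^{1/4+\varepsilon}$ appearing in the statement, rather than the naive sum of exponents. The hypothesis $\deltabad\leq B^{1/20}$ is inserted precisely to guarantee that the inflated height $B'$ is still large enough for the main theorem of \cite{DA} to apply in its nontrivial range and for the $L$-value bound at $\sigma_{B'}$ to be comparable to that at $\sigma_B$.
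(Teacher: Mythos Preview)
Your proposal rests on a mistaken premise about what \cite{DA} actually proves. The main result there (Theorem~1.1) already carries the factor $\deltabad^{1/4+\varepsilon}$ in full generality; it does not assume the discriminant is essentially squarefree. Indeed, the present paper remarks explicitly that ``the factor $\deltabad(\x)^{1/4+\varepsilon}$ in \cite[Thm.~1.1]{DA} becomes a major technical issue'' in this setting. Accordingly, the paper does not give any argument for Lemma~\ref{thm:discreteanalysis} beyond citing \cite{DA}: the bound is Theorem~1.1 of \cite{DA} combined with a short bookkeeping argument on p.~3 of that paper (which packages the $L$-value and the $\varpi$-factor into the stated form). No reduction to a squarefree-discriminant case is needed or attempted.

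Even setting aside the premise, your reduction has a genuine gap. Writing $A_i=\alpha_i^2\beta_i$ with $\beta_i$ squarefree and substituting $z_i=\alpha_i y_i$ does not force $\deltabad_{Q'}=O(1)$: primes shared between distinct $\beta_i$ still contribute to the squarefull part of $\varDelta_{Q'}=\prod_i\beta_i$. For instance, if $A_1=A_2=p$ and $A_3=A_4=1$ then each $\alpha_i=1$, $Q'=Q$, and $\deltabad_{Q'}=p^2$. So the auxiliary form $Q'$ does not in general land in the ``clean'' regime you want, and the claimed application of \cite{DA} to $Q'$ would still produce a $\deltabad_{Q'}$-factor you have not accounted for. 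Fixing this would require a change of variables that also untangles common prime factors across different coefficients, which is considerably more delicate than the diagonal substitution you propose.
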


The appearance of the factor $\deltabad^{\frac{1}{4}+\varepsilon}$ will  be problematic when $\deltabad$ is large. We now revisit the arguments in  \cite{DA} to show that the dependence on 
$\deltabad$ can be mitigated at the expense of an additional $B^\varepsilon$-factor. 
This is summarised in the following result. 

\begin{lemma}\label{prop:N(B)discreteanalyis1}
Let $\ve>0$. Then 
	$$N(Q;B)\ll_\varepsilon B+B^\varepsilon\sum_{\substack{\cc\in\BZprim^4\\ |\cc|\ll B^\frac{1}{3}\\
	Q^*(\cc)\neq 0}}\left(1+B\frac{\|Q\|}{|\varDelta_Q|^\frac{1}{2}}\frac{\gcd(\deltabad^3,Q^*(\cc)^2)^\frac{1}{6}}{|\cc|}\right),$$
	where $Q^*$ is the dual quadratic form.
\end{lemma}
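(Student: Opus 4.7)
The plan is to retrace the proof of Lemma~\ref{thm:discreteanalysis} following \cite{DA}, and to halt it at the point immediately before a global character sum estimate would introduce the uniform factor $\deltabad^{1/4+\varepsilon}$. Rather than summing out the auxiliary hyperplane vectors at the end, we keep them displayed and insert a refined $p$-adic estimate at each prime $p\mid\deltabad$.

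First, I would invoke the conic slicing step of \cite{DA}: any primitive $\yy\in\BZprim^4$ with $Q(\yy)=0$ and $|\yy|\leq B$ lies, up to an exceptional set whose contribution is $O(B)$, on a plane section $C_\cc = \{Q=0\}\cap\{\cc\cdot\yy=0\}$ for some primitive $\cc\in\BZprim^4$ with $|\cc|\ll B^{1/3}$. The degenerate sections, corresponding to $\cc$ with $Q^*(\cc)=0$, consist of unions of lines; their integer points of height $\leq B$ are readily absorbed into the standalone $B$ term on the right.

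Next, for each $\cc$ with $Q^*(\cc)\neq 0$ the conic $C_\cc$ is smooth, and its integer points of height $\leq B$ are parametrised by a rank-$2$ lattice $\Lambda_\cc$ whose first successive minimum $s_1(\Lambda_\cc)$ controls the count via $N(C_\cc;B)\ll 1+B/s_1(\Lambda_\cc)$. A computation of $\det\Lambda_\cc$ combined with Minkowski's inequalities should give $s_1(\Lambda_\cc)\gg |\varDelta_Q|^{1/2}|\cc|/(\|Q\|\,\Phi_\cc)$, where the correction factor $\Phi_\cc$ is supported at primes $p\mid\deltabad$ and encodes the $p$-adic invariants of $C_\cc$. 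The key claim is that, via a case analysis on the $p$-adic valuation of $Q^*(\cc)$ at each bad prime, one bounds $\Phi_\cc\ll_\varepsilon B^\varepsilon \gcd(\deltabad^3, Q^*(\cc)^2)^{1/6}$. Inserting this into the estimate for $N(C_\cc;B)$ and summing over admissible $\cc$ then yields the bound of the lemma.

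The main obstacle is the local analysis at primes $p\mid\deltabad$. One must pin down the exponent $1/6$ in $\gcd(\deltabad^3, Q^*(\cc)^2)^{1/6}$ precisely, since a naive $p$-adic bound on the lattice invariants would only yield $\deltabad^{1/2}$ per $\cc$, which is too weak. The correct exponent should emerge from the interplay between the $p$-adic decomposition of $Q$, the linear form $\cc$, and the singularity type of $C_\cc$ modulo $p^{v_p(\deltabad)}$: when $p^e\|\varDelta_Q$ with $e\geq 2$, the lattice $\Lambda_\cc$ acquires a local index bounded in terms of $v_p(Q^*(\cc))$, and optimising this constraint against the divisibility of $Q^*(\cc)$ produces the exponent $1/6$. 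This refined input is precisely what will be needed in Section~\ref{s:upper} when $\deltabad$ is too large for Lemma~\ref{thm:discreteanalysis} itself to apply.
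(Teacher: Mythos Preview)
Your high-level plan (slice by hyperplanes $\cc\cdot\yy=0$ with $|\cc|\ll B^{1/3}$, discard degenerate $\cc$, then count on each smooth conic) is exactly the template of \cite{DA} and matches what the paper does. However, you have misidentified where the work lies.

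The factor $\gcd(\deltabad^3,Q^*(\cc)^2)^{1/6}$ is \emph{not} something you need to derive via a fresh $p$-adic case analysis at primes $p\mid\deltabad$. It is already sitting inside the intermediate estimates of \cite[\S 2.2]{DA}, just before Eq.~(2.16) there. The paper's actual proof is a two-line modification of \cite{DA}: instead of applying the inequality displayed above \cite[Eq.~(2.16)]{DA} (which is precisely the step that trades the gcd for a uniform $\deltabad^{1/4+\varepsilon}$), one simply takes the trivial bounds
\[
\log\bigl(2+|\cc|^2\|Q\|^3/|Q^*(\cc)|\bigr)=O_\varepsilon(B^\varepsilon)
\quad\text{and}\quad
R(Q^*(\cc))=O_\varepsilon(B^\varepsilon)
\]
on the two remaining auxiliary factors, and the stated bound drops out immediately. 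There is no new local computation; you are stopping \cite{DA} one step early and paying $B^\varepsilon$ for the privilege.

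Your description of the conic count as $N(C_\cc;B)\ll 1+B/s_1(\Lambda_\cc)$ via a single rank-2 lattice is also an oversimplification of what \cite{DA} actually does (ellipsoidal covering followed by counting on conics in lopsided boxes, cf.\ \cite[Lemma~2.2]{DA}). This is not fatal, but if you tried to carry out your local analysis from this starting point you would likely find the bookkeeping does not match. The safest route is simply to read \cite[\S 2]{DA} up to the line before Eq.~(2.16), observe that the gcd term is already present, and then take trivial bounds.
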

\begin{proof}
We sketch the proof of Lemma \ref{thm:discreteanalysis}.
The main idea is to use Siegel's lemma to cover with plane sections the integer solutions to the equation  $Q(\yy)=0$ which lie in the box $|\yy|\leqslant B$. Thus any such point lies on at least one  plane $\cc\cdot\yy=0$,
where $\cc\in\BZprim^4$ satisfies  $|\cc|\ll B^\frac{1}{3}$, for an absolute implied constant.
This produces a union of conics  $Q_\cc$, as in \cite[Lemma 2.1]{DA}. 
We cover points 
on each conic $Q_\cc$
 using a family of ellipsoids, the number of which is effectively bounded in terms of the dual form $Q^*$ and $\cc$. This is the object of \cite[Lemma 2.2]{DA}. In this way,  the problem reduces to counting lattice points in a  conic within a fixed ellipsoid, which can be transformed to  counting points on conics in unequal boxes.

 For the purposes of the  lemma the main idea is to 
not use the  inequality displayed  above \cite[Eq.~(2.16)]{DA}, but to
 take the trivial bounds 
 $
 \log(2+|\cc|^2\|Q\|^3/|Q^*(\cc)|)=O_\ve(B^\ve)
 $
and 
 $R(Q^*(\cc))=O_\ve(B^\ve)$ 
at the close of \cite[\S~2.2]{DA}.  The statement of the lemma easily follows.
\end{proof}

Our next estimate for $N(Q;B)$ in \eqref{eq:NQB} is based on the circle method. 
Let us begin with a few remarks about the singular series 
$\mathfrak{S}(Q)$. This is 
defined to be 
\begin{equation}\label{eq:interview}
		\mathfrak{S}(Q)=\prod_p\sigma_p,
\end{equation}
where
$$
\sigma_p=\lim_{k\to \infty}\frac{\#\left\{\x\in (\ZZ/p^k\ZZ)^4: Q(\x)\equiv 0 \bmod{p^k}
\right\}}{p^{3k}}.
$$
The following result is concerned with an upper bound for $\mathfrak{S}(Q)$. 

\begin{lemma}\label{lem:interview'}
Let $\ve>0$.
Assume that there exists $A\in \NN$ such that 
 $\gcd(A_i,A_j)\mid A$, for all distinct $i,j\in \{1,\dots, 4\}$. 
Then 
$$\mathfrak{S}(Q)\ll_{\ve,A} \deltabad^\ve 
 L(1,\chi_Q),
$$
where $\deltabad$ is given by  \eqref{eq:yellow-bag} and the implied constant
depends on $\ve$ and $A$.
\end{lemma}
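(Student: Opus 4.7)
The plan is to unfold the local factor $\sigma_p=\sum_{k\geq 0}A_{p^k}$ via the Gauss-sum factorisation
\[
A_{p^k}=\frac{1}{p^{4k}}\sum_{\substack{a\bmod p^k\\ \gcd(a,p)=1}}\prod_{i=1}^{4}G(aA_i;p^k),\qquad G(c;q)=\sum_{y\bmod q}\e(cy^2/q),
\]
and to split the resulting Euler product over four classes of primes:
(I) the \emph{good primes} $p\nmid 2A\varDelta_Q$;
(II) the \emph{tame bad primes} $p\nmid 2A$ with $p\|\varDelta_Q$;
(III) the small exceptional set $p\mid 2A$; and
(IV) the \emph{wild bad primes} $p\mid\deltabad$.

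For class (I) the standard evaluation $G(c;p^k)=p^{k/2}\bigl(\tfrac{c}{p^k}\bigr)\epsilon_p^k$ for $\gcd(c,p)=1$ gives $A_p=\chi_Q(p)p^{-1}(1-p^{-1})$ and $|A_{p^k}|\leq p^{-k}$ for $k\geq 2$, so that
\[
\sigma_p=1+\frac{\chi_Q(p)}{p}+O\!\left(\frac{1}{p^2}\right)=L_p(1,\chi_Q)\left(1+O\!\left(\frac{1}{p^2}\right)\right).
\]
The product over good primes therefore produces $L(1,\chi_Q)$ times a convergent Euler product of size $O(1)$. Class (III) is finite and bounded uniformly in terms of $A$, giving a total contribution of $O_A(1)$ via the trivial bound $\sigma_p=O(1)$.

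The first delicate step is class (II). The coprimality hypothesis $\gcd(A_i,A_j)\mid A$ forces exactly one $A_i$, say $A_1$, to be divisible by $p$, and only to the first power. Writing $A_1=p\alpha_1$ with $\gcd(\alpha_1,p)=1$ and applying $G(p\alpha_1 a;p^k)=pG(\alpha_1 a;p^{k-1})$, a direct inspection of both parities of $k$ shows that each surviving term in the $a$-sum carries precisely one factor $\bigl(\tfrac{a}{p}\bigr)$. The orthogonality $\sum^{*}_{a\bmod p^k}\bigl(\tfrac{a}{p}\bigr)=0$ then forces $A_{p^k}=0$ for every $k\geq 1$, so $\sigma_p=1$ identically on class (II), exactly matching the trivial factor $L_p(1,\chi_Q)=1$.

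The main obstacle is class (IV), where $p^e\|A_1$ with $e\geq 2$. Here one must combine the reductions $G(aA_1;p^k)=p^k$ for $k\leq e$ and $G(aA_1;p^k)=p^eG(a(A_1/p^e);p^{k-e})$ for $k>e$, and analyse the four sub-cases arising from the parities of $k$ and of $k-e$. In every odd-parity case Legendre cancellation again annihilates the $a$-sum, while the even-parity cases contribute a geometric series in $p^{-1/2}$ which a careful bookkeeping controls uniformly by $O(p^{-1})$. Thus $\sigma_p=1+O(p^{-1})$ at each wild bad prime, with an absolute implied constant. Since every prime divisor of $\deltabad$ appears to exponent at least $2$, one has $\operatorname{rad}(\deltabad)\leq\deltabad^{1/2}$, and Mertens' theorem yields
\[
\prod_{p\mid\deltabad}\sigma_p\ll\prod_{p\leq\deltabad}\left(1+\frac{C}{p}\right)\ll(\log\deltabad)^{C}\ll_{\varepsilon}\deltabad^{\varepsilon}.
\]
Assembling the contributions of (I)--(IV) then gives the bound $\mathfrak{S}(Q)\ll_{\varepsilon,A}\deltabad^{\varepsilon}L(1,\chi_Q)$ asserted in the lemma.
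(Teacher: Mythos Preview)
Your overall strategy is sound and runs parallel to the paper's, which likewise splits the Euler product according to whether $p\nmid 2\varDelta_Q$, whether $p\|\varDelta_Q$ but $p\nmid 2\deltabad$, or $p\mid 2\deltabad$, citing \cite[Lemmas~4.8 and~4.10]{duke} for the first two cases. Your class~(IV) analysis is in fact sharper than the paper's: you obtain $\sigma_p=1+O(p^{-1})$ with an absolute constant, whereas the paper is content with $\sigma_p=O_A(1)$ at every prime dividing $2\deltabad$ and then uses the divisor bound $O_A(1)^{\omega(\deltabad)}\ll_{\ve,A}\deltabad^{\ve}$ rather than Mertens.

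There is, however, a real gap in your treatment of class~(III). The statement ``trivial bound $\sigma_p=O(1)$'' is \emph{false} as an absolute bound: for a fixed prime $p$ the local density $\sigma_p$ can be made arbitrarily large by letting several of the $A_i$ share a high power of $p$. It is precisely the hypothesis $\gcd(A_i,A_j)\mid A$ that rules this out, and you have not invoked it here. Concretely, writing $f_i=v_p(A_i)$ with $f_1\le f_2\le f_3\le f_4$, the hypothesis forces $f_1,f_2,f_3\le v_p(A)$, but $f_4$ is unconstrained. One then needs an estimate of the type $\sigma_p\ll p^{(f_1+f_2+f_3)/3}$ (this is what the paper extracts from \cite[Lemma~4.10]{duke}) to conclude $\sigma_p=O_A(1)$. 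Your own Gauss-sum machinery will deliver this: for $k\le f_3$ one has $|A_{p^k}|\le p^k\le p^{v_p(A)}$, and for $k>f_3$ three of the Gauss sums contribute only $p^{(k+f_i)/2}$, giving $|A_{p^k}|\le p^{-k/2+(f_1+f_2+f_3)/2}$; summing yields $\sigma_p\ll p^{v_p(A)}$. But this step is neither trivial nor omittable, and it is exactly where the dependence on $A$ in the implied constant enters.
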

\begin{proof}
On revisiting the proof of \cite[Lemma 4.10]{duke}, it is shown that 
$
\prod_{p\nmid 2\varDelta}\sigma_p\ll  L(1,\chi_{Q}).
$ 
Moreover, by \cite[Lemma 4.8]{duke}, we have 
$
\sigma_p=1
$ 
if  $p\mid \varDelta_Q$ but $p\nmid 2\deltabad$.
Thus
$$
\mathfrak{S}(Q)\ll 
 L(1,\chi_Q)
\prod_{p\mid 2\deltabad} 
\sigma_p.
$$
To estimate the remaining product, we examine $\sigma_p$ for a given prime $p$.
Let  $f_{i}=v_p(A_i)$, 
for $1\leq i\leq 4$, and assume without loss of generality that 
$f_{1}\leq f_2\leqslant\cdots\leqslant f_{4}$.
Then the last part of the proof of \cite[Lemma 4.10]{duke} gives 
 $\sigma_p\ll p^{(f_1+f_2+f_3)/3}.$  It follows that 
	$\sigma_p=O_A(1)$ for an implied constant that is allowed to depend on $A$. 
We obtain the statement of the lemma on taking 
the product over all   $p\mid 2\deltabad$.
\end{proof}

The next result  has the advantage that  there is no restriction on the size of $\deltabad$,  or on the size of coefficients, but it comes  at the expense of a worse error term. 

\begin{lemma}\label{co:uniformquad}
	Let $\ve>0$ and let $m(Q)=\min_{1\leqslant i\leqslant 4} |A_i|$. 
	Assume that there exists $A\in \NN$ such that 
 $\gcd(A_i,A_j)\mid A$, for all distinct $i,j\in \{1,\dots, 4\}$. 
	Then $$N(Q;B)\ll_{\varepsilon ,A}
	\frac{
 \deltabad^\ve L(1,\chi_Q) 	
	}{(m(Q)\|Q\|)^\frac{1}{2}}B^2+\frac{\|Q\|^{11+\varepsilon}}{m(Q)^6|\varDelta_Q|^\frac{1}{2}}B^{\frac{3}{2}+\varepsilon}.$$
\end{lemma}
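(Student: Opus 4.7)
The plan is to apply the uniform circle-method result for counting zeros of diagonal quaternary quadratic forms established by Browning and Munshi \cite{BrowningM}. Their work delivers an asymptotic of the shape
\begin{equation*}
N(Q;B)=\mathfrak{S}(Q)\,\sigma_{\infty}(Q;B)+\mathcal{E}(Q;B),
\end{equation*}
where $\mathfrak{S}(Q)$ is the singular series \eqref{eq:interview}, $\sigma_{\infty}(Q;B)$ is the singular integral, and $\mathcal{E}(Q;B)$ is an error term whose dependence on the coefficients of $Q$ and on $B$ is made fully explicit in that reference. Our task is then to massage the main term into the first summand of the claim, and to read off the second summand from $\mathcal{E}$.

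For the main term I would proceed in two steps. First, Lemma~\ref{lem:interview'} gives $\mathfrak{S}(Q)\ll_{\varepsilon,A}\deltabad^{\varepsilon}L(1,\chi_{Q})$, which is precisely the arithmetic factor appearing in the statement. Second, for the singular integral I would use its Fourier representation
\begin{equation*}
\sigma_{\infty}(Q;B)=\int_{\RR}\prod_{i=1}^{4}\left(\int_{-B}^{B}\operatorname{e}(\theta A_{i}y^{2})\,\mathrm{d}y\right)\mathrm{d}\theta,
\end{equation*}
and bound each inner Gauss-type integral by $\min(B,|\theta A_{i}|^{-1/2})$ via a standard Van der Corput estimate. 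After ordering $|A_{\pi(1)}|\leq\cdots\leq|A_{\pi(4)}|$ and subdividing the $\theta$-line at the break-points $\theta_{i}=(|A_{i}|B^{2})^{-1}$, piecewise integration yields $\sigma_{\infty}(Q;B)\ll B^{2}/(m(Q)\|Q\|)^{1/2}$, with any logarithmic losses absorbable into $\deltabad^{\varepsilon}$ or into the $L(1,\chi_{Q})$ factor. Multiplying the two bounds produces the first summand.

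The second summand comes directly from the error term $\mathcal{E}(Q;B)$ of \cite{BrowningM}. Tracking the minor-arc estimate and Weyl differencing through the coefficient dependences gives an upper bound of shape $\|Q\|^{11+\varepsilon}m(Q)^{-6}|\varDelta_{Q}|^{-1/2}B^{3/2+\varepsilon}$, which matches the lemma. The pairwise-gcd hypothesis (captured by the parameter $A$) plays exactly the role it played in Lemma~\ref{lem:interview'}: it ensures that the local factors $\sigma_{p}$ at primes $p\mid\deltabad$ are individually $O_{A}(1)$, so that the singular-series bound survives with only a $\deltabad^{\varepsilon}$ loss rather than positive powers of $p$ at each bad prime.

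The main obstacle, in my view, is bookkeeping rather than any essential new idea: the exponents $11$ and $6$ are not optimised but arise from propagating the error analysis of \cite{BrowningM} through the coefficient dependences under the sole structural hypothesis $\gcd(A_{i},A_{j})\mid A$. Once $\mathcal{E}(Q;B)$ is written in the required uniform form, the singular-series estimate from Lemma~\ref{lem:interview'} and the singular-integral estimate combine routinely to yield the stated inequality.
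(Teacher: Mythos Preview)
Your approach is essentially the paper's: invoke the uniform circle-method result of \cite{BrowningM}, bound the singular series via Lemma~\ref{lem:interview'}, bound the singular integral by $(m(Q)\|Q\|)^{-1/2}B^{2}$, and read off the error term. The exponents $11$ and $6$ are not re-derived; they are quoted directly from \cite[Prop.~2]{BrowningM}, so there is no ``bookkeeping obstacle''.

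There is one genuine technical point you have glossed over. Proposition~2 of \cite{BrowningM} does \emph{not} give an asymptotic for $N(Q;B)$ itself: it applies to a smoothly weighted count $N'_{w_1}(Q;B)$, for a specific weight $w_1$ supported on $\{1\leq x_1\leq 3,\ 0\leq x_i\leq x_1\}$. The singular-integral bound you want, $\sigma_{\infty,w_1}(Q)\ll(|A_1|\|Q\|)^{-1/2}$, is \cite[Eq.~(2.6)]{BrowningM} and depends on $|A_1|$, not on $m(Q)$. To recover the sharp-cutoff count $N(Q;B)$ (or rather $N'(Q;B)$, which suffices), the paper sums over all permutations $\sigma\in S_4$ of the coefficients and over dyadic scales $2^{-j}B$; this is what replaces $|A_1|$ by $m(Q)$ and absorbs the sum into the implied constant. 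Your direct attack on the unweighted singular integral via van der Corput would work in principle, but you would still need to explain why the circle-method error term survives the passage from smooth to sharp weights without degradation, and the standard route for that is exactly the permutation-plus-dyadic argument just described.
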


\begin{proof}
In fact we shall prove the same upper bound for the quantity $N'(Q;B)$, in which the stipulation that $\y$ is primitive is dropped. To do so, we shall actually apply an asymptotic formula for a smoothly weighted version of this counting function. 
Consider the non-negative smooth weight function 
$$w(x)=\begin{cases}
	\exp\left(-(1-x^2)^{-1}\right) & \text{ if } |x|<1\\ 0 & \text{ if } |x|\geqslant 1,
\end{cases}$$ and define  
$$\omega(x)=
\left(\int_{-\infty}^{\infty}w(x)\operatorname{d}x\right)^{-1}3\int_{x-\frac{2}{3}}^{x-\frac{1}{3}}w(3y)\operatorname{d}y.$$
This is a smooth function that takes values in $[0,1]$ and is supported on $(0,1)$.
Now for $\xx\in\BR^4$, we put
$$w_1(\xx)=w(x_1-2)\prod_{i=2}^4\omega\left(1-\frac{x_i}{x_1}\right).$$
Then  $w_1$ is supported on the set
$
\{\xx\in\BR^4:1\leqslant x_1\leqslant 3,~ 0\leqslant x_2,x_3,x_4\leqslant x_1\}.
$ 
We define the  weighted counting function
$$
N^\prime_{w_1}(Q;B)=
\sum_{\substack{\yy\in\BZ^4\\ Q(\yy)=0}}w_1\left(\frac{\yy}{B}\right).$$
Under the assumption that 
 $\varDelta_Q\neq \square$, it follows from \cite[Prop.~2]{BrowningM} that 
$$
N^\prime_{w_1}(Q;B)=\sigma_{\infty,w_1}(Q)\mathfrak{S}(Q)B^2+O_\varepsilon\left(\frac{\|Q\|^{11+\varepsilon}}{|A_1|^6|\varDelta_Q|^\frac{1}{2}}B^{\frac{3}{2}+\varepsilon}\right),
$$ 
for any $\ve>0$. Here,   
$\mathfrak{S}(Q)$ is given by \eqref{eq:interview} and 
 $\sigma_{\infty,w_1}(Q)\geqslant 0$ is the singular integral, which is shown to  satisfy
$\sigma_{\infty,w_1}(Q)\ll (|A_1|\|Q\|)^{-\frac{1}{2}}$
in \cite[Eq.~(2.6)]{BrowningM}. 

Finally, to obtain a  uniform bound  for the counting function $N^\prime(Q;B)$, we
follow the argument in 
\cite[p.~18]{BrowningM}. Let $Q^\sigma$ denote the quadratic forms obtained by  permuting 
the coefficients of $Q$, for any  $\sigma \in S_4$. On decomposing the interval $[-B,B]$ into dyadic intervals, we have
\begin{align*}
N^\prime(Q;B)
&\ll 1+\sum_{\sigma\in S_4}\sum_{j=0}^{\infty}N^\prime_{w_1}(Q^\sigma; 2^{-j}B)
\ll_\varepsilon \frac{\mathfrak{S}(Q)B^2}{(m(Q)\|Q\|)^\frac{1}{2}}+\frac{\|Q\|^{11+\varepsilon}}{m(Q)^6|\varDelta_Q|^\frac{1}{2}}B^{\frac{3}{2}+\varepsilon}.
 \end{align*}
An application of Lemma \ref{lem:interview'} now completes the proof.
\end{proof}

\section{Upper bounds for $\#\CM(X,Y)$ and related quantities}\label{s:upper}

Let $X,Y\geq 1$. 
The main goal of this section is to prove Theorem \ref{t:upper}, which is concerned with estimating 
$\#\CM(X,Y)$, where $\CM(X,Y)$ is defined in \eqref{eq:MX}.
 Along the way we shall  establish several auxiliary estimates that will have their own role to play. This section should be seen as an analogy to
\cite[\S~2]{duke}, the principal results of which are 
 \cite[Lemmas~2.1 and 2.7]{duke}. However, 
unlike the variety studied in \cite{duke}, we have less symmetry and fewer variables. This 
prohibits the ability to apply the arguments based on  Hua's inequality that were  used to great effect  in \cite{duke}.
Rather, our proof of Theorem \ref{t:upper} relies on a number of different upper bounds that will be played off against each other and which we proceed to record here.

We begin by dealing with  the  counting problem in which the condition  \eqref{eq:notsquare} 
fails.  
Thus let 
$$
\mathcal{M}^\square(X,Y)=\left\{(\bfx,\bfy)\in\ZZ_{\text{prim}}^{2}\times \ZZ^{4}:
\begin{array}{l}
\text{\eqref{eq:quadbundle} and \eqref{eq:square} hold}\\ 
|\xx|\leq X, ~|\yy|\leq Y
\end{array}\right\},
$$
for $X,Y\geq 1$.
We have  the following estimate.

\begin{lemma}\label{lem:neron}
Let $\ve>0$. Then $\#\mathcal{M}^\square(X,Y)\ll_\ve X^\ve Y^{2+\ve}$.
\end{lemma}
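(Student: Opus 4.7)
The plan is to decouple the conditions on $\bfx$ and $\bfy$ and bound each one independently. First I would count the admissible $\bfx \in \BZprim^2$ with $|\xx| \le X$ satisfying \eqref{eq:square}: such $\bfx$, together with a choice of $t \in \BZ_{\ge 0}$ with $t^{2}=\prod_{i=1}^{4}L_{i}(\xx)$, correspond up to sign to primitive integral points on the affine elliptic curve $E \subset \BP(2,1,1)$ cut out by $y^{2}=\prod_{i=1}^{4}L_{i}(x_{1},x_{2})$, which is the Jacobian of $Z$ highlighted in the introduction. Since $|t|\ll X^{2}$ whenever $|\xx|\le X$, every admissible $\xx$ yields a rational point on $E$ whose (weighted projective) Weil height is $O(X)$.

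Next I would invoke N\'eron's theorem on canonical heights, by which the number of rational points on the fixed elliptic curve $E$ of Weil height at most $H$ is $O_{E}((\log H)^{r/2})$ with $r=\rank E(\QQ)$. This yields
$$
\#\{\bfx\in\BZprim^{2}: |\xx|\le X,\ \text{\eqref{eq:square} holds}\}\ll_{E} (\log X)^{r/2}\ll_{\ve} X^{\ve}.
$$
The dependence of the implied constant on $E$ (equivalently on the $L_i$) is permissible under the conventions of Section~\ref{s:road}.

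For each such fixed $\xx$, the quadric $\sum_{i=1}^{4}L_{i}(\xx)y_{i}^{2}=0$ is a diagonal form in four variables with coefficients of size $O(X)$, so the number of $\yy\in\BZ^{4}$ with $|\yy|\le Y$ lying on it is bounded by $O_{\ve}(Y^{2+\ve})$. This is a standard estimate for integer zeros of quadratic forms in four variables; if one wishes to be fully explicit, Lemma~\ref{co:uniformquad} applied to $Q_{\xx}(\yy)$ (after stratifying $\yy$ by $\gcd(\yy)$ and rescaling, since primitivity is not imposed on $\yy$) suffices, absorbing all polynomial-in-$X$ factors into $X^{\ve}$. Multiplying the two estimates yields $\#\CM^{\square}(X,Y)\ll_{\ve}X^{\ve}Y^{2+\ve}$.

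The main obstacle, which is very mild, is step one: verifying that the N\'eron count indeed applies uniformly in $X$ with constants depending only on the fixed curve $E$. A conceptually equivalent route is to invoke Silverman's explicit upper bound on $E(\QQ)$ of bounded height in terms of the rank and conductor of $E$; either way the polynomial saving $X^{\ve}$ is all that is needed, which is far weaker than the sharpest known bounds.
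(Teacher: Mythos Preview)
Your overall strategy matches the paper's proof exactly: first bound the number of admissible $\xx$ by $O_\ve(X^\ve)$ using N\'eron heights on the genus one curve $y^2=\prod_i L_i(\xx)$ in $\BP(2,1,1)$, then for each such $\xx$ bound the number of $\yy$ on the quadric $Q_\xx=0$ by $O_\ve(Y^{2+\ve})$ uniformly in $\xx$, and multiply.

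There is, however, a genuine gap in your execution of the second step. You claim that Lemma~\ref{co:uniformquad} suffices ``absorbing all polynomial-in-$X$ factors into $X^\ve$''. This is not so: the error term in Lemma~\ref{co:uniformquad} is of the shape $\|Q_\xx\|^{11+\ve} m(Q_\xx)^{-6}|\varDelta(\xx)|^{-1/2} Y^{3/2+\ve}$, which for a single $\xx$ can be as large as $X^{11+\ve}Y^{3/2+\ve}$ (indeed, $m(Q_\xx)$ may equal $1$). Even after multiplying by only $O_\ve(X^\ve)$ choices of $\xx$, this is not dominated by $X^\ve Y^{2+\ve}$, since no relation of the form $X\ll Y^{O(\ve)}$ is available. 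The paper instead invokes Heath-Brown's uniform bound \cite[Thm.~2]{HB02}, which gives $N(Q_\xx;Y)\ll_\ve Y^{2+\ve}$ with an implied constant depending only on $\ve$ and not on $\xx$. This uniformity is precisely the point, and it is not a consequence of Lemma~\ref{co:uniformquad}. Once you replace your reference by \cite{HB02}, your argument is complete and identical to the paper's.
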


\begin{proof}
For a fixed choice of $\x$, the quadric in \eqref{eq:quadbundle} admits 
$O_\ve(Y^{2+\ve})$ solutions $\y\in \ZZ^4$ with $|\y|\leq Y$, thanks to 
work of Heath-Brown \cite[Thm.~2]{HB02}. It is important to emphasise here that the implied constant is only allowed to depend on $\ve>0$, and not on $\x$. 
It remains to estimate the number of zeros $(z,\x)\in \ZZ^3$ of the equation
$
z^{2}=L_{1}(\bfx)\cdots L_{4}(\bfx),
$
with $\gcd(x_1,x_2)=1$ and $|\x|\leq X$. This equation defines a genus $1$ curve in weighted projective space $\PP(2,1,1)$ and so it 
has $O_\ve(X^\ve)$ solutions by the theory of N\'eron heights. 
The statement of the lemma follows. 
\end{proof}

We now return to the task of estimating $\#\CM(X,Y)$.
For any $\xx\in\BZprim^2$ such that $L_1(\xx)\cdots L_4(\x)\neq 0$, we define 
\begin{equation}\label{eq:deltabadx}
	\deltabad(\xx)=\prod_{\substack{
	p^e\| L_1(\xx)\cdots L_4(\x)\\
	e\geqslant 2}}p^e.
\end{equation} 
For given $D\geq 1$, we let 
\begin{equation}\label{eq:M1X}
	\CM_1(X,Y;D)=\{(\xx,\yy)\in\CM(X,Y):\deltabad(\xx)\leqslant D\}
\end{equation}
and 
\begin{equation}\label{eq:M2X}
	\CM_2(X,Y;D)=\{(\xx,\yy)\in\CM(X,Y):\deltabad(\xx)> D\}.
\end{equation}
We shall prove the following upper bound for the size of  the first set.

\begin{proposition}\label{prop:M1Xsmall}
Let $\ve>0$ and assume that  $D\leqslant Y^{\frac{1}{20}}$. Then 
	$$\#\CM_1(X,Y;D)\ll_{\varepsilon} XY^2+X^2Y^\frac{4}{3}  +(D X)^\varepsilon \left(D^{\frac{3}{4}}(X^\frac{15}{8}Y^\frac{4}{3}+X^\frac{7}{8}Y^2)\right).$$
\end{proposition}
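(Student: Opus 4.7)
The plan is to apply Lemma \ref{thm:discreteanalysis} to the diagonal quadratic form $Q_\xx(\yy) = \sum_{i=1}^{4} L_i(\xx) y_i^2$ for each $\xx \in \BZprim^2$ with $|\xx|\asymp X$ that contributes to $\CM_1(X,Y;D)$. The hypothesis $\deltabad(\xx) \leq D \leq Y^{1/20}$ precisely matches the applicability of the lemma with $B=Y$, while condition \eqref{eq:notsquare} forces the Kronecker character $\chi_{Q_\xx}$ to be non-principal, so that $L(\sigma_Y,\chi_{Q_\xx}) \ll \log(XY)$, absorbable into a $(DX)^\varepsilon$ factor. Since $\|Q_\xx\|\asymp X$ and $|\varDelta_{Q_\xx}| = |L_1(\xx)\cdots L_4(\xx)|$, this yields
\[ N(Q_\xx;Y) \ll_\varepsilon (DX)^\varepsilon \deltabad(\xx)^{1/4}\,\frac{X^{5/2}}{|\varDelta_{Q_\xx}|^{5/8}}\Big(Y^{4/3}+\frac{Y^2}{|\varDelta_{Q_\xx}|^{1/4}}\Big). \]

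The next step is to sum this over $\xx$ using two dyadic decompositions. Since the $L_i$ are pairwise non-proportional, for primitive $\xx$ with $|\xx|\asymp X$ at most one $|L_i(\xx)|$ can have small dyadic size $m\in[1,X]$, and the remaining three are then forced to be $\asymp X$. Consequently $|\varDelta_{Q_\xx}|\asymp mX^3$, and a standard linear form count gives $\#\{\xx\in\BZprim^2 : |\xx|\asymp X,\ \min_i|L_i(\xx)|\asymp m\}\ll Xm$. Secondly, I decompose over squareful $d\asymp\deltabad(\xx)$ with $d\leq D$. The combinatorial ingredient is that for $d$ coprime to the finite bad set $\mathcal{P}$, primitivity forces each prime power divisor $p^{v_p(d)}\mid d$ to divide a unique $L_i(\xx)$, yielding the count $\#\{\xx\text{ prim}: |\xx|\asymp X,\ d\mid\varDelta_{Q_\xx}\}\ll d^\varepsilon(X^2/d+X)$. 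Primes in $\mathcal{P}$ contribute only a bounded multiplicative factor.

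The two main terms $X^2Y^{4/3}+XY^2$ appear from the $m$-dyadic analysis applied to the trivial case $\deltabad=1$, together with a direct treatment of the degenerate strip $m\asymp 1$ (which yields the $XY^2$ contribution via Heath-Brown's uniform quadric count). The $D^{3/4}$ correction is extracted via the key summation
\[ \sum_{\substack{d\leq D\\ d\text{ squareful}}} d^{1/4}\ll D^{3/4}, \]
a direct consequence of the fact that squareful integers up to $D$ number $O(D^{1/2})$. More precisely, one combines this with the bound $\sum_{\xx:\deltabad\leq D}\deltabad(\xx)^{1/4}\ll X^2+XD^{3/4}$ (obtained by inserting the count above and noting that $\sum d^{-3/4}$ over squareful $d$ converges) and the worst-case estimate $|\varDelta_{Q_\xx}|\geq cX^3$, which turns the factors $X^{5/2}/|\varDelta_{Q_\xx}|^{5/8}$ and $1/|\varDelta_{Q_\xx}|^{1/4}$ into the advertised $X^{15/8}$ and $X^{-3/4}$ respectively, producing the term $D^{3/4}(X^{15/8}Y^{4/3}+X^{7/8}Y^2)$.

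The main technical obstacle is the careful bookkeeping of the interaction between $\deltabad(\xx)$ and $|\varDelta_{Q_\xx}|$ across the dyadic ranges in $m$ and $d$: each of the two strategies (tight $m$-summation for the $D$-free contribution, worst-case $|\varDelta_{Q_\xx}|$ combined with the squareful sum for the $D$-dependent one) has a preferred regime, and they must be blended cleanly to produce a single uniform bound. A secondary issue is treating primes in the finite set $\mathcal{P}$ for which the "unique $L_i$" divisibility argument breaks down; since $|\mathcal{P}|=O(1)$, any complication from such primes contributes only an $O(1)$ multiplicative factor, easily subsumed in the $(DX)^\varepsilon$ error.
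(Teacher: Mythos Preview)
Your approach has a genuine gap at the very first step: the trivial bound $L(\sigma_Y,\chi_{Q_\xx})\ll\log(XY)$ cannot be ``absorbed into a $(DX)^\varepsilon$ factor''. The proposition demands the main terms $XY^2+X^2Y^{4/3}$ \emph{without} any $(XY)^\varepsilon$ or logarithmic loss (only the $D^{3/4}$ terms carry $(DX)^\varepsilon$), and this cleanliness is essential downstream---it feeds into Theorem~\ref{t:upper} and hence into Proposition~\ref{pro:L3}, where an extra $B^\varepsilon$ on $XY^2$ would destroy the $O(\eta B\log B)$ bound. With the pointwise estimate $L(\sigma_Y,\chi_{Q_\xx})\ll\log X$ you unavoidably pick up a $\log X$ on every term after summing over $\xx$, regardless of how carefully you organise the $m$-dyadic and squarefull-$d$ bookkeeping.

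What the paper actually does is exploit oscillation of $L(\sigma_Y,\chi_{Q_\xx})$ as $\xx$ varies. After decomposing $\varpi(\varDelta(\xx))=\sum_{t\mid\varDelta(\xx)}\mu^2(t)/t$ and passing to fixed divisibility constraints $d_i\mid L_i(\xx)$, one \emph{opens} the Dirichlet series, truncates it at $N_1=(S^3R)^{2\theta}$ via Burgess (Lemma~\ref{le:Burgess}), swaps the $n$- and $\xx$-sums, and applies the P\'olya--Vinogradov type estimate of Lemma~\ref{lem:PV-prep} to the character sum $\sum_{\xx}\bigl(\tfrac{\varDelta(\xx)}{n}\bigr)$. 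This yields the key bound \eqref{eq:NdiSRY}, whose first term $SR/(d_1\cdots d_4)^{1-\varepsilon}$ produces the clean $X^2Y^{4/3}+XY^2$, and whose error term $S^{3\theta}R^{1+\theta}$ with $\theta=\tfrac{7}{32}$ gives exactly $1+4\theta=\tfrac{15}{8}$ after summing $R\le S\le X$. The exponent $\tfrac{15}{8}$ is thus a Burgess artefact, not the outcome of any elementary count.

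There is also an arithmetic slip in your sketch: the ``worst case'' $|\varDelta_{Q_\xx}|\ge cX^3$ gives $X^{5/2}/|\varDelta_{Q_\xx}|^{5/8}\ll X^{5/8}$, not $X^{15/8}$; combined with your bound $\sum_\xx\deltabad(\xx)^{1/4}\ll X^2+XD^{3/4}$ this would produce an inadmissible $X^{21/8}Y^{4/3}$ term.
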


The main tool in the proof of this result is 
 \cite[Thm.~1.1]{DA}, which  
 is recorded in Lemma~\ref{thm:discreteanalysis} and which
  requires $\deltabad(\xx)$ to be sufficiently small. 
It is worth taking a moment to compare with the analogous situation in \cite{duke}. 
There, a version of Proposition \ref{prop:M1Xsmall} is  proved 
using \cite[Thm.~1.1]{DA},
in which there is no appearance of any power  of $D$.  In our situation, the factor $\deltabad(\x)^{1/4+\varepsilon}$ in  \cite[Thm.~1.1]{DA} 
becomes a major technical issue. 
At the expense of allowing an additional $(XY)^\varepsilon$-factor, we will show that the argument behind 
Proposition \ref{prop:M1Xsmall} can be adjusted  to prove the following result.

\begin{proposition}\label{prop:M2Xbig}
Let $\ve>0$. Then 
	$$\#\CM_2(X,Y;D)\ll_\varepsilon(XY)^\varepsilon\left( \frac{XY^2+X^\frac{5}{2}Y}{D^\frac{1}{16}}+X^\frac{3}{2}Y+X^\frac{1}{2}Y^{\frac{4}{3}}\right).$$
\end{proposition}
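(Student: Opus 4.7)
The plan is to repeat the analysis used to prove Proposition~\ref{prop:M1Xsmall}, but to replace the application of Lemma~\ref{thm:discreteanalysis} by Lemma~\ref{prop:N(B)discreteanalyis1}. The point is that the former contains the hazardous factor $\deltabad(\x)^{\frac{1}{4}+\varepsilon}$, which balloons when $\deltabad(\x)>D$ is large; the latter contains $\deltabad(\x)$ only inside a gcd with the value of the dual form $Q_\x^{*}(\cc)$, which behaves much better on average. The price to pay is an extra factor $(XY)^{\varepsilon}$ coming from the application.

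Thus we write $\#\CM_2(X,Y;D)=\sum_{\x}N(Q_\x;Y)$, where $\x$ runs over primitive vectors with $|\x|\asymp X$ and $\deltabad(\x)>D$, and $Q_\x(\y)=\sum_i L_i(\x)y_i^2$. Applying Lemma~\ref{prop:N(B)discreteanalyis1} with $B=Y$ produces a bound in terms of a double sum over $\x$ and $\cc\in\BZprim^4$ with $|\cc|\ll Y^{1/3}$ and $Q_\x^{*}(\cc)\neq 0$. Separating out the trivial contribution (the term $Y$ and the $\sum_{\cc}1$ term) gives a contribution which, after careful summation over $\x$ using sharp estimates for the number of $\x$ with some $L_i(\x)$ atypically small, produces the $D$-independent terms $X^{3/2}Y$ and $X^{1/2}Y^{4/3}$ in the target bound.

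The remaining and main contribution is the double sum
\[
Y^{1+\varepsilon}\sum_{\x}\sum_{\cc}\frac{\|Q_\x\|}{|\varDelta_{Q_\x}|^{\frac{1}{2}}}\cdot\frac{\gcd(\deltabad(\x)^3,Q_\x^{*}(\cc)^2)^{\frac{1}{6}}}{|\cc|}.
\]
Here I would swap the order of summation and, for each fixed $\cc$, bound the inner sum over $\x$ dyadically according to the size of $\deltabad(\x)\asymp D'\geq D$ and according to the size of the gcd. The key combinatorial input is the estimate $\#\{\x:|\x|\asymp X,\ \deltabad(\x)\asymp D'\}\ll X^{2+\varepsilon}/(D')^{1/8}$, which follows from the fact that squarefull integers up to $K$ number $O(K^{1/2})$, together with the coprimality of each pair $L_i,L_j$ modulo primes away from $\Delta$, which forces most of the squarefull part of $\prod_i L_i(\x)$ to be carried by a single $L_j(\x)$. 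The $D^{-1/16}$ exponent in the final bound then emerges as the geometric mean between this $(D')^{-1/8}$ saving and a second $(D')^{-1/8}$ saving obtained by combining the trivial bound $\gcd(\deltabad^3,Q_\x^{*}(\cc)^2)^{1/6}\leq\deltabad^{1/2}$ with the fact that for many $\cc$ the gcd is genuinely smaller, via an upper bound of Lemma~\ref{lem:PV-prep} type on the number of $\x$ for which $Q_\x^{*}(\cc)$ shares a given large divisor with $\deltabad(\x)$.

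The main obstacle is this last step: controlling the gcd factor uniformly in $\cc$ while retaining the savings from $\deltabad(\x)>D$. Because $Q_\x^{*}(\cc)$ is a cubic polynomial in $\x$ whose coefficients depend on $\cc$, one cannot simply treat it as a generic integer; one must handle the joint distribution of $\deltabad(\x)$ and $Q_\x^{*}(\cc)\bmod m$ as $\x$ varies, which is where the $1/16$ in the exponent (as opposed to the naive $1/8$) is lost in order to absorb the contribution from exceptional values of $\cc$.
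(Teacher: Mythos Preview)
Your starting point is right---replacing Lemma~\ref{thm:discreteanalysis} by Lemma~\ref{prop:N(B)discreteanalyis1} is exactly what the paper does---but from that point on your plan diverges from the paper's and leaves the essential step unresolved.

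The paper does \emph{not} attack the condition $\deltabad(\x)>D$ by counting $\x$ with $\deltabad(\x)\asymp D'$ and extracting a $(D')^{-1/8}$ saving. Instead it first converts the condition into a clean divisibility constraint: since the linear forms are pairwise coprime on primitive $\x$, $\deltabad(\x)>D$ forces some single $L_{i_0}(\x)$ to have square-full part exceeding $D^{1/8}$, so there exists $e\geq D^{1/16}$ with $e^2\mid L_{i_0}(\x)$. One then bounds, for each such $e$, the set $\CM_2(X,Y;e)=\{(\x,\y)\in\CM(X,Y):e^2\mid L_{i_0}(\x)\}$; the divisibility costs a clean factor $1/e^2$ in the count of $\x$, and summing $\sum_{e>D^{1/16}}e^{-2}\ll D^{-1/16}$ produces the exponent you are after. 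This is where the $1/16$ really comes from---not from any geometric-mean balancing.

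The second idea you are missing is the treatment of the gcd factor $\gcd(\deltabad(\x)^3,Q_\x^{*}(\cc)^2)^{1/6}$. The paper does not argue that ``for many $\cc$ the gcd is small''; it proves an algebraic factorisation: any $d\mid\gcd(\deltabad(\x)^3,Q_\x^{*}(\cc)^2)$ admits $d_\CD d=d_1\cdots d_4$ with $d_i\mid\gcd(L_i(\x)^3,c_i^6)$. (This uses the explicit diagonal shape of $Q_\x^{*}$.) One can then sum over $\cc$ with the constraints $d_i\mid c_i^6$, which reduces the problem to elementary counts and gives a bound of the form $T^4+TS^{3/2}$ in each dyadic box, independent of how large $\deltabad(\x)$ is. Your proposal to invoke a Lemma~\ref{lem:PV-prep}-type input here is off target: that lemma is a character-sum estimate over one variable and does not control the joint arithmetic of $\deltabad(\x)$ and $Q_\x^{*}(\cc)$.

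In short, the proposal correctly identifies the input lemma and the easy terms, but the two structural devices that make the proof work---the reduction to $e^2\mid L_{i_0}(\x)$ and the factorisation of the gcd via the diagonal dual form---are absent, and the paragraph you label ``main obstacle'' is precisely the place where the argument, as sketched, would not close.
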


Unfortunately, Propositions \ref{prop:M1Xsmall} and \ref{prop:M2Xbig} are not quite enough to provide a satisfactory estimate for 
$\#\CM(X,Y)$ when $X$ is a very small power of $Y$. However, in this particular case, we can invoke  the following  upper bound.

\begin{proposition}\label{prop:MXXsmall}
Let $\ve>0$. Then 
	$$\#\CM(X,Y)\ll_\varepsilon XY^2+X^{11} Y^{\frac{3}{2}+\varepsilon}.$$
\end{proposition}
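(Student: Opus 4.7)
The strategy is to fix a primitive $\x\in\BZprim^2$ with $|\x|\asymp X$ and \eqref{eq:notsquare} holding, and apply Lemma \ref{co:uniformquad} to the diagonal quadric $Q_\x(\y)=\sum_{i=1}^4 L_i(\x)y_i^2$. Since the $L_i$ are pairwise non-proportional with integer coefficients, $\gcd(L_i(\x),L_j(\x))$ divides the resultant $a_ib_j-a_jb_i$ and is hence bounded uniformly in $\x$, so Lemma \ref{co:uniformquad} applies with $A=O(1)$; moreover \eqref{eq:notsquare} rules out $\Delta_{Q_\x}=\square$ (and in particular $L_i(\x)\neq 0$ for each $i$). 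Summing
$$
N(Q_\x;Y)\ll_\varepsilon\frac{\deltabad(\x)^\varepsilon L(1,\chi_{Q_\x})}{(m(Q_\x)\|Q_\x\|)^{1/2}}Y^2+\frac{\|Q_\x\|^{11+\varepsilon}}{m(Q_\x)^6|\Delta_{Q_\x}|^{1/2}}Y^{3/2+\varepsilon}
$$
over $\x$ then yields an upper bound for $\#\CM(X,Y)$.

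Writing $m_1(\x)\leq\cdots\leq m_4(\x)$ for the sorted values of $|L_i(\x)|$, the crucial geometric input is that since any two of the $L_i$ are linearly independent with bounded resultant, the map $\x\mapsto(L_i(\x),L_j(\x))$ has inverse of bounded operator norm. Hence the condition $|\x|\asymp X$ forces $m_2(\x),m_3(\x),m_4(\x)\asymp X$, while $m_1(\x)$ is free to range over $[1,X]$. Moreover, the number of primitive $\x$ with $|\x|\asymp X$ and $m_1(\x)\asymp M_1$ is $\ll XM_1$, a standard strip bound. Substituting these into the second term of Lemma \ref{co:uniformquad}, the per-$\x$ integrand is $\asymp X^{19/2+\varepsilon}/M_1^{13/2}$, so a dyadic sum over $M_1\geq 1$ contributes
$$
Y^{3/2+\varepsilon}\sum_{M_1\text{ dyadic}}XM_1\cdot\frac{X^{19/2+\varepsilon}}{M_1^{13/2}}\ll X^{21/2+\varepsilon}Y^{3/2+\varepsilon}\ll X^{11}Y^{3/2+\varepsilon},
$$
by convergence of $\sum M_1^{-11/2}$. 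Similarly, using $L(1,\chi_{Q_\x})\ll\log X$ and $\deltabad(\x)^\varepsilon\ll X^\varepsilon$, the first term sums to $O(X^{1+\varepsilon}Y^2)$.

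The main technical obstacle is to promote the $X^{1+\varepsilon}Y^2$ contribution from the first term to a clean $XY^2$. This will be achieved by treating the factor $\deltabad(\x)^\varepsilon L(1,\chi_{Q_\x})$ on average: splitting $\x$ according to the dyadic size of $\deltabad(\x)$, one sees that the bulk set where $\deltabad(\x)$ is of moderate size contributes $O(XY^2\log X)$ via only the crude $L$-value bound, while the sparse exceptional set where $\deltabad(\x)$ is large can be handled by absorbing its contribution into the second term $X^{11}Y^{3/2+\varepsilon}$. The regime of very small $X$ (where the first-term loss cannot be absorbed into the second term) may additionally require a direct estimate, e.g.\ via Lemma \ref{thm:discreteanalysis} when applicable, but the structure above delivers the stated bound $\#\CM(X,Y)\ll_\varepsilon XY^2+X^{11}Y^{3/2+\varepsilon}$.
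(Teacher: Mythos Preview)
Your overall framework matches the paper's: write $\#\CM(X,Y)=\sum_{\x\in\CS(X)}N(Q_\x;Y)$, apply Lemma~\ref{co:uniformquad}, and sum the two resulting terms over~$\x$. Your treatment of the second term is correct and essentially identical to the paper's (dyadic decomposition in $m(Q_\x)$, summing $\sum_{M_1}XM_1\cdot M_1^{-13/2}$ to obtain $X^{21/2+\varepsilon}Y^{3/2+\varepsilon}\ll X^{11}Y^{3/2+\varepsilon}$).

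The gap is in the first term. The pointwise bounds $L(1,\chi_{Q_\x})\ll\log X$ and $\deltabad(\x)^\varepsilon\ll X^\varepsilon$ give only $X^{1+\varepsilon}Y^2$, and your proposed fixes do not remove this loss. Splitting according to the size of $\deltabad(\x)$ and applying the crude $L$-value bound on the bulk still leaves $XY^2\log X$; and absorbing a residual $\log X$ into $X^{11}Y^{3/2+\varepsilon}$ fails precisely in the regime $X\leq Y^{1/100}$ where this proposition is actually invoked (there $XY^2$ dominates $X^{11}Y^{3/2+\varepsilon}$ by a large power of $Y$). The clean $XY^2$ is essential downstream, since any extra logarithmic factor here propagates through Theorem~\ref{t:upper} into Proposition~\ref{pro:L3} and spoils the final asymptotic.

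What is missing is genuine cancellation in the average $\sum_{\x}L(1,\chi_{Q_\x})$. The paper obtains this via Lemma~\ref{lem:Wi1}: after dyadic localisation $|\x|\sim S$, $|L_{i_1}(\x)|\sim R$, one splits off the $\x$ with $\deltabad(\x)>(SR)^\delta$ using Lemma~\ref{lem:stain}, and on the remaining set one reuses the estimate \eqref{eq:NdiSRY} (established in the proof of Proposition~\ref{prop:M1Xsmall} by truncating the Dirichlet series for $L(1,\chi_{Q_\x})$ with Burgess and then averaging the short character sum $\sum_\x(\frac{\varDelta(\x)}{n})$ via Lemma~\ref{lem:PV-prep}). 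This yields $W_{i_1}(S,R)\ll SR+S^\varepsilon(\cdots)$ with a power saving in the secondary terms, and after summing over dyadic $R,S$ the main contribution is exactly $XY^2$ with no logarithm. Your proposal does not touch this mechanism; without it the argument does not close.
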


This result can be viewed as a weak version of Theorem \ref{t:upper}
and is based on  Lemma \ref{co:uniformquad}.
While the principal terms $XY^2$ agree, Proposition \ref{prop:MXXsmall}  is much worse when $X$ is large. 
Later in our argument it will  be useful to have a good upper bound for the quantity
\begin{equation}\label{eq:MX*}
	\cM^*(X,Y)=\left\{(\xx,\yy)\in\BZprim^2\times\ZZ^4:
	\text{\eqref{eq:quadbundle} holds}, ~ 
	|\xx|\leqslant X,~|\yy|\leqslant Y\right\},
\end{equation}
for any  $X,Y\geqslant 1$.  Note that $\#\CM(X,Y)\leq \#\CM^*(2X,Y)$, since we have merely dropped 
from $\CM(X,Y)$ the constraint that $\y$ be primitive, as well as the condition \eqref{eq:notsquare}. 
We can combine Propositions \ref{prop:M1Xsmall}--\ref{prop:MXXsmall} to deduce the following result.

\begin{corollary}\label{cor:M*}
Let $\ve>0$ and let $X,Y\geq 1$ such that $X\leq Y^{2/3}\log Y$. Then 
$$
\#\CM^*(X,Y)\ll_\ve Y^{2+\ve}+  XY^2+X^2Y^{4/3}.
$$
\end{corollary}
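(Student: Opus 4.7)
The plan is to bootstrap from Theorem~\ref{t:upper} by handling the two features in which $\CM^*(X,Y)$ differs from $\CM(X,Y)$: the possibility that $\y$ is non-primitive, and the possibility that the square condition \eqref{eq:square} holds in place of \eqref{eq:notsquare}.

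First I would partition $\CM^*(X,Y)$ into $\mathcal{M}^\square(X,Y)$ and its complement $\CM^{*,\mathrm{ns}}(X,Y)$ inside $\CM^*(X,Y)$. Lemma~\ref{lem:neron} immediately gives $\#\mathcal{M}^\square(X,Y) \ll_\ve X^\ve Y^{2+\ve}$, which is absorbed into $Y^{2+\ve}$ thanks to the hypothesis $X \le Y^{2/3}\log Y$. Inside $\CM^{*,\mathrm{ns}}(X,Y)$ the trivial case $\y = \mathbf{0}$ contributes at most $\#\{\x \in \ZZ^2_{\mathrm{prim}} : |\x| \le X\} = O(X^2)$, which is again $O(Y^{2+\ve})$ under the same hypothesis.

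For the remaining points $(\x,\y) \in \CM^{*,\mathrm{ns}}(X,Y)$ with $\y \neq \mathbf{0}$, I would factor $\y = k\y'$ uniquely, with $k = \gcd(y_1,\ldots,y_4) \ge 1$ and $\y' \in \ZZ^4_{\mathrm{prim}}$, so that $|\y'| \le Y/k$ and $1 \le k \le Y$. A further dyadic decomposition of the range of $|\x|$ into powers of two $X' \le X$ then yields
\[
\#\CM^{*,\mathrm{ns}}(X,Y) \ll X^2 + \sum_{1 \le k \le Y}\ \sum_{\substack{X' \le X\\ X'\text{ dyadic}}} \#\CM(X',Y/k).
\]

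Applying the bound $\#\CM(X',Y') \ll X'(Y')^2 + (X')^2 (Y')^{4/3}$ furnished by Theorem~\ref{t:upper} (keeping only the first entry of the minimum), the geometric sums $\sum_{X'} X' \ll X$ and $\sum_{X'} (X')^2 \ll X^2$ together with the convergent tails $\sum_{k\ge 1} k^{-2}$ and $\sum_{k\ge 1} k^{-4/3}$ collapse the double sum to $O(XY^2 + X^2 Y^{4/3})$. The main technical point is simply the convergence of $\sum_k k^{-4/3}$, i.e.\ that the exponent $4/3$ from Theorem~\ref{t:upper} exceeds $1$; the handful of terms with $Y/k = O(1)$, where Theorem~\ref{t:upper} is not directly applicable, are trivially controlled by $O(X^2)$ and absorbed. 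Combining all contributions yields the claimed bound.
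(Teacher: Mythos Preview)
Your argument is logically circular. At this point in the paper Theorem~\ref{t:upper} has not yet been proved; in fact Corollary~\ref{cor:M*} is one of the ingredients in its proof. The dependency chain is
\[
\text{Theorem~\ref{t:upper}} \;\Rightarrow\; \text{Prop.~\ref{prop:poisson}} \;\Rightarrow\; \text{Lemma~\ref{lem : upp}} \;\Rightarrow\; \text{Lemma~\ref{lem : sucmin}} \;\Rightarrow\; \text{Lemma~\ref{lem : gcd}} \;\Rightarrow\; \text{Cor.~\ref{cor:M*}},
\]
so invoking Theorem~\ref{t:upper} to bound $\#\CM(X',Y/k)$ is not permitted. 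This is precisely why the paper does not take the shortcut you propose but instead establishes the intermediate estimate~\eqref{eq:goat} directly from Propositions~\ref{prop:M1Xsmall}, \ref{prop:M2Xbig} and~\ref{prop:MXXsmall}, none of which depend on Corollary~\ref{cor:M*}.

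Apart from this circularity, your reduction steps (Lemma~\ref{lem:neron} for the square locus, sorting by $k=\gcd(\y)$, dyadic decomposition in $X'$, and noting that $\sum_k k^{-4/3}<\infty$) are all sound, and would indeed give a shorter proof \emph{if} Theorem~\ref{t:upper} were already available. To repair your argument you must replace the appeal to Theorem~\ref{t:upper} by a direct bound on $\#\CM(X',Y/k)$ built from Propositions~\ref{prop:M1Xsmall}--\ref{prop:MXXsmall}; this is exactly the content of the claim~\eqref{eq:goat} in the paper, whose verification (splitting into the ranges $X\le Y^{1/100}$ and $Y^{1/100}<X\le Y^{2/3}\log Y$, and optimising the parameter $D$) is where the real work lies.
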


\begin{proof}
We would like to insert the condition \eqref{eq:notsquare} into $\cM^*(X,Y)$. But the 
overall  contribution from those $(\x,\y)$ for which \eqref{eq:notsquare} fails is 
$O_\ve(Y^{2+\ve})$,  thanks
to  Lemma~\ref{lem:neron} and the fact that $X\leq Y^{2/3}\log Y\ll Y$. Sorting the remaining contribution according to the greatest common divisor of the coordinates of $\y$, and breaking the $\x$-sum into dyadic intervals,
it easily follows that 
\begin{equation}\label{eq:bread-b}
\#\CM^*(X,Y)\ll_\ve Y^{2+\ve} +\sum_{d\leq Y}\sum_{X_0 \nearrow X}\#\CM(X_0,Y/d),
\end{equation}
for any $\ve>0$. 
We claim that there exists $\ve>0$ such that 
\begin{equation}\label{eq:goat}
\#\CM(X,Y/d)\ll_\ve
\frac{XY^2+X^2Y^{4/3}}{d^{1+\ve}},
\end{equation}
if $X\leq Y^{2/3}\log Y$. Once inserted into \eqref{eq:bread-b}, this will clearly suffice to 
complete the proof of the corollary.

Now if $X\leq Y^{1/100}$, it is clear that \eqref{eq:goat} follows from Proposition \ref{prop:MXXsmall}. Thus we may proceed under the assumption that 
$Y^{1/100}\leq X$. Under the further assumption $X\leq Y^{2/3}\log Y$, we proceed by inspecting some of the terms in Proposition~\ref{prop:M2Xbig}. Note that 
$$
(XY/d)^\ve X^\frac{3}{2}\left(\frac{Y}{d}\right)\ll_\ve \frac{XY^{4/3+2\ve}}{d^{1+\ve}}\ll \frac{XY^2}{
d^{1+\ve}},
$$
on assuming that $\ve\leq 1/3$.
Moreover, 
$(XY/d)^\ve X^\frac{1}{2}(Y/d)^{\frac{4}{3}}\leq d^{-\frac{4}{3}}XY^2$ and 
$$
(XY/d)^\ve X^{\frac{5}{2}}\left(\frac{Y}{d}\right)\ll_\ve \frac{XY^{2+2\ve}}{d^{1+\ve}}.
$$
Turning to the terms in Proposition \ref{prop:M1Xsmall}, we have
\begin{align*}
(D X)^\varepsilon  \left(D^{\frac{3}{4}}(X^\frac{15}{8}(Y/d)^\frac{4}{3}+X^\frac{7}{8}(Y/d)^2)\right)
& \ll  
\frac{D^{\frac{3}{4}+\ve}}{d^{\frac{4}{3}}} \left( X (Y^{2/3})^{\frac{7}{8}+\ve} Y^{\frac{4}{3}} +X^{\frac{7}{8}+\ve}Y^2\right)\\
& \ll  \frac{XY^2
D^{\frac{3}{4}+\ve} }{d^{\frac{4}{3}}}\left( \frac{1}{Y^{\frac{1}{12}-\ve}} + \frac{1}{X^{\frac{1}{8}-\ve}}\right)\\
& \ll \frac{XY^2}{d^{\frac{4}{3}}}
\frac{D^{\frac{3}{4}+\ve} }{X^{\frac{1}{8}-2\ve}},
\end{align*}
if  $X\leq Y^{2/3}\log Y$. 

Hence, on combining 
Propositions \ref{prop:M1Xsmall} and 
\ref{prop:M2Xbig}, we deduce that 
 $$
 \#\CM(X,Y/d)\ll_\varepsilon  
 \frac{XY^2+X^2Y^{\frac{4}{3}}}{d^{1+\ve}}\left(1+
\min\left(
 \frac{D^{\frac{3}{4}+\ve} }{X^{\frac{1}{8}-2\ve}},
\frac{Y^{2\ve}}{D^{\frac{1}{16}}}
\right)\right),
$$
for any $D\leq Y^{1/20}$ and $Y^{1/100}<X\leq Y^{2/3}\log Y$.
But then 
\begin{align*}
\min\left(
 \frac{D^{\frac{3}{4}+\ve} }{X^{\frac{1}{8}-2\ve}},
\frac{Y^{2\ve}}{D^{\frac{1}{16}}}
\right)
&\leq 
\min\left(
 \frac{D^{\frac{3}{4}} }{Y^{\frac{1}{800}}},
\frac{1}{D^{\frac{1}{16}}}
\right)Y^{2\ve}.
\end{align*}
We can ensure that this is $O(1)$ by taking $D=Y^{1/800}$ and choosing a sufficiently small value of $\ve$.  This completes the proof of \eqref{eq:goat}.
\end{proof}

The results so far are efficient when $X$ is small compared to $Y$. The following result
is proved using completely different methods and allows us to handle the opposite case.

\begin{proposition}\label{prop:poisson}
We have
$$\#\CM(X,Y)
\ll	XY^{2}+\left(\frac{Y^{4}}{X^2} +X^{\frac{1}{4}}Y^{\frac{5}{2}}\right)\frac{\log Y}{\log\log Y}.
$$
\end{proposition}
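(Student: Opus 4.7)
The plan is to use the equivalent form $x_{1}Q_{1}(\y)=x_{2}Q_{2}(\y)$ of the defining equation, from \eqref{eq:xQ}, and to parametrise solutions by the $\y$-variable. For $\y\in\ZZ^{4}$ with $(Q_{1}(\y),Q_{2}(\y))\neq(0,0)$, the only primitive $\x\in\BZprim^{2}$ satisfying the equation are $\pm(Q_{2}(\y),Q_{1}(\y))/g(\y)$, where $g(\y)=\gcd(Q_{1}(\y),Q_{2}(\y))$; thus the constraint $X\leq|\x|<2X$ becomes $M(\y)/g(\y)\in[X,2X)$, with $M(\y)=\max(|Q_{1}(\y)|,|Q_{2}(\y)|)$. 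Because $Z(\RR)=\emptyset$, one has $M(\y)\asymp|\y|^{2}$ uniformly, so writing $d=g(\y)$ every contributing $\y$ satisfies $d\mid Q_{i}(\y)$ for $i=1,2$, together with $|\y|\asymp\sqrt{dX}$ and $d\ll Y^{2}/X$.

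Stratifying $\y$ by $d=g(\y)$ and then by its residue class modulo $d$ (which, after scaling, lies in the set $V_{d}^{\times}$), we obtain the upper bound
\[
\#\CM(X,Y)\ll\sum_{d\ll Y^{2}/X}\sum_{[\uu]\in V_{d}^{\times}}\#\bigl\{\y\in\Lambda_{[\uu],d}:|\y|\ll\sqrt{dX}\bigr\}.
\]
For each $\Lambda_{[\uu],d}$, set $Y'\asymp\sqrt{dX}$ and let $s_{1}\leq\cdots\leq s_{4}$ denote its successive minima, which satisfy $s_{1}\cdots s_{4}\asymp d^{3}$ by \eqref{eq:mink}. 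A standard Poisson summation estimate applied to a smooth approximation of the indicator of the ball (equivalently, Davenport's lemma on $\Lambda_{[\uu],d}$) yields
\[
\#\bigl\{\y\in\Lambda_{[\uu],d}:|\y|\ll Y'\bigr\}\ll 1+\frac{Y'}{s_{1}}+\frac{(Y')^{2}}{s_{1}s_{2}}+\frac{(Y')^{3}}{s_{1}s_{2}s_{3}}+\frac{(Y')^{4}}{s_{1}s_{2}s_{3}s_{4}}.
\]

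I would then treat each of the five summands separately. The last term is $\asymp(dX)^{2}/d^{3}=X^{2}/d$ per class, which, on summing over $[\uu]\in V_{d}^{\times}$ via Lemma~\ref{lem : loc''} and then over $d\ll Y^{2}/X$, produces the expected principal contribution $\asymp XY^{2}$. The constant term contributes $\sum_{d\ll Y^{2}/X}\#V_{d}^{\times}\ll(Y^{2}/X)^{2}\cdot(\log Y/\log\log Y)$, producing the summand $(Y^{4}/X^{2})(\log Y/\log\log Y)$, where the logarithmic factor arises from handling the divisor-type correction $d_{\Delta}^{\varepsilon}$ in Lemma~\ref{lem : loc''} via the maximal-order estimate $\tau(n)\ll\exp(O(\log n/\log\log n))$. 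The three intermediate summands are controlled by the same divisor-type bounds together with a more delicate analysis of the distribution of the successive minima, and they combine to give the remaining $X^{1/4}Y^{5/2}(\log Y/\log\log Y)$ contribution.

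The main obstacle is controlling the intermediate summands, for which uniform information about the successive minima $s_{i,[\uu],d}$ across classes $[\uu]\in V_{d}^{\times}$ is required. I would address this by observing that a class with atypically short first minimum $s_{1,[\uu],d}\ll d^{3/4-\eta}$ corresponds to a lopsided lattice; the existence of a short vector in $\Lambda_{[\uu],d}$ is equivalent to an integer zero of $(Q_{1},Q_{2})$ of small height in the residue class $[\uu]$ modulo $d$, and thus the number of such lopsided classes can be bounded in terms of the counting function $\#\CM^{*}(X',Y')$ estimated in Corollary~\ref{cor:M*}. Playing this bound off against the divisor-type estimate on $\#V_{d}^{\times}$ from Lemma~\ref{lem : loc''}, in the same spirit as earlier in Section~\ref{s:upper}, will produce the stated error contributions and complete the proof.
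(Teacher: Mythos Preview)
Your proposal is correct and follows essentially the same approach as the paper: the paper parametrises by $\y$ and $d=\gcd(Q_1(\y),Q_2(\y))$, decomposes into lattices $\Lambda_{[\uu],d}$ over $V_d^\times$, applies Schmidt's lattice point count (simplifying the intermediate terms to $Y'^3/s_1^3$), and controls the sum $\sum_{d,[\uu]}s_{1,[\uu],d}^{-3}$ by relating short lattice vectors to points of $\CM^*$ via Corollary~\ref{cor:M*}. These steps are packaged in the paper as Lemmas~\ref{lem : gcd}, \ref{lem : sucmin} and \ref{lem : upp}, which implement exactly the strategy you describe.
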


Once in possession of  Propositions \ref{prop:M1Xsmall}--\ref{prop:poisson}, we are now positioned to 
prove our main upper bound for $\#\CM(X,Y)$.

\begin{proof}[Proof of Theorem \ref{t:upper}]
Let us put $J=(\log Y)/(\log\log Y)$.  
If  $X>Y^{2/3}J^{4/7}$,  then the statement of the theorem  follows from Proposition \ref{prop:poisson}.
If  $X\leq Y^{2/3}J^{4/7}$, on the other hand, then it follows from taking $d=1$ in \eqref{eq:goat}.
\end{proof}

The rest of this section is concerned with proving Propositions \ref{prop:M1Xsmall}, \ref{prop:M2Xbig}, 
\ref{prop:MXXsmall}
 and \ref{prop:poisson}.
Propositions \ref{prop:M1Xsmall}
and \ref{prop:M2Xbig}  will be proved 
using Lemmas \ref{thm:discreteanalysis} and 
\ref{prop:N(B)discreteanalyis1} in Section~\ref{s:DA}. Proposition 
\ref{prop:MXXsmall} will be proved in Section \ref{s:monats} and uses the  circle method bound proved in Lemma~\ref{co:uniformquad}.
Finally, in Section \ref{s:poisson} we shall prove Proposition \ref{prop:poisson}.
During the course of this work, given $X_1,X_2\geq 1$, we shall often use the elementary inequality
\begin{equation}\label{eq:convex}
	X_1+X_2\geqslant \max(X_1,X_2)\geqslant X_1^\alpha X_2^{1-\alpha},
\end{equation}
for any  $0\leqslant \alpha\leqslant 1$.

\subsection{Proof of Propositions  \ref{prop:M1Xsmall}
and \ref{prop:M2Xbig}}\label{s:DA}

Let us begin by  fixing some notation. Let $L_1,\dots,L_4\in \ZZ[x_1,x_2]$ be the pairwise non-proportional linear forms featuring in \eqref{eq:quadbundle}. We define
\begin{equation}\label{eq:badprimes}
	\CD=\prod_{1\leq i<j\leq 4}\Res(L_{i},L_{j}),
\end{equation} 
where  $\Res(L_i,L_j)$ is the resultant of $L_i$ and $L_j$, which is  defined to be the
absolute value of the determinant of the $2\times2$ matrix formed from the coefficient vectors.
Then, in what follows,  we shall make frequent use of the fact that 
\begin{equation}\label{le:discform}
\gcd(L_i(\xx),L_j(\xx))\mid \Res(L_i,L_j), \quad \text{for any $\xx\in\BZprim^2$,}
\end{equation}
for each choice $i\neq j\in \{1,\dots,4\}$. We shall also exploit  the following  elementary 
observation.

\begin{lemma}\label{le:smallvalueLi}
Let $L_1,\dots,L_4\in \ZZ[x_1,x_2]$ be pairwise non-proportional linear forms, as above.
Let  $\xx\in\BR^2$ such that  $|\xx|\sim X$. 
Then there exist $c_0\in (0,1)$ and $d_0>0$, both depending  on the coefficients of $L_1,\dots,L_4$, such that
$$
|L_{i_0}(\xx)|\leqslant c_0 X \Longrightarrow  |L_i(\xx)|\geqslant d_0 X \text{ for every $i\neq i_0$}.
$$
\end{lemma}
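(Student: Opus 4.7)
The plan is to reduce to a compactness argument after normalizing by $X$. First I would observe that the statement is homogeneous in $\xx$: writing $\xx = X\uu$ with $|\uu|\asymp 1$, we have $L_i(\xx) = X\, L_i(\uu)$, so the implication is equivalent to showing that on the compact annulus
\[
K = \{\uu\in\RR^2 : |\uu|\asymp 1\},
\]
if $|L_{i_0}(\uu)|\leq c_0$ then $|L_i(\uu)|\geq d_0$ for every $i\neq i_0$.

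Next I would use the non-proportionality assumption. For any $i\neq i_0$, the kernels of $L_{i_0}$ and $L_i$ are distinct one-dimensional subspaces of $\RR^2$, so $\ker L_{i_0}\cap K$ is a finite union of points on which $L_i$ does not vanish. By continuity and compactness of $K\cap \ker L_{i_0}$, the function $|L_i|$ attains a positive minimum $2d_{0,i}>0$ on this set. Set $d_0 = \min_{i\neq i_0} d_{0,i}$; since there are only finitely many indices and only finitely many choices of $i_0$, this minimum is positive and depends only on the coefficients of $L_1,\dots,L_4$.

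For the choice of $c_0$, I would appeal to uniform continuity: the map $\uu \mapsto (L_1(\uu),\dots,L_4(\uu))$ is Lipschitz on the compact set $K$, so there exists $c_0\in(0,1)$ such that whenever $\uu\in K$ satisfies $|L_{i_0}(\uu)|\leq c_0$, one has $\operatorname{dist}(\uu, \ker L_{i_0}\cap K) \leq \delta$ for $\delta$ small enough to guarantee $|L_i(\uu)|\geq d_0$ for all $i\neq i_0$, by continuity of $L_i$. Rescaling back by $X$ yields the lemma.

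I do not anticipate any serious obstacle here: the statement is a soft compactness/continuity fact, with the only subtle point being to make $c_0$ and $d_0$ depend only on the coefficients of the $L_i$ and not on $\xx$ or $X$, which is automatic from the homogeneity reduction. One could equivalently give a quantitative proof by writing any $L_i$ as $L_i = \alpha_i L_{i_0} + \beta_i M$, where $M$ is a fixed linear form independent from $L_{i_0}$ and $\beta_i\neq 0$ by non-proportionality, and then bounding $|M(\xx)|$ from below using $|\xx|\sim X$ together with the smallness of $|L_{i_0}(\xx)|$; this makes $d_0$ and $c_0$ explicit in terms of the coefficients, if desired.
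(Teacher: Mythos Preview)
Your compactness argument is correct. One minor slip: $\ker L_{i_0}\cap K$ is a pair of line segments, not a finite set of points, since $K$ is an annulus and $\ker L_{i_0}$ a line through the origin; but this is irrelevant, as all you need is that this intersection is compact and disjoint from each $\ker L_i$, so $|L_i|$ attains a positive minimum there.

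The paper takes precisely the quantitative route you sketch at the end. Assuming without loss of generality that $|\xx|=|x_1|$, it observes that $L_{i_0}$ cannot be proportional to the coordinate form $x_1$ (else $|L_{i_0}(\xx)|\geq X$ would violate the hypothesis for any $c_0<1$), and then writes $x_1=\lambda_i L_{i_0}+\mu_i L_i$ with $\mu_i\neq 0$. The inequality $X\leq |x_1|\leq |\lambda_i|c_0 X+|\mu_i||L_i(\xx)|$ gives $|L_i(\xx)|\geq |\mu_i|^{-1}(1-|\lambda_i|c_0)X$, and one takes $c_0<\min_i 1/|\lambda_i|$. Your main compactness proof is softer and arguably cleaner; the paper's approach makes the constants explicit in terms of the coefficients, though this explicitness is never exploited later.
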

\begin{proof}
Assume without loss of generality that  $|\xx|=|x_1|$. 
Suppose that  $|L_1(\xx)|\leqslant c_1 X$, say, for a certain $0<c_1<1$, to be specified in due course. 
Then necessarily $L_1(\x)\neq \pm nx_1$ for any $n\in\BZ_{\neq 0}$.  Then, for  $i\in \{2,3,4\}$,
there exist $(\lambda_i,\mu_i)\in \QQ\times \QQ^\times$ such that 
$x_1=\lambda_iL_1(\x)+\mu_iL_i(\x)$. But then it follows that 
$
X\leq |x_1| \leq
|\lambda_i|c_1X+|\mu_i||L_i(\x)|,$
which implies that $|L_i(\x)|\geq |\mu_i|^{-1} (1-|\lambda_i|c_1)X$.
The lemma follows on demanding that  $c_1<1/|\lambda_i|$ for $2\leq i\leq 4$.
\end{proof}

\begin{proof}[Proof of  Proposition \ref{prop:M2Xbig}] 
We now estimate  the cardinality of $\CM_2(X,Y;D)$, as defined in \eqref{eq:M2X}. We can assume that $D\gg 1$ for an implied constant that depends  on $L_1,\dots,L_4$.
According to the estimate  following Lemma 2.7 in \cite{duke}, the  condition $\deltabad(\xx) >D$ implies that either $\gcd(L_i(\xx),L_j(\xx))> D^\frac{1}{24}$ for certain indices $i\neq j$, or else 
$L_i(\xx)^\square >D^\frac{1}{8}$. In view of \eqref{le:discform}, only the second possibility can happen if $D\gg 1$. Hence there exists $e\geqslant D^\frac{1}{16}$ and  $i_0\in \{1,\dots,4\}$ such that $e^2\mid L_{i_0}(\xx)$. Without loss of generality we study the contribution corresponding to $i_0=1$.
It therefore suffices to estimate the size of the  set
\begin{equation}\label{eq:M2Xbigi0}
	\CM_{2}(X,Y;e)=\{(\xx,\yy)\in\CM(X,Y):e^2\mid L_{1}(\xx)\},
\end{equation}
for each 
$e\geqslant D^\frac{1}{16}$.  On recalling \eqref{eq:notsquare} and \eqref{eq:MX}, we see that 
any $(\x,\y)\in \CM(X,Y)$ satisfies 
$L_1(\x)\neq 0$. Hence  $e\ll X^\frac{1}{2}$, since $e^2\mid L_{1}(\xx)$.

It will be convenient to define the set
\begin{equation}\label{eq:S(X)}
\CS(X)=\{\x=(x_1,x_2)\in \ZZp^2: |\x|\leq 2X, \text{ \eqref{eq:notsquare} holds}\}.
\end{equation}
Then  for any $\xx\in \CS(X)$, we may define the quaternary quadratic form
\begin{equation}\label{eq:Qx}
Q_{\xx}(\y)=L_1(\xx)y_1^2+\dots+L_4(\xx)y_4^2.
\end{equation}
Adopting the notation \eqref{eq:NQB} and applying 
 Lemma~\ref{prop:N(B)discreteanalyis1}, we therefore obtain
\begin{align*}
	\#\CM_{2}(X,Y;e)
	&\ll_\ve Y\sum_{\substack{\xx\in \CS(X)\\ e^2\mid L_{1}(\xx)}}1~+~Y^\varepsilon\sum_{\substack{\xx\in \CS(X) \\ e^2\mid L_{1}(\xx)}}\sum_{\substack{\cc\in\BZprim^4\\ |\cc|\ll Y^\frac{1}{3}}}1\\ &\quad +XY^{1+\varepsilon}\sum_{\substack{\xx\in \CS(X) \\ e^2\mid L_{1}(\xx)}}\sum_{\substack{\cc\in\BZprim^4\\ |\cc|\ll Y^\frac{1}{3},~Q^*_{\xx}(\cc)\neq 0}}\frac{\gcd(\deltabad(\xx)^3,Q^*_{\xx}(\cc)^2)^\frac{1}{6}}{|\cc|\prod_{i=1}^4|L_i(\xx)|^\frac{1}{2}}\\ &=W_0(X,Y;e)+W_1(X,Y;e)+W_2(X,Y;e),
\end{align*}
say. 
On appealing to  \cite[Lemma 2]{H-B}, it easily follows that  
\begin{equation}\label{eq:W1}
	W_0(X,Y;e)\ll \left(\frac{X^2}{e^2}+1\right)Y \quad \text{ and } \quad W_1(X,Y;e)\ll \left(\frac{X^2}{e^2}+1\right)Y^{\frac{4}{3}+\varepsilon}.
\end{equation}

From now on we focus on estimating $W_2(X,Y;e)$. Let us introduce various dyadic parameters $S,T$ corresponding respectively to  the ranges of $\xx$ and $\cc$. Then, in the light of  Lemma \ref{le:smallvalueLi}, we obtain
$$
W_2(X,Y;e)\ll XY^{1+\varepsilon}\sum_{\substack{S\nearrow  X\\T\nearrow  Y^\frac{1}{3}}} \frac{1}{S^\frac{3}{2}T}\sum_{i_1=1}^{4}W_{2,i_1}(S,T;e),
$$ 
where
$$
W_{2,i_1}(S,T;e)=\sum_{\substack{\xx\in \CS(X), ~e^2\mid L_{1}(\xx)\\ |\xx|\sim S\\
i\neq i_1 \Rightarrow 
|L_{i}(\xx)|\gg |L_{i_1}(\xx)|}}\sum_{\substack{\cc\in\BZprim^4\\ |\cc|\sim T,~ Q^*_{\xx}(\cc)\neq 0}}\frac{\gcd(\deltabad(\xx)^3,Q^*_{\xx}(\cc)^2)^\frac{1}{6}}{|L_{i_1}(\xx)|^\frac{1}{2}}.
$$
By further decomposing the size of  $L_{i_1}(\xx)$ into dyadic intervals using a dyadic parameter $R$, we obtain 
$$
W_{2,i_1}(S,T;e)\leq \sum_{R \nearrow S}\frac{1}{R^\frac{1}{2}}W_{2,i_1}(R,S,T;e),
$$ 
where 
\begin{equation}\label{eq:Wi11RSTe}
	W_{2,i_1}(R,S,T;e)=\sum_{\substack{\xx\in \CS(X), ~e^2\mid L_{1}(\xx)\\ |\xx|\sim S,~
	|L_{i_1}(\xx)|\sim R}}\sum_{\substack{\cc\in\BZprim^4\\ |\cc|\sim T,~Q^*_{\xx}(\cc)\neq 0}}\gcd(\deltabad(\xx)^3,Q^*_{\xx}(\cc)^2)^\frac{1}{6}.
\end{equation}

Recall the definition 
\eqref{eq:badprimes} of $\CD$ and 
let $d\mid \gcd(\deltabad(\xx)^3,Q^*_{\xx}(\cc)^2)$. We claim that there exists $d_\mathcal{D}=O(1)$ and 
$d_1,\dots,d_4\in \NN$ such that $d_\mathcal{D}d=d_1\cdots d_4$ and 
$d_i\mid \gcd(L_i(\xx)^3,c_i^6)$.
If $d\mid \deltabad(\xx)^3$ then $d\mid (L_1(\xx)\cdots L_4(\xx) )^3$. Let us put $d_i=\gcd(L_i(\x)^3,d)$ for $1\leq i\leq 4$. Then $\gcd(d_i,d_j)\mid \mathcal{D}$ for $i\neq j$. Hence 
$d_1\cdots d_4=d d_{\mathcal{D}}$, for a suitable positive integer $d_\mathcal{D}=O(1)$.
It remains to prove that $d_i\mid c_i^6$. Suppose 
that $p^\lambda\| d_i$ with $\lambda\in \NN$.
Let 
$p^\nu\| L_1(\xx)$. Then $\lambda\leqslant 3\nu$. On the other hand, we have 
$p^\lambda\mid Q^*_{\xx}(\cc)^2$, whence
\begin{align*}
	p^{\lceil\frac{\lambda}{2}\rceil}\mid Q^*_{\xx}(\cc)
	&=L_1(\xx)\sum_{\{j,k,l\}=\{2,3,4\}}L_j(\xx)L_k(\xx)c_l^2+L_2(\xx)L_3(\xx)L_4(\xx)c_1^2.
\end{align*} 
Under the condition \eqref{eq:notsquare}, it follows that 
$\prod_{i=1}^{4}L_i(\xx)\neq 0$, whence  
$p^{\min(\nu,\lceil\frac{\lambda}{2}\rceil)}\mid c_1^2$. If $\nu\leq \lceil\frac{\lambda}{2}\rceil$
then $p^{3\nu}\mid c_1^6$, whence $p^\lambda\mid c_1^6$.  If $\nu> \lceil\frac{\lambda}{2}\rceil$, then $p^{\lceil\frac{\lambda}{2}\rceil}\mid c_1^2$, whence $p^{\lambda}\mid c_1^4$. 
This completes the proof of the claim.

We now continue estimating $W_{2,i_1}(R,S,T;e)$ in \eqref{eq:Wi11RSTe}. 
Using the claim in the previous paragraph, we therefore obtain
\begin{align*}
	W_{2,i_1}(R,S,T;e)&=\sum_{\substack{\xx\in \CS(X), ~e^2\mid L_{1}(\xx)\\ |\xx|\sim S,~
	|L_{i_1}(\xx)|\sim R}}
	\sum_{\substack{d\mid \deltabad(\xx)^3}}d^\frac{1}{6}\sum_{\substack{\cc\in\BZprim^4,~|\cc|\sim T\\ d\mid Q^*_{\xx}(\cc)\neq 0}}1\\ 
	&\ll  \sum_{\substack{\xx\in \CS(X), ~e^2\mid L_{1}(\xx)\\ 
	|\xx|\sim S,~|L_{i_1}(\xx)|\sim R}}\sum_{\substack{d\mid \deltabad(\xx)^3
	}}
	{d}^\frac{1}{6}\sum_{\substack{d_\CD d=d_1\cdots d_4\\	d_i\mid L_i(\x)^3}}
	\sum_{\substack{\cc\in\BZprim^4,~|\cc|\sim T\\ d_i\mid c_i^6}}1. 
	\end{align*}
The condition $|\xx|\sim S$ implies each $d_i\ll S^3$, since $\prod_{i=1}^{4}L_i(\xx)\neq 0$.
Hence
\begin{align*}
	W_{2,i_1}(R,S,T;e)
	\ll \sum_{\substack{\xx\in \CS(X),~e^2\mid L_{1}(\xx)\\ |\xx|\sim S,~|L_{i_1}(\xx)|\sim R}}\sum_{\substack{d\mid \deltabad(\xx)^3}}d^\frac{1}{6}\sum_{\substack{d_\CD d=d_1\cdots d_4\\ \min_{1\leqslant i\leqslant 4}d_i\ll T^6\\
	\max_{1\leqslant i\leqslant 4}d_i\ll S^2}}\prod_{i=1}^{4}\left(\frac{T}{d_i^\frac{1}{6}}+1\right),
\end{align*}
where the condition $\min_{1\leqslant i\leqslant 4}d_i\ll T^6$ is deduced from the fact that at least one of the components of $\cc$ must be non-zero.
Therefore, for each factorisation $d_\CS d=d_1\cdots d_4$, we have 
\begin{align*}
	d^\frac{1}{6}\prod_{i=1}^{4}\left(\frac{T}{d_i^\frac{1}{6}}+1\right)
	&\ll T^4+T^3\sum_{j=1}^4 d_j^\frac{1}{6}+
	T^2	\hspace{-0.3cm}
	\sum_{k\neq l\in\{1,2,3,4\}}(d_kd_l)^\frac{1}{6}+T
	\hspace{-0.3cm}
	\sum_{\substack{k,l,m\in\{1,2,3,4\}\\ \text{distinct}}}(d_kd_ld_m)^\frac{1}{6}\\ &\ll T^4+T^3S^\frac{1}{2}+T^2S+TS^\frac{3}{2}.
\end{align*} 
This is $O(T^4+TS^\frac{3}{2})$.
(Here, the  absence of the constant term in the product is thanks to the fact that
$\min_{1\leqslant i\leqslant 4}d_i\ll T^6$.)

Returning  to $W_{2,i_1}(R,S,T;e)$ and using  \cite[Lemma 2]{H-B} once more, we arrive at the bound
\begin{equation}\label{eq:Wi1RSTebound}
	\begin{split}
	W_{2,i_1}(R,S,T;e)
	&\ll_\ve (T^4+TS^\frac{3}{2})
		 \sum_{\substack{\xx\in \CS(X),~e^2\mid L_{1}(\xx)\\ |\xx|\sim S,~|L_{i_1}(\xx)|\sim R}}\deltabad(\xx)^\ve\\
	&\ll_\ve S^\ve(T^4+TS^\frac{3}{2})\left(\frac{SR}{e^2}+1\right),
\end{split}
\end{equation}
for any $\ve>0$.  Summing over the dyadic parameter $R$, we therefore  obtain 
$$
W_{2,i_1}(S,T;e)=
\sum_{R \nearrow S}\frac{1}{R^\frac{1}{2}}W_{2,i_1}(R,S,T;e)\ll_\varepsilon S^{2\varepsilon  }(T^4+TS^\frac{3}{2})
\left(\frac{S^{\frac{3}{2}}}{e^2}+1\right)
.$$ 

Recall that our dyadic parameter $S$ goes to $X$, and $T$ goes to $Y^\frac{1}{3}$. Moreover, we have seen that $e\ll S^\frac{1}{2}$, if 
$e^2\mid L_{1}(\xx)$.
Thus, on  summing over $S,T$,  we obtain
$$\sum_{S,T}\frac{1}{S^\frac{3}{2}T} 
\frac{S^{\frac{3}{2}+2\varepsilon  }}{e^2}(T^4+TS^\frac{3}{2})
\ll
(XY)^{2\varepsilon} \left(\frac{Y+X^\frac{3}{2}}{e^2}\right)
$$
and 
$$
\sum_{S,T}\frac{1}{S^\frac{3}{2}T} S^{2\varepsilon} (T^4+TS^\frac{3}{2})\ll X^{2\varepsilon}\sum_{T}\left(\frac{T^3}{e^3}+1\right)\ll (XY)^{2\varepsilon}\left(\frac{Y}{e^3}+1\right).
$$
On redefining the choice of $\ve$, we finally obtain
$$
W_2(X,Y;e)\ll_\varepsilon  (XY)^\varepsilon\left(\frac{XY^2+X^\frac{5}{2}Y}{e^2}+XY\right).
$$
Taking $\alpha=\frac{1}{3}$ in  \eqref{eq:convex}, it follows that 
$
X^2 Y^\frac{4}{3}\ll XY^2+X^\frac{5}{2}Y$. Hence, on combining our bound for 
$W_2(X,Y;e)$ with  the contributions \eqref{eq:W1}, we are  led to the bound
$$
\#\CM_{2}(X,Y;e)\ll_\varepsilon (XY)^\varepsilon\left(\frac{XY^2+X^\frac{5}{2}Y}{e^2}+XY+Y^\frac{4}{3}\right),
$$
for the cardinality of   \eqref{eq:M2Xbigi0}. Finally, it follows that 
\begin{align*}
\#\CM_2(X,Y;D)&\ll 
\hspace{-0.3cm}
\sum_{D^\frac{1}{16}<e\ll\sqrt{X}}
\hspace{-0.35cm}
\#\CM_{2}(X,Y;e)
\ll_\varepsilon(XY)^\varepsilon\left( \frac{XY^2+X^\frac{5}{2}Y}{D^\frac{1}{16}}+X^\frac{3}{2}Y+X^\frac{1}{2}Y^{\frac{4}{3}}\right).
\end{align*}
This completes the proof of Proposition 
\ref{prop:M2Xbig}.
\end{proof}

Later in our argument we will also need to deal with summations over $\xx$ in which one of  the linear forms $L_1,\dots,L_4$  takes a particularly small  value. We take this opportunity to prove the  following analogue of \cite[Lemma 2.1]{duke}, whose proof is a minor modification of the one that we use to prove Proposition \ref{prop:M2Xbig}. 

\begin{lemma}\label{lem:auxbounddelta}
Let $\ve>0$.
Define $\CM_{i_1,\delta}(X,Y)=\{(\xx,\yy)\in \CM(X,Y) : |L_{i_1}(\xx)|\leq |\xx|^\delta\}$,
for any  $\delta\in (0,1)$ and $i_1\in \{1,\dots,4\}$, where
 $\CM(X,Y)$ is given by \eqref{eq:MX}. Then 
$$
\#\CM_{i_1,\delta}(X,Y)\ll_{\varepsilon,\delta} (XY)^\varepsilon(X^{\frac{1}{2}(1+\delta)} Y^2+X^{\frac{1}{2}(4+\delta)}Y).
$$
\end{lemma}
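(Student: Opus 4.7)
The plan is to mirror the proof of Proposition~\ref{prop:M2Xbig}, simply replacing the divisibility constraint $e^2\mid L_1(\xx)$ by the smallness constraint $|L_{i_1}(\xx)|\leq |\xx|^\delta$. Beginning from the diagonal quadric $Q_{\xx}$ in \eqref{eq:Qx}, I would apply Lemma~\ref{prop:N(B)discreteanalyis1} to bound $N(Q_{\xx};Y)$ and sum over $\xx\in \CS(X)$ with $|L_{i_1}(\xx)|\leq X^\delta$, producing a decomposition $W_0(X,Y;\delta)+W_1(X,Y;\delta)+W_2(X,Y;\delta)$ analogous to the one in that proof.

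For the first two pieces, the elementary lattice count $\#\{\xx\in\BZprim^2:|\xx|\sim X,\,|L_{i_1}(\xx)|\leq X^\delta\}\ll X^{1+\delta}$ (obtained by fixing $L_{i_1}(\xx)$ to one of $O(X^\delta)$ integer values and observing that each resulting line contributes $O(X)$ primitive points) immediately yields $W_0\ll X^{1+\delta}Y$ and $W_1\ll X^{1+\delta}Y^{4/3+\varepsilon}$. A straightforward application of \eqref{eq:convex} then shows that both are absorbed into $(XY)^{\varepsilon}(X^{(1+\delta)/2}Y^2+X^{(4+\delta)/2}Y)$ for $\delta<1$.

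The substantive work is the estimate of $W_2$, where the denominator $\prod_{i=1}^4|L_i(\xx)|^{1/2}$ is singular in $L_{i_1}(\xx)$. Since $\delta<1$, Lemma~\ref{le:smallvalueLi} forces $|L_i(\xx)|\gg X$ for every $i\neq i_1$ once $X$ is sufficiently large, so $\prod_i|L_i(\xx)|^{1/2}\gg X^{3/2}|L_{i_1}(\xx)|^{1/2}$. I would then introduce dyadic ranges $R\nearrow X^\delta$ for $|L_{i_1}(\xx)|$ and $T\nearrow Y^{1/3}$ for $|\cc|$, re-run the factorisation $d\mid \gcd(\deltabad(\xx)^3,Q^*_{\xx}(\cc)^2)$ from Proposition~\ref{prop:M2Xbig} to bound the inner $\cc$-sum by $O_\varepsilon((XY)^\varepsilon(T^4+TX^{3/2}))$ uniformly in $\xx$, and replace the $SR/e^2+1$ count used there by the lattice-strip count $\#\{\xx\in\BZprim^2:|\xx|\sim X,\,|L_{i_1}(\xx)|\sim R\}\ll RX$.

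The delicate bookkeeping to verify is the $R$-summation: after weighting by $R^{-1/2}$ (arising from $|L_{i_1}(\xx)|^{-1/2}$), the sum over $R\nearrow X^\delta$ contributes precisely $X^{\delta/2}$, which is what makes the two exponents $(1+\delta)/2$ and $(4+\delta)/2$ appear. Dividing by $X^{3/2}$ from the remaining linear forms and summing over $T\nearrow Y^{1/3}$ then yields $W_2\ll_\varepsilon (XY)^{\varepsilon'}(X^{(1+\delta)/2}Y^2+X^{(4+\delta)/2}Y)$; this is the only place one must be careful with the implicit log factors from the dyadic sums, which are absorbed into the $(XY)^\varepsilon$. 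Combined with the estimates for $W_0$ and $W_1$, this would complete the proof.
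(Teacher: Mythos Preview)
Your proposal is correct and follows essentially the same approach as the paper's proof. The paper is simply terser: it directly invokes the already-established bound \eqref{eq:Wi1RSTebound} with $S=X$ and $e=1$ (so that $SR/e^2+1\ll XR$, which is your lattice-strip count), and then performs exactly the dyadic summations over $R\nearrow X^\delta$ and $T\nearrow Y^{1/3}$ that you outline, absorbing the $X^{1+\delta}Y^{4/3}$ term via \eqref{eq:convex} with $\alpha=\tfrac{1}{3}$.
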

\begin{proof}
Assume without loss of generality that $i_1=1$. Then Lemma \ref{le:smallvalueLi} implies that $|L_i(\xx)|\gg_\delta X$ for any 
	$i\in \{2,3,4\}$.
We maintain the notation from  the proof of Proposition \ref{prop:M2Xbig}. On recalling \eqref{eq:Wi11RSTe} and \eqref{eq:Wi1RSTebound}, we introduce an extra dyadic parameter $R$ for the range of $L_{1}(\xx)$ and similarly obtain
	\begin{align*}
	\#\CM_{1,\delta}(X,Y)
& \ll_{\varepsilon,\delta} X^{1+\delta}Y^{\frac{4}{3}+\varepsilon} +\frac{Y^{1+\varepsilon}}{X^\frac{1}{2}}\sum_{\substack{R \nearrow X^\delta\\ T\nearrow Y^\frac{1}{3}}}\frac{1}{R^\frac{1}{2}T}W_{2,1}(R,X,T;1)\\ &\ll_{\varepsilon,\delta} X^{1+\delta}Y^{\frac{4}{3}+\varepsilon} + \frac{X^{1+\varepsilon} Y^{1+\varepsilon}}{X^\frac{1}{2}}\sum_{\substack{R \nearrow X^\delta\\ T\nearrow Y^\frac{1}{3}}}\frac{R(T^4+TX^\frac{3}{2})}{R^\frac{1}{2}T}\\
	&\ll_{\varepsilon,\delta} (XY)^\varepsilon(X^{1+\delta}Y^\frac{4}{3}+X^{\frac{1}{2}(1+\delta)} Y^2+X^{\frac{1}{2}(4+\delta)}Y).
\end{align*} 
Using \eqref{eq:convex} with $\alpha=\frac{1}{3}$, we have $X^{\frac{1}{2}(1+\delta)} Y^2+X^{\frac{1}{2}(4+\delta)}Y\geqslant X^{1+\delta}Y^\frac{4}{3}.$
\end{proof}

It is now time to analyse the size of the set $\CM_1(X,Y;D)$ that was defined in \eqref{eq:M1X}.

\begin{proof}[Proof of Proposition \ref{prop:M1Xsmall}]
We note that 
$$
\#\CM_1(X,Y;D)=\sum_{\xx\in \CS(X), ~\deltabad(\xx)\leqslant D}N(Q_\x;Y),
$$
where $\CS(X)$ is
given by \eqref{eq:S(X)}, 
 $Q_\x$   by \eqref{eq:Qx} and $N(Q_\x;Y)$ is the counting function defined in 
\eqref{eq:NQB}.
Let us put 
\begin{equation}\label{eq:normLideltaLi}
\|Q_\x\|=\max_{1\leqslant i\leqslant 4}|L_i(\xx)|,\quad \varDelta(\xx)=\prod_{i=1}^{4}L_i(\xx).
\end{equation}
Under the assumption
that  $\deltabad(\xx)\leqslant Y^\frac{1}{20}$,  it  follows from Lemma \ref{thm:discreteanalysis} that
	$$
	N(Q_\x;Y)
	\ll_\varepsilon\varpi\left(\Delta(\x)\right)\deltabad(\xx)^{\frac{1}{4}+\varepsilon}\left(Y^\frac{4}{3}N_1(\xx,Y)+Y^2N_2(\xx,Y)\right),
	$$ 
	for any $\ve>0$, 
	where the arithmetic function $\varpi$ is defined by \eqref{eq:pim}, and
	$$
	N_1(\xx,Y)=\frac{\|Q_\xx\|^\frac{5}{2}}{|\varDelta(\xx)|^\frac{5}{8}}L(\sigma_Y,\chi_{Q_\xx}) \quad \text{ and } \quad 
	N_2(\xx,Y)=\frac{\|Q_\xx\|^\frac{5}{2}}{|\varDelta(\xx)|^\frac{7}{8}}L(\sigma_Y,\chi_{Q_\xx}).
	$$
We can alternatively write $\varpi(m)=\sum_{t\mid m}\frac{\mu^2(t)}{t}.$ Under the assumption $D\leqslant Y^\frac{1}{20}$, it therefore follows that 
\begin{align*}
\#\CM_1(X,Y;D)	 &\ll_\varepsilon
	\sum_{\substack{r\in\BN\\ r~\square\text{-full}\\
	r\leqslant D}}r^{\frac{1}{4}+\varepsilon}\sum_{\substack{\xx\in \CS(X)\\\deltabad(\xx)=r}}\left(\sum_{\substack{t\mid \varDelta(\x)}}\frac{\mu^2(t)}{t}\right) \left(Y^\frac{4}{3}N_1(\xx,Y)+Y^2N_2(\xx,Y)\right).  
\end{align*}
Hence
\begin{equation}\label{eq:pink-bag}
\#\CM_1(X,Y;D)\ll	\sum_{\substack{r,t\in\BN\\ t~\square\text{-free},~t\ll X^4\\ r~\square\text{-full},~r\leqslant D}}\frac{r^{\frac{1}{4}+\varepsilon}}{t} 
	\sum_{\substack{\dd\in\BN^4\\\ d_1\cdots d_4=[r,t]}}N_{\dd}(X,Y),
\end{equation}
where
$$
N_{\dd}(X,Y)=\sum_{\substack{\xx\in \CS(X)\\ d_i\mid L_i(\xx)}}\left(Y^\frac{4}{3}N_1(\xx,Y)+Y^2N_2(\xx,Y)\right).
$$

We recall that 
$$
L(\sigma_Y,\chi_{Q_\xx})=\sum_{n=1}^{\infty}\frac{\chi_{Q_\xx}(n)}{n^{\sigma_Y}},
$$
where $\sigma_Y=1+\frac{1}{\log Y}>1$. We have $L(\sigma_Y,\chi_{Q_\xx})\geq 0$.
By Lemma \ref{le:smallvalueLi} we can assume without loss of generality that   $|L_i(\x)|\gg |L_1(\x)|$
for $i\geq 2$. On  introducing dyadic decomposition parameters $S$  for $|\xx|$ and  $R$ for  $|L_{1}(\xx)|$, and on  dropping the primitivity condition on $\xx$, it follows that 
\begin{equation}\label{eq:varSR}
N_{\dd}(X,Y) \ll
\sum_{S\nearrow X}\sum_{R \nearrow S}\left(\frac{S^\frac{5}{8}Y^\frac{4}{3}}{R^\frac{5}{8}}+\frac{Y^2}{S^\frac{1}{8}R^\frac{7}{8}}\right)N_{\dd}(S,R,Y),
\end{equation}
where 
\begin{equation}\label{eq:Nprimei1dd}
\begin{split}
	N_{\dd}(S,R,Y)
	&=\sum_{\substack{\xx\in\BZ^2\\ 
	|\xx|\sim S,~|L_{1}(\xx)|\sim R\\\prod_{i=1}^{4}L_i(\xx)\neq\square\\ d_{i}\mid L_i(\xx)}}
	\sum_{n=1}^{\infty}\frac{\chi_{Q_\xx}(n)}{n^{\sigma_Y}}.
	\end{split}
\end{equation}
In particular, we may assume that $d_1\cdots d_4\ll S^4$.
We have  $L_i(\xx)=a_ix_1+b_ix_2$ for $1\leqslant i\leqslant 4$, 
where  $\gcd(a_i,b_i)=1$.
But then there  exists  a matrix
$\mathbf{M}\in \mathrm{SL}_2(\ZZ)$ with first row equal to $(a_1,b_1)$. Making the change of variables 
$\y=\mathbf{M}\x$, we let $J_i(\yy)=L_i(\mathbf{M}^{-1}\yy)$, for $1\leq i\leq 4$.
We can thus  rewrite \eqref{eq:Nprimei1dd} as
$$N_{\dd}(S,R,Y)=
\sum_{\substack{\yy\in\BZ^2\\ |y_1|\sim R, ~|\mathbf{M}^{-1}\y|\sim S\\
\prod_{i=1}^{4}J_i(\yy)\neq\square\\ d_{i}\mid J_i(\yy)}}\sum_{n=1}^{\infty}\frac{\chi_{\yy}(n)}{n^{\sigma_Y}},
$$
where
$
\chi_{\yy}(\cdot)=(\frac{J_1(\y)\cdots J_4(\y)}{\cdot}).
$

Let  $\theta>\frac{3}{16}$. We introduce the truncation parameter \begin{equation}\label{eq:N1}
	N_1=(S^3R)^{2\theta}.
\end{equation} Then since $\prod_{i=1}^{4}J_i(\yy)\neq\square$, the character $\chi_{\yy}$ is a non-principal Dirichlet character  of modulus at most $
\prod_{i=1}^{4}|J_i(\yy)|\ll \prod_{i=1}^{4}|L_i(\xx)|\ll S^3R.$
The Burgess bound in  Lemma \ref{le:Burgess} implies that 
$$
\sum_{n>N_1}\frac{\chi_{\yy}(n)}{n^{\sigma_Y}}\ll_\theta N_1^{-\frac{1}{2}}(S^3R)^\theta
\ll 1.
$$
Thus it follows that 
\begin{equation}\label{eq:largeN1}
\sum_{\substack{\yy\in\BZ^2\\ |y_1|\sim R, ~|\mathbf{M}^{-1}\y|\sim S\\
\prod_{i=1}^{4}J_i(\yy)\neq\square\\ d_{i}\mid J_i(\yy)}}
\sum_{n>N_1}\frac{\chi_{\yy}(n)}{n^{\sigma_Y}}\ll_\theta \frac{SR}{d_1\cdots d_4}+S.
\end{equation}

It remains to study the contribution
$$
\sum_{\substack{\yy\in\BZ^2\\ |y_1|\sim R, ~|\mathbf{M}^{-1}\y|\sim S\\
\prod_{i=1}^{4}J_i(\yy)\neq\square\\ d_{i}\mid J_i(\yy)}}
\sum_{n\leq N_1}\frac{\chi_{\yy}(n)}{n^{\sigma_Y}}.
$$
We observe that $\gcd(d_i,d_j)\mid \CD$ for each  $1\leqslant i< j\leqslant 4$.
We need to separate out the contribution from those $\y$ for which  $\prod_{i=1}^{4}J_i(\yy)=\square$. For such $\y$, the $n$-sum contributes $O_\ve(N_1^\ve).$
Moreover, there are $O(Y^\ve)$ primitive vectors $|\y|\leq Y$ for which 
$\prod_{i=1}^{4}J_i(\yy)=\square$, 
by the proof of Lemma \ref{lem:neron}, 
leading to an overall contribution 
$
O_\ve (S^{1+\ve}),
$
on extracting possible common divisors from $y_1$ and $y_2$.
Hence
\begin{equation}\label{eq:white-bag}
\sum_{\substack{\yy\in\BZ^2\\ |y_1|\sim R, ~|\mathbf{M}^{-1}\y|\sim S\\
\prod_{i=1}^{4}J_i(\yy)\neq\square\\ d_{i}\mid J_i(\yy)}}
\sum_{n\leq N_1}\frac{\chi_{\yy}(n)}{n^{\sigma_Y}}
= O_\ve(S^{1+\ve} N_1^\ve)+
\sum_{n\leq N_1}\frac{1}{n^{\sigma_Y}}T_{\dd}(n;S,R),
\end{equation}
where
$$
T_{\dd}(n;S,R)=\sum_{
\substack{
\yy\in\BZ^2\\ 
|y_1|\sim R, ~|\mathbf{M}^{-1}\y|\sim S\\
d_{i}\mid J_i(\yy)}}	\left(\frac{J_1(\yy)\cdots J_4(\yy)}{n}\right).
$$

Note that once $y_1$ is fixed, there exists an interval $K_{y_1}$ of length $O( S)$, such that 
$|\mathbf{M}^{-1}\y|\sim S$ if and only if $y_2\in K_{y_1}$.
For fixed  $y_1$, we are now in a position to apply Lemma \ref{lem:PV-prep} to estimate the character sum involving $y_2$. 
Note that there exists a factorisation 
 $n=n_0n_1^2$, with $n_0$ square-free. 
 Applying Lemma \ref{lem:PV-prep} with $A=\CD$, $q=1$ and $k=4$, we therefore obtain
\begin{align*}
 T_{\dd}(n;S,R)
 &\ll_\ve
 \sum_{|y_1|\sim R}
\left(\frac{S}{n_0^{\frac{1}{2}}d_1\cdots d_4} +n_0^{\frac{1}{2}}\log(d_1\cdots d_4)\right) n_0^{\ve/2} \gcd(y_1,n_0d_1\cdots d_4)\\
&\ll_\ve (d_1\cdots d_4n_0)^\ve
\left(\frac{RS}{n_0^{\frac{1}{2}}d_1\cdots d_4} +n_0^{\frac{1}{2}}R\right), 
\end{align*}
for any $\ve>0$,
since 
$
\sum_{y\leq Y}\gcd(y,a)\leq \sum_{d\mid a} d\#\{y\leq Y: d\mid y\}\leq \tau(a)Y,
$
for any  $a\in \NN$.
Since $d_1\cdots d_4\ll S^4$, 
it now follows that  
\begin{align*}
\sum_{n\leq N_1}\frac{1}{n^{\sigma_Y}}T_{\dd}(n;S,R)
&\ll_\ve (d_1\cdots d_4)^\ve\sum_{n_0\leq N_1} \frac{1}{n_0} \left(
\frac{SR}{n_0^{\frac{1}{2}-\varepsilon}d_1\cdots d_4}+Rn_0^{\frac{1}{2}+\varepsilon}
\right)\\
&\ll_\ve \frac{SR}{(d_1\cdots d_4)^{1-\ve}}+R N_1^{\frac{1}{2}+\varepsilon}S^{4\ve}.
\end{align*}

We may now record our final estimate for \eqref{eq:Nprimei1dd}.
Combining the previous line with \eqref{eq:largeN1} and \eqref{eq:white-bag}, 
and recalling 
the choice of $N_1$ made in \eqref{eq:N1}, 
we therefore deduce that 
\begin{align*}
N_{\dd}(S,R,Y)
&\ll_{\ve, \theta}
 \frac{SR}{(d_1\cdots d_4)^{1-\ve}}+S+
S^{1+\ve} N_1^\ve+
R N_1^{\frac{1}{2}+\varepsilon}S^{4\ve} \\
&\ll_{\ve, \theta}
 \frac{SR}{(d_1\cdots d_4)^{1-\ve}}+
S^{1+2\ve} +
R (S^3R)^{\theta}S^{8\ve}.
\end{align*}
On rescaling  $\varepsilon$, we finally obtain
\begin{equation}\label{eq:NdiSRY}
	N_{\dd}(S,R,Y)\ll_{\varepsilon,\theta} \frac{SR}{(d_1\cdots d_4)^{1-\ve}}+ S^{\varepsilon}\left(S+S^{3\theta}R^{1+\theta}\right),
\end{equation}
for any  $\theta>\frac{3}{16}$.

We are now ready to sum over all dyadic intervals in \eqref{eq:varSR}. 
Inserting \eqref{eq:NdiSRY}, it follows that 
\begin{align*}
N_{\dd}(X,Y)
&\ll_{\ve,\theta}
\sum_{S\nearrow X}\sum_{R \nearrow S}\left(\frac{S^\frac{5}{8}Y^\frac{4}{3}}{R^\frac{5}{8}}+\frac{Y^2}{S^\frac{1}{8}R^\frac{7}{8}}\right)
\left(\frac{SR}{(d_1\cdots d_4)^{1-\ve}}+ S^{\varepsilon}\left(S+S^{3\theta}R^{1+\theta}\right)\right)\\
& \ll_{\varepsilon,\theta} Y^\frac{4}{3}\left(\frac{X^2}{(d_1\cdots d_4)^{1-\ve}}+X^\varepsilon\left(X^{\frac{13}{8}}+X^{1+4\theta}\right)\right)\\
&\qquad +Y^2\left(\frac{X}{(d_1\cdots d_4)^{1-\ve}}+X^\varepsilon\left(X^{\frac{7}{8}}+X^{4\theta}\right)\right).
\end{align*}
Making  the choice $\theta=\frac{7}{32}$, we obtain
$$N_{\dd}(X,Y)\ll_{\varepsilon} \frac{X^2Y^\frac{4}{3}+XY^2}{(d_1\cdots d_4)^{1-\ve}}+X^\varepsilon\left(X^\frac{15}{8}Y^\frac{4}{3}+X^\frac{7}{8}Y^2\right)
$$
in \eqref{eq:pink-bag}.
It remains to sum over all $r,t$. Using the trivial bound for the divisor function $\tau_4$, we obtain 
$$
\sum_{\substack{r,t\in\BN\\ t~\square\text{-free},~t\ll X^4\\ r~\square\text{-full},~r\leqslant D}}\frac{r^{\frac{1}{4}+\varepsilon}}{t} \tau_4([r,t])\ll_\varepsilon X^\varepsilon D^{\frac{3}{4}+\varepsilon}.$$
Similarly, in view of the   lower bound
$[r,t]\geqslant \max(r,t)\geqslant r^\frac{7}{8} t^\frac{1}{8}$, we have
$$
\sum_{\substack{r,t\in\BN\\ t~\square\text{-free},~t\ll X^4\\ r~\square\text{-full},~r\leqslant D}}\frac{r^{\frac{1}{4}+\varepsilon}}{t} \frac{\tau_4([r,t])}{[r,t]^{1-\ve}}\ll_\ve
\sum_{\substack{r,t\in\BN\\ t~\square\text{-free},~t\ll X^4\\ r~\square\text{-full},~r\leqslant D}}\frac{1}{r^{\frac{5}{8}-2\ve}t^{\frac{9}{8}-2\ve}} 
\ll_\ve 1.
$$
The statement of Proposition \ref{prop:M1Xsmall} is now clear.
\end{proof}

\subsection{Proof of Proposition \ref{prop:MXXsmall}}\label{s:monats}

Recall the definition \eqref{eq:S(X)} of $\CS(X)$. 
Then we have 
$$
\#\CM(X,Y)=\sum_{\substack{\xx\in \CS(X)}}N(Q_\x;Y),
$$
where $Q_\x$ is given by \eqref{eq:Qx} and $N(Q_\x;Y)$ is the counting function defined in 
\eqref{eq:NQB}.
We continue to adopt the notation $\|Q_\x\|$ and $\varDelta(\xx)$ 
that was introduced in \eqref{eq:normLideltaLi}.
 In this section we see what can be deduced from an application of 
Lemma~\ref{co:uniformquad}. 
Letting  $m(Q_\xx)=\min_{1\leqslant i\leqslant 4} |L_i(\x)|$, we obtain
$$N(Q_\x;Y)\ll_\varepsilon 
	\frac{
	 \deltabad(\xx)^\varepsilon L(1,\chi_{Q_\x}) 	
	}{(m(Q_\x)\|Q_\x\|)^\frac{1}{2}}Y^2+\frac{\|Q_\x\|^{11+\varepsilon}}{m(Q_\x)^6|\varDelta(\x)|^\frac{1}{2}}Y^{\frac{3}{2}+\varepsilon},$$
	for any $\ve>0$.  Lemma \ref{le:smallvalueLi} implies that $|\Delta(\x)|^{\frac{1}{2}}\gg \|Q_\x\|^{\frac{3}{2}} m(Q_\x)^{\frac{1}{2}}.$ Since $\|Q_\x\|\ll X$, it follows that 
\begin{equation}\label{eq:yellow-shoe}
	\#\CM(X,Y)\ll_\ve Y^2\sum_{\substack{\xx\in \CS(X)}}
	\frac{\deltabad(\xx)^\varepsilon L(1,\chi_{Q_\xx})}{(m(Q_{\xx})\|Q_\xx\|)^\frac{1}{2}} +
	X^{\frac{19}{2}+\ve}Y^{\frac{3}{2}+\varepsilon}
\hspace{-0.2cm}
	\sum_{\substack{\xx\in \CS(X)}}\frac{1}{m(Q_{\xx})^{\frac{13}{2}}}.
	\end{equation}
But a standard dyadic decomposition procedure yields 
\begin{align*}
\sum_{\substack{\xx\in \CS(X)}}\frac{1}{m(Q_{\xx})^{\frac{13}{2}}}
&\ll \sum_{i_1=1}^4\sum_{S\nearrow X}\frac{1}{S^\frac{13}{2}}
\#\{\xx\in \CS(X):  |L_{i_1}(\xx)|\sim S\} \ll X.
\end{align*}
Thus the overall contribution from the second term is 
$\ll_\varepsilon X^{\frac{21}{2}+\varepsilon} Y^{\frac{3}{2}+\varepsilon}\ll X^{11} Y^{\frac{3}{2}+\varepsilon}.$

In what follows we focus on the first summand. Much as in the proof of Proposition \ref{prop:M1Xsmall}, we carry out two dyadic decompositions. Then, for fixed $i_1\in \{1,\dots,4\}$, we are reduced to estimating
\begin{equation}\label{eq:SRW}
	 \sum_{S\nearrow X}\sum_{R \nearrow S}\frac{1}{S^\frac{1}{2}R^\frac{1}{2}}
	 W_{i_1}(S,R),
\end{equation}
where
\begin{equation}\label{eq:defW}
	 W_{i_1}(S,R)=	 
	 \sum_{\substack{\xx\in \CS(X)\\ |\xx|\sim S,~|L_{i_1}(\xx)|\sim R}}\deltabad(\xx)^\varepsilon L(1,\chi_{Q_\xx}).
\end{equation}
One of the ingredients we will need in our treatment of 
$W_{i_1}(S,R)$ is a proof that there are relatively few $\x$ for which  
$\deltabad(\xx)
$ is large. This is achieved in the following result. 

\begin{lemma}\label{lem:stain}
Let $\delta\geq 0$ and let $i_1\in \{1,\dots,4\}$. Then 
$$
\#\left\{\xx\in \ZZp^2: |\xx|\sim S,~|L_{i_1}(\xx)|\sim R, ~\deltabad(\x)>(SR)^\delta\right\}
\ll (SR)^{1-\frac{\delta}{8}}.
$$
\end{lemma}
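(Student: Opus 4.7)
The plan is to exploit the fact that $\deltabad(\xx)$ is a powerful (squarefull) integer by definition, since every prime in its factorisation appears to exponent at least $2$. Recall that the number of powerful integers up to $X$ is $O(X^{1/2})$, so by partial summation
\[
\sum_{\substack{c \text{ powerful} \\ c \geq T}} \frac{1}{c} \ll T^{-1/2}.
\]

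The first step is to compare $\deltabad(\xx)$ with the individual $L_i(\xx)$. For each $i$, let $c_i(\xx)$ denote the largest powerful divisor of $L_i(\xx)$. Thanks to \eqref{le:discform}, any prime $p\nmid\CD$ divides at most one of the $L_i(\xx)$, so the $p$-adic contributions to $\deltabad(\xx)$ and to $\prod_i c_i(\xx)$ coincide at such primes, while the finitely many primes dividing $\CD$ contribute only a bounded factor. Hence $\deltabad(\xx)\asymp c_1(\xx)c_2(\xx)c_3(\xx)c_4(\xx)$, and the hypothesis $\deltabad(\xx)>(SR)^\delta$ yields, by pigeonhole, an index $i_0\in\{1,\dots,4\}$ and a powerful integer $c\gg (SR)^{\delta/4}$ with $c\mid L_{i_0}(\xx)$.

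I then decompose dyadically over such $c$ to obtain
\[
\#\{\xx\in\ZZp^2 : |\xx|\sim S,\ |L_{i_1}(\xx)|\sim R,\ \deltabad(\xx)>(SR)^\delta\} \ll \sum_{i_0=1}^4 \sum_{\substack{c \text{ powerful} \\ c\gg (SR)^{\delta/4}}} N_{i_0}(c),
\]
where $N_{i_0}(c)=\#\{\xx\in\ZZp^2 : c\mid L_{i_0}(\xx),\ |\xx|\sim S,\ |L_{i_1}(\xx)|\sim R\}$. If $i_0=i_1$, the divisibility $c\mid L_{i_1}(\xx)$ with $|L_{i_1}(\xx)|\sim R$ confines $L_{i_1}(\xx)$ to $O(R/c+1)$ values, each with $O(S)$ primitive preimages. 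If $i_0\neq i_1$, then the map $\xx\mapsto(L_{i_0}(\xx),L_{i_1}(\xx))$ has bounded fibres (its Jacobian equals the nonzero bounded resultant of $L_{i_0}$ and $L_{i_1}$), and $N_{i_0}(c)$ is dominated by the number of $(n_1,n_2)\in\BZ^2$ with $c\mid n_1$, $|n_1|\ll S$ and $|n_2|\sim R$, giving $O(SR/c+R)$. Since $\deltabad(\xx)$ is defined, $L_{i_0}(\xx)\neq 0$, so $c\leq |L_{i_0}(\xx)|\ll S$ and the $+1$ contributions are absorbed into $N_{i_0}(c)\ll SR/c$ in both cases.

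Assembling,
\[
\sum_{\substack{c \text{ powerful} \\ c\gg (SR)^{\delta/4}}} \frac{SR}{c} \ll SR\cdot (SR)^{-\delta/8} = (SR)^{1-\delta/8},
\]
as claimed. I expect the main obstacle to be the opening comparison $\deltabad(\xx)\asymp \prod_i c_i(\xx)$: one must extract the essentially multiplicative structure of $\deltabad$ from the near-coprimality \eqref{le:discform} of the $L_i(\xx)$, keeping careful track of the bounded contributions from primes dividing $\CD$. Once this structural observation is in place, the dyadic decomposition, counting, and partial summation are routine.
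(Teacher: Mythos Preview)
Your proof is correct and follows essentially the same route as the paper: both extract from $\deltabad(\xx)>(SR)^\delta$ a powerful divisor $c\gg(SR)^{\delta/4}$ of some $L_{i_0}(\xx)$ by pigeonhole, then count $\xx$ with $c\mid L_{i_0}(\xx)$ according to whether $i_0=i_1$ or not, and finish by summing $SR/c$ over powerful $c\gg(SR)^{\delta/4}$. The only cosmetic difference is that the paper performs an explicit $\mathrm{SL}_2$ change of variables sending $(L_{i_0},L_{i_1})$ to coordinates, whereas you invoke the bounded-fibre map $\xx\mapsto(L_{i_0}(\xx),L_{i_1}(\xx))$; and you spell out more carefully the comparison $\deltabad(\xx)\asymp\prod_i c_i(\xx)$, which the paper leaves implicit.
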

\begin{proof}
Let $N_\delta(S,R)$ denote the quantity that is to be estimated. First,
we observe that the condition $\deltabad(\xx)>(SR)^\delta$ implies that at least one of $L_1(\xx),\dots, L_4(\x)$ has a square-full part that exceeds 
$(SR)^\frac{\delta}{4}$. Let us assume that $L_{i_0}$ has  square-full part $>(SR)^\frac{\delta}{4}$, with $i_0\neq i_1$. Upon a non-singular change of variables, we  may therefore  assume that $L_{i_0}(\xx)=x_1$ and $L_{i_1}(\xx)=x_2$. But then, on summing trivially over $x_2$, we obtain 
 \begin{align*}
	N_\delta(S,R)&\ll R\sum_{\substack{(SR)^\frac{\delta}{4}\ll a\ll S\\ a~\square\text{-full}}}\sum_{\substack{x_1\in\BZ_{\neq 0}\\ |x_1|\leqslant S,~a\mid x_1}} 1
\ll (SR)^{1-\frac{\delta}{8}},
\end{align*}
which is satisfactory.  Alternatively, if  $L_{i_1}(\xx)$
is the term with large square-full part, then a similar manipulation yields
\begin{align*}
	N_\delta(S,R)&\ll  S\sum_{\substack{(SR)^\frac{\delta}{4}\ll a\ll R\\ a~\square\text{-full}}}\sum_{\substack{x_1\in\BZ_{\neq 0}\\ |x_1|\leqslant R,~a\mid x_1}} 1
	\ll (SR)^{1-\frac{\delta}{8}},
\end{align*}
which completes the proof of the lemma.
\end{proof}

We are now ready to estimate the  quantity in 
\eqref{eq:defW}.

\begin{lemma}\label{lem:Wi1}
Let $\delta\geq 0$. Then 
$$
	W_{i_1}(S,R)\ll_{\varepsilon}SR+S^\varepsilon\left((SR)^{\frac{\delta}{2}}\left(S+S^{\frac{21}{32}}R^{\frac{39}{32}}\right)+(SR)^{1-\frac{\delta}{8}}\right).
$$
\end{lemma}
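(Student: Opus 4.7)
The plan is to partition $W_{i_1}(S,R)$ along the threshold $\deltabad(\xx)\leq(SR)^\delta$. For the range $\deltabad(\xx)>(SR)^\delta$, Lemma \ref{lem:stain} caps the count of admissible $\xx$ by $O((SR)^{1-\delta/8})$, and the pointwise bounds $\deltabad(\xx)^\varepsilon\ll S^{O(\varepsilon)}$ (from $\deltabad(\xx)\leq|\varDelta(\xx)|\ll S^4$) together with the standard bound $L(1,\chi_{Q_\xx})\ll\log|\varDelta(\xx)|\ll\log S$ immediately yield the contribution $S^\varepsilon(SR)^{1-\delta/8}$ that appears as the last summand of the target bound. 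This part of the argument is essentially immediate.

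For the principal range $\deltabad(\xx)\leq(SR)^\delta$, the strategy is to replace $L(1,\chi_{Q_\xx})$ by a Burgess-type truncation and then extract cancellation from the resulting character sum over $\xx$. The character $\chi_{Q_\xx}$, induced by the Kronecker symbol $\bigl(\tfrac{\varDelta(\xx)}{\cdot}\bigr)$, is non-principal of modulus $\ll|\varDelta(\xx)|\ll S^3R$, non-principality being guaranteed by the condition \eqref{eq:notsquare} built into $\CS(X)$. Fixing $\theta>\tfrac{3}{16}$ and invoking Lemma \ref{le:Burgess} with $N_1=(S^3R)^{2\theta}$ then yields
\begin{equation*}
L(1,\chi_{Q_\xx})=\sum_{n\leq N_1}\frac{\chi_{Q_\xx}(n)}{n}+O_\theta(1).
\end{equation*}
The $O(1)$ tail, weighted by $\deltabad(\xx)^\varepsilon\leq(SR)^{\delta\varepsilon}$ and summed over the $\ll SR$ admissible $\xx$, contributes $O((SR)^{1+\delta\varepsilon})$, which is absorbed into the leading $SR$ summand after the standard rescaling of $\varepsilon$.

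To bound the truncated piece I would swap the order of summation and, following the approach of Proposition \ref{prop:M1Xsmall}, select $\mathbf{M}\in\mathrm{SL}_2(\ZZ)$ whose first row is the coefficient vector of $L_{i_1}$ and set $\y=\mathbf{M}\xx$; then $y_1=L_{i_1}(\xx)$ has $|y_1|\sim R$ and, for each fixed $y_1$, the variable $y_2$ ranges in an interval $K_{y_1}$ of length $O(S)$. Writing $n=n_0 n_1^2$ with $n_0$ squarefree so that $\chi_{Q_\xx}(n)=\bigl(\tfrac{L_1(\xx)\cdots L_4(\xx)}{n_0}\bigr)\cdot\bigl(\tfrac{L_1(\xx)\cdots L_4(\xx)}{n_1^2}\bigr)$ with the latter factor in $\{0,1\}$, an application of Lemma \ref{lem:PV-prep} to the inner $y_2$-sum with $k=4$, $q=D=1$, $r=n_0$ and $A=\CD$ (the coprimality hypothesis being ensured by \eqref{le:discform}) produces the estimate $n_0^\varepsilon(S/n_0^{1/2}+n_0^{1/2}\log n_0)\gcd(y_1,n_0)$. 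Summing over $y_1\sim R$ via $\sum_{y_1\sim R}\gcd(y_1,n_0)\ll Rn_0^\varepsilon$, over $n_1$ via the convergent factor $\sum 1/n_1^2=O(1)$, and finally over $n_0\leq N_1$ with the choice $\theta=\tfrac{7}{32}$, yields an overall Burgess-shape contribution of order $S^\varepsilon(SR+S^{21/32}R^{39/32})$. Since $(SR)^{\delta/2}\geq 1$, this is dominated by the middle summand $S^\varepsilon(SR)^{\delta/2}(S+S^{21/32}R^{39/32})$ of the stated bound.

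The hardest part will be handling the $n_1^2$-factor cleanly: the coprimality condition $\gcd(n_1,\text{cond}(\chi_{Q_\xx}))=1$ inherent in $\bigl(\tfrac{\cdot}{n_1^2}\bigr)\in\{0,1\}$ is $\xx$-dependent and so cannot be pulled across the $\xx$-summation, meaning that either a M\"obius inclusion-exclusion must be carried out or the coprimality condition dropped at the cost of a negligible boundary term, paralleling the more elaborate treatment in the proof of Proposition \ref{prop:M1Xsmall}. A second bookkeeping subtlety is verifying that the bound $\sum_{y_1\sim R}\gcd(y_1,n_0)\ll Rn_0^\varepsilon$ holds uniformly, in particular when $n_0>R$; this is true because only divisors $d\leq 2R$ of $n_0$ contribute to the gcd-sum, and so it can be bounded by $R\tau(n_0)$ in all regimes.
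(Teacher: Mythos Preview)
Your two-range split and the treatment of the large-$\deltabad$ range via Lemma~\ref{lem:stain} match the paper exactly. The difference lies in the principal range $\deltabad(\xx)\leq(SR)^\delta$. The paper does not bound $\deltabad(\xx)^\varepsilon$ trivially; instead it sorts the sum according to the exact value $d=\deltabad(\xx)$, a square-full integer at most $(SR)^\delta$, factors $d=d_1\cdots d_4$ with $d_i\mid L_i(\xx)$, and then \emph{cites} the bound \eqref{eq:NdiSRY} already established in the proof of Proposition~\ref{prop:M1Xsmall}. The point of this decomposition is that the term $SR/(d_1\cdots d_4)^{1-\varepsilon}$ in \eqref{eq:NdiSRY}, summed against $d^\varepsilon\tau_4(d)$ over square-full $d$, converges and yields the clean $SR$; the remaining terms pick up the factor $\sum_{d\leq(SR)^\delta,\ d\ \square\text{-full}}d^\varepsilon\tau_4(d)\ll(SR)^{\delta/2+\varepsilon}$, which is where the $(SR)^{\delta/2}$ in the statement comes from.

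Your route---bounding $\deltabad(\xx)^\varepsilon\leq(SR)^{\delta\varepsilon}$ and then rederiving \eqref{eq:NdiSRY} with $\dd=(1,1,1,1)$---is simpler and actually gives a tighter middle term (a factor $(SR)^{\delta\varepsilon}$ rather than $(SR)^{\delta/2}$), but it does \emph{not} give the lemma as stated: the tail contribution $(SR)^{1+\delta\varepsilon}$ cannot be ``absorbed into the leading $SR$ summand after rescaling $\varepsilon$'', since no choice of $\varepsilon>0$ makes $(SR)^{1+\delta\varepsilon}\ll SR$. You also need to be explicit that the weight $\deltabad(\xx)^\varepsilon$ must be removed (via the pointwise bound, using $L(1,\chi_{Q_\xx})\geq 0$ for real characters) \emph{before} swapping the $\xx$- and $n$-sums, since otherwise the $\xx$-dependent weight obstructs the application of Lemma~\ref{lem:PV-prep}. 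Your bound suffices for the downstream application in \S\ref{s:monats}, but to recover the stated lemma with its clean $SR$ term you need the paper's square-full sorting.
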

\begin{proof}
For given $\delta\geq 0$, we write 
$$
W_{i_1}(S,R)=W_{i_1,1}(S,R;\delta)+W_{i_1,2}(S,R;\delta),
$$ 
where the sum $W_{i_1,1}(S,R;\delta)$ is subject to the condition $\deltabad(\xx)\leqslant (SR)^\delta$ and $W_{i_1,2}(S,R;\delta)$ has $\deltabad(\xx)>(SR)^\delta$.

Observe that 
\begin{equation}\label{eq:calculus}
\deltabad(\xx)^\varepsilon
L(1,\chi_{Q_\xx}) \ll  |\x|^{4\varepsilon}  \log\left(2+\|Q_\x\|\right)\ll_{\varepsilon} X^{5\varepsilon}.
\end{equation}
Rescaling $\varepsilon$ and applying Lemma \ref{lem:stain}, we get a  satisfactory bound 
for the term $W_{i_1,2}(S,R;\delta)$.
Turning to 
$W_{i_1,1}(S,R;\delta)$, we  begin in the same way as in the proof of 
Proposition \ref{prop:M1Xsmall}.  Thus 
\begin{align*}
	W_{i_1,1}(S,R;\delta)&\leqslant \sum_{\substack{d\leqslant (SR)^\delta\\ d~\square\text{-full}}}d^\varepsilon\sum_{\substack{\dd\in \NN^4\\ d=d_1\cdots d_4}}N^{\prime}_{i_1,\dd}(S,R),
\end{align*}
where
$$
N^{\prime}_{i_1,\dd}(S,R)=\sum_{\substack{\xx\in \CS(X)\\ 
|\xx|\sim S,~|L_{i_1}(\xx)|\sim R\\ d_{i}\mid L_i(\xx)}}L(1,\chi_{Q_\xx}).
$$
This is essentially the same sum 
that we already met in \eqref{eq:Nprimei1dd} and we can directly apply  the bound \eqref{eq:NdiSRY}. 
Taking $\theta=\frac{7}{32}$, we get
$$
	N^{\prime}_{i_1,\dd}(S,R)\ll_{\varepsilon} \frac{SR}{(d_1\cdots d_4)^{1-\ve}}+ S^{\varepsilon}\left(S+S^{\frac{21}{32}}R^{\frac{39}{32}}\right),
$$
whence 
$
W_{i_1,1}(S,R;\delta)\ll_{\varepsilon} 
SR+S^\varepsilon(SR)^{\frac{\delta}{2}}(S+S^{\frac{21}{32}}R^{\frac{39}{32}}).
$
\end{proof}

Inserting Lemma \ref{lem:Wi1} into \eqref{eq:SRW} and 
summing over dyadic intervals, we are now led to the  conclusion that  
$$
\sum_{S\nearrow X}\sum_{R \nearrow S}\frac{1}{S^\frac{1}{2}R^\frac{1}{2}}W_{i_1}(S,R)\ll_{\varepsilon} X+X^\varepsilon\left(X^{\frac{7}{8}+\delta}+X^{1-\frac{\delta}{4}}\right).$$
Taking $\delta=\frac{1}{10}$, the right hand side is $O(X)$, on taking  $\varepsilon$ sufficiently small. Therefore the overall contribution of the first term in 
\eqref{eq:yellow-shoe}
is $O(XY^2)$, thereby completing the proof of Proposition \ref{prop:M1Xsmall}.

\subsection{Proof of Proposition \ref{prop:poisson}}\label{s:poisson}
 
Our proof of Proposition \ref{prop:poisson} relies on viewing the equation \eqref{eq:quadbundle} in the form 
\eqref{eq:xQ}, where $Q_1,Q_2$ are the diagonal quadratic forms defined in 
\eqref{eq:Q1Q2}, where  $\gcd(a_i,b_i)=1$ for $1\leq i\leq 4$.  As usual, we proceed under the assumption that the pencil $Q_1=Q_2=0$ defines a smooth curve $Z\subset \PP^3$ 
of genus $1$, 
such that $Z(\RR)= \emptyset$.

We will be led to make crucial use of properties of the lattice $\Lambda_{[\uu],k}$ that was defined in \eqref{eq:lattice-def}. In particular, we will need to show that its successive minima are not typically 
lop-sided.
We begin, however,  with the following basic estimate.

\begin{lemma}
Let $\ve>0$ and let $D,E>0$ such that $1\ll E\ll D^{3/4}.$ Then 
$$
\sum_{d\asymp D}\sum_{\substack{\vv\in \ZZ^4\\|\bfv|\leq  E\\d\mid Q_{i}(\bfv), ~i=1,2}}1\ll 
E^{2+\ve}
+
\frac{E^{4}}{D}\cdot\frac{\log E}{\log\log E}.
$$
\label{lem : gcd}
\end{lemma}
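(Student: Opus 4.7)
The plan is to swap the order of summation and then analyse the resulting inner count by means of the lattice machinery developed in Section~\ref{s:pre}. Writing
\[
S := \sum_{d\asymp D}N(d), \qquad N(d) = \#\{\vv\in\ZZ^4:|\vv|\leq E,\ d\mid Q_1(\vv),\ d\mid Q_2(\vv)\},
\]
the first step is to peel off the contribution from $\vv=0$, which amounts to $O(D)$ and is absorbed by the error term $O(E^{2+\ve})$ in the regime of interest. For $\vv\neq 0$, the assumption $Z(\RR)=\emptyset$ forces $\max(|Q_1(\vv)|,|Q_2(\vv)|)\gg|\vv|^2$, so that $g(\vv):=\gcd(Q_1(\vv),Q_2(\vv))$ is a positive integer of size $O(E^2)$.

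For each $d\asymp D$, I would decompose $N(d)$ over residue classes $\bfm\in(\ZZ/d\ZZ)^4$ with $Q_i(\bfm)\equiv 0\bmod d$, separating primitive classes ($\gcd(\bfm,d)=1$) from non-primitive ones. The primitive part is parametrised by orbits $[\uu]\in V_d^\times$, and every such $\vv$ lies in the rank-$4$ lattice $\Lambda_{[\uu],d}$ of determinant $d^3$. Minkowski's second theorem yields
\[
\#\{\vv\in\Lambda_{[\uu],d}:|\vv|\leq E\} \ll \prod_{i=1}^{4}\Bigl(1+\tfrac{E}{s_{i,[\uu],d}}\Bigr).
\]
The hypothesis $E\ll D^{3/4}$ is crucial here: for a \emph{balanced} lattice with $s_{i,[\uu],d}\asymp d^{3/4}$, the first successive minimum exceeds $E$, so no nonzero lattice point lies in $[-E,E]^4$. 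Non-trivial contributions come only from \emph{lopsided} lattices, where some $s_{i,[\uu],d}$ is substantially smaller than $d^{3/4}$. The non-primitive residues, with $e=\gcd(\bfm,d)>1$, I would handle by scaling $\bfm=e\bfm'$, which reduces the count to a similar problem at the smaller modulus $d/\gcd(d,e^2)$, controlled via the bound on $\#V_d^\times$ from Lemma~\ref{lem : loc''}.

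Summing the generic part over $[\uu]$ and $d$ produces a main term of the expected shape
\[
\sum_{d\asymp D}\frac{\varrho(d)E^4}{d^4} \ll E^4\sum_{d\asymp D}\frac{\tau(d)}{d^2},
\]
on using Lemma~\ref{lem : loc2'}(ii). Bounding $\tau(d)\ll 2^{O(\log d/\log\log d)}$ pointwise (valid for $d\ll E^2$) produces the logarithmic factor $\log E/\log\log E$, and collecting everything yields the target $E^4/D\cdot \log E/\log\log E$. The residual contributions from lopsided lattices, from non-primitive residues, and from $\vv$ near the boundary of the parameter ranges should fit into $O(E^{2+\ve})$ on appealing to the counting results of Section~\ref{s:upper}, especially Theorem~\ref{t:upper}.

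The hard part will be the uniform control of the lopsided contributions: one must show that the total count of pairs $(d,[\uu])$ for which $s_{1,[\uu],d}\ll d^{3/4-\delta}$ is small enough to be absorbed into the error term. This is precisely where the sharp upper bounds on $\#\mathcal{M}(X,Y)$ established in Section~\ref{s:upper}, together with the lattice-averaging estimates arising in their proofs, will play a decisive role.
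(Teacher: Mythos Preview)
Your approach has two genuine gaps.

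\textbf{Circularity.} You propose to control the lopsided-lattice contributions by appealing to Theorem~\ref{t:upper} and to the ``lattice-averaging estimates arising in their proofs''. But Theorem~\ref{t:upper} depends on Proposition~\ref{prop:poisson}, which depends on Lemma~\ref{lem : upp}, which depends on Lemma~\ref{lem : sucmin}, whose proof in turn invokes Lemma~\ref{lem : gcd} itself. Likewise, the only tool in the paper that averages $s_{1,[\uu],d}^{-3}$ over $d$ and $[\uu]$ is Lemma~\ref{lem : sucmin}, and that lemma is a \emph{consequence} of the statement you are trying to prove. So the heart of your argument --- bounding the error from unbalanced lattices --- is not available without first establishing the lemma by other means.

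\textbf{The sharp factor.} Your derivation of $\log E/\log\log E$ is not correct. The pointwise bound $\tau(d)\ll 2^{O(\log d/\log\log d)}$ for $d\ll E^{2}$ gives a quantity of size $\exp\bigl(O(\log E/\log\log E)\bigr)$, which is $E^{o(1)}$ but certainly not $\log E/\log\log E$. Using instead the average $\sum_{d\asymp D}\tau(d)\ll D\log D$ yields $E^{4}(\log E)/D$, weaker than the target by a factor of $\log\log E$.

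The paper avoids both problems by a different decomposition. Rather than fixing $d$ and counting $\vv$, it swaps the order and groups $\vv$ by the value $g=\gcd(Q_{1}(\vv),Q_{2}(\vv))$; the number of $d\asymp D$ dividing $g$ is bounded both by $g/D$ and by $\tau(g)\ll e^{2\log E/\log\log E}$. For each dyadic range $g\sim 2^{c}D$, the count of admissible $\vv$ is at most $\#\CM^{*}(ME^{2}/(2^{c}D),E)$, and Corollary~\ref{cor:M*} --- which rests only on Propositions~\ref{prop:M1Xsmall}--\ref{prop:MXXsmall} and \emph{not} on Proposition~\ref{prop:poisson} --- handles this. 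The factor $\log E/\log\log E$ then emerges from optimising the threshold $L$ between the two bounds on $\tau_{D}(g)$: it is the number of dyadic scales $c\leq L=4\log E/\log\log E$, not a pointwise divisor bound.
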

\begin{proof}
If the left hand side is non-zero, then there exists a vector $\vv\in \ZZ^4$ such that 
$|\vv|\leq E$ and  $d\mid Q_i(\vv)$ for $i=1,2$. But then 
$D\ll E^2$, since 
$Q_1(\vv), Q_2(\vv)$ cannot both  vanish. Thus we may proceed under the assumption that $D\ll E^2$.

We  sort the sum on the left according to the value of $\gcd(Q_{1}(\bfv), Q_{2}(\bfv))$. This gives
\[
\sum_{d\asymp D}\sum_{\substack{|\bfv|\leq E\\d|Q_{i}(\bfv)\text{ for }i=1,2}}1=
\sum_{c=0}^{\log (ME^{2}/D)}\sum_{g\sim 2^{c}D}\tau_{D}(g)\sum_{\substack{|\bfv|\leq E\\g=\gcd(Q_{1}(\bfv),Q_{2}(\bfv))}}1,
\]
where $\tau_{D}(g)=\#\{d\asymp D: d\mid g\}$ and 
$
M=\max\left(|a_{1}|+\cdots+|a_4|,|b_{1}|+\cdots+|b_4|\right).
$ 
We claim that 
$$
\tau_{D}(g)\ll \min\left( 2^c, \e^{\frac{2\log E}{\log\log E}}\right).
$$
Indeed, if $g\in [2^{c}D,2^{c+1}D)$ and $g=fd$ for some $d\asymp D$, then $2^{c}\ll f\leq 2^{c}$. This implies that $\tau_{D}(g)\ll 2^{c}$. On the other hand, 
$$
\tau_{D}(g)\leq\tau (g)\leq \e^{(\log2+o(1))\frac{\log g}{\log\log g}}\ll \e^{\frac{\log g}{\log\log g}},
$$ 
by  Tenenbaum \cite[Thm.~5.4]{Te15}, for  example. The claim follows, since  $g\leq ME^{2}$.

Next, we observe that if $g=\gcd(Q_{1}(\bfv),Q_{2}(\bfv))$ then 
\[
\frac{Q_{1}(\bfv)}{g}\cdot Q_{2}(\bfv)=Q_{1}(\bfv)\cdot\frac{Q_{2}(\bfv)}{g},
\]
where $\gcd(g^{-1}Q_{1}(\bfv),g^{-1}Q_{2}(\bfv))=1$. Moreover, we also have 
\[
\max\left(\frac{|Q_{1}(\bfv)|}{g},\frac{|Q_{2}(\bfv)|}{g}\right)\leq \frac{ME^{2}}{2^{c}D}.
\] 
It follows that 
\begin{equation}\label{eq:paper}
\sum_{g\sim 2^{c}D}\sum_{\substack{|\bfv|\leq E\\g=\gcd(Q_{1}(\bfv),Q_{2}(\bfv))}}1
\leq 
\#\CM^*\left(
 \frac{ME^{2}}{2^{c}D}, E\right),
\end{equation}
in the notation of \eqref{eq:MX*}. 
Hence Corollary \ref{cor:M*} yields
\begin{equation}\label{eq:sofa}
\begin{split}
\#\CM^*\left(
 \frac{ME^{2}}{2^{c}D}, E\right)
&\ll_\ve
E^{2+\ve}
+
\frac{E^{4}}{2^{c}D}+
\frac{E^{16/3}}{4^{c}D^2},
\end{split}
\end{equation}
for any $\ve>0$, 
provided that 
$
ME^{2}/(2^{c}D)\leq E^{2/3}\log E.
$
The latter is equivalent to $ME^{4/3}\leq 2^c D\log E$, which is implied by the hypothesis of the lemma. 

We shall argue differently according to the size of $c$.  Let $L>0$ be a parameter to be selected in due course. For small $c$, it follows from \eqref{eq:paper} and  \eqref{eq:sofa}  that 
\begin{align*}
\sum_{c=0}^L\sum_{g\sim 2^{c}D}\tau_D(g)\sum_{\substack{|\bfv|\leq E\\g=\gcd(Q_{1}(\bfv),Q_{2}(\bfv))}}1
&\ll_\ve  \sum_{c=0}^L 2^c 
\left(E^{2+\ve}
+
\frac{E^{4}}{2^{c}D}+
\frac{E^{16/3}}{4^{c}D^2}\right)\\
&\ll_\ve  
2^L E^{2+\ve}
+
\frac{LE^{4}}{D}+
\frac{E^{16/3}}{D^2}.
\end{align*}
On the other hand, for the remaining  $c$, the \eqref{eq:paper} and \eqref{eq:sofa} yield
\begin{align*}
\sum_{c>L}\sum_{g\sim 2^{c}D}\tau_D(g)\sum_{\substack{|\bfv|\leq E\\g=\gcd(Q_{1}(\bfv),Q_{2}(\bfv))}}1
&\ll_\ve \e^{\frac{2\log E}{\log\log E}} \sum_{c>L} 
\left(E^{2+\ve}
+
\frac{E^{4}}{2^{c}D}+
\frac{E^{16/3}}{4^{c}D^2}\right)\\
&\ll \e^{\frac{2\log E}{\log\log E}}\left( E^{2+\ve} \log E+
\frac{E^{4}}{ 2^L D}+
\frac{E^{16/3}}{4^LD^2}
\right).
\end{align*}
We shall take  
 $L=\frac{4\log E}{\log\log E}$.
 It now follows that
 $$
\sum_{d\asymp D}\sum_{\substack{|\bfv|\leq E\\d|Q_{i}(\bfv)\text{ for }i=1,2}}1\ll_\ve 
E^{2+2\ve}
+
\frac{E^{4}}{D}\cdot\frac{\log E}{\log\log E}+
\frac{E^{16/3}}{D^2},
$$ 
since 
$c^L\log E=O_\ve (E^\ve)$ for any $c\geq 1$. Now $E^{16/3}/D^2\leq E^4/D$ if $E\leq D^{3/4}$. Hence  the lemma follows on redefining the choice of $\ve$.
\end{proof}

We can use this result to assess the average size of the smallest successive minimum 
$s_{1,[\uu],k}$ of the lattice 
 lattice $\Lambda_{[\uu],k}$ that was defined in \eqref{eq:lattice-def}, for $k\in \NN$ and 
 $[\uu]\in V_k^\times$, in the notation of  \eqref{eq:def-Vd}.
 At this point it is convenient to recall the notation \eqref{eq:delta-notation}, where $\Delta$
is the product of bad primes defined in \eqref{eq:DELTA}. The following result is rather general, since it will be used in more than one context in what follows. 

\begin{lemma}
Let $f,e,m,D\in \NN$ and assume that $m\mid e$. Then
\[
\sum_{\substack{d\asymp D\\f|d}}\sum_{[\uu]\in V_{de}^{\times}}\frac{1}{s_{1,[\uu],\frac{de}{mf}}^{3}}\ll
m_{\Delta} f_{\Delta}\cdot(mf)^{\frac{5}{4}}\cdot
(De)^{-\frac{1}{4}}\frac{\log De}{\log\log De}.
\]
\label{lem : sucmin}
\end{lemma}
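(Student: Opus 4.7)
The plan is to dyadically decompose according to the size of $s_{1,[\uu],k}$ (with $k=de/(mf)$), then bound the number of pairs $([\uu],d)$ producing a short lattice vector by appealing to Lemma~\ref{lem : gcd}. Since $Z(\RR)=\emptyset$, there is a constant $c>0$ with $\max(|Q_{1}(\vv)|,|Q_{2}(\vv)|)\geqslant c|\vv|^{2}$ for every $\vv\in\RR^{4}$; combined with the fact that any nonzero $\vv\in\Lambda_{[\uu],k}$ satisfies $k\mid Q_{i}(\vv)$ for $i=1,2$, this forces $s_{1,[\uu],k}\gg k^{1/2}$. Minkowski's theorem together with \eqref{eq:mink} gives the upper bound $s_{1,[\uu],k}\ll k^{3/4}$. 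Writing $K=De/(mf)$, we have $k\asymp K$ uniformly, so the sum in question is controlled by
\[
\sum_{\substack{E\text{ dyadic}\\ \sqrt{K}\ll E\ll K^{3/4}}}E^{-3}\cdot N(E),\qquad N(E)=\sum_{\substack{d\asymp D\\ f\mid d}}\#\bigl\{[\uu]\in V_{de}^{\times}:s_{1,[\uu],k}\leq 2E\bigr\}.
\]

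Next, I would estimate $N(E)$ by a double count of pairs $([\uu],\vv)$ with $\vv\in\Lambda_{[\uu],k}$ and $0<|\vv|\leq 2E$. For fixed $\vv$ with $\gcd(\vv,k)=1$ the reduction $\vv\bmod k$ determines the class $[\vv\bmod k]\in V_{k}^{\times}$, and Lemma~\ref{lem : loc''} then bounds the number of lifts to $V_{de}^{\times}$ by $\ll (mf)_{\Delta}\cdot mf=m_{\Delta}f_{\Delta}\cdot mf$. Writing $d=fd'$ we have $k=d'(e/m)$, which runs through certain multiples of $e/m$ of size $\asymp K$ as $d'\asymp D/f$ varies; re-parametrising by $\ell=k$ and dropping the divisibility $(e/m)\mid\ell$ gives an upper bound by $\sum_{\ell\asymp K}\#\{\vv:0<|\vv|\leq 2E,~\ell\mid Q_{i}(\vv)\}$. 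Invoking Lemma~\ref{lem : gcd} with $D\leftarrow K$ and $E\leftarrow 2E$ (whose hypothesis $E\ll K^{3/4}$ is exactly Minkowski's bound) yields
\[
N(E)\ll m_{\Delta}f_{\Delta}\cdot mf\left(E^{2+\varepsilon}+\frac{E^{4}}{K}\cdot\frac{\log E}{\log\log E}\right).
\]

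The last step is to sum over dyadic $E$. The $E^{2+\varepsilon}$ contribution is a geometric series dominated by the smallest value $E\asymp\sqrt{K}$, producing $\ll m_{\Delta}f_{\Delta}\cdot mf\cdot K^{(\varepsilon-1)/2}\ll m_{\Delta}f_{\Delta}\cdot mf\cdot K^{-1/4}$ for $\varepsilon\leq 1/2$. The $E^{4}/K$ contribution is dominated by $E\asymp K^{3/4}$, giving $\ll m_{\Delta}f_{\Delta}\cdot mf\cdot K^{-1/4}\log K/\log\log K$. Substituting $K^{-1/4}=(mf)^{1/4}(De)^{-1/4}$ and $\log K\leq\log(De)$ produces exactly the claimed bound. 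The main technical obstacle is the non-primitive case $\gcd(\vv,k)=g>1$: one writes $\vv=g\vv'$ with $\gcd(\vv',k/g)=1$ and $|\vv'|\leq 2E/g$, applies Lemma~\ref{lem : loc''} to the pair $(k/g,de)$ to obtain an extra lift factor $g\cdot g_{\Delta}$, and invokes Lemma~\ref{lem : gcd} at level $\asymp K/g$. Summing over $g\mid k$ then introduces factors $\sum_{g}g_{\Delta}g^{-1-\varepsilon}$ and $\sum_{g}g_{\Delta}g^{-2}$, both absolutely convergent since $g_{\Delta}$ is supported on the finitely many primes dividing $\Delta$, so the non-primitive contribution matches the primitive case $g=1$ in order of magnitude.
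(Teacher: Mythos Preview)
Your proposal is correct and follows essentially the same approach as the paper: dyadic decomposition in $E$ using the a priori bounds $K^{1/2}\ll s_{1}\ll K^{3/4}$, then a double-count over short lattice vectors $\vv$, reduction to Lemma~\ref{lem : gcd} after enlarging the sum over moduli, and the lift bound from Lemma~\ref{lem : loc''}. The paper's treatment of the non-primitive case (writing $\vv=\ell\vv'$, picking up the extra factor $\ell_\Delta\ell$, and summing over $\ell$ after grouping by $\ell_\Delta$) is exactly your handling of $g=\gcd(\vv,k)$, with only notational differences.
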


\begin{proof}
Let $\vv\in \ZZ^4$ be a non-zero vector in the lattice 
$\Lambda_{[\uu],\frac{de}{mf}}$, with Euclidean length equal to 
$s_{1,[\uu],\frac{de}{mf}}$. This implies that 
$\frac{de}{mf} \mid Q_i(\vv)$ for $i=1,2$. 
Since we cannot have $Q_1(\vv)=Q_2(\vv)=0$, so it follows that 
$\frac{de}{mf}\ll |\vv|^2$. Once combined with  \eqref{eq:mink}, this therefore implies that  the smallest successive minimum of 
$\Lambda_{[\uu],\frac{de}{mf}}$ satisfies
$$
 \left(\frac{de}{mf}\right)^{\frac{1}{2}}\ll 
s_{1,[\uu],\frac{de}{mf}}
\ll \left(\frac{de}{mf}\right)^{\frac{3}{4}}.
$$ 
Splitting the sum in the lemma into dyadic intervals, we obtain
\begin{equation}\label{eq:S(E)}
\sum_{\substack{d\asymp D\\f|d}}\sum_{[\uu]\in V_{de}^{\times}}\frac{1}{s_{1,[\uu],\frac{de}{mf}}^{3}}\ll\sum_{ \left(\frac{De}{mf}\right)^{\frac{1}{2}} \swarrow E\nearrow \left(\frac{De}{mf}\right)^{\frac{3}{4}}}\frac{S(E)}{E^{3}},
\end{equation}
where
\begin{align*}
S(E)=
\sum_{\substack{d\asymp D\\f|d}}\sum_{\substack{[\uu]\in V_{de}^{\times}\\s_{1,[\uu],\frac{de}{mf}}\sim E}}1&\leq
 \sum_{\substack{\vv\in \ZZ^4\\|\bfv|\sim E}}\sum_{\substack{d\asymp D\\ f|d}}\sum_{\substack{[\uu]\in V_{de}^{\times}\\ \bfv\in\Lambda_{[\uu],\frac{de}{mf}}}}1\\&\leq\sum_{d'\asymp \frac{D}{f}}\sum_{\ell\mid \frac{d'e}{m}}\sum_{\substack{|\bfv|\sim E\\\gcd(\bfv,\frac{d'e}{m})=\ell\\ \frac{d'e}{m}\mid Q_{i}(\bfv),~i=1,2}}\text{}\sum_{\substack{[\uu]\in V_{d'ef}^{\times}\\
\bfv\in \Lambda_{[\uu],\frac{d'e}{m}}}}
1\\
&=\sum_{d'\asymp \frac{D}{f}}\sum_{\ell|\frac{d'e}{m}}\sum_{\substack{|\bfv'|\sim E/\ell\\\gcd(\bfv',\frac{d'e}{\ell m})=1\\ \frac{d'e}{m}\mid Q_{i}(\ell\cdot \bfv'),~i=1,2}}\text{}\sum_{\substack{[\uu]\in V_{d'ef}^{\times}\\\ell\cdot \bfv'\in \Lambda_{[\uu],\frac{d'e}{m}}}}1.
\end{align*}
We  observe that if $\ell\cdot\bfv '\in \Lambda_{[\uu],\frac{d'e}{m}}$, with $\ell\mid \frac{d'e}{m}$ and $
\gcd(\bfv',\frac{d'e}{\ell m})=1$, then there exists $\lambda\in\mathbb{Z}$ such that $\ell\cdot\bfv'\equiv \lambda\cdot\uu\bmod{\frac{d'e}{m}}$, which  implies that $\lambda=\ell\lambda'$ for some $\lambda'\in\mathbb{Z}$ which is coprime with $\frac{d'e}{\ell m}$. But then it follows that 
 $\bfv'\equiv \lambda'\cdot\uu \bmod{\frac{d'e}{\ell m}}$ and so  $Q_{i}(\bfv')\equiv  0\bmod{\frac{d'e}{\ell m}}$, for $i=1,2$. Hence
 \[
S(E)\leq\sum_{d'\asymp \frac{D}{f}}\sum_{\ell|\frac{d'e}{m}}\sum_{\substack{|\bfv'|\sim E/\ell\\
\gcd(\bfv',\frac{d'e}{\ell m})=1\\ \frac{d'e}{\ell m}\mid Q_{i}(\bfv'),~i=1,2}}
\sum_{\substack{[\uu]\in V_{d'ef}^{\times}\\\bfv'\in \Lambda_{[\uu],\frac{d'e}{\ell m}}}}1.
\]

The first part of 
Lemma  \ref{lem : loc''} implies that the inner sum is $O(  (\ell_\Delta m_{\Delta}  f_{\Delta})\cdot(\ell m f))$.
If write $M_{m,f}=m_{\Delta} f_{\Delta}m f$, then it follows that 
\[
\begin{split}
S(E)&\ll M_{m,f}
\sum_{d'\asymp \frac{D}{f}}\sum_{\ell|\frac{d'e}{m}}
\sum_{\substack{|\bfv'|\sim E/\ell\\\gcd(\bfv',\frac{d'e}{\ell m})=1\\ \frac{d'e}{\ell m}\mid 
Q_{i}(\bfv'),~i=1,2}}
\ell_{\Delta}\ell
\ll M_{m,f}
\sum_{\substack{k\in\NN\\
p\mid k\Rightarrow p\mid \Delta}}
\sum_{\substack{\ell\ll E\\\ell_{\Delta}=k}}k \ell
\sum_{\substack{d'\asymp\frac{D}{f}\\ \ell|\frac{d'e}{m}}}
\sum_{\substack{|\bfv'|\sim E/\ell\\\gcd(\bfv',\frac{d'e}{\ell m})=1\\ \frac{d'e}{\ell m}\mid 
Q_{i}( \bfv'),~i=1,2}}1.
\end{split}
\]
It now follows from
 Lemma \ref{lem : gcd} that 
the inner sum is 
\begin{align*}
\sum_{\substack{d'\asymp\frac{D}{f}\\ \ell|\frac{d'e}{m}}}
\sum_{\substack{|\bfv'|\sim E/\ell\\\gcd(\bfv',\frac{d'e}{\ell m})=1\\ \frac{d'e}{\ell m}\mid 
Q_{i}(\bfv'),~i=1,2}}1
&\leq
\sum_{\substack{d''\asymp\frac{De}{\ell mf}}}
\sum_{\substack{|\bfv'|\sim E/\ell\\ d '' \mid 
Q_{i}( \bfv'),~i=1,2}}1
\ll_\ve \left(\frac{E}{\ell}\right)^{2+\ve}+
 \frac{mfE^{4}}{\ell^{3} De}\cdot\frac{\log E}{\log\log E},
\end{align*}
for any $\ve>0$,
 since clearly 
$$
\frac{E}{\ell}\ll \left(
\frac{De}{\ell mf}
\right)^{3/4}
$$
in \eqref{eq:S(E)}.
Thus 
$$
S(E)\ll_\ve 
 M_{m,f} 
 \left(
  E^{2+\ve} U(1+\ve)
+\frac{mf  E^4}{De}\cdot \frac{\log E}{\log\log E}\cdot U(2)\right),
$$
where
$$
U(\theta)=
\sum_{\substack{k\in\NN\\
p\mid k\Rightarrow p\mid \Delta}}
\sum_{\substack{\ell\ll E\\\ell_{\Delta}=k}}\frac{k}{\ell^\theta}.
$$
Finally, for any $\theta>1$, we note that 
$$
U(\theta)\leq
\sum_{\substack{k \ll E\\
p\mid k\Rightarrow p\mid \Delta}}
\sum_{\substack{\ell'\ll E}}\frac{k^{1-\theta}}{{\ell'}^\theta}\ll
\sum_{\substack{k \ll E\\
p\mid k\Rightarrow p\mid \Delta}}
k^{1-\theta}\ll_\theta 1.
$$
Hence
$$
S(E)\ll_\ve 
 M_{m,f} 
 \left(
  E^{2+\ve} 
+\frac{mf  E^4}{De}\cdot \frac{\log E}{\log\log E}\right).
$$
Returning to \eqref{eq:S(E)} and summing over dyadic intervals for $E$, the statement of the lemma easily follows. 
\end{proof}

Armed with the previous facts about our lattices $\Lambda_{[\uu],k}$, 
we are now ready to establish the following result, which is  a critical step towards
Proposition \ref{prop:poisson}.

\begin{lemma}
Let $\ve>0$, let $D\geq 1$ and  let $e\in \NN$. Then
\[
\sum_{d\asymp D}\sum_{\substack{\y\in \ZZp^4\\ 
|\bfy|\leq Y\\de|Q_{i}(\bfy),~i=1,2}}1\ll_{\varepsilon} \frac{Y^{4}}{De^{2-\varepsilon}}+\frac{Y^3}{(De)^{\frac{1}{4}}}\frac{\log De}{\log\log De}+D^{2}e^{1+\varepsilon}.
\]
\label{lem : upp}
\end{lemma}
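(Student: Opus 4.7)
The plan is to reinterpret the inner sum as a lattice-point count. For each primitive $\y$ with $de\mid Q_1(\y),Q_2(\y)$, one has $\gcd(\y,de)=1$, so the reduction $[\uu]=[\y\bmod de]$ is a well-defined element of $V_{de}^{\times}$ and $\y\in\Lambda_{[\uu],de}$. Thus
\[
\sum_{d\asymp D}\sum_{\substack{\y\in \ZZp^4\\|\y|\le Y\\de\mid Q_{i}(\y)}}1\;\le\;\sum_{d\asymp D}\sum_{[\uu]\in V_{de}^{\times}}\#\{\y\in\Lambda_{[\uu],de}:0<|\y|\le Y\}.
\]
The successive minima $s_{j}=s_{j,[\uu],de}$ satisfy $s_1s_2s_3s_4\asymp (de)^3$ by \eqref{eq:mink}, and the standard geometry-of-numbers bound for lattice points in a box yields
\[
\#\{\y\in\Lambda_{[\uu],de}:0<|\y|\le Y\}\ll \frac{Y}{s_1}+\frac{Y^2}{s_1s_2}+\frac{Y^3}{s_1s_2s_3}+\frac{Y^4}{s_1s_2s_3s_4}.
\]

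First I would handle the $k=4$ contribution by bounding it by $Y^4/(de)^3$ via Minkowski, then summing using the estimate $\#V_{de}^{\times}\ll_\varepsilon (de)^{1+\varepsilon}$ from Lemma \ref{lem : loc''}; this produces the first term $Y^4/(De^{2-\varepsilon})$. For the $k=3$ contribution I would use $s_1s_2s_3\ge s_1^3$ and apply Lemma \ref{lem : sucmin} with $m=f=1$ directly, which yields the second term $Y^3(De)^{-1/4}\log(De)/\log\log(De)$.

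For the contributions with $k\in\{1,2\}$ I would use the trivial bound $Y^k/(s_1\cdots s_k)\le Y^k/s_1^k$ (since $s_j\ge s_1$) and interpolate via H\"older's inequality with exponents $3/k$ and $3/(3-k)$:
\[
\sum_{d\asymp D}\sum_{[\uu]}\frac{1}{s_1^k}\le\biggl(\sum_{d\asymp D}\#V_{de}^{\times}\biggr)^{1-k/3}\biggl(\sum_{d\asymp D}\sum_{[\uu]}\frac{1}{s_1^3}\biggr)^{k/3}.
\]
The first factor is $\ll_\varepsilon (D^{2+\varepsilon}e^{1+\varepsilon})^{1-k/3}$ by Lemma \ref{lem : loc''}, while the second is $\ll ((De)^{-1/4}\log(De)/\log\log(De))^{k/3}$ by Lemma \ref{lem : sucmin}. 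A short case analysis, separating according to whether $Y$ is above or below the threshold $Y_0\asymp D^{3/4}e^{5/12}$, then shows that the resulting estimate is majorised by $D^{2}e^{1+\varepsilon}+Y^3(De)^{-1/4}\log(De)/\log\log(De)$, and hence is absorbed into the three claimed terms.

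The main obstacle is the bookkeeping for the $k=1,2$ contributions, since the lattices $\Lambda_{[\uu],de}$ may be very lopsided (some $s_j$ much larger than others), and only by combining Lemma \ref{lem : sucmin} (which records that $s_1$ is not too small on average) with the trivial class-number bound of Lemma \ref{lem : loc''} can the contributions from such lopsided lattices be absorbed. The term $D^{2}e^{1+\varepsilon}$ acts precisely as the budget for these lopsided cases, while slack of the form $(De)^{\varepsilon}$ in the intermediate estimates is re-absorbed into the exponents of $e$ and $D$ in the three displayed terms by relabelling $\varepsilon$.
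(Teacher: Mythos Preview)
Your argument is correct and follows the same skeleton as the paper's proof: pass to the lattices $\Lambda_{[\uu],de}$, apply Schmidt's lattice-point bound, control the $j=4$ term via Lemma~\ref{lem : loc''} and the $j=3$ term via Lemma~\ref{lem : sucmin} with $m=f=1$. The only difference is in dispatching the $j=1,2$ contributions. The paper does this in one line: since $s_1\cdots s_j\ge s_1^j$ and $(Y/s_1)^j\le 1+(Y/s_1)^3$ for $j\le 3$, one has
\[
\sum_{j=1}^{3}\frac{Y^{j}}{s_{1}\cdots s_{j}}\ll \frac{Y^{3}}{s_{1}^{3}}+1,
\]
so the $j=1,2$ terms fold into the $j=3$ term together with a constant ``$+1$''; summing this constant over $d$ and $[\uu]$ is what produces $D^{2}e^{1+\varepsilon}$. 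Your H\"older/Young interpolation arrives at the same place with more effort. One small point: with only the crude bound $\#V_{de}^{\times}\ll_{\varepsilon}(de)^{1+\varepsilon}$ you obtain $D^{2+\varepsilon}e^{1+\varepsilon}$ (and likewise $Y^{4}/(D^{1-\varepsilon}e^{2-\varepsilon})$ for $j=4$), which is marginally weaker than the stated lemma. To remove the stray $D^{\varepsilon}$ you need the refined form of Lemma~\ref{lem : loc''}, summing the $\Delta$-part of $d$ separately as in \eqref{eq:toe}, which gives $\sum_{d\asymp D}\#V_{de}^{\times}\ll_{\varepsilon} D^{2}e^{1+\varepsilon}$ and $\sum_{d\asymp D}\#V_{de}^{\times}/(de)^{3}\ll_{\varepsilon} 1/(De^{2-\varepsilon})$ with no loss on $D$.
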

\begin{proof}
Dropping the primitivity condition, we first note that 
\[
\begin{split}
\sum_{d\asymp D}\sum_{\substack{\y\in \ZZp^4\\ 
|\bfy|\leq  Y\\de|Q_{i}(\bfy),~i=1,2}}1
&\leq\sum_{d\asymp D}\sum_{[\uu]\in V_{de}^{\times}}\sum_{\substack{|\bfy|\leq Y\\ \bfy\in\Lambda_{[\uu],de}}}1,
\end{split}
\]
where the inner sum is now over all $\y\in \ZZ^4$.  Recalling that $\Lambda_{[\uu],de}$ is an integer lattice of rank $4$ and determinant $(de)^3$, it follows from a lattice point counting result  due to Schmidt
 \cite[Lemma $2$]{Sc68} that 
\begin{align*}
\sum_{\substack{|\bfy|\leq Y\\ \bfy\in\Lambda_{[\uu],de}}}1
&\ll\frac{Y^{4}}{(de)^{3}}+\sum_{j=1}^{3}\frac{Y^{j}}{s_{1,[\uu],de}\cdots s_{j,[\uu],de}}+1
\ll\frac{Y^{4}}{(de)^{3}}+\frac{Y^{3}}{s_{1,[\uu],de}^{3}}+1,
\end{align*}
where $1\leq s_{1,[\uu],de}\leq \cdots \leq s_{4,[\uu],de}$ are the successive minima of $\Lambda_{[\uu],de}$. 

Taking $f=m=1$ in 
Lemma \ref{lem : sucmin}, we obtain 
\[
\sum_{d\asymp D}\sum_{[\uu]\in V_{de}^{\times}}
\frac{Y^{3}}{s_{1,[\uu],de}^{3}}\ll 
\frac{Y^3}{(De)^{\frac{1}{4}}}\frac{\log De}{\log\log De}.
\]
Moreover, 
\[
\begin{split}
\sum_{d\asymp D}\sum_{[\uu]\in V_{de}^{\times}}\left(\frac{Y^{4}}{(de)^{3}}+1\right)
\ll \left( Y^{4}+D^3e^{3}\right)\sum_{d\asymp D}\frac{\#V_{de}^{\times}}{(de)^{3}}.
\end{split}
\]
To estimate $\#V_{de}^{\times}$ we may appeal to  the second part of Lemma \ref{lem : loc''}, which 
gives
\begin{align*}
\#V_{de}^{\times}
&\ll_\ve 
d_{\Delta}^{\varepsilon}e^{1+\varepsilon}\cdot
d\cdot \prod_{\substack{p|d\\p\nmid \Delta}}\left(1+O(p^{-\frac{1}{2}})\right),
\end{align*}
for any $\varepsilon >0$. Hence
$$
\sum_{d\asymp D}\frac{\#V_{de}^{\times}}{(de)^{3}}
\ll_\ve \frac{1}{e^{2-\varepsilon}}\sum_{d\asymp D}\frac{d_{\Delta}^{\varepsilon}}{d^{2}}\cdot\prod_{\substack{p\mid d\\p\nmid \Delta}}\left(1+O(p^{-\frac{1}{2}})\right)
\ll_\ve \frac{1}{D e^{2-\varepsilon}}.
$$
The statement of the lemma  follows on collecting together the various estimates.
\end{proof}

Combing the latter with our earlier work, we can now record the following bound.

\begin{corollary}
Let $D,Y\geq 1$. Then
$$
\sum_{d\asymp D}\sum_{\substack{\y\in \ZZp^4\\ 
|\bfy|\leq Y\\d \mid Q_{i}(\bfy),~i=1,2}}1\ll_\ve Y^{2+\ve}+
 \frac{Y^{4}}{D}\log YD,
$$
\label{cor : M}
for any $\ve>0$.
\end{corollary}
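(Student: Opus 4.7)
The plan is to apply Lemmas~\ref{lem : upp} and \ref{lem : gcd} in complementary ranges, splitting according to the size of $D$ relative to $Y^{4/3}$. Since dropping the primitivity condition on $\y$ only enlarges the left-hand side, Lemma~\ref{lem : gcd} (which is stated without primitivity) will be directly applicable. The two lemmas are essentially tight at the threshold $D\asymp Y^{4/3}$, which is why this split is natural.

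First I would treat the range $D\gg Y^{4/3}$. Then $Y\ll D^{3/4}$, so Lemma~\ref{lem : gcd} with $E=Y$ gives
$$
\sum_{d\asymp D}\sum_{\substack{\y\in\ZZ^4\\ |\y|\leq Y\\ d\mid Q_i(\y),~i=1,2}}1 \ll Y^{2+\ve}+\frac{Y^{4}}{D}\cdot\frac{\log Y}{\log\log Y},
$$
which is dominated by the claimed bound $Y^{2+\ve}+\frac{Y^4}{D}\log YD$.

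Next I would handle the complementary range $D\ll Y^{4/3}$ by invoking Lemma~\ref{lem : upp} with $e=1$, giving
$$
\sum_{d\asymp D}\sum_{\substack{\y\in\ZZp^4\\ |\y|\leq Y\\ d\mid Q_i(\y),~i=1,2}}1 \ll_\ve \frac{Y^{4}}{D}+\frac{Y^{3}}{D^{1/4}}\cdot\frac{\log D}{\log\log D}+D^{2}.
$$
Under the assumption $D\ll Y^{4/3}$ the last two terms are absorbed into the first: the ratio $\frac{Y^{3}/D^{1/4}}{Y^{4}/D}=\frac{D^{3/4}}{Y}\ll 1$ bounds the middle term, and the ratio $\frac{D^{2}}{Y^{4}/D}=\frac{D^{3}}{Y^{4}}\ll 1$ bounds the last. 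Hence the right-hand side simplifies to $\frac{Y^{4}}{D}\log YD$, as required.

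The two ranges together cover all $D\geq 1$, and combining the two estimates yields the corollary. There is no real obstacle here; the corollary is essentially a repackaging of Lemmas~\ref{lem : upp} and \ref{lem : gcd}, each optimized in its natural range, balanced at the critical point $D\asymp Y^{4/3}$.
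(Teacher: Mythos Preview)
Your proposal is correct and follows exactly the same approach as the paper's proof: split at the threshold $D\asymp Y^{4/3}$, apply Lemma~\ref{lem : gcd} with $E=Y$ for large $D$, and Lemma~\ref{lem : upp} with $e=1$ for small $D$. The paper's proof is terser, omitting the verification that the secondary terms from Lemma~\ref{lem : upp} are absorbed, which you have supplied.
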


\begin{proof}
If $D\ll Y^{\frac{4}{3}}$, then we apply Lemma \ref{lem : upp} with $e=1$. Otherwise, 
if $D\gg Y^{\frac{4}{3}}$, 
the desired bound is a consequence of 
 Lemma \ref{lem : gcd}.
This completes the proof.
\end{proof}

We now have all the tools in place to complete the proof of 
 Proposition \ref{prop:poisson}.
On appealing to Lemma \ref{lem:neron} and 
breaking the range of $|\y|$ into dyadic intervals,
we find that 
$$
\#\CM(X,Y)\ll_\ve X^\ve Y^{2+\ve} +
\sum_{Y_0\nearrow Y}
\sum_{\substack{\bfy\in\mathbb{Z}_{\text{prim}}^{4}\\ |\bfy|\sim Y_0}}M(X,\bfy),
$$
for any $\ve>0$, 
where
$
M(X,\bfy)=\#\left\{\bfx\in\ZZ_{\text{prim}}^{2}: \text{\eqref{eq:xQ} holds}, ~|\x|\sim  X\right\}.
$
Since $\gcd(x_1,x_2)=1$,  \eqref{eq:xQ} implies that 
$
x_{1}=\pm Q_{2}(\bfy)/d$ and $x_{2}=\pm Q_{1}(\bfy)/d$, 
 where $d=\gcd(Q_{1}(\bfy),Q_{2}(\bfy))$. In particular, we must have 
$\max(|Q_{1}(\bfy)|,|Q_{2}(\bfy)|)\sim Xd$.
 Let
$$
C=\inf_{\mathbf{t}\in\RR^4, |\mathbf{t}|\sim 1}\max\left(|Q_{1}(\mathbf{t})|,|Q_{2}(\mathbf{t})|\right).
$$
Our assumption that $Z(\RR)=\emptyset$ implies that  $C>0$, for a constant $C$ that depends only on the coefficients of $Q_1,Q_2.$ It follows from homogeneity that 
$
\max(|Q_{1}(\bfy)|,|Q_{2}(\bfy)|)\asymp Y_0^2,
$ 
for any $|\y|\sim Y_0$.
In this way we deduce that
$$
\#\CM(X,Y)\ll_\ve X^\ve Y^{2+\ve} +
\sum_{Y_0\nearrow Y}
\sum_{d\asymp D}
\sum_{\substack{\bfy\in\mathbb{Z}_{\text{prim}}^{4}\\ |\bfy|\sim  Y_0\\ 
d\mid Q_{i}(\bfy),~i=1,2}}1, 
$$
for any $\ve>0$, 
where $D=Y_0^{2}/X$.
We now apply 
Lemma \ref{lem : upp}, which gives
\begin{align*}
\#\CM(X,Y)&\ll_\ve X^\ve Y^{2+\ve} +
\sum_{Y_0\nearrow Y} \left(\frac{Y_0^{4}}{D}+\frac{Y_0^{3}}{D^{\frac{1}{4}}}\frac{\log D}{\log\log D}+D^{2}\right)\\
&=X^\ve Y^{2+\ve}
\sum_{Y_0\nearrow Y}\left( XY_0^2+
X^{\frac{1}{4}}Y_0^{\frac{5}{2}}\frac{\log Y_0}{\log\log Y_0}+\frac{Y_0^4}{X^2}\right),
\end{align*}
on taking $\ve=\frac{1}{4}$ and $D=Y_0^2/X$. Proposition~\ref{prop:poisson} readily follows on summing  over $Y_0$.

\section{Asymptotics via the geometry of numbers}\label{sec:geometry}

The purpose of this section is to prove Proposition \ref{pro:L1}, which provides an asymptotic formula for
$$
\#\CL_1(B)
=\#\left\{(\xx,\yy)\in \CL(B):
B^{\frac{1}{4}+\eta}\leq |\x|\leq 
B^{\frac{1}{2}-\eta}
 \right\},
$$
where  $\CL(B)$ is given by  \eqref{eq:L}. 
In particular, we note that any $(\x,\y)$ counted by 
$\#\CL_1(B)$ must satisfy 
$|\y|\leq B^{\frac{3}{8}-\frac{\eta}{2}}$. Breaking into dyadic intervals, we therefore find that 
$$
\#\CL_1(B)
=
\hspace{-0.3cm}\sum_{Y\nearrow B^{\frac{3}{8}-\frac{\eta}{2}}}
\hspace{-0.3cm}
\#
\left\{(\xx,\yy)\in\BZprim^2\times\BZprim^4:	
\begin{array}{l}
\text{\eqref{eq:quadbundle} and \eqref{eq:notsquare} hold},~ |\y|\sim Y\\
	B^{\frac{1}{4}+\eta}\leq |\xx|\leq  \min(B/|\y|^2, B^{\frac{1}{2}-\eta}) 
	\end{array}\right\}.
$$
We claim that there is a satisfactory overall contribution from $Y$
in the range $Y\leq B^{\frac{1}{4}+\frac{\eta}{2}}.$
But it follows from Theorem 
\ref{t:upper} that this contribution is 
$$\ll \sum_{Y\nearrow B^{\frac{1}{4}+\frac{\eta}{2}}}
\left(B^{\frac{1}{2}-\eta}\cdot Y^2 +\left(\frac{B}{Y^2}\right)^{\frac{1}{4}}Y^{\frac{5}{2}}
\frac{\log B}{\log\log B}\right)\ll
B,
$$
on breaking the sum over $\x$ into dyadic intervals.
We can use Lemma \ref{lem:neron} to handle the overall contribution from those $\x,\y$ for which 
\eqref{eq:notsquare} fails. Hence 
\begin{equation}\label{eq:L1-step1}
\#\CL_1(B)
=
\sum_{B^{\frac{1}{4}+\frac{\eta}{2}}\swarrow Y\nearrow B^{\frac{3}{8}-\frac{\eta}{2}}}\#\tilde\cM(Y) + O(B),
\end{equation}
where 
\begin{equation}\label{eq:MXXX}
	\tilde\cM(Y)=\left\{(\xx,\yy)\in\BZprim^2\times\BZprim^4:	
	\begin{array}{l}
	\text{\eqref{eq:quadbundle} holds},~ |\y|\sim Y\\
B^{\frac{1}{4}+\eta}\leq|\xx|\leq B/|\y|^2
\end{array}
\right\}.
\end{equation}
The main goal of this section is to produce the following asymptotic formula for the cardinality of 
$\tilde\cM(Y)$.

\begin{proposition}\label{pro:latt}
Let $Y\geq 1$ such that $
B^{\frac{1}{4}+\frac{\eta}{2}}\ll 
Y\ll B^{\frac{3}{8}-\frac{\eta}{2}}$.
Then
$$
\#\tilde\cM(Y)=2
\mathfrak{S}_1 B
\left(\sigma_\infty(Y)+O\left(\frac{Y^2}{B^{\frac{3}{4}-\eta}}\right)\right)
 +O\left(\frac{B}{\sqrt{\log B}} \right),
$$
where 
$\mathfrak{S}_1$ is given by \eqref{eq:series} and
\begin{equation}\label{eq:path}
\sigma_\infty(Y)=\int_{\substack{\y\in \RR^4\\ |\y|\sim Y}} \frac{\mathrm{d} \y}{|\y|^2\max(|Q_1(\y)|,|Q_2(\y)|)} .
\end{equation}
\end{proposition}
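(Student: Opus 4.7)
The entry point is the rewriting \eqref{eq:xQ}. For any $(\x,\y) \in \tilde\cM(Y)$, primitivity of $\x \in \ZZp^2$ combined with $x_1 Q_1(\y) = x_2 Q_2(\y)$ forces $\x = \pm(Q_2(\y), Q_1(\y))/d(\y)$, where $d(\y) = \gcd(Q_1(\y), Q_2(\y))$; in particular $|\x| = \max(|Q_1(\y)|, |Q_2(\y)|)/d(\y)$. Absorbing the sign ambiguity into a factor of $2$ gives
\[
\#\tilde\cM(Y) = 2\,\#\Bigl\{\y \in \ZZp^4 : |\y| \sim Y,~B^{1/4+\eta} \leq \tfrac{\max(|Q_1(\y)|, |Q_2(\y)|)}{d(\y)} \leq \tfrac{B}{|\y|^2}\Bigr\}.
\]
I would first stratify over $d = d(\y)$, use Möbius inversion to replace $d(\y) = d$ by the divisibilities $D \mid Q_1(\y), Q_2(\y)$ with $D = dk$, and perform a second Möbius inversion to impose primitivity of $\y$. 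After these inversions, each remaining inner sum counts $\y$ lying in the lattice $\Lambda_{[\uu], D}$ of \eqref{eq:lattice-def} inside an appropriate box, parameterised by the equivalence classes $[\uu] \in V_D^\times$.

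For each pair $(D, [\uu])$ I would then apply Schmidt's lattice point estimate \cite[Lemma 2]{Sc68} to obtain
\[
\#\{\y \in \Lambda_{[\uu], D} \cap \mathrm{Region}\} = \frac{\operatorname{Vol}(\mathrm{Region})}{D^3} + O\!\left(\frac{Y^3}{s_{1,[\uu],D}^3} + 1\right).
\]
Summing the main term over $[\uu] \in V_D^\times$ introduces a factor $\#V_D^\times$, and reassembling the sums over $D$ together with the Möbius weights yields an Euler product. Using Corollary~\ref{cor:stone} and the second part of Lemma~\ref{lem : loc''}, this product is identified precisely with $\mathfrak S_1$. The geometric side, after substituting $t = \max(|Q_1(\y)|, |Q_2(\y)|)/|\x|$ for the integer variable $d$ and integrating $|\x|$ over $[B^{1/4+\eta}, B/|\y|^2]$, collapses to $B \cdot \sigma_\infty(Y)$, up to a boundary discrepancy of order $B \cdot Y^2/B^{3/4-\eta}$ arising from the lower cutoff $|\x| \geq B^{1/4+\eta}$.

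The decisive technical obstacle is controlling the aggregate error $\sum_{D, [\uu]} Y^3/s_{1,[\uu],D}^3$. Lemma~\ref{lem : sucmin} supplies an average saving of order $(\log D)/(\log\log D)$ on $1/s_{1,[\uu],D}^3$, but this alone falls short of the required $O(B/\sqrt{\log B})$. I would therefore split the lattice sum according to whether $\Lambda_{[\uu], D}$ is \emph{generic}, with $s_{1,[\uu],D}$ comparable to the expected value $D^{3/4}$, or \emph{lopsided}, with first minimum strictly smaller. The generic contribution is directly handled by Schmidt's bound; the lopsided contribution is controlled by appealing to the point-counting upper bounds from Corollary~\ref{cor : M} or Proposition~\ref{prop:poisson}, which guarantee that $\y$ rarely lies in such anomalous lattices. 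An optimal calibration of the threshold between the two regimes, as a suitable power of $\log B$, should yield the final error of order $B/\sqrt{\log B}$.
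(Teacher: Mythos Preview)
Your outline captures the right geometric picture and correctly identifies Schmidt's lattice count and Lemma~\ref{lem : sucmin} as the key tools, but the untruncated M\"obius inversion you propose cannot close. Writing $[\gcd(Q_1(\y),Q_2(\y))=d]=\sum_k \mu(k)[dk\mid Q_i(\y)]$ introduces a sum over $k$ ranging up to $Y^2/d$. After decomposing into classes $[\uu]\in V_{dk}^\times$ and applying Schmidt, the $O(1)$ error per lattice accumulates to
\[
\sum_{D_1\ll d\ll D_2}\ \sum_{k\ll Y^2/d}\#V_{dk}^\times \ \asymp\ \sum_{d}\frac{Y^4}{d}\ \asymp\ Y^4\log B\ \gg\ B\log B,
\]
which already swamps the main term. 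The $Y^3/s_1^3$ error is likewise out of control: Lemma~\ref{lem : sucmin} saves only $(Dk)^{-1/4}$, and $\sum_{k\ll Y^2/d} k^{-1/4}$ grows like $(Y^2/d)^{3/4}$. Your generic/lopsided dichotomy might conceivably tame the $s_1^{-3}$ contribution, but it does nothing for the $O(1)$ term, which is present for every lattice regardless of shape; this is the error you have not accounted for.

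The paper's remedy is a sieve truncation rather than full M\"obius. One sets $P(z)=\prod_{p\le z}p$ and works with the approximate condition $\gcd(d^{-1}Q_1(\y),d^{-1}Q_2(\y),P(z))=1$, so that the M\"obius parameter $e$ divides $P(z)$ and stays bounded; see $T_1$ in \eqref{eq:T1} and the sandwich \eqref{eq:T1-3}. The residual contribution from primes in $(z,M]$ and $>M$ is controlled by the upper bounds of Lemma~\ref{lem:T2T3}, which rest on Lemma~\ref{lem : upp} and Corollary~\ref{cor : M}. With this truncation, the $O(1)$ and $s_1^{-3}$ errors become $O(P(z)^4 B^{1-\eta/2})$ and $O(2^z B^{1-\eta/2})$ respectively (the quantities $E_1,E_2$ in Lemma~\ref{lem:sigma1-ev}); the choice $z=\log B/(\log\log B)^2$, $M=B^{\eta/10}$ then makes every error acceptable, and the surviving main term is evaluated in Lemma~\ref{lem:quoc} to produce~$\mathfrak S_1$. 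Incidentally, once the sieve is in place, Lemma~\ref{lem : sucmin} alone handles the $s_1^{-3}$ error with a power saving in $B$, so no generic/lopsided splitting is needed.
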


We may insert this result into \eqref{eq:L1-step1}, noting  that 
$
\sum_{Y\nearrow B^{\frac{3}{8}-\frac{\eta}{2}}}
Y^2/B^{\frac{3}{4}-\eta}\ll 1.
$
Hence we obtain
$$
\#\CL_1(B)
=2\mathfrak{S}_1 B\cdot 
\sum_{B^{\frac{1}{4}+\frac{\eta}{2}}\swarrow Y\nearrow B^{\frac{3}{8}-\frac{\eta}{2}}} 
\sigma_\infty(Y)
 +O\left(B\sqrt{\log B} \right).
$$
This therefore completes the proof of Proposition \ref{pro:L1}, subject to 
Proposition \ref{pro:latt}.

\subsection{Preliminary steps}

We now turn to the task of estimating the cardinality of 
\eqref{eq:MXXX}.
Rewriting \eqref{eq:quadbundle} as \eqref{eq:xQ} and extracting the greatest  common divisor $d$ of $Q_1(\y)$ and $Q_2(\y)$, we begin our proof of Proposition \ref{pro:latt}
by writing
$$
\#\tilde\cM(Y)=
\sum_{d=1}^\infty\sum_{\substack{\bfy\in \ZZp^4\cap \cB(Y,d)\\d=\gcd(Q_{1}(\bfy),Q_{2}(\bfy))}}2,
$$
where the factor $2$ corresponds to the  possible parameterisations 
$
(x_1,x_2)=d^{-1}(Q_2(\y),Q_1(\y))$  and $(x_1,x_2)=-d^{-1}(Q_2(\y),Q_1(\y))$,
and we have put 
\begin{equation}\label{eq:BXY}
\mathcal{B}(Y,d)=\left\{\bfy\in\mathbb{\RR}^{4}: |\bfy|\sim Y,~ B^{\frac{1}{4}+\eta}\leq
\frac{\max(|Q_{1}(\bfy)|,|Q_2(\y)|)}{d}\leq  \frac{B}{|\y|^2}\right\}.
\end{equation}
The assumption  $Z(\RR)=\emptyset$ implies that $\max(|Q_{1}(\bfy)|,|Q_2(\y)|)\asymp Y^2$ for any 
$\y\in \RR^4$ such that
$|\y|\sim Y$.
Hence
$
D_1\ll d\ll D_2
$
where
\begin{equation}
\label{eq:D1D2}
D_1=\frac{Y^4}{B} \quad \text{ and }\quad  D_2=\frac{Y^2}{B^{\frac{1}{4}+\eta}}.
\end{equation}
It follows that 
$$
\#\tilde\cM(Y)=
\sum_{D_1\ll d\ll D_2}\sum_{\substack{\bfy\in \ZZp^4\cap \cB(Y,d)\\d=\gcd(Q_{1}(\bfy),Q_{2}(\bfy))}}2.
$$

Let $M,z>0$ be parameters to be chosen in due course and define $
P(z)=\prod_{p\leq z}p.
$
Let
\begin{equation}\label{eq:T1}
T_{1}=
\sum_{D_1\ll d\ll D_2}\sum_{\substack{\bfy\in \ZZp^4\cap \cB(Y,d)\\
d\mid Q_i(\y),~i=1,2\\
\gcd(d^{-1}Q_{1}(\bfy),d^{-1}Q_{2}(\bfy),P(z))=1}}1.
\end{equation}
Then 
we clearly have
\begin{equation}\label{eq:T1-3}
2T_{1}-T_{2}-T_{3}\leq \#\tilde{\mathcal{M}}(X,Y)\leq 2T_{1},
\end{equation}
where
\begin{align*}
T_{2}&=
\sum_{D_1\ll d\ll D_2}\sum_{\substack{\bfy\in \ZZp^4\cap \cB(Y,d)\\
d\mid Q_i(\y),~i=1,2\\
\exists p\in(z,M] \text{ s.t. } p\mid  \frac{Q_{i}(\bfy)}{d},~i=1,2}}1, \qquad
T_{3}=
\sum_{D_1\ll d\ll D_2}\sum_{\substack{\bfy\in \ZZp^4\cap \cB(Y,d)\\
d\mid Q_i(\y),~i=1,2\\
\exists p>M \text{ s.t. } p\mid  \frac{Q_{i}(\bfy)}{d},~i=1,2}}1.
\end{align*}
We shall produce upper bounds for $T_2$ and $T_3$, and  an asymptotic formula for $T_1$. 

\begin{lemma}\label{lem:T2T3}
Let $\ve>0$. Then 
\begin{align*}
T_{2}
&\ll_\eta 
\frac{B}{z^{1-\varepsilon}}+
B^{1-\frac{\eta}{2}}M^3
\quad \text{ and }\quad
T_{3}
\ll_\ve Y^{2+\ve}+ \frac{B(\log B)^{2}}{M}.
\end{align*}
\end{lemma}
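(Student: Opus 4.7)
The common observation is that if a prime $p$ divides both $Q_1(\bfy)/d$ and $Q_2(\bfy)/d$, then $pd$ itself divides $Q_i(\bfy)$ for $i=1,2$. This lets me absorb $p$ into the modulus and feed the problem into the uniform divisibility bounds of Lemma~\ref{lem : upp} (for $T_2$) and Corollary~\ref{cor : M} (for $T_3$).

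\textbf{Bound on $T_2$.} I would relax the region $\mathcal{B}(Y,d)$ to $|\bfy|\sim Y$, break the $d$-range into dyadic pieces $D_1\ll D\ll D_2$, and apply Lemma~\ref{lem : upp} with $e=p$ to each piece. This produces three contributions. The first, $Y^4/(Dp^{2-\varepsilon})$, is decreasing in $D$, so its dyadic sum is controlled at $D=D_1=Y^4/B$ by $B/p^{2-\varepsilon}$; summing $\sum_{p>z}p^{-2+\varepsilon}\ll z^{-1+\varepsilon}$ yields the $B/z^{1-\varepsilon}$ term. The third, $D^2 p^{1+\varepsilon}$, is increasing in $D$ and is dominated at $D=D_2=Y^2/B^{1/4+\eta}$: one has $D_2^2=Y^4 B^{-1/2-2\eta}\leq B^{1-4\eta}$ by the hypothesis $Y\leq B^{3/8-\eta/2}$, and summing over $p\leq M$ gives $\ll B^{1-4\eta}M^{2+\varepsilon}$, which is absorbed into $B^{1-\eta/2}M^3$ provided $\varepsilon$ is small. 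The middle contribution $Y^3/(Dp)^{1/4}$ times a log factor, maximized at $D=D_1$, is $\ll Y^2 B^{1/4}/p^{1/4}\ll B^{1-\eta}/p^{1/4}$ (again using $Y^2\leq B^{3/4-\eta}$), and summing over $p\leq M$ gives $\ll B^{1-\eta}M^{3/4}\log B$, which is swamped by the previous contributions.

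\textbf{Bound on $T_3$.} Writing $d'=pd$ with $p>M$, one has $d'\gg MD_1$, and since $d'\mid Q_i(\bfy)$ with $Q_i(\bfy)\asymp Y^2$, also $d'\ll Y^2$ (otherwise $T_3=0$, which is fine). Hence
\[
T_3\leq \sum_{MD_1\ll D'\ll Y^2}\sum_{d'\asymp D'}\#\{\bfy\in\BZprim^4: |\bfy|\sim Y,\ d'\mid Q_i(\bfy)\text{ for }i=1,2\},
\]
the outer sum running over dyadic $D'$. Corollary~\ref{cor : M} bounds each dyadic block by $Y^{2+\varepsilon}+(Y^4/D')\log(YD')$. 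The first summand contributes $Y^{2+\varepsilon}\log B\ll Y^{2+2\varepsilon}$ after $O(\log B)$ dyadic sums; the second is decreasing in $D'$, so its dyadic sum is dominated at $D'=MD_1$ by $(Y^4/(MD_1))\log B=(B/M)\log B$, times a further factor of $\log B$ coming from the number of dyadic levels, yielding $B(\log B)^2/M$.

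\textbf{Main obstacle.} The delicate point is matching the third contribution in the $T_2$ analysis to the target $B^{1-\eta/2}M^3$: this is precisely where the upper bound $Y\leq B^{3/8-\eta/2}$ must be used, and $\varepsilon$ must be chosen small enough relative to $\eta$. Everything else is routine bookkeeping with dyadic sums, prime sums over $(z,M]$ via Chebyshev's estimate, and the relaxation from $\mathcal{B}(Y,d)$ to $|\bfy|\sim Y$.
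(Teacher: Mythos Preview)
Your treatment of $T_2$ is correct and essentially identical to the paper's: both bound $T_2\le\sum_{p\in(z,M]}U_p$, apply Lemma~\ref{lem : upp} with $e=p$ after breaking the $d$-range into dyadic pieces, and then use $Y\ll B^{3/8-\eta/2}$ to push the middle and third contributions under $B^{1-\eta/2}M^3$.

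For $T_3$ you have the right idea but there is a genuine gap in the displayed inequality
\[
T_3\leq \sum_{MD_1\ll D'\ll Y^2}\ \sum_{d'\asymp D'}\#\{\bfy\in\BZprim^4: |\bfy|\sim Y,\ d'\mid Q_i(\bfy)\}.
\]
The map $(d,\bfy)\mapsto(d',\bfy)$ with $d'=pd$ is \emph{not} injective: for a fixed pair $(d',\bfy)$, every prime $p>M$ dividing $d'$ with $d'/p$ in the admissible range for $d$ produces a distinct preimage. Hence the correct bound carries an extra factor $\omega(d')\ll\log Y$, and this is precisely how the paper obtains the second logarithm: it writes $T_3\le\sum_{p>M}U_p$, passes to $q=pd$, and inserts the multiplicity bound ``there are $O(\log Y)$ prime divisors of $q$'' before applying Corollary~\ref{cor : M}.

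Your attempt to manufacture the missing $\log B$ from ``the number of dyadic levels'' is not valid: the term $(Y^4/D')\log(YD')$ is geometrically decreasing in $D'$, so its dyadic sum is dominated by the first term $D'\asymp MD_1$ with only an $O(1)$ constant, not a further $\log B$. In effect you arrive at the stated bound $B(\log B)^2/M$ through two compensating slips --- an unjustified inequality that drops a $\log Y$, and an incorrect dyadic argument that restores it. Inserting the multiplicity factor $\omega(d')$ before the dyadic decomposition repairs the argument and aligns it with the paper's proof.
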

\begin{proof}
We start by noting that $T_{2}\leq\sum_{p\in (z,M]}U_{p}$, where
\[
U_{p}= 
\sum_{D_1\ll d\ll D_2}\sum_{\substack{\bfy\in \ZZp^4\\ |\y|\sim Y\\ dp\mid Q_{i}(\bfy),~i=1,2}}1,
\]
for any prime  $p$.
We may use 
Lemma \ref{lem : upp} to estimate $U_p$, 
on breaking the $d$-sum into dyadic intervals. In this way, on recalling \eqref{eq:D1D2},  we see that 
\begin{align*}
T_2
&\ll_{\varepsilon} \sum_{p\in(z,M]}\left(\frac{Y^{4}}{D_1p^{2-\varepsilon}}+\frac{Y^{3}}{(D_1p)^{\frac{1}{4}}}\frac{\log D_2p}{\log\log D_2p}+D_2^{2}p^{1+\varepsilon}\right)\\
&\ll\frac{Y^{4}}{D_1z^{1-\varepsilon}} +\frac{Y^{3}M^{\frac{3}{4}}\log B}{D_1^{\frac{1}{4}}}+D_2^{2}M^{2+\varepsilon}\\
&\ll \frac{B}{z^{1-\varepsilon}}+B^{\frac{1}{4}}Y^{2}M^{\frac{3}{4}}\log B+\frac{Y^{4}M^{2+\varepsilon}}{B^{\frac{1}{2}+2\eta}},
\end{align*}
for any $\ve>0$.
Since $Y\ll B^{\frac{3}{8}-\frac{\eta}{2}}$, it follows that 
$$
B^{\frac{1}{4}}Y^{2}M^{\frac{3}{4}}\log B\ll 
B^{1-\eta}M^{\frac{3}{4}}\log B\ll_\eta B^{1-\frac{\eta}{2}}M^3.
$$
Similarly, 
$$
\frac{Y^{4}M^{2+\varepsilon}}{B^{\frac{1}{2}+2\eta}}
\ll  B^{1-4\eta} 
M^{2+\varepsilon}\ll B^{1-\frac{\eta}{2}}M^3,
$$
which completes the proof of the first part of the lemma.

We now turn to 
$
T_{3}
\leq\sum_{p>M}U_{p}.
$
If $pd\mid Q_i(\y)$ for $i=1,2$ then 
 there exists $q\gg D_1M$ such that $q\mid Q_{i}(\bfy)$ for $i=1,2$. 
 Under our assumption  $Z(\RR)=\emptyset$,  it follows that 
$$
T_{3}\ll \log Y \sum_{D_1M\ll q\ll Y^{2}}
\sum_{\substack{\bfy\in \ZZp^4\\ |\y|\sim Y\\ q\mid Q_{i}(\bfy),~i=1,2}}1,
$$
since there  $O(\log Y)$ primes  divisors of $q$. 
Splitting the range of summation over $q$ into dyadic intervals, 
Corollary \ref{cor : M} yields
\begin{align*}
T_3
&\ll \log Y\sum_{D_1M \swarrow  G\nearrow Y^{2}}
\left(Y^{2+\frac{\ve}{2}}+\frac{Y^{4}}{G}\log B\right)
\ll_\ve Y^{2+\ve}+
\frac{Y^{4}}{D_1M}(\log B)^2.
\end{align*}
Recalling \eqref{eq:D1D2}, 
this is also satisfactory for the lemma.
\end{proof}

\subsection{Asymptotic formula for  $T_{1}$}
It remains to deal with the sum \eqref{eq:T1}.
The first step is to reduce the  primitivity condition on $\bfy$ to the requirement that  $\gcd(\bfy,dP(z))=1$. This is the purpose of the following result. 

\begin{lemma}
Let $\ve>0$. Then 
\[
T_{1}=\Sigma_1
+ O_\ve\left( Y^{2+\ve}+\frac{B\log B}{z^{3}}\right),
\]
where
$$
\Sigma_1=
\sum_{D_1\ll d\ll D_2}\sum_{\substack{\bfy\in \ZZ^4\cap \cB(Y,d)\\
\gcd(\y,dP(z))=1\\
d\mid Q_i(\y),~i=1,2\\
\gcd(d^{-1}Q_{1}(\bfy),d^{-1}Q_{2}(\bfy),P(z))=1}}
\hspace{-0.5cm}
1.
$$
\end{lemma}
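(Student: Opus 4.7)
The approach is a short comparison argument based on removing a restricted common factor. First I would note that $\yy \in \BZprim^4$ forces $\gcd(\yy, dP(z))=1$, so the set counted by $T_1$ is contained in the set counted by $\Sigma_1$, whence $0 \le \Sigma_1 - T_1$. A vector $\yy$ counted by $\Sigma_1$ but not by $T_1$ has $g := \gcd(\yy) > 1$; the defining condition $\gcd(\yy, dP(z))=1$ then forces every prime factor of $g$ to exceed $z$ and to be coprime to $d$.

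Next I would make the substitution $\yy = g \yy'$ with $\yy' \in \BZprim^4$. Three things need checking: (i) $|\yy'| \sim Y/g$; (ii) because $\gcd(g,d)=1$, the divisibility $d \mid Q_i(g\yy') = g^2 Q_i(\yy')$ is equivalent to $d \mid Q_i(\yy')$ for $i=1,2$; and (iii) because every prime factor of $g$ exceeds $z$, we have $\gcd(g^2, P(z))=1$, so the coprimality
\[
\gcd\!\left(d^{-1}Q_1(\yy),\, d^{-1}Q_2(\yy),\, P(z)\right)=1
\]
transfers verbatim to $\yy'$. Dropping the refined shape condition from $\cB(Y,d)$ and the coprimality condition with $P(z)$ (both harmless upper bounds here), I arrive at
\[
\Sigma_1 - T_1 \;\le\; \sum_{\substack{g>1\\ p\mid g\Rightarrow p>z}} \sum_{D_1 \ll d \ll D_2} \sum_{\substack{\yy' \in \BZprim^4 \\ |\yy'|\le 2Y/g \\ d\mid Q_i(\yy'),\, i=1,2}} 1.
\]

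Then I would apply Corollary~\ref{cor : M} to the innermost sum after a dyadic decomposition $D \asymp D_1, 2D_1, \dots, D_2$, noting that the number of dyadic scales is $O(\log B)$ and recalling $D_1 = Y^4/B$ from \eqref{eq:D1D2}. The two terms in Corollary~\ref{cor : M} contribute $O_\varepsilon((Y/g)^{2+\varepsilon} \log B)$ and $O((Y/g)^4/D_1)\cdot \log B = O(B\log B / g^4)$, respectively; absorbing the $\log B$ factor into $Y^{\varepsilon}$ (valid since $Y\gg 1$) and renaming $\varepsilon$, I obtain an inner bound $\ll_\varepsilon (Y/g)^{2+\varepsilon} + B\log B/g^4$. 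Since any integer $g>1$ with all prime factors exceeding $z$ satisfies $g > z$, the tail sums are controlled by
\[
\sum_{\substack{g>1\\ p\mid g\Rightarrow p>z}} \frac{1}{g^{2+\varepsilon}} \;\ll\; \frac{1}{z^{1+\varepsilon}} \;\ll\; 1, \qquad \sum_{\substack{g>1\\ p\mid g\Rightarrow p>z}} \frac{1}{g^{4}} \;\ll\; \frac{1}{z^{3}},
\]
leading to $\Sigma_1 - T_1 \ll_\varepsilon Y^{2+\varepsilon} + B \log B / z^3$, which is exactly the claimed error.

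The only step requiring care is the transfer of the $P(z)$-coprimality condition under $\yy = g\yy'$; this succeeds precisely because every prime factor of $g$ exceeds $z$, so $g^{2}$ is invertible modulo $P(z)$. Everything else is a routine lattice/divisibility bookkeeping followed by a direct application of Corollary~\ref{cor : M}.
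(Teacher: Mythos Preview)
Your proposal is correct and follows essentially the same approach as the paper: observe $T_1 \le \Sigma_1$, parametrise the excess by the common factor $g=\gcd(\yy)$ (which must have all prime factors $>z$ and be coprime to $d$), substitute $\yy=g\yy'$, drop auxiliary conditions, apply Corollary~\ref{cor : M} after a dyadic split in $d$, and then sum the tails $\sum_{g>z}g^{-2-\varepsilon}$ and $\sum_{g>z}g^{-4}$. Your explicit verification that the $P(z)$-coprimality condition transfers under $\yy=g\yy'$ (because $g^2$ is a unit modulo $P(z)$) is a nice point that the paper leaves implicit.
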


\begin{proof}
The proof hinges on the observation that 
$$
T_1-
\sum_{D_1\ll d\ll D_2}\sum_{\substack{\bfy\in \ZZ^4\cap \cB(Y,d)\\
\gcd(\y,dP(z))=1\\
d\mid Q_i(\y),~i=1,2\\
\gcd(d^{-1}Q_{1}(\bfy),d^{-1}Q_{2}(\bfy),P(z))=1}}
\hspace{-0.5cm}
1\leq
\sum_{\substack{k>1\\ \gcd(k,P(z))=1}} R_k,
$$
where now
$$
R_k=\sum_{\substack{D_1\ll d\ll D_2\\ \gcd(d,k)=1}}\sum_{\substack{\bfy\in \ZZ^4\cap \cB(Y,d)\\
k=\gcd(y_1,\dots,y_4)\\
d\mid Q_i(\y),~i=1,2\\
\gcd(d^{-1}Q_{1}(\bfy),d^{-1}Q_{2}(\bfy),P(z))=1}}
\hspace{-0.5cm}
1.
$$
But clearly 
\begin{align*}
R_k
&\leq \sum_{\substack{D_1\ll d\ll D_2\\ \gcd(d,k)=1}}
\sum_{\substack{\bfy'\in \ZZp^4\cap k^{-1}\cB(Y,d)\\
d\mid Q_i(\y'),~i=1,2\\
\gcd(d^{-1}Q_{1}(\bfy'),d^{-1}Q_{2}(\bfy'),P(z))=1}}
\hspace{-0.5cm}
1
\leq \sum_{\substack{D_1\ll d\ll D_2}}\sum_{\substack{\bfy'\in \ZZp^4\\ |\yy'|\sim Y/k\\
d\mid Q_i(\y'),~i=1,2}}
\hspace{-0.5cm}
1.
\end{align*}
Breaking the $d$-sum into dyadic intervals, it follows from 
Corollary \ref{cor : M} and \eqref{eq:D1D2} that 
$$
R_k\ll_\ve \left(\frac{Y}{k}\right)^{2+\ve} +\frac{(Y/k)^4\log Y}{D_1}\ll 
\frac{Y^{2+\ve}}{k^2} + \frac{B \log B}{k^4},
$$
for any $\ve>0$. 
Hence
$$
\sum_{\substack{k>1\\ \gcd(k,P(z))=1}} R_k\ll_\ve Y^{2+\ve} + B\log B\sum_{k>z} 
\frac{1}{k^4}.
$$
The remaining sum is $O(z^{-3})$, which thereby  completes the proof of the lemma.
\end{proof}

We now turn our attention to an asymptotic evaluation of the main term $\Sigma_1$ in the previous lemma.
We use the geometry of numbers to prove the following result.

\begin{lemma}\label{lem:sigma1-ev}
We have 
$$
\Sigma_1=
\sum_{\substack{\ell_{1}\mid P(z)\\ 
\ell_{2}\mid P(z)\\
\ell_3\ll D_2
}}\frac{\mu (\ell_{1}\ell_2\ell_{3})}{\ell_1^4\ell_{2}\ell_{3}}
\hspace{-0.3cm}
\sum_{\substack{e\mid \frac{P(z)}{\ell_{1}}\\\ell_{2}\mid e\\\gcd(e,\ell_{3})=1}}
\hspace{-0.3cm}
\frac{\mu (e)}{e^3}
S_{\ell_1,\ell_3,e}+O_\eta\left( 
\left(2^z 
+P(z)^4\right)B^{1-\frac{\eta}{2}}\right),
$$
where
$$
S_{\ell_1,\ell_3,e}=
\sum_{\substack{D_1\ll d\ll D_2\\\gcd(d,\ell_{1})=1\\\ell_{3}\mid d}}
\frac{
\# V_{de}^{\times}
\cdot \Vol\mathcal{B}(Y,d)}{d^3}.
$$
\end{lemma}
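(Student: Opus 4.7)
The plan is to dismantle the three coprimality conditions in the definition of $\Sigma_1$ by M\"obius inversion, convert each resulting constrained summation into a lattice point count, apply Schmidt's lemma to extract the expected main term, and control the error using the successive-minima bound of Lemma \ref{lem : sucmin}. First, I would expand
\[
\mathbf{1}\!\left[\gcd(d^{-1}Q_1(\bfy), d^{-1}Q_2(\bfy), P(z)) = 1\right] = \sum_{e \mid P(z)} \mu(e)\, \mathbf{1}\!\left[de \mid Q_i(\bfy),\ i=1,2\right],
\]
and split $\gcd(\bfy, dP(z)) = 1$ as $\gcd(\bfy, P(z)) = 1$ together with $\gcd(\bfy, d) = 1$. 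M\"obius inversion on each of these produces parameters $\ell_1 \mid P(z)$ with $\ell_1 \mid \bfy$ (contributing the volume-rescaling factor $\mu(\ell_1)/\ell_1^4$ after substituting $\bfy = \ell_1 \bfy'$) and $\ell_3 \mid d$ with $\ell_3 \mid \bfy$ (contributing $\mu(\ell_3)/\ell_3$ after rescaling the congruence modulo $d$). The intermediate parameter $\ell_2$ arises from the interaction between $\ell_1 \mid \bfy$, which automatically forces $\ell_1^2 \mid Q_i(\bfy)$, and the divisibility $e \mid d^{-1}Q_i(\bfy)$: absorbing $\ell_2 := \gcd(e, \ell_1)$ of $e$ into the $\ell_1$-part and renaming the residual divisor yields the stated coprimality constraints $e \mid P(z)/\ell_1$, $\ell_2 \mid e$, $\gcd(e, \ell_3) = 1$, with the factor $\mu(\ell_2)/\ell_2$ recording the rescaled modulus.

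For each fixed tuple $(\ell_1, \ell_2, \ell_3, e, d)$, the resulting inner sum counts vectors $\bfy' \in \ZZ^4$ lying in a rescaled copy of $\cB(Y, d)$ and inside a union of cosets of the lattice $\Lambda_{[\uu], de/(\ell_2\ell_3)}$ indexed by classes $[\uu] \in V_{de}^\times$. Applying Schmidt's lattice-point counting lemma \cite[Lemma 2]{Sc68}, exactly as in the proof of Lemma \ref{lem : upp}, each class contributes a main term $\Vol \cB(Y, d)/(\ell_1^4 d^3)$ plus an error of size $O\!\left(Y^3/s_{1,[\uu], de/(\ell_2\ell_3)}^3 + 1\right)$. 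Summing the main terms over $[\uu] \in V_{de}^\times$ and over $d$ in the allowed range recovers precisely the quantity $S_{\ell_1, \ell_3, e}$, together with the numerical prefactor $\mu(\ell_1\ell_2\ell_3)/(\ell_1^4 \ell_2 \ell_3) \cdot \mu(e)/e^3$ displayed in the statement.

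The error then splits into two pieces. The $O(1)$ term from Schmidt, summed over $[\uu] \in V_{de}^\times$ using Lemma \ref{lem : loc''}, over $d \ll D_2 \ll B^{1/2 - 2\eta}$, and over the at most $2^{\pi(z)} \leq 2^z$ squarefree M\"obius tuples supported on $P(z)$, yields $O_\eta(2^z B^{1-\eta/2})$ on inserting $Y \ll B^{3/8 - \eta/2}$. The successive-minima contribution $Y^3/s_{1,[\uu], \cdot}^3$ is handled by Lemma \ref{lem : sucmin} with $f = \ell_3$ and $m = \ell_2$, whose output of order $(\ell_2\ell_3)^{5/4}(De)^{-1/4}\log(De)/\log\log(De)$, after summing dyadically in $d$ with $D \gg D_1 = Y^4/B$ and then over $\ell_1, \ell_2 \mid P(z)$, $\ell_3 \ll D_2$ and $e \mid P(z)$, aggregates to $O_\eta(P(z)^4 B^{1-\eta/2})$. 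The main obstacle will be the careful combinatorial bookkeeping required to extract the parameter $\ell_2$ naturally and with the correct multiplicative weight from the interplay between $\ell_1 \mid \bfy$ and $e \mid d^{-1}Q_i(\bfy)$, together with verifying that the successive-minima losses from Lemma \ref{lem : sucmin}, which depend on $\ell_2, \ell_3$ and on the $\Delta$-parts of these parameters, combine over the M\"obius tuples to respect the stated aggregate bound.
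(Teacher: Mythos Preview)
Your overall strategy---M\"obius inversion, then Schmidt's lattice count, then Lemma~\ref{lem : sucmin} for the successive-minima error---matches the paper's. Two points need correction.

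First, your account of the parameter $\ell_2$ is garbled. It does not arise from any interaction between $\ell_1\mid\bfy$ and the condition $e\mid d^{-1}Q_i(\bfy)$, and it is certainly not $\gcd(e,\ell_1)$ (note that the statement itself forces $e\mid P(z)/\ell_1$, hence $\gcd(e,\ell_1)=1$). In the paper one performs a \emph{single} M\"obius inversion on $\gcd(\bfy,dP(z))=1$, introducing a square-free $\ell\mid dP(z)$ with $\ell\mid\bfy$, and then factorises $\ell=\ell_1\ell_2\ell_3$ according to whether each prime of $\ell$ is coprime to $de$ (this part is $\ell_1$), divides $e$ (this is $\ell_2$), or divides $d$ but not $e$ (this is $\ell_3$). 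This makes the constraints $e\mid P(z)/\ell_1$, $\ell_2\mid e$, $\gcd(e,\ell_3)=1$ automatic, and---crucially---when one rescales $\bfy\mapsto \bfy/(\ell_1\ell_2\ell_3)$, the lattice condition drops from $\Lambda_{[\uu],de}$ to $\Lambda_{[\uu],de/(\ell_2\ell_3)}$ precisely because $\ell_2\ell_3\mid de$ while $\gcd(\ell_1,de)=1$. This is what produces the determinant factor $(de/(\ell_2\ell_3))^3$ and hence the net weight $1/(\ell_1^4\ell_2\ell_3 e^3)$. Your proposed split into two independent M\"obius inversions on $\gcd(\bfy,P(z))=1$ and $\gcd(\bfy,d)=1$ does not deliver this three-way factorisation cleanly.

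Second, you have the two error contributions reversed. After Lemma~\ref{lem : sucmin}, the successive-minima piece carries the rescaling factor $(\ell_1\ell_2\ell_3)^{-3}$ from the region, so the sums over $\ell_1,\ell_2,\ell_3$ converge and the only loss is $\sum_{e\mid P(z)}e^{-1/4}\leq 2^{\pi(z)}\leq 2^z$; this is the source of the $2^z B^{1-\eta/2}$ term. The $O(1)$ term from Schmidt carries no such decay: one must sum $\#V_{de}^\times\asymp de$ over $d\ll D_2$ and $e\mid P(z)$, picking up $D_2^2$ and genuine powers of $P(z)$, and then sum trivially over $\ell_1\mid P(z)$; this is what yields the $P(z)^4 B^{1-\eta/2}$ term. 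Your claim that the $O(1)$ term costs only $2^z$ underestimates the $e$-sum and the unbounded $\ell_3$-sum.
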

\begin{proof}
Using M\"obius inversion to handle
$\gcd(d^{-1}Q_{1}(\bfy),d^{-1}Q_{2}(\bfy),P(z))=1
$, we see that 
\begin{align*}
\Sigma_1&=
\sum_{e\mid P(z)} \mu (e)\sum_{D_1\ll d\ll D_2}\sum_{\substack{\bfy\in \ZZ^4\cap \cB(Y,d)\\
\gcd(\y,dP(z))=1\\
de\mid Q_i(\y),~i=1,2}}
1
=\sum_{e\mid P(z)}\mu (e)\sum_{\substack{D_1\ll d\ll D_2}}\sum_{[\uu]\in V_{de}^{\times}}\sum_{\substack{\bfy\in \ZZ^4\cap \cB(Y,d)\\\gcd(\bfy,dP(z))=1\\\bfy\in\Lambda_{[\uu],de}}}1,
\end{align*}
where $V_{de}^{\times}$ and $\Lambda_{[\uu],de}$ are given by 
\eqref{eq:def-Vd} and \eqref{eq:lattice-def}, respectively. 
Appealing to M\"obius inversion once more, we obtain
\[
\Sigma_{1}=\sum_{e\mid P(z)}\mu (e)\sum_{\substack{D_1\ll d\ll D_2}}\sum_{[\uu]\in V_{de}^{\times}}\sum_{\substack{\bfy\in \ZZ^4\cap \cB(Y,d)\\\bfy\in\Lambda_{[\uu],de}}}\sum_{\substack{\ell\mid \bfy\\\ell \mid dP(z)}}\mu(\ell).
\]
It will be convenient to observe that
\[
\begin{split}
\sum_{\substack{\ell\mid \bfy\\\ell \mid dP(z)}}\mu(\ell)
&=\sum_{\substack{c\mid \bfy\\ c\mid de}}\sum_{\substack{\ell_{1}\mid\bfy\\ \ell_{1}\mid P(z)\\ \gcd(\ell_{1},de)=1}}\mu (\ell_{1})\mu (c)
=\sum_{\substack{\ell_{1}\mid \bfy\\ \ell_{1}\mid P(z)\\ \gcd(\ell_{1},de)=1}}\sum_{\substack{\ell_{2}\mid \bfy\\ \ell_{2}\mid e}}\sum_{\substack{\ell_{3}\mid \bfy\\ \ell_{3}\mid d\\ \gcd(\ell_{3},e)=1}}\mu (\ell_{1})\mu (\ell_{2})\mu (\ell_{3}).
\end{split}
\]
Note that $\mu (\ell_{1})\mu (\ell_{2})\mu (\ell_{3})=\mu (\ell_{1}\ell_2\ell_{3})$ in the summand. 
But then it follows that 
\[
\begin{split}
\Sigma_{1}
&=\sum_{\substack{\ell_{1}\mid P(z)\\ 
\ell_{2}\mid P(z)\\
\ell_3\ll D_2
}}\mu (\ell_{1}\ell_2\ell_{3})
\sum_{\substack{e\mid \frac{P(z)}{\ell_{1}}\\\ell_{2}\mid e\\\gcd(e,\ell_{3})=1}}\mu (e)\sum_{\substack{D_1\ll d\ll D_2\\\gcd(d,\ell_{1})=1\\\ell_{3}\mid d}}
\sum_{[\uu]\in V_{de}^{\times}}\sum_{\substack{\bfy\in \ZZ^4\cap \cB(Y,d)
\\
\ell_{1}\ell_{2}\ell_{3}\mid \bfy\\\bfy\in\Lambda_{[\uu],de}}}1\\
&=\sum_{\substack{\ell_{1}\mid P(z)\\ 
\ell_{2}\mid P(z)\\
\ell_3\ll D_2
}}\mu (\ell_{1}\ell_2\ell_{3})
\sum_{\substack{e\mid \frac{P(z)}{\ell_{1}}\\\ell_{2}\mid e\\\gcd(e,\ell_{3})=1}}\mu (e)\sum_{\substack{D_1\ll d\ll D_2\\\gcd(d,\ell_{1})=1\\\ell_{3}\mid d}}
\sum_{[\uu]\in V_{de}^{\times}}
N,
\end{split}
\]
where
$$
N=
\#\left(\ZZ^4\cap 
(\ell_1\ell_2\ell_3)^{-1}\cB(Y,d)\cap 
\Lambda_{[\uu],\frac{de}{\ell_2\ell_3}}\right).
$$
Recalling the definition \eqref{eq:BXY} of $\cB(Y,d)$, 
we now appeal to the lattice point counting result worked out by Schmidt \cite[Lemma $2$]{Sc68}. This yields
$$
N=
\frac{\Vol\left((\ell_1\ell_2\ell_3)^{-1}\mathcal{B}\left(Y,d\right)\right)\cdot(\ell_{2}\ell_{3})^{3}}{(de)^{3}}+O\left(1+\frac{Y^{3}}{(\ell_{1}\ell_{2}\ell_{3})^{3}s_{1,[\uu],\frac{de}{\ell_{2}\ell_{3}}}^{3}}\right),
$$
where $s_{1,[\uu],\frac{de}{\ell_{2}\ell_{3}}}$ is the smallest successive minimum of the lattice
$\Lambda_{[\uu],\frac{de}{\ell_2\ell_3}}$.

Since 
$
\Vol\left((\ell_1\ell_2\ell_3)^{-1}\mathcal{B}\left(Y,d\right)\right)=
(\ell_1\ell_2\ell_3)^{-4}\Vol\mathcal{B}(Y,d),
$
it follows that 
\begin{equation}\label{eq:schmidt}
N=
\frac{\Vol\mathcal{B}(Y,d)}{(de)^{3}\ell_1^4\ell_{2}\ell_{3}}+O\left(1+\frac{Y^{3}}{(\ell_{1}\ell_{2}\ell_{3})^{3}s_{1,[\uu],\frac{de}{\ell_{2}\ell_{3}}}^{3}}\right).
\end{equation}
We begin by handling the overall contribution to $\Sigma_1$ from the error terms.
Let $E_1$ denote the overall contribution from the term $O(1)$ and let $E_2$ denote the contribution from the term involving the first successive minimum. Beginning with the latter, 
it follows from 
Lemma \ref{lem : sucmin} that
\[
\sum_{\substack{D_1\ll d\ll D_2\\\ell_{3}\mid d}}\sum_{[\uu]\in V_{de}^{\times}}\frac{Y^{3}}{(\ell_{1}\ell_{2}\ell_{3})^{3}s_{1,[\uu],\frac{de}{\ell_{2}\ell_{3}}}^{3}}
\ll\frac{Y^{3}}{\ell_{1}^{3}(\ell_{2}\ell_{3})^{\frac{7}{4}}}\cdot(\ell_{2,\Delta}\ell_{3,\Delta})
\cdot(D_1e)^{-\frac{1}{4}}\frac{\log B}{\log\log B},
\]
making the overall contribution
\[
E_2\ll B^{1-\eta}\frac{\log B}{\log\log B}\sum_{\ell_{2}\mid P(z)}\frac{\ell_{2,\Delta}}{\ell_{2}^{\frac{7}{4}}}
\sum_{\ell_{3}\ll D_2}\frac{\ell_{3,\Delta}}{\ell_{3}^{\frac{7}{4}}}
\sum_{\substack{e\mid P(z)\\\ell_{2}\mid e}}e^{-\frac{1}{4}},
\]
since $D_1$ satisfies \eqref{eq:D1D2} and $Y\ll B^{\frac{3}{8}-\frac{\eta}{2}}$.
The inner sum is at most $2^z$ and the sums over $\ell_2,\ell_3$ are
$O(1)$, since
$$
\sum_{\ell\in \NN}\frac{\ell_{\Delta}}{\ell^{\frac{7}{4}}}\leq \sum_{\substack{k\in \NN\\ p\mid k\Rightarrow p\mid \Delta}} \frac{1}{k^{\frac{3}{4}}} \sum_{\ell'\in \NN} \frac{1}{{\ell'}^{\frac{7}{4}}}\ll 1.
$$
Hence 
$E_2=O_\eta(2^z B^{1-\frac{\eta}{2}})$, which is satisfactory.

The remaining error term in the lattice point counting result makes the contribution
\begin{equation}\label{eq:booty}
E_1\ll \sum_{\substack{\ell_{1}\mid P(z)\\ 
\ell_{2}\mid P(z)\\
\ell_3\ll D_2
}}|\mu (\ell_{1}\ell_2\ell_{3})|
\sum_{\substack{e\mid P(z)\\\ell_{2}\mid e}}\sum_{\substack{D_1\ll d\ll D_2
\\\ell_{3}\mid d}}
\#V_{de}^{\times}
\end{equation}
to $\Sigma_1$.
We claim that  
\begin{equation}\label{eq:boot}
\sum_{\substack{e\mid P(z)\\\ell_{2}\mid e}}\sum_{\substack{d\ll D_2
\\\ell_{3}\mid d}}
\#V_{de}^{\times}
\ll P(z)^2D_2^2
\frac{(\ell_{2,\Delta}\ell_{3,\Delta})^\ve}{\ell_2\ell_3} \prod_{\substack{p\mid \ell_2\ell_3\\ p\nmid \Delta}} \left(1+O(p^{-\frac{1}{2}})\right).
\end{equation}
To prove this, we  appeal to the second part of Lemma \ref{lem : loc''}, which implies that 
\begin{align*}
\sum_{\substack{d\leq  D
\\\ell_{3}\mid d}}
\#V_{de}^{\times}
&\ll_\ve
\sum_{\substack{d\leq D
\\\ell_{3}\mid d}}
de d_\Delta^\ve e_\Delta^\ve \prod_{\substack{p\mid de\\ p\nmid \Delta}} \left(1+O(p^{-\frac{1}{2}})\right)\\
&\ll 
e e_{\Delta}^\ve 
\sum_{\substack{d'\leq D/\ell_3}}
\ell_3d' (\ell_{3,\Delta}d_\Delta')^\ve \prod_{\substack{p\mid \ell_3d'e\\ p\nmid \Delta}} \left(1+O(p^{-\frac{1}{2}})\right),
\end{align*}
for any $D\geq 1$, 
on writing   $d=\ell_{3}d'$.
Now
\begin{align*}
\sum_{\substack{d'\leq  D/\ell_3}}d'd_\Delta'^\ve 
\prod_{\substack{p\mid d'\\ p\nmid \Delta}} \left(1+O(p^{-\frac{1}{2}})\right)
&\ll \sum_{\substack{k\in \NN\\ 
p\mid k\Rightarrow p\mid \Delta
}} k^{1+\ve} \sum_{d''\leq  D/(k\ell_3)} d''
\prod_{\substack{p\mid d''}} \left(1+O(p^{-\frac{1}{2}})\right)\\
&\ll \frac{D^2}{\ell_3^2} \sum_{\substack{k\in \NN\\ 
p\mid k\Rightarrow p\mid \Delta
}} k^{-1+\ve}.
\end{align*}
This is $O_\ve ({D^2}/{\ell_3^2})$.
Hence we have proved that 
\begin{equation}\label{eq:toe}
\sum_{\substack{d\leq  D
\\\ell_{3}\mid d}}
\#V_{de}^{\times}
\ll 
\frac{e e_{\Delta}^\ve \ell_{3,\Delta}^\ve D^2}{\ell_3} \prod_{\substack{p\mid \ell_3 e\\ p\nmid \Delta}} \left(1+O(p^{-\frac{1}{2}})\right),
\end{equation}
for any $D\geq 1$. 
Making the change
of variable $e=\ell_{2}e'$, 
and noting that
\[
\sum_{\substack{e'\mid \frac{P(z)}{\ell_{2}}}}e'e_\Delta'^{\varepsilon}
\prod_{\substack{p\mid e'\\ p\nmid \Delta}} \left(1+O(p^{-\frac{1}{2}})\right)\ll 
 \frac{P(z)^{2}}{\ell_{2}^{2}},
\]
the claimed bound \eqref{eq:boot} readily follow.

It
follows from  \eqref{eq:D1D2} and
$Y\ll B^{\frac{3}{8}-\frac{\eta}{2}}$ that 
$D_2^2\ll B^{1-4\eta}$.
On inserting \eqref{eq:boot}  into \eqref{eq:booty} and summing trivially over $\ell_1$, a similar analysis leads to the conclusion that 
$$
E_1\ll_\ve P(z)^3D_2^2
\sum_{\ell_{2}\mid P(z)}\sum_{\ell_3\ll D_2}
\frac{(\ell_{2,\Delta}\ell_{3,\Delta})^\ve}{\ell_2\ell_3} \prod_{\substack{p\mid \ell_2\ell_3\\ p\nmid \Delta}} \left(1+O(p^{-\frac{1}{2}})\right)\\
\ll_\eta  P(z)^4B^{1-\frac{\eta}{2}},
$$
which is satisfactory for the lemma.

Finally, we note that the main term in our asymptotic formula \eqref{eq:schmidt} for $N$ gives 
$$
\sum_{\substack{\ell_{1}\mid P(z)\\ 
\ell_{2}\mid P(z)\\
\ell_3\ll D_2
}}\mu (\ell_{1}\ell_2\ell_{3})
\sum_{\substack{e\mid \frac{P(z)}{\ell_{1}}\\\ell_{2}\mid e\\\gcd(e,\ell_{3})=1}}\mu (e)\sum_{\substack{D_1\ll d\ll  D_2\\\gcd(d,\ell_{1})=1\\\ell_{3}\mid d}}
\sum_{[\uu]\in V_{de}^{\times}}
\frac{\Vol\mathcal{B}(Y,d)}{(de)^{3}\ell_1^4\ell_{2}\ell_{3}},
$$
once inserted into our expression for $\Sigma_1$.
Finally, on rearranging the terms, we are led to the main term in the lemma. 
\end{proof}

We now have everything in place to complete the first step in the  
proof of Proposition \ref{pro:latt}. We shall take
$$
z=\frac{\log B}{(\log\log B)^{2}} \quad \text{ and }\quad M=B^{\frac{\eta}{10}}.
$$
Mertens' theorem  implies that $P(z)\ll \e^{\frac{\log B}{(\log\log B)^2}}\ll_\eta B^{\frac{\eta}{16}}$.
Hence we obtain
$$
\Sigma_1=
\hspace{-0.1cm}
\sum_{\substack{\ell_{1}\mid P(z)\\ 
\ell_{2}\mid P(z)\\
\ell_3\ll D
}}\frac{\mu (\ell_{1}\ell_2\ell_{3})}{\ell_1^4\ell_{2}\ell_{3}}
\hspace{-0.3cm}
\sum_{\substack{e\mid \frac{P(z)}{\ell_{1}}\\\ell_{2}\mid e\\\gcd(e,\ell_{3})=1}}
\hspace{-0.3cm}
\frac{\mu (e)}{e^3}
S_{\ell_1,\ell_3,e}+
O_\eta\left(B^{1-\frac{\eta}{4}}\right),
$$
in Lemma \ref{lem:sigma1-ev}.
Next, it follows from Lemma \ref{lem:T2T3} that 
\begin{align*}
T_{2}+T_3
&\ll_\ve 
\frac{B}{z^{1-\varepsilon}}+
B^{1-\frac{\eta}{2}}M^3+
Y^{2+\ve}+ \frac{B(\log B)^{2}}{M}
\ll
 \frac{B}{\sqrt{\log B}}.
\end{align*}
since $Y\ll B^{\frac{3}{8}-\frac{\eta}{2}}$.
Inserting these estimates into \eqref{eq:T1-3}, we obtain
$$
\#\tilde\cM(Y)=2
\sum_{\substack{\ell_{1}\mid P(z)\\ 
\ell_{2}\mid P(z)\\
\ell_3\ll D_2
}}\frac{\mu (\ell_{1}\ell_2\ell_{3})}{\ell_1^4\ell_{2}\ell_{3}}
\sum_{\substack{e\mid \frac{P(z)}{\ell_{1}}\\\ell_{2}\mid e\\\gcd(e,\ell_{3})=1}}
\frac{\mu (e)}{e^3}
S_{\ell_1,\ell_3,e}
+\left(\frac{B}{\sqrt{\log B}}\right).
$$

We next  show that the sum over $\ell_3$ can be truncated with acceptable error,
as in the following result.

\begin{lemma}\label{lem:latt}
We have 
$$
\#\tilde\cM(Y)=2T_0+\left(\frac{B}{\sqrt{\log B}}\right),
$$
where 
$$
T_0=\sum_{\substack{\ell_{1}\mid P(z)\\ 
\ell_{2}\mid P(z)\\
\ell_3\mid P(z)
}}\frac{\mu (\ell_{1}\ell_2\ell_{3})}{\ell_1^4\ell_{2}\ell_{3}}
\hspace{-0.3cm}
\sum_{\substack{e\mid \frac{P(z)}{\ell_{1}}\\\ell_{2}\mid e\\\gcd(e,\ell_{3})=1}}
\hspace{-0.3cm}
\frac{\mu (e)}{e^3}
\sum_{\substack{D_1\ll d\ll D_2\\\gcd(d,\ell_{1})=1\\\ell_{3}\mid d}}
\frac{
\# V_{de}^{\times}
\cdot \Vol\mathcal{B}(Y,d)}{d^3}.
$$
\end{lemma}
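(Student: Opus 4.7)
The plan is to compare the two sums and bound their difference directly. Denote by $\Sigma$ the double sum displayed immediately before the lemma, so that this display reads $\#\tilde\cM(Y) = 2\Sigma + O(B/\sqrt{\log B})$. Since the condition $\ell_3 \mid P(z)$ forces $\ell_3 \leq P(z) = B^{o(1)} \ll D_2$, the difference $\Sigma - T_0$ is exactly the same sum restricted to squarefree $\ell_3$ with $\ell_3 \nmid P(z)$, i.e.\ to those $\ell_3 \leq D_2$ that possess at least one prime divisor $p > z$. It therefore suffices to show $|\Sigma - T_0| \ll B/\sqrt{\log B}$.

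The first step would be to bound the inner quantity $S_{\ell_1,\ell_3,e}$ by discarding the precise shape of $\mathcal{B}(Y,d)$ in favour of the trivial volume estimate $\Vol\mathcal{B}(Y,d) \ll Y^4$ and by invoking the second part of Lemma \ref{lem : loc''} to control $\#V_{de}^\times$. Substituting $d = \ell_3 d'$ and carrying out the tail sum over $d' \geq D_1/\ell_3$ then yields
\[
S_{\ell_1,\ell_3,e} \ll_\ve \frac{Y^{4\ve} e (e\ell_3)_\Delta^\ve}{\ell_3^{1+\ve} D_1^{1-\ve}} \prod_{\substack{p \mid e\ell_3 \\ p \nmid \Delta}} \left(1 + O(p^{-1/2})\right).
\]
Since $D_1 = Y^4/B$ and $Y \ll B^{3/8-\eta/2}$, the prefactor $Y^{4\ve}/D_1^{1-\ve}$ is absorbed into $B^{1-\ve/2}$ after rescaling $\ve$.

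Next, I would exploit the hypothesis $\ell_3 \nmid P(z)$ to extract a prime $p > z$ with $p \mid \ell_3$. For $B$ sufficiently large we have $z > \Delta$, so $p \nmid \Delta$; under the decomposition $\ell_3 = p\ell_3'$ the $\Delta$-part and the Euler correction then factor multiplicatively, and summing over $p$ first gives
\[
\sum_{\substack{\ell_3 \geq 1\\ \ell_3 \nmid P(z)}} \frac{(\ell_3)_\Delta^\ve \prod_{p \mid \ell_3,\, p \nmid \Delta}(1+O(p^{-1/2}))}{\ell_3^{2+\ve}} \ll \log B \cdot \sum_{p > z} \frac{1}{p^2} \ll \frac{\log B}{z},
\]
where the $\log B$ accommodates the possibly non-unique choice of $p$. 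The outer sums over $\ell_1,\ell_2,e$ are absolutely convergent Euler-type series of size $O(1)$. Collecting, one obtains $|\Sigma - T_0| \ll B^{1-\ve/2} \log B / z$; with $z = \log B/(\log\log B)^2$ this becomes $O(B^{1-\ve/2}(\log\log B)^2) = o(B/\sqrt{\log B})$, which is the required bound.

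The main technical obstacle is the careful bookkeeping of the $\Delta$-parts and of the $\prod(1+O(p^{-1/2}))$-corrections propagating from Lemma \ref{lem : loc''}; once the substitutions $d = \ell_3 d'$ and $\ell_3 = p\ell_3'$ separate these multiplicatively, the remainder reduces to elementary tail estimates. It is also noteworthy that the argument makes no use of the $\mu(\ell_3)$ sign, since the gain $1/z$ is produced purely by the sieve hypothesis $\ell_3\nmid P(z)$.
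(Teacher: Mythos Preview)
Your strategy matches the paper's: bound the inner quantity $S_{\ell_1,\ell_3,e}$ via the trivial volume estimate and the second part of Lemma~\ref{lem : loc''}, then exploit $\ell_3\nmid P(z)$. However, there is a genuine slip in your execution.

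Your displayed bound for $S_{\ell_1,\ell_3,e}$ has lost the volume factor: the factor $Y^{4\ve}$ should be $Y^4$ (coming from $\Vol\mathcal{B}(Y,d)\ll Y^4$). With the correct factor, your ``prefactor'' becomes
\[
\frac{Y^{4}}{D_1^{1-\ve}}=\frac{Y^4}{(Y^4/B)^{1-\ve}}=B^{1-\ve}\,Y^{4\ve}\ll B^{1+(1/2-2\eta)\ve},
\]
since $Y\ll B^{3/8-\eta/2}$. This is a genuine power of $B$ larger than $B$, and the saving of $1/z\asymp(\log\log B)^2/\log B$ from your prime-extraction cannot absorb it. The culprit is that you bounded the tail $\sum_{d'\gg D_1/\ell_3} d'^{-2}\cdot(\text{Euler corrections})$ termwise, putting the $\ve$-loss on the exponent of $D_1$. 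The paper instead appeals to \eqref{eq:toe}, which already averages the Euler corrections over $d$ and returns a clean power $D^2$; partial summation then gives exactly $Y^4/D_1=B$ with the $\ve$-losses sitting on $\ell_2$ and $\ell_3$ only. This yields
\[
|\Sigma-T_0|\ll_\ve \frac{Y^4}{D_1}\sum_{\ell_3>z}\frac{1}{\ell_3^{2-\ve}}\ll \frac{B}{z^{1-\ve}},
\]
which is $\ll B/\sqrt{\log B}$ for any $\ve<1/2$.

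One further simplification: your prime-extraction step is unnecessary. Since $\mu(\ell_1\ell_2\ell_3)\neq 0$ forces $\ell_3$ to be squarefree, the condition $\ell_3\nmid P(z)$ already implies $\ell_3>z$ outright; the paper simply sums the tail $\sum_{\ell_3>z}\ell_3^{-(2-\ve)}$ directly, avoiding the overcounting and the auxiliary $\log B$ factor.
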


\begin{proof}
Recall the definition of $S_{\ell_1,\ell_3,e}$ from the statement of Lemma \ref{lem:sigma1-ev}.
Then, in order to prove the lemma, it will suffice to bound
\begin{align*}
E&=
\sum_{\substack{\ell_{1}\mid P(z)\\ 
\ell_{2}\mid P(z)
}}
\sum_{\substack{\ell_3\ll D_2\\ \ell_3\nmid P(z)}}
\frac{|\mu (\ell_{1}\ell_2\ell_{3})|}{\ell_1^4\ell_{2}\ell_{3}}
\sum_{\substack{e\mid P(z)\\\ell_{2}\mid e}}
\frac{|\mu (e)|}{e^3}
S_{\ell_1,\ell_3,e}\\
&\ll 
Y^4
\sum_{\substack{\ell_{1}\mid P(z)\\ 
\ell_{2}\mid P(z)
}}
\sum_{\substack{\ell_3\ll D_2\\ \ell_3\nmid P(z)}}
\frac{1}{\ell_1^4\ell_{2}\ell_{3}}
\sum_{\substack{e\mid P(z)\\\ell_{2}\mid e}}
\frac{1}{e^3}
\sum_{\substack{D_1\ll d\ll D_2\\\ell_{3}\mid d}}
\frac{\# V_{de}^{\times}}{d^3}.
\end{align*}
A modest reworking of the proof of 
\eqref{eq:boot}, using partial summation and \eqref{eq:toe}  to incorporate the weight $(de)^{-3}$, easily yields
$$
E\ll_\ve
\frac{Y^4}{D_1 }
\sum_{\substack{\ell_{1}\mid P(z)\\ 
\ell_{2}\mid P(z)
}}
\sum_{\substack{\ell_3\ll D_2\\ \ell_3\nmid P(z)}}
\frac{1}{\ell_1^4\ell_{2}^{3-\ve}\ell_{3}^{2-\ve}}\ll_\ve
\frac{Y^4}{D_1 }
\sum_{\substack{\ell_3>z}}
\frac{1}{\ell_{3}^{2-\ve}}\ll \frac{B}{z^{1-\ve}},
$$
by \eqref{eq:D1D2}.
Our choice of $z$ ensures this is satisfactory.
\end{proof}

\subsection{Asymptotic formula for  $T_{0}$}

The final step in the proof of Proposition \ref{pro:latt}
is to  analyse the main term in 
Lemma \ref{lem:latt}, in which we recall that 
$\mathcal{B}(Y,d)$ is given by \eqref{eq:BXY}.
Making the change of variables $e=\ell_2e'$ and $d=\ell_3d'$, we find that 
\begin{equation}\label{eq:T0''}
T_0=\sum_{\substack{\ell_{1}\mid P(z)\\ 
\ell_{2}\mid P(z)\\
\ell_3\mid P(z)
}}\frac{\mu (\ell_{1}\ell_2\ell_{3})\mu(\ell_2)}{\ell_1^4\ell_{2}^4\ell_{3}^4}
\hspace{-0.3cm}
\sum_{\substack{e'\mid \frac{P(z)}{\ell_{1}\ell_2}\\\gcd(e',\ell_{3})=1}}
\hspace{-0.3cm}
\frac{\mu (e')}{e'^3}Q_{\ell_1,\ell_2,\ell_3,e'},
\end{equation}
where
\begin{equation}\begin{split}\label{eq:check}
Q_{\ell_1,\ell_2,\ell_3,e'}
&=
\sum_{\substack{D_1/\ell_3\ll d'\ll D_2/\ell_3\\\gcd(d',\ell_{1})=1}}
\frac{
\# V_{\ell_2\ell_3 d'e'}^{\times}
\cdot \Vol\mathcal{B}(Y,\ell_3 d')}{d'^3}\\
&=\int_{\substack{\y\in \RR^4\\ |\y|\sim Y}} \sum_{\substack{
D_{1,\y}\leq d' \leq D_{2,\y}\\
\gcd(d',\ell_{1})=1}}
\frac{
\# V_{\ell_2\ell_3 d'e'}^{\times}
}{d'^3} \mathrm{d}\y
\end{split}
\end{equation}
and 
\begin{equation}\label{eq:Dy}
D_{1,\y}= \frac{|\y|^2\max(|Q_1(\y)|,|Q_2(\y)|)}{\ell_3 B}, \quad D_{2,\y}= \frac{\max(|Q_1(\y)|,|Q_2(\y)|)}{\ell_3 B^{\frac{1}{4}+\eta}}.
\end{equation}
Here, we have observed that  the condition  $D_1/\ell_1\ll d'\ll D_2/\ell_3$ is implied by the condition 
$D_{1,\y}\leq d' \leq D_{2,\y}$, since $Z(\RR)= \emptyset$.
We are therefore led to prove the following result.

\begin{lemma}\label{lem:quoc}
Let $Z_2>Z_1>0$ and let  $c,\ell\in \NN$ be square-free coprime integers. Then 
\[
\begin{split}
\sum_{\substack{Z_1\leq d\leq Z_2\\(d,\ell)=1}}\frac{\#V_{cd}^{\times}}{ d^{3}}&=C\#V_{c}^{\times}h_1(c)
h_2(\ell)\left(\frac{1}{Z_1}-\frac{1}{Z_2}\right)+O_{\varepsilon}(c^{1+\varepsilon}Z_1^{-\frac{3}{2}+\ve}),
\end{split}
\]
for any $\varepsilon >0$, where
\[
\begin{split}
C&=\prod_{p}\left(1-\frac{1}{p}\right)\left(1+\sum_{a=1}^{\infty}\frac{\#V_{p^{a}}^{\times}}{p^{2a}}\right)\\
h_1(c)
&=\prod_{p|c}\left(1+\sum_{a=1}^{\infty}\frac{\#V_{p^{a}}^{\times}}{p^{2a}}\right)^{-1}\left(1+\frac{1}{\#V_{p}^{\times}}\sum_{a=1}^{\infty}\frac{\#V_{p^{a+1}}^{\times}}{p^{2a}}\right)\\
h_2(\ell)
&=\prod_{p|\ell}\left(1+\sum_{a=1}^{\infty}\frac{\#V_{p^{a}}^{\times}}{p^{2a}}\right)^{-1}.
\end{split}
\]
Moreover, $h_1(c)=O_\ve(c^\ve)$ and $h_2(\ell)=O_\ve(\ell^\ve)$, for any $\ve>0$.
\end{lemma}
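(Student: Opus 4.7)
The plan is to treat this as a weighted partial-sum problem for a Dirichlet series, and to factor out a copy of $\zeta(s-1)$. The initial observation is that $f(n):=\#V_n^{\times}$ is multiplicative: the Chinese remainder theorem applied to $(\ZZ/cd\ZZ)^4$ and to $(\ZZ/cd\ZZ)^{\times}$ yields a bijection $V_{cd}^{\times}\simeq V_c^{\times}\times V_d^{\times}$ whenever $\gcd(c,d)=1$. Since $c$ is squarefree and $\gcd(c,\ell)=1$, the Dirichlet series
\[
F(s) \;=\; \sum_{\substack{d\ge 1\\\gcd(d,\ell)=1}} \frac{f(cd)}{d^{s}}
\]
then admits an Euler product whose local factor at $p\mid \ell$ is the trivial $f(1)=1$, at $p\nmid c\ell$ is $1+\sum_{a\ge 1}f(p^{a})/p^{as}$, and at $p\mid c$ is $\sum_{a\ge 0}f(p^{a+1})/p^{as}$.

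Writing $F(s)=\zeta(s-1)G(s)$ with $G(s)=\prod_p(1-p^{1-s})L_p(s)$, Corollary~\ref{cor:stone} gives $(1-p^{1-s})L_p(s)=1+O(p^{1/2-\Re s})$ for $p\nmid c\ell\Delta$, so $G(s)$ is absolutely convergent on $\Re s>3/2$. Matching the local factors of $G$ at $s=2$ against the definitions of $C$, $h_1(c)$ and $h_2(\ell)$, using in particular that the constant term of the Euler factor of $G$ at $p\mid c$ equals $f(p)$ and that $\prod_{p\mid c}f(p)=\#V_c^{\times}$ by multiplicativity, one directly obtains $G(2)=C\cdot\#V_c^{\times}\cdot h_1(c)\cdot h_2(\ell)$. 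If $g(m)$ denotes the Dirichlet coefficients of $G$, then $F(s)=\zeta(s-1)G(s)$ reads coefficient-wise as $f(cn)\,\mathbf{1}_{\gcd(n,\ell)=1}=n\sum_{m\mid n}g(m)/m$, and summing this over $n\le Z$ with the elementary identity $\lfloor Z/m\rfloor(\lfloor Z/m\rfloor+1)/2=Z^{2}/(2m^{2})+O(Z/m)$ produces
\[
\sum_{\substack{n\le Z\\\gcd(n,\ell)=1}} f(cn) \;=\; \tfrac{1}{2}Z^{2}G(2) + O_\varepsilon\bigl(c^{1+\varepsilon}Z^{3/2+\varepsilon}\bigr).
\]

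To control the error I would apply Rankin's trick to $\sum_{m>Z}|g(m)|/m^{2}\ll Z^{-1/2+\varepsilon}\sum_m|g(m)|/m^{3/2+\varepsilon}$, bounding $|g(m)|$ multiplicatively via $|g_p^{(a)}|=|f(p^a)-pf(p^{a-1})|=O(p^{a-1/2})$ at $p\nmid c\ell\Delta$ (Corollary~\ref{cor:stone}) and $O_\varepsilon(p^{a(1+\varepsilon)})$ at $p\mid\Delta$ (Lemma~\ref{lem : loc''}); the $O(p)$ constant terms at the primes $p\mid c$ accumulate to the $c^{1+\varepsilon}$ prefactor. Abel summation against the weight $t^{-3}$ then converts the partial-sum asymptotic into $G(2)(1/Z_{1}-1/Z_{2})+O_\varepsilon(c^{1+\varepsilon}Z_{1}^{-3/2+\varepsilon})$, while the bounds $h_1(c)=O_\varepsilon(c^\varepsilon)$ and $h_2(\ell)=O_\varepsilon(\ell^\varepsilon)$ follow from Corollary~\ref{cor:stone}, which shows each local factor in either product is $1+O(p^{-3/2})$ at primes $p\nmid\Delta$, with only finitely many bad primes to handle separately. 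The main obstacle will be to obtain the error with its sharp $c^{1+\varepsilon}$ dependence: this requires cleanly isolating the contribution of primes dividing $c$ (where the constant term $f(p)\sim p$ in the Euler factor of $G$ produces the factor $\#V_c^\times$) from the generic cancellation $f(p^a)-pf(p^{a-1})=O(p^{a-1/2})$ at $p\nmid c\ell\Delta$ responsible for the $Z^{1/2}$ saving.
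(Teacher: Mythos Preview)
Your proposal is correct and follows essentially the same route as the paper: both define the relevant multiplicative function, factor out $\zeta(s-1)$ (equivalently, write $f=1*g$), bound $g$ via Corollary~\ref{cor:stone} and Lemma~\ref{lem : loc''} to obtain a partial-sum asymptotic with $O(x^{1/2+\varepsilon})$ error, and finish by partial summation, with the $c^{1+\varepsilon}$ coming from $\#V_c^{\times}=O_\varepsilon(c^{1+\varepsilon})$. The only cosmetic difference is that the paper normalises first by $d\,\#V_c^{\times}$ so as to work with a function of average order $1$ and convolve against $\mathbf{1}$, whereas you convolve the unnormalised function against $\mathrm{id}$; these are equivalent bookkeeping choices.
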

\begin{proof}
If $0<Z_1<1$, the result is trivial, so we may assume $Z_1\geq 1$. We begin by defining the multiplicative function
\[
f(p^{a})=
\begin{cases}
p^{-a}\#V_{p^{a}}^{\times}								&\text{ if $p\nmid c\ell$,}\\
p^{-a}\#V_{p^{a+1}}^{\times}/\#V_{p}^{\times}		&\text{ if $p\mid c$,}\\
0													&\text{ if $p\mid \ell$.}
\end{cases}
\]
It follows form the Hasse bound that 
$
\#V_{p}^{\times}=p-T_{p}+1,
$
where
 $T_{p}=O(\sqrt{p})$.
Moreover, the Chinese remainder theorem implies that 
$$
f(d)=\frac{
\#V_{cd}^{\times}}{d\#V_{c}^{\times}}
$$ 
for any 
 $d\in\mathbb{N}$ such that $\gcd(d,\ell)=1$.
 Hence we can write
 $$
 \sum_{\substack{Z_1\leq d\leq Z_2\\(d,\ell)=1}}\frac{\#V_{cd}^{\times}}{ d^{3}}=
\#V_{c}^{\times} \sum_{\substack{Z_1\leq d\leq Z_2}}\frac{f(d)}{ d^{2}}.
$$ 
We claim that 
\begin{equation}\label{eq:sand}
\sum_{d\leq x} f(d) =C h_1(c)h_2(\ell) x +O\left(x^{\frac{1}{2}+\ve}\right),
\end{equation}
for any $\ve>0$, 
where $C, h_1,h_2$ are defined in the statement of the lemma.
Once achieved, on recalling the bound $\#V_c^\times=O_\ve(c^{1+\ve})$ from Lemma \ref{lem : loc''}, 
an application of partial summation easily leads to the statement of the lemma. 
Finally, the bounds on $h_1$ and $h_2$ in the last part of the lemma follow easily from 
Lemma \ref{lem : loc''}.

To prove \eqref{eq:sand}, we write  $f=1*g$  as a Dirichlet convolution, noting that 
$
g(p^a)=
f(p^a)-f(p^{a-1})$
for any prime $p$ and $a\in 
\NN$. 
Suppose first that $p\nmid \Delta$. Then it follows from Corollary~\ref{cor:stone} that 
$$
g(p^a)
=
\begin{cases}
O(p^{-\frac{1}{2}})  &\text{ if $a=1$ and $p\nmid \ell$,}\\
-1  &\text{ if $a=1$ and $p\mid \ell$,}\\
0 &\text{ otherwise}.
\end{cases}
$$
On the other hand, if $p\mid \Delta$ then \eqref{eq:stone}
and Lemma~\ref{lem : loc2'} together yield
$
g(p^a)
=O(a).
$
Given any $k\in \NN$, 
it therefore follows that 
 $
 g(k)\ll_\ve k^\ve/\sqrt{k_1},
 $
 where $k_1$ is the  part of $k$ that is coprime to $\ell \Delta$.
 Given this, we easily conclude that 
\begin{align*}
\sum_{d\leq x} f(d) 
&=\sum_{k\leq x} g(k) \left(\frac{x}{k} +O(1)\right)
=\gamma x+O(x^{\frac{1}{2}+\ve}), 
\end{align*}
for any $\ve>0$, where
$$
\gamma=\sum_{k=1}^\infty \frac{g(k)}{k}=
\prod_p \left(1+\sum_{a=1}^\infty \frac{g(p^a)}{p^a}\right)=
\prod_p \left(1-\frac{1}{p}\right)\left(1+\sum_{a=1}^\infty \frac{f(p^a)}{p^a}\right).
$$
Inserting the definition of $f(p^a)$, it is straightforward to see that 
$\gamma=Ch_1(c)h_2(\ell)$, as required to complete the proof of the lemma.
\end{proof}

We now seek to 
apply this  in \eqref{eq:check}.
Note from  \eqref{eq:Dy}
 that   $D_{1,\y}\gg Y^4/(\ell_3B)$, since we are assuming that $Z(\RR)=\emptyset$. Hence
we obtain
\begin{align*}
Q_{\ell_1,\ell_2,\ell_3,e'}
=~&
C\#V_{\ell_2\ell_3e'}^{\times}h_1(\ell_2\ell_3e')
h_2(\ell_1) \nu(Y)  +O_\ve\left( \frac{(\ell_2e')^{1+\ve}\ell_3^{\frac{5}{2}}B^{\frac{3}{2}+\ve}}{Y^{2}}\right),
\end{align*}
where
$$
\nu(Y)=\int_{\substack{\y\in \RR^4\\ |\y|\sim Y}} 
\left(\frac{1}{D_{1,\y}}-\frac{1}{D_{2,\y}}\right) \mathrm{d}\y 
$$
and  $D_{1,\y}, D_{2,\y}$ are given by \eqref{eq:Dy}.
Clearly
$
\nu(Y)
=\ell_3 B \sigma_\infty(Y)
 +O(\ell_3 B^{\frac{1}{4}+\eta} Y^2),
$
in the notation of \eqref{eq:path}.
Thus $Q_{\ell_1,\ell_2,\ell_3,e'}$ is equal to
\begin{align*}
C\#V_{\ell_2\ell_3e'}^{\times}h_1(\ell_2\ell_3e')
h_2(\ell_1) \ell_3  B\left(\sigma_\infty(Y)+O\left(\frac{Y^2}{B^{\frac{3}{4}-\eta}}\right)\right) +O_\ve\left( \frac{(\ell_2e')^{1+\ve}\ell_3^{\frac{5}{2}}B^{\frac{3}{2}+\ve}}{Y^{2}}\right).
\end{align*}

It is now time to insert this estimate into 
\eqref{eq:T0''}. First, the overall contribution from the error term is 
\begin{align*}
\ll_\ve 
\frac{B^{\frac{3}{2}+\ve}}{Y^{2}}
\sum_{\substack{\ell_{1}\mid P(z)\\ 
\ell_{2}\mid P(z)\\
\ell_3\mid P(z)
}}\frac{1}{\ell_1^4\ell_{2}^{3-\ve}\ell_{3}^\frac{3}{2}}
\sum_{\substack{e'\mid \frac{P(z)}{\ell_{1}\ell_2}}}
\frac{1}{e'^{2-\ve}}
\ll_\ve \frac{B^{\frac{3}{2}+\ve}}{Y^{2}}.
\end{align*}
Adjoining the  contribution from the main term, we therefore obtain
\begin{equation}\label{eq:lip}
T_0=
C  \cdot J(z) \cdot B\left(\sigma_\infty(Y)+O\left(\frac{Y^2}{B^{\frac{3}{4}-\eta}}\right)\right)
+ O_\ve\left(
 \frac{B^{\frac{3}{2}+\ve}}{Y^{2}}
\right),
\end{equation}
for any $\ve>0$, 
where
\begin{align*}
J(z)
&=
\sum_{\substack{\ell_{1}\mid P(z)\\ 
\ell_{2}\mid P(z)\\
\ell_3\mid P(z)
}}
\frac{\mu (\ell_{1}\ell_2\ell_{3})\mu(\ell_2) 
h_1(\ell_2\ell_3e')
h_2(\ell_1)
}{\ell_1^4\ell_{2}^4\ell_{3}^3}
\hspace{-0.3cm}
\sum_{\substack{e'\mid \frac{P(z)}{\ell_{1}\ell_2}\\\gcd(e',\ell_{3})=1}}
\hspace{-0.3cm}
\frac{\mu (e')}{e'^3}
\cdot \#V_{\ell_2\ell_3e'}^{\times}\\
&=
\sum_{\substack{m\mid P(z)}}
\sum_{\substack{\ell_1,\dots,\ell_4\in \NN\\
\ell_1\cdots \ell_4 =m}}
\hspace{-0.2cm}
\frac{\mu(\ell_1)h_2(\ell_1)}{\ell_1^4}
\cdot
\frac{\mu(\ell_2)^2h_1(\ell_2)\#V_{\ell_2}^{\times}}{\ell_2^4}
\frac{\mu(\ell_3)h_1(\ell_3) \#V_{\ell_3}^{\times}}{\ell_3^3}
\cdot 
\frac{\mu (\ell_4)h_1(\ell_4) \#V_{\ell_4}^{\times}}{\ell_4^3}.	
\end{align*}
In view of the second part of  Lemma \ref{lem : loc''} and the bounds on $h_1$ and $h_2$ from Lemma 
\ref{lem:quoc}, 
we can extend the sum over $m$ to all square-free integers, finding that 
$$
J(z)=J+O(z^{-1}),
$$
where
\begin{align*}
J&=
\prod_{p}
\left(1-\frac{h_2(p)}{p^{4}}+\frac{h_1(p)\#V_{p}^{\times}}{p^{4}}-\frac{2h_1(p)\#V_{p}^{\times}}{p^{3}}\right)
\\
&=\prod_{p}
\left(1+\sum_{a=1}^{\infty}\frac{\#V_{p^{a}}^{\times}}{p^{2a}}\right)^{-1}
\left(1+\sum_{a=1}^{\infty}\frac{\#V_{p^{a}}^{\times}}{p^{2a}}-\frac{1}{p^{4}}+\left(\frac{1}{p^{2}}-\frac{2}{p}\right)\sum_{a=1}^{\infty}\frac{\#V_{p^{a}}^{\times}}{p^{2a}}\right).
\end{align*}
Recalling the definition of $C$ from the statement of Lemma \ref{lem:quoc}, it follows that 
$CJ=\mathfrak{S}_1$, in the notation of \eqref{eq:series}.
Returning to \eqref{eq:lip} and observing that $z\geq \sqrt{\log B}$, it therefore follows that 
$$
T_0=
\mathfrak{S}_1 B
\left(\sigma_\infty(Y)+O\left(\frac{Y^2}{B^{\frac{3}{4}-\eta}}\right)\right)
 +O\left(\frac{B}{\sqrt{\log B}} \right)
+ O_\ve\left(
 \frac{B^{\frac{3}{2}+\ve}}{Y^{2}}
\right),
$$
where $\sigma_\infty(Y)$ is given by \eqref{eq:path}. 
Substituting this into 
 Lemma \ref{lem:latt}, 
 and using the lower bound $Y\gg B^{\frac{1}{4}+\frac{\eta}{2}}$,  
 we are finally led to the statement of  Proposition \ref{pro:latt}.

\section{Asymptotics via the circle method}\label{sec:circle-a}

The goal of this section is to prove Proposition \ref{pro:L2}.
Recall the notation 
$
\varDelta(\xx)=\prod_{i=1}^4L_i(\xx)
$
from \eqref{eq:normLideltaLi}.
For any  $\xx$ and any compactly supported weight function $w:\RR^4\to \RR_{\geq 0}$, 
the  singular integral is defined to be 
$$
\sigma_{\infty,w}(\xx)=\int_{-\infty}^{+\infty}\int_{\BR^4}w(\yy)\operatorname{e}\left(-\theta(L_1(\xx)y_1^2+\cdots + L_4(\xx)y_4^2)\right)\operatorname{d}\yy\operatorname{d}\theta.
$$
In the special case that  $w_0$ is the characteristic function of $[-1,1]^4$, we set 
$
\sigma_{\infty}(\xx)=\sigma_{\infty,w_0}(\xx).
$
We have $\sigma_\infty(\lambda \x)=\lambda^{-1}\sigma_\infty(\x)$, for any $\lambda>0$.
Moreover, it follows from \cite[Lemma~4.12]{duke} that 
\begin{equation}\label{eq:brown-bag}
\sigma_{\infty}(\xx) \ll \frac{1}{|\varDelta(\x)|^{1/4}} \quad \text{ and }\quad
\sigma_{\infty,w}(\xx) \ll \frac{1}{|\varDelta(\x)|^{1/4}},
\end{equation}
for any 
 compactly supported smooth weight function $w:\RR^4\to \RR_{\geq 0}$.

Finally, we put 
$$
\mathfrak{S}(\xx)=\mathfrak{S}(Q_{\xx})=\prod_p \sigma_p(\xx),
$$ where 
$$
\sigma_p(\xx)=\lim_{k\to \infty}\frac{\#\left\{\y\in (\ZZ/p^k\ZZ)^4: L_1(\xx)y_1^2+\cdots + L_4(\xx)y_4^2\equiv 0 \bmod{p^k}
\right\}}{p^{3k}}.
$$
With this notation to hand we may now record the first main result of this section, which closely follows the strategy in  \cite[\S~5]{duke}.

\begin{proposition}\label{thm:countingxsmall}
Let $\eta>0$. Then 
	\begin{align*}
	\#\CL_2(B)
=~&\frac{B}{\zeta(2)}\sum_{\substack{\xx\in\BZprim^2\\ 
	B^{2\eta}\leqslant |\xx|\leqslant B^\frac{1}{4}\\ \prod_{i=1}^{4}L_i(\xx)\neq\square }}\frac{\sigma_{\infty}(\xx)\mathfrak{S}(\xx)}{|\xx|}+O(\eta^\frac{1}{2} B\log B)+O_\eta(B^{1-\frac{\eta^2}{32}}).
	\end{align*}
\end{proposition}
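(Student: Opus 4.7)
The plan is to follow the strategy of Browning--Heath-Brown \cite[\S~5]{duke}, viewing \eqref{eq:quadbundle} for fixed $\x$ as the diagonal quadric $Q_\x(\y)=L_1(\x)y_1^2+\cdots+L_4(\x)y_4^2=0$ in $\y$, and estimating the number of primitive $\y$ in the box $|\y|\leq (B/|\x|)^{1/2}$ by the circle method, before summing over $\x\in\BZprim^2$ with $|\x|\leq B^{1/4}$.

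First, I would dispose of the range $|\x|\leq B^{2\eta}$. After dyadic decomposition in $|\x|\sim X$ and $|\y|\sim Y$ with $XY^2\ll B$, Theorem~\ref{t:upper} gives $\#\CM(X,Y)\ll XY^2+X^2Y^{4/3}$. The first term sums to $O(B\log B)$ per dyadic interval in $X$, and there are only $O(\eta\log B)$ such intervals, yielding $O(\eta B\log B)$. The second term is comparatively negligible in this regime since $X$ is so small. The contribution of $(\x,\y)$ with $\prod L_i(\x)=\square$ is discarded via Lemma~\ref{lem:neron}, giving an $O_\eta(1)$ error.

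Second, for $B^{2\eta}<|\x|\leq B^{1/4}$ with $\prod_{i}L_i(\x)\neq\square$, I would strip the primitivity of $\y$ by M\"obius,
\[
\#\{\y\in\BZprim^4:Q_\x(\y)=0,\,|\y|\leq Y_\x\}=\sum_{k\leq Y_\x}\mu(k)\,N(Q_\x;Y_\x/k),\qquad Y_\x=(B/|\x|)^{1/2},
\]
and attack each inner count via the smooth-weight circle method worked out in \cite[\S~4]{duke}. More precisely, I would approximate the sharp cutoff $\chi_{[-1,1]^4}$ by a smooth weight $w_\eta$ with boundary-layer thickness calibrated to $\eta$; the asymptotic formula of \cite[\S~4]{duke} is immediately applicable and yields, uniformly in $\x$ in our range,
\[
N_{w_\eta}(Q_\x;Y_\x/k)=\sigma_{\infty,w_\eta}(\x)\,\mathfrak{S}(\x)\,\frac{Y_\x^2}{k^2}+\text{(power-saving error)},
\]
using the homogeneity $\sigma_{\infty,w_\eta}(\x)\asymp\sigma_\infty(\x)\ll|\x|^{-1}$ from \eqref{eq:brown-bag}. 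The $k$-sum collapses via $\sum_k\mu(k)/k^2=1/\zeta(2)$, producing the factor $\zeta(2)^{-1}$. The arithmetic input $Y_\x^2/|\x|=B/|\x|^2$ then gives the claimed main-term summand $B\sigma_\infty(\x)\mathfrak{S}(\x)/(\zeta(2)|\x|)$, and the primitivity tail $k>Y_\x$ is bounded trivially by the M\"obius-assisted Lemma~\ref{lem : upp}-type estimates.

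The main obstacle will be the book-keeping of two competing error sources: the cutoff-smoothing error, whose natural size is proportional to the boundary-layer thickness and summing dyadically over $\x$ contributes $O(\eta^{1/2}B\log B)$ once the thickness is optimised; and the pointwise circle-method error, which is of the shape $O(Y_\x^{2-\delta}/|\x|^{c\eta})$ and, summed over $|\x|\geq B^{2\eta}$, must collapse to $O_\eta(B^{1-\eta^2/32})$ -- this requires exploiting the lower bound $|\x|\geq B^{2\eta}$ inside the power-saving exponent. Controlling the singular integral for degenerate $\x$ (where some $L_i(\x)$ is very small) uses Lemma~\ref{le:smallvalueLi} together with the bound $\sigma_\infty(\x)\ll|\varDelta(\x)|^{-1/4}$; the summability of $\sum_\x\sigma_\infty(\x)\mathfrak{S}(\x)/|\x|$ is automatic from this bound and standard estimates for $\mathfrak{S}(\x)$ obtained via Lemma~\ref{lem:interview'}.
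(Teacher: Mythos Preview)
Your approach is essentially the paper's: follow \cite[\S~5]{duke}, apply the smooth-weight circle method fibre by fibre in the range $B^{2\eta}\ll|\x|\ll B^{1/4}$, and mop up the small-$|\x|$ range with Theorem~\ref{t:upper}. However, two points are genuinely understated rather than mere book-keeping.

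First, the circle-method input \cite[Lemma~5.2]{duke} carries hypotheses on the fibre: it needs $|L_i(\x)|\gg|\x|^{1-\eta}$ for every $i$ and $\deltabad(\x)\ll|\x|^\eta$. You must excise the exceptional $\x$ before applying it, and this is exactly where Lemma~\ref{lem:auxbounddelta} and Proposition~\ref{prop:M2Xbig} enter. You then have to reinsert those $\x$ into the main-term sum, which requires the estimates for $S_1(X)$ and $S_2(X)$ in \eqref{eq:SX1}--\eqref{eq:SX2}. The paper also handles $B^{1/4-4\eta}\ll|\x|\leq B^{1/4}$ separately via Theorem~\ref{t:upper}, which your write-up does not isolate.

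Second, and more seriously, the claim that ``summability of $\sum_\x\sigma_\infty(\x)\mathfrak{S}(\x)/|\x|$ is automatic'' is where the real work hides. The smoothing error, after invoking \cite[Lemma~4.13]{duke}, contributes $\eta^{1/2}B\cdot S_0(X)$ per dyadic block, with $S_0(X)=\sum_{|\x|\sim X}\mathfrak{S}(\x)/(|\x||\varDelta(\x)|^{1/4})$. The naive bound $\mathfrak{S}(\x)\ll_\varepsilon|\x|^\varepsilon$ from Lemma~\ref{lem:interview'} and \eqref{eq:calculus} gives only $S_0(X)\ll X^\varepsilon$, which after summing over $O(\log B)$ dyadic blocks yields $B^{1+\varepsilon}$ rather than $B\log B$. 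To get $S_0(X)=O(1)$ one must exploit cancellation in the average of $L(1,\chi_{Q_\x})$ over $\x$; this is precisely the content of Lemma~\ref{lem:Wi1}, which in turn rests on the Weil-type character-sum estimate of Lemma~\ref{lem:PV-prep}. This is not a routine step and should be flagged as the key analytic input.

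A minor remark: the M\"obius stripping of the $\y$-primitivity is unnecessary, since \cite[Lemma~5.2]{duke} already delivers the $\zeta(2)^{-1}$; and your line ``$Y_\x^2/|\x|=B/|\x|^2$'' is garbled (the correct identity is simply $Y_\x^2=B/|\x|$, which directly produces the summand $B\sigma_\infty(\x)\mathfrak{S}(\x)/|\x|$).
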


The asymptotic behaviour of the  leading term in Proposition \ref{thm:countingxsmall} is  our next milestone and 
is summarised in the following result. 

\begin{proposition}\label{prop:lid}
Let $\eta>0$. Then 
\begin{align*}
	\sum_{\substack{\xx\in\BZprim^2\\ B^{2\eta}\leqslant |\xx|\leqslant B^\frac{1}{4}\\ \prod_{i=1}^{4}L_i(\xx)\neq\square }}\frac{\sigma_{\infty}(\xx)\mathfrak{S}(\xx)}{|\xx|}
= \frac{\tau_\infty \mathfrak{S}_2}{4\zeta(2)}\log B+O(\eta\log B)+O_\eta(1),
\end{align*}
where $\tau_\infty$ is given by \eqref{eq:tau-inf} and $\mathfrak{S}_2$ is given by \eqref{eq:def-SS2}.
\end{proposition}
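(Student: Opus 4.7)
The plan is to unfold $\mathfrak{S}(\x)$ as a sum over an auxiliary modulus $q$, interchange orders of summation, use Möbius inversion to handle the primitivity of $\x$, approximate the resulting residue-class sum by an integral, and finally recognise the constants as $\tau_\infty$ and $\mathfrak{S}_2$.

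Concretely, I would start from the standard singular-series expansion
$$
\mathfrak{S}(\x)=\sum_{q=1}^{\infty}\frac{c_q(\x)}{q^4},\qquad c_q(\x)=\sum_{\substack{a\bmod q\\(a,q)=1}}\sum_{\bb\bmod q}\operatorname{e}_q\!\bigl(aQ_\x(\bb)\bigr),
$$
and truncate to $q\leq Q_0=B^{c\eta}$ for a suitably small $c>0$. For each such $q$, only residues $\cc\bmod q$ with $(\cc,q)=1$ can be reached by primitive $\x$, so Möbius inversion on $\gcd(\x)=1$, together with the homogeneity $\sigma_\infty(d\x')=d^{-1}\sigma_\infty(\x')$, gives
$$
\sum_{\substack{\x\equiv\cc\bmod q,\ \x\in\BZprim^2\\ B^{2\eta}\leq|\x|\leq B^{1/4}}}\frac{\sigma_\infty(\x)}{|\x|}=\sum_{\substack{d\geq 1\\(d,q)=1}}\frac{\mu(d)}{d^{2}}\sum_{\substack{\x'\equiv\overline{d}\cc\bmod q\\ B^{2\eta}/d\leq|\x'|\leq B^{1/4}/d}}\frac{\sigma_\infty(\x')}{|\x'|}.
$$
The inner lattice sum I would replace by $q^{-2}$ times the corresponding integral, with a standard boundary error. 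Because $F(\x):=\sigma_\infty(\x)/|\x|$ is homogeneous of degree $-2$ and $|\cdot|$ is the sup-norm, a polar-type calculation yields
$$
\int_{B^{2\eta}/d\leq|\x'|\leq B^{1/4}/d}F(\x')\,\mathrm{d}\x'=\bigl(\tfrac14-2\eta\bigr)(\log B)\int_{|\omega|=1}\sigma_\infty(\omega)\,\mathrm{d}\sigma(\omega)=\bigl(\tfrac14-2\eta\bigr)\tau_\infty\log B,
$$
the second equality following from the identical polar computation applied to the defining identity $\tau_\infty=\int_{|\x|\leq 1}\sigma_\infty(\x)\,\mathrm{d}\x$. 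Since this value is independent of $d$, the Möbius sum collapses to $\zeta(2)^{-1}\prod_{p\mid q}(1-p^{-2})^{-1}$; summing on $\cc$ with $(\cc,q)=1$ recognises $A_q$ from \eqref{eq:defAq}, and summing on $q$ assembles $\mathfrak{S}_2$ from \eqref{eq:def-SS2}. The factor $(\tfrac14-2\eta)/\zeta(2)$ splits as $\tfrac{1}{4\zeta(2)}+O(\eta)$, delivering the main term and the stated $O(\eta\log B)$ loss.

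The main obstacle will be organising the error terms uniformly. The lattice-to-integral approximation produces a boundary error of order $q$ per scale, which summed over $d$, over residues $\cc\bmod q$, and over $q\leq Q_0$ must remain $O_\eta(1)$; this forces $Q_0$ to be a sufficiently small power of $B$ and crucially exploits the lower cut-off $|\x|\geq B^{2\eta}$. Dually, the tail $q>Q_0$ of the singular-series decomposition, integrated against $\sigma_\infty(\x)/|\x|$ and summed over $\x$, is controlled through the character-sum inputs of Lemma~\ref{lem:PV-prep} and Lemma~\ref{le:Burgess}, combined with the uniform bound for $\mathfrak{S}(\x)$ supplied by Lemma~\ref{lem:interview'}, in the same spirit as the circle-method reduction underlying Proposition~\ref{thm:countingxsmall}. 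Finally, the contribution of the locus $\prod_iL_i(\x)=\square$ is negligible, of size $O_\varepsilon(B^{\varepsilon})$, by Lemma~\ref{lem:neron}, so it is absorbed into the $O_\eta(1)$ error.
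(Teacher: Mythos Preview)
Your overall architecture matches the paper's: truncate the $q$-sum to a small power of $B$, break $\x$ into residues modulo $q$, use M\"obius for primitivity, replace the inner sum by an integral, and read off $\tau_\infty$ and $\mathfrak{S}_2$. The polar computation and the identification of $A_q$ are correct.

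However, two steps are seriously underestimated.

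First, the tail $q>Q_0$. You say this is handled ``in the same spirit'' as Proposition~\ref{thm:countingxsmall} via Lemmas~\ref{lem:PV-prep} and~\ref{le:Burgess}, but that proposition concerns a different object, and the actual tail argument is the longest and hardest part of the paper's proof (Proposition~\ref{prop:fksum} and Lemma~\ref{prop:lace}). One must first restrict to $\deltabad(\x)\leq B^{\eta/1000}$ (Lemma~\ref{le:smalldeltabad}); without this the Burgess-type bound in part~(iii) of Lemma~\ref{le:Sqxx} carries an uncontrolled factor $\deltabad(\x)^{3/8}$. Then, to show that the range $B^{\eta/10}<q\leq B^{100}$ contributes $O_\eta(1)$, one factors $q=q_1q_2$ with $q_2\mid(2\deltabad(\x))^\infty$, sorts by the value $r=\deltabad(\x)$, brings the $\x$-sum inside, and only then applies Lemma~\ref{lem:PV-prep} with partial summation against $\sigma_\infty(\x)/|\x|$ using the derivative bound of Lemma~\ref{le:derivative}. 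The paper explicitly remarks that this step is genuinely harder than its analogue in \cite{duke} because only $|\x|^{-1}$ rather than $|\x|^{-3}$ sits in the denominator, so a direct large-sieve argument no longer suffices. Your proposal does not engage with any of this structure.

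Second, the lattice-to-integral replacement. The weight $\sigma_\infty(\x)/|\x|$ is not bounded on the region: by \eqref{eq:brown-bag} it blows up like $|\varDelta(\x)|^{-1/4}$ near the zero loci of the $L_i$. A boundary error ``of order $q$ per scale'' is therefore not available directly. The paper's Lemma~\ref{lem:712} first excises a $\delta$-neighbourhood of $\min_i|L_i(\x)|=0$, then performs partial summation in each coordinate using Lemma~\ref{le:derivative}, and finally optimises in $\delta$. This is routine once seen, but it is a genuine ingredient your sketch omits.
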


Combining Propositions \ref{thm:countingxsmall} and \ref{prop:lid} concludes the proof of Proposition \ref{pro:L2}.

\subsection{Proof of Proposition \ref{thm:countingxsmall}}

 Let 
$w:\RR^4\to \RR_{\geq 0}$ be a compactly supported  weight function.
Then for any $X\geq 1$, we define the  weighted counting function
\begin{equation}\label{eq:NwBX}
	L_w(B,X)=\sum_{\substack{\xx\in\BZprim^2\\ |\xx|\sim X\\
	\varDelta(\xx)\neq\square}}\sum_{\substack{\yy\in\BZprim^4\\\eqref{eq:quadbundle} \text{ holds}}} w\left(\frac{|\xx|^\frac{1}{2}}{B^\frac{1}{2}}\yy\right).
\end{equation}
When  $w=w_0$, as above,  then
$$
\#\CL_2(B)=\sum_{X\nearrow B^{\frac{1}{4}}}L_{w_0}(B,X).
$$
Our strategy for proving Proposition \ref{thm:countingxsmall} is to first produce an asymptotic formula for 
$L_w(B,X)$ when $w$ is a suitable smooth weight, before finally showing how to approximate the counting function
$L_{w_0}(B,X)$ by smoothly weighted ones. 
It turns out that the circle method tools required to produce an asymptotic formula for 
$L_w(B,X)$ are already in exactly the right form in \cite{duke}. 
This allows us to prove the following analogue of 
 \cite[Lemma 5.3]{duke}.
\begin{lemma}\label{prop:countsmoothweight}
Let 
$w:\RR^4\to \RR_{\geq 0}$ be a compactly supported  weight function 
which vanishes on $[-\eta,\eta]^4$. 
	Suppose that $X\geq 1$ satisfies \begin{equation}\label{eq:Xrange}
		B^{2\eta}\ll X\ll B^{\frac{1}{4}-4\eta}.
	\end{equation}
Then, for $\eta>0$ sufficiently small, we have
$$
	L_w(B,X)=\frac{B}{\zeta(2)}\sum_{\substack{\xx\in\BZprim^2\\ |\xx|\sim X\\\varDelta(\xx)\neq\square}}\frac{\sigma_{\infty,w}(\xx)\mathfrak{S}(\xx)}{|\xx|}+O_{\eta,w}(B^{1-\frac{1}{16}\eta^2}).
$$
\end{lemma}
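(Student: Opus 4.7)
The plan is to fix $\xx\in\BZprim^2$ with $|\xx|\sim X$ and $\varDelta(\xx)\neq\square$, apply the circle method to asymptotically count the $\yy$ weighted by $w(\sqrt{|\xx|/B}\,\yy)$, and then sum over $\xx$. Setting $T=T(\xx)=\sqrt{B/|\xx|}$, the inner sum is a smoothly weighted count on the diagonal rank $4$ quadric $Q_\xx(\yy)=L_1(\xx)y_1^2+\cdots+L_4(\xx)y_4^2=0$ at scale $T$. Since $w$ vanishes on $[-\eta,\eta]^4$, the variable $\yy$ is kept bounded away from the origin, which is exactly the regime in which the real density $\sigma_{\infty,w}(\xx)$ from \eqref{eq:tau-inf} is well behaved.

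First I would remove the primitivity of $\yy$ by M\"obius inversion: since $Q_\xx$ is homogeneous, writing $\yy=d\yy'$ gives
\[
\sum_{\substack{\yy\in\BZprim^4\\ Q_\xx(\yy)=0}} w(\yy/T)=\sum_{d=1}^{\infty}\mu(d)\sum_{\substack{\yy'\in\ZZ^4\\ Q_\xx(\yy')=0}} w(d\yy'/T),
\]
where only $d\ll_w T$ contribute. Next, for each $(\xx,d)$, I would invoke the smoothly weighted circle method asymptotic for the diagonal quaternary form $Q_\xx$ from \cite[\S~4]{duke}. That analysis applies verbatim to our inner problem, and after rescaling $\yy'\mapsto (T/d)\yy'$ and using the definition of $\sigma_{\infty,w}(\xx)$, it yields
\[
\sum_{\substack{\yy'\in\ZZ^4\\ Q_\xx(\yy')=0}} w(d\yy'/T)=\frac{T^2}{d^2}\,\sigma_{\infty,w}(\xx)\,\mathfrak{S}(\xx)+E(\xx,d),
\]
with a pointwise error $E(\xx,d)$ saving a power over the main term. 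Summing $\mu(d)\cdot T^2/d^2\cdot \sigma_{\infty,w}(\xx)\mathfrak{S}(\xx)=\mu(d)B/(d^2|\xx|)\cdot \sigma_{\infty,w}(\xx)\mathfrak{S}(\xx)$ over $d$ contributes the factor $\zeta(2)^{-1}$ and produces the asserted leading term after summation over $\xx$.

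The remaining task is to bound $\sum_{\xx}\sum_{d}|E(\xx,d)|$ by $O_{\eta,w}(B^{1-\eta^2/16})$. The circle-method error depends polynomially on the height $\|Q_\xx\|\asymp X$, the scale $T/d$, the discriminant $|\varDelta(\xx)|$, and the singular series. The hypothesis $\varDelta(\xx)\neq\square$ is crucial: it ensures that the Kronecker character $\chi_{Q_\xx}$ is non-principal, so that $L(1,\chi_{Q_\xx})$ and hence $\mathfrak{S}(\xx)$ admit Burgess-type estimates (cf.\ Lemmas~\ref{le:Burgess} and~\ref{lem:interview'}), and it eliminates a potential secondary main term. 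The range $B^{2\eta}\ll X\ll B^{1/4-4\eta}$ is what makes the aggregate error small: the lower bound keeps $T$ away from degenerate scales, while the upper bound forces $T$ to be large enough that the pointwise error saves a definite power of $B$.

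The main obstacle will be obtaining this power saving uniformly in $\xx$. The circle-method error depends badly on the square-full part $\deltabad(\xx)$ from \eqref{eq:deltabadx} and on the smallest of the $|L_i(\xx)|$, either of which can be significantly larger, respectively smaller, than the generic behaviour would suggest. To handle this, I would stratify the $\xx$-sum according to $\deltabad(\xx)$ and the multiplicative structure of the $L_i(\xx)$, control each stratum using the geometry-of-numbers estimates of Section~\ref{s:upper} (in particular Lemmas~\ref{lem:auxbounddelta} and~\ref{lem:stain}, which show that such strata are sparse), and play these density bounds against the pointwise circle-method errors. The delicate exponent $\tfrac{1}{16}\eta^2$ in the final saving reflects exactly this compromise between the density of each stratum and the power of $B$ lost to its pointwise contribution.
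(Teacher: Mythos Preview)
Your approach is essentially the paper's. The paper invokes \cite[Lemma~5.2]{duke} directly, which already packages the M\"obius step and the $\zeta(2)^{-1}$ factor into a primitive-point asymptotic, but the architecture is the same: apply the circle-method asymptotic for $N_w(Q_\xx;\sqrt{B/|\xx|})$ pointwise on the good $\xx$, and handle the bad strata (large $\deltabad(\xx)$, or some $|L_i(\xx)|$ small) separately.

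One clarification: on the bad strata the paper does not attempt to control the circle-method \emph{error}. The asymptotic from \cite[Lemma~5.2]{duke} requires $|L_i(\xx)|\gg |\xx|^{1-\eta}$ and $\deltabad(\xx)\ll |\xx|^\eta$, so on the bad strata it simply does not apply. Instead the paper bounds the \emph{entire} contribution of those $\xx$ to $L_w(B,X)$ using the upper bounds of Section~\ref{s:upper} (Lemma~\ref{lem:auxbounddelta} for small $|L_i(\xx)|$, Proposition~\ref{prop:M2Xbig} for large $\deltabad(\xx)$), and then separately shows that the corresponding piece of the \emph{main-term} sum is negligible (the sums $S_1(X)$ and $S_2(X)$ in the proof, via \eqref{eq:brown-bag}, Lemma~\ref{lem:interview'}, and Lemma~\ref{lem:stain}). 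Your phrase ``play these density bounds against the pointwise circle-method errors'' slightly elides this two-part structure, but you have all the right ingredients and the correct lemmas.
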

\begin{proof}
	For $\xx\in\BZprim^2$ and 
	$Y\gg 1$, we define
	$$
	N_w(Q_\xx;Y)=\sum_{\substack{\yy\in\BZprim^4\\  Q_\xx(\y)=0}} w\left(\frac{\yy}{Y}\right),
	$$
	where $Q_\x$ is given by \eqref{eq:Qx}. Then 
	$$
	L_w(B,X)=\sum_{\substack{\xx\in\BZprim^2\\ 
	|\xx|\sim X\\\varDelta(\xx)\neq\square}}N_w\left(Q_\xx;\sqrt{\frac{B}{|\xx|}}\right).
	$$
	As in \eqref{eq:normLideltaLi}, we write  $\|Q_\xx\|=\max_{1\leqslant i\leqslant 4}|L_i(\xx)|$ and we recall the definition \eqref{eq:deltabadx} of $\deltabad(\x)$.

It now follows from  \cite[Lemma 5.2]{duke} 
$$
N_w(Q_\xx;Y)=
\frac{\sigma_{\infty,w}(\xx)\mathfrak{S}(\xx)}{\zeta(2)}Y^2+O_{\eta,w}\left(\frac{Y^{\frac{5}{3}+5\eta}}{\|Q_\xx\|^\frac{1}{2}}\right),
$$
for any $Y\geq 1$, provided that 
$Y^\eta\leqslant \|Q_\x\|\leqslant Y^\frac{2}{3}$, 
$|L_i(\xx)|\geqslant \|Q_\x\|^{1-\eta}$ for every  $1\leqslant i\leqslant 4$, and 
	$\deltabad(\xx)\leqslant \|Q_\x\|^\eta$.
	Observing that $|\xx|\ll \|Q_\xx\|\ll |\xx|$, we deduce that 
$$
	N_w(Q_\xx;Y)=\frac{\sigma_{\infty,w}(\xx)\mathfrak{S}(\xx)}{\zeta(2)}Y^2+O_{\eta,w}\left(\frac{Y^{\frac{5}{3}+5\eta}}{|\xx|^\frac{1}{2}}\right),
	$$
provided that 
	\begin{equation}\label{eq:cond1}
	Y^\eta\ll |\xx|\ll Y^\frac{2}{3},
\end{equation}
\begin{equation}\label{eq:cond2}
	|L_i(\xx)|\gg  |\xx|^{1-\eta}, \text{ for every } 1\leqslant i\leqslant 4,
\end{equation}
and 
\begin{equation}\label{eq:cond3}
	\deltabad(\xx)\ll |\xx|^\eta.
\end{equation}
Under the assumption that $X$ satisfies  \eqref{eq:Xrange}, the condition \eqref{eq:cond1} is always satisfied with $Y=\sqrt{B/|\xx|}$. Hence it follows that 
\begin{equation}\label{eq:eqinter1}
	\begin{split}
	\sum_{\substack{\xx\in\BZprim^2,~|\xx|\sim X\\\varDelta(\xx)\neq\square\\ \eqref{eq:cond2}\eqref{eq:cond3}\text{ hold}}}\sum_{\substack{\yy\in\BZprim^4\\\eqref{eq:quadbundle} \text{ holds}}} w\left(\frac{|\xx|^\frac{1}{2}}{B^\frac{1}{2}}\yy\right)
	=~&\frac{B}{\zeta(2)}\sum_{\substack{\xx\in\BZprim^2,|\xx|\sim X\\\varDelta(\xx)\neq\square\\ \eqref{eq:cond2}\eqref{eq:cond3}\text{ hold}}}\frac{\sigma_{\infty,w}(\xx)\mathfrak{S}(\xx)}{|\xx|}
	+O_{\eta,w}(B^{\frac{5}{6}+\frac{5}{2}\eta}X^\frac{2}{3}).
		\end{split}
\end{equation} 
Note that the  error term is $O( B^{1-\frac{1}{6}\eta})$, since  $X\ll B^{\frac{1}{4}-4\eta}$ in \eqref{eq:Xrange}. 

It remains to treat the  cases where either \eqref{eq:cond2} or \eqref{eq:cond3} fails.
We start with   \eqref{eq:cond2} and assume without loss of generality that  $|L_{1}(\xx)|\ll |\xx|^{1-\eta}$. 
Then it follows from Lemma  \ref{lem:auxbounddelta}, with $\delta=1-\eta$ and $Y=\sqrt{B/X}$, that the overall contribution is 
\begin{align*}
&\ll_{\eta,\varepsilon}  B^\varepsilon X^{-\frac{1}{2}\eta}\left(B+B^\frac{1}{2}X^2\right)
\ll X^{-\frac{1}{2}\eta} B^{1+\varepsilon}
\ll B^{1-\eta^2+\varepsilon},
\end{align*} 
under the assumption \eqref{eq:Xrange}.
On taking $\varepsilon=\eta^2/2$, the contribution from this case is therefore $O_{\eta}(B^{1-\eta^2/2})$.
To handle the situation  when  \eqref{eq:cond3} fails, we use Proposition~\ref{prop:M2Xbig}. Taking $D\gg X^\eta$, we therefore  obtain the  contribution 
\begin{align*}
&\ll_{\varepsilon} B^\varepsilon\left(\frac{B+B^\frac{1}{2}X^2}{X^{\frac{\eta}{16}}}+XB^\frac{1}{2}+ B^\frac{2}{3}\right)
\ll X^{-\frac{\eta}{16}}B^{1+\varepsilon}+B^{\frac{3}{4}+\varepsilon}
\ll B^{1-\frac{\eta^2}{8}+\varepsilon}.
\end{align*}
Taking  $\varepsilon=\eta^2/{16}$, we obtain the overall  contribution  $O_\eta(B^{1-\eta^2/16})$.

It remains to extend the sum of $\xx\in\BZprim^2$ in \eqref{eq:eqinter1} to the whole range. 
For this purpose, we consider the sums
\begin{equation}\label{eq:SX1}
	S_1(X)=\sum_{\substack{\xx\in\BZprim^2, ~\varDelta(\xx)\neq\square\\|\xx|\sim X,~|L_1(\xx)|\ll X^{1-\eta} }}\frac{\mathfrak{S}(\xx)}{|\xx||\prod_{i=1}^4L_i(\xx)|^\frac{1}{4}}
\end{equation}
and 
\begin{equation}\label{eq:SX2}
	S_2(X)=\sum_{\substack{\xx\in\BZprim^2,~\varDelta(\xx)\neq\square\\|\xx|\sim X,~\deltabad(\xx)\gg X^{\eta} }}\frac{\mathfrak{S}(\xx)}{|\xx||\prod_{i=1}^4L_i(\xx)|^\frac{1}{4}}.
\end{equation}
In both of these sums 
we can apply Lemma \ref{lem:interview'} to   estimate $\mathfrak{S}(\xx)$.

We start by  estimating \eqref{eq:SX1}. Combining  Lemmas \ref{lem:interview'} and \ref{le:smallvalueLi}
with \eqref{eq:calculus}, we obtain
\begin{align*}
	S_1(X)
	&\ll_{\ve,\eta} \frac{1}{X^\frac{7}{4}}\sum_{\substack{\xx\in\BZprim^2,~\varDelta(\xx)\neq\square\\|\xx|\sim X,~|L_1(\xx)|\ll X^{1-\eta}}}\frac{\deltabad(\xx)^\varepsilon L(1,\chi_{Q_\xx})}{|L_1(\xx)|^\frac{1}{4}}\\
	&\ll_{\varepsilon,\eta}
	\frac{X^{5\ve}}{X^\frac{7}{4}} \sum_{R\nearrow X^{1-\eta}} \frac{1}{R^\frac{1}{4}}\sum_{\substack{\xx\in\BZprim^2,~\varDelta(\xx)\neq\square\\|\x|\sim X,~|L_1(\x)|\sim R}} 1,
\end{align*}
on introducing a dyadic parameter for the range of $|L_1(\x)|$.
It follows easily that 
$$
S_1(X) 
	\ll_{\varepsilon,\eta}
	\frac{X^{5\ve}}{X^\frac{7}{4}}
\sum_{R\nearrow X^{1-\eta}} 
XR^{\frac{3}{4}} \ll  X^{-\frac{3}{4}\eta+5\ve}
\ll X^{-\frac{\eta}{2}},
$$
by fixing $\varepsilon$ to be sufficiently small. 

Now we handle  $S_2(X)$ similarly. It follows from Lemma  \ref{lem:interview'} and \eqref{eq:calculus} that 
\begin{align*}
	S_2(X)&\ll_\ve \frac{X^{5\ve}}{X^\frac{7}{4}}
	\sum_{i_1=1}^4
	\sum_{R\nearrow X} \frac{1}{R^\frac{1}{4}}\sum_{\substack{\xx\in\BZprim^2,~\varDelta(\xx)\neq\square\\ |\x|\sim X, ~|L_{i_1}(\x)|\sim R\\\deltabad(\xx)\gg X^\eta}}1.
\end{align*}
The condition $\deltabad(\xx)\gg X^\eta$ implies in particular $\deltabad(\xx)\gg (XR)^{\frac{\eta}{2}}$. Applying  Lemma \ref{lem:stain} with
$\delta=\frac{\eta}{2}$ and $S=X$, we obtain
$$
S_2(X)\ll_\varepsilon \frac{X^{5\ve}}{X^\frac{7}{4}}\sum_{R\nearrow X} \frac{X R^{1-\frac{\eta}{16}}}{R^\frac{1}{4}}\ll X^{-\frac{\eta}{16}+5\varepsilon}.
$$
Thus 
$
S_2(X)\ll X^{-\frac{\eta}{32}},
$
on taking $\ve$ sufficiently small.

Invoking the bound \eqref{eq:brown-bag} for $\sigma_{\infty,w}(\xx)$, we may now apply 
our  bounds for $S_1(X)$ and $S_2(X)$ to deduce that 
there is a satisfactory  overall contribution to the main term in \eqref{eq:eqinter1}, corresponding to the failure of   \eqref{eq:cond2} or \eqref{eq:cond3}.
The proof  of the lemma is now  completed.
\end{proof}

It remains to remove the smooth weights, using the previous result to deduce a similar asymptotic formula for the counting function $L_{w_0}(B,X)$, where $w_0$ is the characteristic function of $[-1,1]^4$.

\begin{lemma}\label{prop:smoothtoconstantweight}
Assume that $X$ lies in the range \eqref{eq:Xrange}.
Then, for $\eta>0$ sufficiently small, we have
	$$
	L_{w_0}(B,X)=\frac{B}{\zeta(2)}\sum_{\substack{\xx\in\BZprim^2\\ |\xx|\sim X\\\varDelta(\xx)\neq\square}}\frac{\sigma_{\infty}(\xx)\mathfrak{S}(\xx)}{|\xx|}+O(\eta^\frac{1}{2}B)+O_{\eta}(B^{1-\frac{1}{16}\eta^2}).
	$$
\end{lemma}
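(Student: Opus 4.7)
The plan is to deduce the asymptotic for $w_0=\chi_{[-1,1]^4}$ from the smooth-weight Lemma \ref{prop:countsmoothweight}. A technical subtlety is that Lemma \ref{prop:countsmoothweight} requires the weight to vanish on $[-\eta,\eta]^4$, whereas $w_0$ equals $1$ there. To handle this, I split $w_0=w_0^{(1)}+w_0^{(2)}$, where $w_0^{(2)}$ is a smooth bump equal to $1$ on $[-\eta,\eta]^4$ and supported in $[-2\eta,2\eta]^4$, and $w_0^{(1)}=w_0-w_0^{(2)}$ vanishes on $[-\eta,\eta]^4$. The contribution of $w_0^{(2)}$ to $L_{w_0}(B,X)$ is bounded by $\#\CM(X,2\eta\sqrt{B/X})$; applying Theorem \ref{t:upper} with $Y=2\eta\sqrt{B/X}$ and using \eqref{eq:Xrange}, the terms $XY^2$ and $X^2 Y^{4/3}$ contribute $O(\eta^2 B)$ and $O_\eta(B^{1-16\eta/3})$ respectively, both absorbed into the stated error. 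On the main-term side, the scaling $\sigma_{\infty,\chi_{[-R,R]^4}}(\xx)=R^2\sigma_\infty(\xx)$ gives $\sigma_{\infty,w_0^{(2)}}(\xx)\ll\eta^2\sigma_\infty(\xx)$, so the $w_0^{(2)}$-contribution to the main term is $O(\eta^2 B)$, using that $\sum_{|\xx|\sim X}\sigma_\infty(\xx)\mathfrak{S}(\xx)/|\xx|=O(1)$ per dyadic range (implicit in the work behind Proposition \ref{prop:lid}).

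For $w_0^{(1)}$, I sandwich it between smooth weights $w_-\leq w_0^{(1)}\leq w_+$, with both $w_\pm$ vanishing on $[-\eta,\eta]^4$ and the non-negative difference $w_+-w_-$ supported in the boundary shell $\{\y\in\RR^4:\bigl|\,|\y|_\infty-1\,\bigr|\leq\eta\}$ of four-dimensional Lebesgue measure $O(\eta)$. Lemma \ref{prop:countsmoothweight} applies to each of $w_-,w_+,w_+-w_-$, yielding asymptotic formulas with error $O_\eta(B^{1-\eta^2/16})$. Combining the sandwich $L_{w_-}(B,X)\leq L_{w_0^{(1)}}(B,X)\leq L_{w_+}(B,X)$ with the pointwise inequality $|\sigma_{\infty,w_\pm}(\xx)-\sigma_{\infty,w_0^{(1)}}(\xx)|\leq\sigma_{\infty,w_+-w_-}(\xx)$ reduces the problem to showing
\[
\frac{B}{\zeta(2)}\sum_{\substack{|\xx|\sim X\\ \varDelta(\xx)\neq\square}}\frac{\sigma_{\infty,w_+-w_-}(\xx)\mathfrak{S}(\xx)}{|\xx|}\ll \eta^{1/2}B.
\]

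The hard part will be the pointwise shell estimate $\sigma_{\infty,w_+-w_-}(\xx)\ll\eta^{1/2}\sigma_\infty(\xx)$. Interpreting $\sigma_{\infty,w}(\xx)$ as the integral of $w$ against the Leray residue measure on the real quadric $\{Q_\xx=0\}$, this reduces to bounding the surface measure of $\{Q_\xx=0\}$ inside the $\eta$-thick shell around $\partial[-1,1]^4$. Transversal intersections of the quadric with each face of $[-1,1]^4$ contribute $O(\eta|\varDelta(\xx)|^{-1/4})$, while the worst-case $O(\eta^{1/2}|\varDelta(\xx)|^{-1/4})$ arises from tangential intersections (where a face of the cube is asymptotically tangent to the quadric), which I would control by a Morse-type expansion of $Q_\xx$ around the tangency locus after localising. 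Combining this shell bound with $\sum_{|\xx|\sim X}\sigma_\infty(\xx)\mathfrak{S}(\xx)/|\xx|=O(1)$ yields the displayed estimate, and assembling all the error contributions delivers the asymptotic with the claimed error $O(\eta^{1/2}B)+O_\eta(B^{1-\eta^2/16})$.
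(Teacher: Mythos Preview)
Your overall strategy matches the paper's: sandwich $w_0$ between smooth weights vanishing near the origin (the paper absorbs the central contribution into a term $L_{w_0}(4\eta^2 B,X)$ and bounds it via Theorem~\ref{t:upper} exactly as you do for $w_0^{(2)}$), apply Lemma~\ref{prop:countsmoothweight} to the smooth pieces, and control the shell difference in the singular integral. The paper does not reprove the shell estimate; it imports \cite[Lemma~4.13]{duke}, which directly furnishes smooth $w_1,w_2$ with $|\sigma_{\infty,w_i}(\xx)-\sigma_\infty(\xx)|\ll \eta^{1/2}|\varDelta(\xx)|^{-1/4}$, so your Morse-type sketch would be redeveloping that imported input.

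There is one genuine gap. The quantity you actually need to control is not $\sum_{|\xx|\sim X}\sigma_\infty(\xx)\mathfrak{S}(\xx)/|\xx|$ but rather
\[
S_0(X)=\sum_{\substack{|\xx|\sim X\\ \varDelta(\xx)\neq\square}}\frac{\mathfrak{S}(\xx)}{|\xx|\,|\varDelta(\xx)|^{1/4}},
\]
since the shell bound is against $|\varDelta(\xx)|^{-1/4}$, not against $\sigma_\infty(\xx)$ itself. The claim $S_0(X)=O(1)$ is neither stated in nor a consequence of Proposition~\ref{prop:lid}; it is established inside the paper's proof of the present lemma. One first uses Lemma~\ref{lem:interview'} to get $\mathfrak{S}(\xx)\ll\deltabad(\xx)^\ve L(1,\chi_{Q_\xx})$, and then applies Lemma~\ref{lem:Wi1} (the Burgess-type average of $L(1,\chi_{Q_\xx})$ over $\xx$ developed in \S\ref{s:monats}) to obtain the requisite cancellation. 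The trivial estimate $L(1,\chi)\ll\log|\xx|$ yields only $S_0(X)\ll\log X$, which would weaken the lemma's error to $O(\eta^{1/2}B\log B)$; that is still adequate for the main theorem after summing over dyadic $X$, but it does not prove the lemma as stated.
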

\begin{proof}
We mimic  the procedure of \cite[\S5.3]{duke}, which  relates the counting function 
$L_{w_0}(B,X)$ to one in which smooth weights appear. For each $\eta>0$ sufficiently small, we fix two smooth weight functions $w_1,w_2$ satisfying the requirements of \cite[Lemma 4.13]{duke}. Thus 
\begin{equation}\label{eq:sigmaw0wi}
	\sigma_{\infty,w_i}(\xx)-\sigma_{\infty}(\xx)\ll \frac{\eta^\frac{1}{2}}{\prod_{i=1}^{4}|L_i(\xx)|^\frac{1}{4}}.
\end{equation}
Moreover,
$$
L_{w_1}(B,X)\leqslant L_{w_0}(B,X)\leq 
L_{w_0}(4\eta^2B,X)+L_{w_2}(B,X).
$$
We can apply Lemma 
\ref{prop:countsmoothweight} to estimate $L_{w_1}(B,X)$ and $L_{w_2}(B,X)$.
 Moreover, on recalling \eqref{eq:NwBX}, we deduce from   Theorem \ref{t:upper} that 
\begin{align*}
L_{w_0}(4\eta^2B,X)
&\ll \eta^2 B +\eta^{4/3}X^{4/3}B^{2/3}\ll \eta B,
\end{align*}
since  $X\ll B^\frac{1}{4}$ in 
\eqref{eq:Xrange}.

In view of  \eqref{eq:sigmaw0wi}, for $i=1,2$,
it  remains to show that
\begin{equation}\label{eq:SX}
	S_0(X)=\sum_{\substack{\xx\in\BZprim^2\\ |\xx|\sim X\\\varDelta(\xx)\neq\square}}\frac{\mathfrak{S}(\xx)}{|\xx||\prod_{i=1}^4L_i(\xx)|^\frac{1}{4}}=O(1),
\end{equation}
in order to  complete the proof of the lemma.  We can adopt a similar argument to the treatment of $S_1(X)$ and $S_2(X)$ in \eqref{eq:SX1} and \eqref{eq:SX2}, respectively.
Appealing to   Lemma \ref{lem:interview'} and breaking into dyadic intervals, we obtain
\begin{align*}
	S_0(X)&\ll \sum_{i_1=1}^{4}\frac{1}{X^{\frac{7}{4}}}\sum_{R \nearrow X}\frac{1}{R^\frac{1}{4}}\sum_{\substack{\xx\in\BZprim^2\\|\xx|\sim X,~|L_{i_1}(\xx)|\sim R\\\varDelta(\xx)\neq\square}}\deltabad(\xx)^\varepsilon L(1,\chi_{Q_\xx}).
	\end{align*}
	Recalling 
	\eqref{eq:defW}, 
an application of Lemma \ref{lem:Wi1} with $\delta=\frac{1}{16}$ and $S=X$, therefore yields
\begin{align*}
S_0(X)	
&\ll_{\varepsilon} \frac{1}{X^{\frac{7}{4}}}\sum_{R \nearrow X}\frac{1}{R^\frac{1}{4}}
\left(XR
+X^{\frac{33}{32}+\ve}R^{\frac{1}{32}}+X^{\frac{11}{16}+\ve}R^{\frac{5}{4}}\right)\\
	&\ll\frac{1}{X^{\frac{7}{4}}}\left(X^\frac{7}{4}+X^\varepsilon(X^\frac{27}{16}+X^{\frac{33}{32}})\right).
\end{align*} 
Thus $S_0(X)=O(1)$ on  fixing $\varepsilon$ to be small enough. This establishes \eqref{eq:SX}, thereby completing the proof of the lemma.
\end{proof}

We are now ready to deduce Proposition \ref{thm:countingxsmall}.
Lemma \ref{prop:smoothtoconstantweight} implies that
\begin{align*}
	\sum_{B^{2\eta}\ll X\ll B^{\frac{1}{4}-4\eta}}L_{w_0}(B,X)&=\frac{B}{\zeta(2)}\sum_{\substack{\xx\in\BZprim^2\\\varDelta(\xx)\neq\square\\ B^{2\eta}\ll |\xx|\ll B^{\frac{1}{4}-4\eta}}}\frac{\sigma_{\infty}(\xx)\mathfrak{S}(\xx)}{|\xx|}\\ 
	& \quad+O(\eta^\frac{1}{2}B\log B)+O_{\eta}(B^{1-\frac{1}{16}\eta^2}\log B).
\end{align*}
We may clearly take $\log B=O(B^{\frac{1}{32}\eta^2})$ in the second error term.
Moreover, on recalling \eqref{eq:SX}, we obtain
\begin{align*}
	\frac{B}{\zeta(2)}
	\sum_{\substack{\xx\in\BZprim^2\\
	\varDelta(\xx)\neq\square\\ B^{\frac{1}{4}-4\eta} \ll |\xx|\ll B^\frac{1}{4}}}\frac{\sigma_{\infty}(\xx)\mathfrak{S}(\xx)}{|\xx|} &\ll B\sum_{B^{\frac{1}{4}-4\eta} \ll X\ll B^\frac{1}{4}} S_0(X)\ll \eta B\log B.
\end{align*}
Moreover, with further recourse to \eqref{eq:SX}, it also follows that 
\begin{align*}
	\sum_{B^{2\eta}\ll X\ll B^{\frac{1}{4}-4\eta}}L_{w_0}(B,X)&=\frac{B}{\zeta(2)}\sum_{\substack{\xx\in\BZprim^2\\\varDelta(\xx)\neq\square\\ B^{2\eta}\leq |\xx|\leq  B^{\frac{1}{4}}}}\frac{\sigma_{\infty}(\xx)\mathfrak{S}(\xx)}{|\xx|}
	+O(\eta^\frac{1}{2}B\log B)+O_{\eta}(B^{1-\frac{1}{32}\eta^2}).
\end{align*}
Finally, it follows from Theorem \ref{t:upper} that 
$$
\sum_{\substack{X\ll B^{2\eta}\text{ or}\\ B^{\frac{1}{4}-4\eta}\ll X\ll B^\frac{1}{4}}}L_{w_0}(B,X)\ll \eta B\log B.$$
This therefore finishes the proof of Proposition  \ref{thm:countingxsmall}. 

\subsection{Proof of Proposition \ref{prop:lid}: preliminaries}

In this section we are concerned with the asymptotic behaviour of the term
\begin{equation}\label{eq:M1B}
	M(B)=	\sum_{\substack{\xx\in\BZprim^2\\ B^{2\eta}\leqslant |\xx|\leqslant B^\frac{1}{4}\\ \prod_{i=1}^{4}L_i(\xx)\neq\square }}\frac{\sigma_{\infty}(\xx)\mathfrak{S}(\xx)}{|\xx|}.
\end{equation}
The line of attack follows \cite[\S6.2]{duke}, but we  face  extra difficulties that are similar to the ones we encountered in  \S\ref{s:DA}. The basic idea is to restrict each series $\mathfrak{S}(\xx)$ to a sum over small moduli, before interchanging the order of summation. To achieve this, it will be crucial to achieve sufficient cancellation when averaging over the $\xx$-sum, which is harder in this setting, since 
we have half the number of $\xx$-variables compared to 
the variety \eqref{eq:duke} considered in
\cite{duke}.

	For $\xx\in\BZprim^2$ and $q\in\BN$, we let 
	$$
	S_q(\xx)=\sum_{\substack{a \bmod{q}\\ 
	\gcd(a,q)=1}
	}\sum_{\bb\in(\BZ/q\BZ)^4}\operatorname{e}_q\left(a\sum_{i=1}^{4}L_i(\xx)b_i^2\right).
	$$
This is  multiplicative in $q$ and we have 
$
\mathfrak{S}(\xx)=\sum_{q=1}^\infty q^{-4} S_q(\xx).
$ 
It will be useful to collect together some estimates for $S_q(\x)$ that can be extracted from  \cite[\S4.2]{duke}.
\begin{lemma}\label{le:Sqxx}
Let $\xx\in\BZprim^2$ and let $q\in \NN$. Then the following estimates hold for any $\ve>0$.
\begin{enumerate}
	\item[(i)] $S_q(\xx)\ll q^3\gcd(q,\varDelta(\xx))^\frac{1}{2}.$
	\item[(ii)] If $p\nmid 2\varDelta(\xx)$ and $r\in\BN$, then
	$$S_{p^r}(\xx)=\left(\frac{\varDelta(\xx)}{p^r}\right)p^{3r}\phi^*(p^r),$$
	where $\phi^*(n)=\phi(n)/n$. Moreover, 
	$S_{p^r}(\xx)=0$ 
	if $p\mid \varDelta(\xx)$ but $p\nmid 2\deltabad(\xx)$. 
	\item[(iii)] We have 
	$$
	\sum_{q\leqslant X}\frac{S_q(\xx)}{q^3}
\ll_{\varepsilon}
|\varDelta(\xx)|^{\frac{3}{16}+\varepsilon}\deltabad(\xx)^\frac{3}{8}X^{\frac{1}{2}+\varepsilon},
$$
if  $\varDelta(\xx)\neq \square$.
	\item[(iv)] We have 
	$$
	\sum_{q\leqslant X}\frac{|S_q(\xx)|}{q^4}
\ll_{\varepsilon}
X^\varepsilon.
$$
\end{enumerate}
\end{lemma}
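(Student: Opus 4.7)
The plan is to follow the analysis of \cite[\S4.2]{duke}, which treats an analogous exponential sum. By multiplicativity of $S_q(\xx)$ in $q$, each of the four claims reduces to understanding $S_{p^r}(\xx)$ at prime powers, and these are essentially quadratic Gauss sums parametrised by the unit $a \in (\mathbb{Z}/p^r\mathbb{Z})^\times$.

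For (i), each inner sum $\sum_{b_i \bmod p^r} \operatorname{e}_{p^r}(a L_i(\xx) b_i^2)$ is bounded by $p^{r/2} \gcd(p^r, L_i(\xx))^{1/2}$ by the standard Gauss sum estimate (splitting off the $p$-part of $L_i(\xx)$ and noting that the reduced sum modulo the remaining power is a pure Gauss sum of magnitude square root of the modulus). The outer sum over $a$ contributes a factor of $\phi(p^r) \leq p^r$, and multiplying the four $b_i$-bounds yields $\gcd(p^r, \varDelta(\xx))^{1/2}$, which gives (i) after reassembling via the Chinese remainder theorem.

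For (ii), when $p \nmid 2\varDelta(\xx)$ we may complete the square in each $b_i$ modulo $p^r$, and the resulting quadratic Gauss sums evaluate to $\bigl(\tfrac{a L_i(\xx)}{p^r}\bigr) \varepsilon_{p^r} p^{r/2}$ with $\varepsilon_{p^r} \in \{1,i\}$; since $\varepsilon_{p^r}^4 = 1$ and $(\tfrac{a}{p^r})^4 = 1$, the product of the four factors collapses to $\bigl(\tfrac{\varDelta(\xx)}{p^r}\bigr) p^{2r}$, and summing over $a$ coprime to $p$ produces the stated formula with factor $\phi^*(p^r) p^{3r}$. The vanishing assertion follows because $p \mid \varDelta(\xx)$ with $p \nmid 2\deltabad(\xx)$ forces exactly one index $i_0$ to satisfy $p \| L_{i_0}(\xx)$; after factoring $p$ out of the $b_{i_0}$-sum and carrying out the remaining Gauss sum evaluations, the outer sum over $a$ reduces to a character sum against a non-principal Jacobi symbol modulo $p$, which vanishes.

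For (iii), the explicit formula from (ii) expresses the sum over $q$ coprime to $2\deltabad(\xx)$ as a partial sum of the Dirichlet series for the non-principal character $\bigl(\tfrac{\varDelta(\xx)}{\cdot}\bigr)$ (non-principal since $\varDelta(\xx) \neq \square$), which we bound by the Burgess estimate of Lemma~\ref{le:Burgess} with parameter $\theta > 3/16$; the local factors at primes dividing $2\deltabad(\xx)$ are controlled by the trivial bound in (i), producing the factor $\deltabad(\xx)^{3/8}$ after optimising the Burgess parameter. For (iv), we use the trivial bound from (i) together with Mertens' theorem, splitting the local factors according to whether $p \mid \deltabad(\xx)$ or not; the resulting Euler product is $O_\varepsilon(X^\varepsilon)$. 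The main obstacle is the balancing in (iii) between the Burgess subconvex saving on the ``good'' primes and the trivial bound on the ``bad'' primes, and this precise optimisation is what determines the exponents $3/16$ and $3/8$.
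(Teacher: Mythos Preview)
Your approach mirrors the paper's, which likewise defers (i)--(iii) to \cite[\S4.2]{duke} and only spells out (iv). One step you elide in (i) deserves comment: the Gauss sum bound gives $|S_{p^r}(\xx)| \ll p^{3r}\prod_{i=1}^4 \gcd(p^r, L_i(\xx))^{1/2}$, and the passage to $\gcd(p^r, \varDelta(\xx))^{1/2}$ is not automatic, since in general $\prod_i \gcd(p^r, L_i(\xx))$ can exceed $\gcd\bigl(p^r, \prod_i L_i(\xx)\bigr)$. What saves you is the primitivity of $\xx$, which via \eqref{le:discform} forces $\gcd(L_i(\xx), L_j(\xx)) = O(1)$ for $i \neq j$; hence for all but finitely many primes at most one $L_i(\xx)$ is divisible by $p$, and the product collapses to a single factor up to $O(1)$. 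The paper makes exactly this observation.

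For (iv) your Euler-product route with Mertens is essentially right, but note that the trivial bound from (i) alone at primes $p\|\varDelta(\xx)$ with $p\nmid\deltabad(\xx)$ gives a local factor $1+O(p^{-1/2})$, and the product of these over all prime divisors of $\varDelta(\xx)$ introduces an unwanted dependence on $|\varDelta(\xx)|$ rather than $\deltabad(\xx)$. The paper instead splits $q=q_1q_2$ with $q_2\mid(2\deltabad(\xx))^\infty$, uses part (ii) on $q_1$ (both the explicit formula \emph{and} the vanishing, so that $|S_{q_1}(\xx)|/q_1^4 \leq \phi^*(q_1)/q_1$ whenever $\gcd(q_1,2\deltabad(\xx))=1$), and bounds the $q_2$-sum using (i) together with a count of $q_2 \leq X$ supported on the primes of $2\deltabad(\xx)$. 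You should likewise invoke the vanishing in (ii) at these intermediate primes rather than relying solely on (i).
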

\begin{proof}
It follows from  \cite[Lemma 4.5]{duke} that  $S_q(\xx)\ll q^3\prod_{i=1}^{4}\gcd(q,L_i(\xx))^\frac{1}{2}.$ 
But then part (i) is a consequence of the observation \eqref{le:discform} and the fact that $\x$ is primitive.
The formulae in (ii) follow from \cite[Lemmas 4.6 and 4.8]{duke}. 
The case $j=3$ of part  (iii) is the  same as \cite[Lemma~4.9]{duke}. It remains to prove the estimate when $j=4$.
Appealing to part  (ii), we obtain 
	\begin{align*}
	\sum_{q\leqslant X}\frac{S_q(\xx)}{q^4}
	&\ll \left|\sum_{\substack{q_2\leqslant X\\ q_2\mid (2\deltabad(\xx))^\infty}}\frac{S_{q_2}(\xx)}{q_2^4}\right|\left|\sum_{q_1\leqslant X/q_2}\left(\frac{\varDelta(\xx)}{q_1}\right)\frac{\phi^*(q_1)}{q_1}\right|
	\\
&\ll  \#\left\{q_2\in \NN: q_2\leqslant X, ~ q_2\mid (2\deltabad(\xx))^\infty\right\}\log X,
	\end{align*}
since part (i) implies that $S_{q_2}(\xx)\ll q_2^{\frac{7}{2}}$.
The remaining cardinality is easily seen to be $O_\ve(X^\ve\deltabad(\xx)^\ve)$, which thereby 
completes the proof.
\end{proof}

We now carry out the proof of Proposition \ref{prop:lid} in a series of steps

\begin{lemma}[Reduction to small $\deltabad(\xx)$]\label{le:smalldeltabad}
	We have
	$$\sum_{\substack{\xx\in\BZprim^2\\ B^{2\eta}\leqslant |\xx|\leqslant B^\frac{1}{4}\\ \varDelta(\xx)\neq\square }}\frac{\sigma_{\infty}(\xx)\mathfrak{S}(\xx)}{|\xx|}=\sum_{\substack{\xx\in\BZprim^2\\ B^{2\eta}\leqslant |\xx|\leqslant B^\frac{1}{4}\\ \varDelta(\xx)\neq \square\\ \deltabad(\xx)\leqslant B^{\eta/1000}}}\frac{\sigma_{\infty}(\xx)\mathfrak{S}(\xx)}{|\xx|}+O_\eta(1).$$
\end{lemma}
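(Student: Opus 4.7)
My plan is to show that the contribution $E$ from $\xx$ with $\deltabad(\xx) > B^{\eta/1000}$ is $O_\eta(1)$. Since $\sigma_\infty(\xx) \ll |\varDelta(\xx)|^{-1/4}$ by \eqref{eq:brown-bag}, and since Lemma \ref{lem:interview'} combined with the observation \eqref{eq:calculus} yields $\mathfrak{S}(\xx) \ll_\ve \deltabad(\xx)^{\ve} L(1, \chi_{Q_\xx}) \ll X^{5\ve}$ whenever $|\xx| \sim X$, the sum $E$ is structurally analogous to the sum $S_2(X)$ already controlled in the proof of Lemma \ref{prop:countsmoothweight}, and I will adapt that strategy.

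First, I break the sum over $|\xx|$ into dyadic ranges $|\xx| \sim X$ with $B^{2\eta} \ll X \ll B^{1/4}$. Within each such range, decompose according to which index $i_1 \in \{1,\dots,4\}$ minimises $|L_{i}(\xx)|$, and introduce another dyadic parameter $R$ for $|L_{i_1}(\xx)| \sim R$. Lemma \ref{le:smallvalueLi} then gives $|L_{i}(\xx)| \gg X$ for $i \neq i_1$, hence $|\varDelta(\xx)| \gg X^3 R$, so each $\xx$ contributes at most $\ll X^{5\ve - 7/4} R^{-1/4}$ to the inner sum. The critical point is that, since $X \leq B^{1/4}$, we have $XR \leq X^2 \leq B^{1/2}$, whence the condition $\deltabad(\xx) > B^{\eta/1000}$ forces $\deltabad(\xx) > (XR)^{\eta/500}$.

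Applying Lemma \ref{lem:stain} with $\delta = \eta/500$, the number of admissible $\xx$ in each dyadic box is bounded by $(XR)^{1-\eta/4000}$. Substituting and summing over $R \nearrow X$ and then over $B^{2\eta} \ll X \ll B^{1/4}$, standard manipulations yield a bound of the shape
$$
E \;\ll\; (\log B)^{2}\, B^{5\ve/4}\, B^{-\eta/16000},
$$
which is $O_\eta(1)$ provided we choose $\ve$ sufficiently small in terms of $\eta$ (e.g. $\ve = \eta/10^{5}$).

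The main obstacle is not conceptual but a matter of careful bookkeeping: one has to verify that the polynomial saving $B^{-\eta/16000}$ provided by Lemma \ref{lem:stain} genuinely dominates the $B^{5\ve/4}$-factor coming from the combined bounds on $\mathfrak{S}(\xx)$ and $L(1,\chi_{Q_\xx})$, together with the logarithmic losses from the dyadic decomposition. Since Lemma \ref{lem:stain} provides a \emph{polynomial} (not merely logarithmic) saving once $\deltabad(\xx)$ exceeds a small power of $XR$, and all competing contributions are of lower order, this saving can always be secured by choosing $\ve$ small enough.
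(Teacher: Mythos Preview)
Your proposal is correct and follows essentially the same approach as the paper: bound $\sigma_\infty(\xx)$ via \eqref{eq:brown-bag}, bound $\mathfrak{S}(\xx)$ via Lemma~\ref{lem:interview'} and \eqref{eq:calculus}, dyadically decompose in $|\xx|\sim S$ and in the minimum $|L_{i_1}(\xx)|\sim R$, observe that $\deltabad(\xx)>B^{\eta/1000}\geqslant (SR)^{\eta/500}$ since $SR\leq B^{1/2}$, and then apply Lemma~\ref{lem:stain} to obtain a polynomial saving that absorbs the $X^{5\ve}$ loss once $\ve$ is chosen small enough in terms of $\eta$. Your numerical exponents differ slightly from the paper's (indeed your $(XR)^{1-\eta/4000}$ is the exact output of Lemma~\ref{lem:stain} with $\delta=\eta/500$), but this is immaterial for the conclusion.
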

\begin{proof}
	We apply the  bound \eqref{eq:brown-bag} for $\sigma_\infty(\x)$, together with the bound
	$\mathfrak{S}(\xx)\ll_{\varepsilon}|\x|^{5\ve}$,
	which follows from Lemma \ref{lem:interview'} and \eqref{eq:calculus}.
	On  executing the dyadic decomposition in the same way as in the proof of Proposition \ref{s:monats}, it follows from Lemma~\ref{lem:stain} that 
	\begin{align*}
		\sum_{\substack{\xx\in\BZprim^2\\ B^{2\eta}\leqslant |\xx|\leqslant B^\frac{1}{4}\\ \varDelta(\xx)\neq \square\\
		\deltabad(\xx)> B^{\eta/1000}}}\frac{\sigma_{\infty}(\xx)\mathfrak{S}(\xx)}{|\xx|}
		&\ll_{\varepsilon}	\sum_{B^{2\eta}\swarrow S\nearrow B^\frac{1}{4}}\sum_{R \nearrow S} \frac{S^{5\varepsilon}(SR)^{1-\eta/400}}{S^\frac{7}{4}R^\frac{1}{4}}
\ll_{\varepsilon}B^{-\eta^2/100+5\varepsilon},
	\end{align*} 
	since  $\deltabad(\xx)>B^{\eta/1000}\geqslant (SR)^{\eta/500}$. The result follows on choosing $\ve$ small enough.
\end{proof}

\begin{lemma}[First truncation of $\mathfrak{S}(\xx)$]\label{le:finiteqsum}
	We have
	$$\sum_{\substack{\xx\in\BZprim^2\\ B^{2\eta}\leqslant |\xx|\leqslant B^\frac{1}{4}\\ \varDelta(\xx)\neq\square }}\frac{\sigma_{\infty}(\xx)\mathfrak{S}(\xx)}{|\xx|}=\sum_{\substack{\xx\in\BZprim^2\\ B^{2\eta}\leqslant |\xx|\leqslant B^\frac{1}{4}\\\varDelta(\xx)\neq\square }}\frac{\sigma_{\infty}(\xx)\mathfrak{S}(\xx;B^{100})}{|\xx|}+O(B^{-1}),$$
	where
	$
	\mathfrak{S}(\x;N) =\sum_{q\leq N}q^{-4}S_q(\x).$
\end{lemma}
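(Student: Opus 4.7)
The plan is to show that the tail of the series defining $\mathfrak{S}(\xx)$ beyond $q = B^{100}$ contributes negligibly, when averaged over the relevant $\xx$. It therefore suffices to prove
$$
\sum_{\substack{\xx\in\BZprim^2\\ B^{2\eta}\leq |\xx|\leq B^\frac{1}{4}\\ \varDelta(\xx)\neq\square}}\frac{\sigma_\infty(\xx)}{|\xx|}\Bigl|\sum_{q>B^{100}}\frac{S_q(\xx)}{q^4}\Bigr|\ll B^{-1}.
$$

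First, I would use partial summation to transform the tail of $\sum_q S_q(\xx)/q^4$ into an expression involving the partial sums estimated in Lemma \ref{le:Sqxx}(iii). Writing $T_\xx(X)=\sum_{q\leq X}S_q(\xx)/q^3$ and noting that Lemma \ref{le:Sqxx}(iii) gives
$T_\xx(X)\ll_\ve |\varDelta(\xx)|^{\frac{3}{16}+\varepsilon}\deltabad(\xx)^\frac{3}{8}X^{\frac{1}{2}+\varepsilon}$,
Abel summation yields, for any $M>B^{100}$,
$$
\sum_{B^{100}<q\leq M}\frac{S_q(\xx)}{q^4}=\frac{T_\xx(M)}{M}-\frac{T_\xx(B^{100})}{B^{100}}+\int_{B^{100}}^M \frac{T_\xx(X)}{X^2}\,\mathrm{d}X.
$$
Letting $M\to\infty$, the first term vanishes and the remainder is bounded by
$O_\ve(|\varDelta(\xx)|^{\frac{3}{16}+\varepsilon}\deltabad(\xx)^\frac{3}{8}B^{-50+100\varepsilon})$.

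Next, I would substitute this tail bound together with $\sigma_\infty(\xx)\ll |\varDelta(\xx)|^{-1/4}$ from \eqref{eq:brown-bag}, the trivial inequality $\deltabad(\xx)\leq |\varDelta(\xx)|$, and $|\varDelta(\xx)|\ll |\xx|^4$. This shows that for each relevant $\xx$,
$$
\frac{\sigma_\infty(\xx)}{|\xx|}\Bigl|\sum_{q>B^{100}}\frac{S_q(\xx)}{q^4}\Bigr|\ll_\ve \frac{|\varDelta(\xx)|^{\frac{5}{16}+\varepsilon}}{|\xx|}\cdot B^{-50+100\varepsilon}\ll \frac{|\xx|^{\frac{5}{4}+4\varepsilon}}{|\xx|}\cdot B^{-50+100\varepsilon}.
$$
Summing over $\xx$ with $|\xx|\leq B^{1/4}$ contributes at most $O(B^{\frac{1}{2}}\cdot B^{\frac{1}{16}+\ldots}\cdot B^{-50+\ldots})$, which is vastly smaller than $B^{-1}$ once $\varepsilon$ is chosen small.

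There is no real obstacle here; the exponent $100$ in $B^{100}$ is very generous and any polynomial threshold of the form $B^{C}$ with $C>5$ would already suffice. The only point that needs care is to ensure that the partial summation manipulation is valid, which follows from the rapid decay of the partial sum bound at infinity once divided by an extra power of $q$.
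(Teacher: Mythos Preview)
Your proposal is correct and follows essentially the same route as the paper: apply partial summation together with Lemma~\ref{le:Sqxx}(iii) to bound the tail $\sum_{q>B^{100}}S_q(\xx)/q^4$, then feed in \eqref{eq:brown-bag} and $\deltabad(\xx)\leq|\varDelta(\xx)|\ll|\xx|^4$ before summing trivially over $\xx$. The paper records the tail as $\ll |\varDelta(\xx)|^{3/16}\deltabad(\xx)^{3/8}B^{-20}$ rather than your sharper $B^{-50+100\varepsilon}$, but otherwise the arguments are identical.
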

\begin{proof}
Applying partial summation, 
it follows from part (iii) of Lemma  
 \ref{le:Sqxx} that 
	$$
	\sum_{q>B^{100}}\frac{S_q(\xx)}{q^4}\ll |\varDelta(\xx)|^\frac{3}{16}\deltabad(\xx)^\frac{3}{8}B^{-20},
	$$
uniformly for any $\xx\in\BZprim^2$  such that $\Delta(\x)\neq \square$ and $|\xx|\leq B^{\frac{1}{4}}$.
The bound \eqref{eq:brown-bag} now yields
	$$\sum_{\substack{\xx\in\BZprim^2\\ B^{2\eta}\leqslant |\xx|\leqslant B^\frac{1}{4}\\ \varDelta(\xx)\neq\square }}\frac{\sigma_{\infty}(\xx)}{|\xx|}\sum_{q>B^{100}}\frac{S_q(\xx)}{q^4}
	\ll B^{-20}\sum_{\substack{\xx\in\BZ^2\\ |\xx|\leqslant B^\frac{1}{4}\\ \varDelta(\xx)\neq 0}}|\varDelta(\xx)|^{\frac{5}{16}}\ll B^{-1},$$
since  $\deltabad(\xx)\leqslant |\varDelta(\xx)|\ll |\xx|^4.$ 
\end{proof}

Having truncated the $q$-sum, we proceed to show that there is a  negligible  contribution from $\xx$ such that $\varDelta(\xx)=\square$.

\begin{lemma}\label{le:squareaway}
We have 
	$$
	\sum_{\substack{\xx\in\BZprim^2\\ B^{2\eta}\leqslant |\xx|\leqslant B^\frac{1}{4}\\\varDelta(\xx)=\square }}\frac{\sigma_{\infty}(\xx)\mathfrak{S}(\xx;B^{100})}{|\xx|}=O_\eta(B^{-\eta}).$$
\end{lemma}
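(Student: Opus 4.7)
The plan is to combine three ingredients already established in the excerpt. First, \eqref{eq:brown-bag} yields $\sigma_\infty(\xx)\ll |\varDelta(\xx)|^{-1/4}$, so in particular $\sigma_\infty(\xx)\ll 1$ whenever $\varDelta(\xx)$ is a \emph{nonzero} integer square (since then $|\varDelta(\xx)|\geq 1$). Second, Lemma~\ref{le:Sqxx}(iv) (applied with $X=B^{100}$ and after rescaling $\varepsilon$) gives $|\mathfrak{S}(\xx;B^{100})|\leq \sum_{q\leq B^{100}}|S_q(\xx)|q^{-4}\ll_\varepsilon B^{\varepsilon}$. Third, the N\'eron-heights input used inside the proof of Lemma~\ref{lem:neron} shows that $z^{2}=L_{1}(\xx)\cdots L_{4}(\xx)$ defines a smooth genus-one curve in $\BP(2,1,1)$, so that
\[
\#\bigl\{\xx\in\BZprim^2:|\xx|\leq X,~\varDelta(\xx)=\square\bigr\}\ll_{\varepsilon} X^{\varepsilon}.
\]

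I would first handle the degenerate locus $\varDelta(\xx)=0$: for primitive $\xx$, this forces $L_{i}(\xx)=0$ for some $i$, whence $\xx=\pm(-b_i,a_i)$ since $\gcd(a_i,b_i)=1$, giving only $O(1)$ such vectors. Once $B$ exceeds a constant depending on the $L_i$, these all violate the condition $|\xx|\geq B^{2\eta}$, so they make no contribution whatsoever. For the remaining $\xx$, with $\varDelta(\xx)$ a positive square, I would run a dyadic decomposition $|\xx|\sim X$ with $B^{2\eta}\ll X\ll B^{1/4}$, and combine (i)--(iii) to obtain
\[
\sum_{\substack{\xx\in\BZprim^2,~|\xx|\sim X\\\varDelta(\xx)=\square,\,\neq 0}}\frac{|\sigma_\infty(\xx)\,\mathfrak{S}(\xx;B^{100})|}{|\xx|}\ll_\varepsilon \frac{X^{\varepsilon}\cdot B^{\varepsilon}}{X}.
\]

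Summing over dyadic $X\geq B^{2\eta}$ produces a geometric series with ratio $2^{\varepsilon-1}<1$, dominated by the smallest term, so the overall contribution is $\ll B^{\varepsilon}\cdot B^{2\eta(\varepsilon-1)}=B^{\varepsilon(1+2\eta)-2\eta}$. Fixing $\varepsilon=\eta/4$ then yields $O_\eta(B^{-7\eta/4+\eta^{2}/2})=O_\eta(B^{-\eta})$ for $\eta$ sufficiently small, as required. The proof is essentially mechanical once the three inputs are assembled; the only (modest) subtlety is isolating the degenerate locus $\varDelta(\xx)=0$, where the pointwise bound from \eqref{eq:brown-bag} cannot be directly invoked, but this locus is trivially finite and thus harmless.
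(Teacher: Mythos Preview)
Your proof is correct and follows essentially the same approach as the paper: both combine the bound $\sigma_\infty(\xx)=O(1)$ from \eqref{eq:brown-bag}, the estimate $\mathfrak{S}(\xx;B^{100})\ll_\varepsilon B^\varepsilon$ from Lemma~\ref{le:Sqxx}(iv), and the N\'eron-heights count of primitive $\xx$ with $\varDelta(\xx)=\square$. The paper dispenses with the dyadic decomposition by simply using $|\xx|^{-1}\leq B^{-2\eta}$ uniformly, and it does not explicitly separate out the degenerate locus $\varDelta(\xx)=0$ (which, as you note, contributes nothing for $B$ large); otherwise the arguments are identical.
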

\begin{proof}
To begin with, it follows from part (iv) of Lemma \ref{le:Sqxx} that 
$
\mathfrak{S}(\xx;B^{100})=O_{\varepsilon} (B^\varepsilon)$,  
uniformly for $\xx\in\BZprim^2$ with $|\xx|\leqslant B^\frac{1}{4}$. Moreover, 
\eqref{eq:brown-bag} implies that  $\sigma_\infty(\x)=O(1)$. Thus 
	$$
		\sum_{\substack{\xx\in\BZprim^2\\ B^{2\eta}\leqslant |\xx|\leqslant B^\frac{1}{4}\\\varDelta(\xx)=\square }}\frac{\sigma_{\infty}(\xx)\mathfrak{S}(\xx;B^{100})}{|\xx|}
	\ll_\ve B^{-2\eta+\ve} \#\left\{\xx\in\BZprim^2: |\xx|\leqslant B^\frac{1}{4}, ~\varDelta(\xx)=\square\right\}.
	$$
	In the spirit of  the proof of Proposition \ref{prop:M1Xsmall}, the condition $\varDelta(\xx)=\square$ implies that  $(y,\xx)$ lies on the genus one curve $y^2=\prod_{i=1}^{4}L_i(\xx)$. 
Thus the number of $\xx\in\BZprim^2$ with $|\xx|\leqslant B^\frac{1}{4}$ which verify 
this condition is $O_\ve (B^\varepsilon)$.  The  lemma now follows on taking $\ve=\eta/2$. 
\end{proof}

We have now come to the most difficult step in the proof of Proposition \ref{prop:lid}. 
\begin{proposition}[Second truncation of $\mathfrak{S}(\x)$]\label{prop:fksum}
	$$\sum_{\substack{\xx\in\BZprim^2\\ B^{2\eta}\leqslant |\xx|\leqslant B^\frac{1}{4}\\ \deltabad(\xx)\leqslant B^{\eta/1000} }}\frac{\sigma_{\infty}(\xx)}{|\xx|}\left(\sum_{B^{\eta/10}<q\leqslant B^{100}}\frac{S_q(\xx)}{q^4}\right)=O_\eta(B^{-\eta/500}).$$
\end{proposition}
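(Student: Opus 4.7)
The plan is to reverse the order of summation, placing the $q$-sum outside, and exploit cancellation in the Jacobi symbol $\left(\frac{\varDelta(\x)}{\cdot}\right)$ that emerges inside $S_q(\x)$. After a dyadic decomposition $|\x|\sim X$ (with $X\in[B^{2\eta},B^{1/4}]$) and $q\sim Q$ (with $Q\in[B^{\eta/10},B^{100}]$), the argument splits into two regimes according to the size of $Q$ relative to $X^2$.

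In the high-$Q$ regime ($Q\geqslant X^2$), partial summation applied to Lemma~\ref{le:Sqxx}(iii) would yield the pointwise bound $\sum_{q>X^2}S_q(\x)/q^4\ll_\varepsilon |\varDelta(\x)|^{3/16+\varepsilon}\deltabad(\x)^{3/8}X^{-1+\varepsilon}$. Combining with $\sigma_\infty(\x)\ll|\varDelta(\x)|^{-1/4}$ from~\eqref{eq:brown-bag} and with $\deltabad(\x)\leqslant B^{\eta/1000}$, summing over $|\x|\sim X$ against $|\varDelta(\x)|^{-1/16}/|\x|$ (whose total mass is $O(X^{3/4})$, with the mild atypical-$\x$ contributions handled via Lemma~\ref{lem:auxbounddelta}) would give a bound $\ll X^{-1/4+\varepsilon}B^{\eta/2000}$, which is $O_\eta(B^{-\eta/3})$ since $X\geqslant B^{2\eta}$.

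In the low-$Q$ regime ($B^{\eta/10}<Q<X^2$) I would swap the sums and decompose each $q=ab$ uniquely with $a$ squarefree, $b$ squarefull and $\gcd(a,b)=1$; by multiplicativity $S_q(\x)=S_a(\x)S_b(\x)$. Lemma~\ref{le:Sqxx}(i) yields $|S_b(\x)|/b^4\ll 1/b$, and $\sum_{b\text{ squarefull}}1/b$ converges. Further split $a=a_0a_1$ where $a_0:=\gcd(a,2\deltabad(\x))\leqslant 2B^{\eta/1000}$; Lemma~\ref{le:Sqxx}(ii) then extracts
$$
S_{a_1}(\x)=\left(\frac{\varDelta(\x)}{a_1}\right)a_1^3\phi^*(a_1),
$$
the Jacobi symbol vanishing unless $\gcd(a_1,\varDelta(\x))=1$. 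Applying Lemma~\ref{lem:PV-prep} with $J_i=L_i$, $k=4$, $q=D=1$, treating $x_1$ as the fixed parameter $n$ and summing over $x_2$, then over $x_1\in[-X,X]$ using $\sum_{x_1}\gcd(a_1,x_1)\ll X a_1^\varepsilon$, yields the character-sum bound $\ll(X^2 a_1^{-1/2}+Xa_1^{1/2}\log a_1)\,a_1^{2\varepsilon}$. Absorbing the slowly varying weight $\sigma_\infty(\x)/|\x|\ll X^{-2}$ via Abel summation over dyadic subboxes gives the weighted bound $(a_1^{-1/2}+a_1^{1/2}/X)a_1^{2\varepsilon}$. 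Multiplying by the prefactor $a_0^{-1/2}b^{-1}\phi^*(a_1)/a_1$ and using the lower cutoff $a_0 b a_1>B^{\eta/10}$, the restricted $a_1$-sum for the leading term contributes $(a_0 b)^{1/2-\varepsilon}B^{-\eta/20+O(\varepsilon)}$; summing over the (at most $B^{O(\varepsilon)}$) choices of $a_0$ and over the convergent squarefull sum $\sum_b b^{-1/2-\varepsilon}$ then produces the required decay.

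The main obstacle is the sub-range $X<a_1\leqslant X^2$, where the second term $a_1^{1/2}/X$ from Lemma~\ref{lem:PV-prep} becomes comparable to the trivial bound and gives little intrinsic cancellation. Handling it requires the three-way interplay of the lower cutoff $q>B^{\eta/10}$ for the power-of-$B$ saving, the hypothesis $\deltabad(\x)\leqslant B^{\eta/1000}$ to control $a_0$, and the convergence of the outer squarefull sum. Tracking all $\varepsilon$-losses so as to stay below the target exponent $\eta/500$ is the price of working with only two $\x$-variables, in contrast to the four-variable cancellation available in~\cite{duke}.
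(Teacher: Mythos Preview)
Your broad strategy (swap sums, extract the Jacobi symbol, apply Lemma~\ref{lem:PV-prep}) matches the paper's, but the execution has a genuine gap precisely where you flag the ``main obstacle''. In the sub-range $X<a_1\leqslant X^2$, the second term $a_1^{1/2}/X$ from Lemma~\ref{lem:PV-prep}, weighted by $\phi^*(a_1)/a_1$ and summed over $a_1\leqslant X^2$, gives
\[
\frac{1}{X}\sum_{a_1\leqslant X^2}\frac{\phi^*(a_1)}{a_1^{1/2-\varepsilon}}\asymp X^{\varepsilon},
\]
which carries no saving. Your proposed ``three-way interplay'' does not rescue this: the lower cutoff $a_0 b a_1>B^{\eta/10}$ is vacuous once $a_1>X\geqslant B^{2\eta}$, and the bounds on $a_0$ and the squarefull sum over $b$ only control auxiliary factors, not the divergent $a_1$-sum. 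Equally, your high-$Q$ threshold at $X^2$ is too generous: lowering it to (say) $X^{3/2}$ would repair the low regime but then the high-regime bound from Lemma~\ref{le:Sqxx}(iii), namely $|\varDelta(\x)|^{3/16+\varepsilon}X^{-3/4}$, just fails to beat the trivial estimate after summing over $\x$. There is no single threshold that works for both.

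The paper resolves this by applying Burgess (Lemma~\ref{le:Burgess}) directly to the character sum over $q_1$ for fixed $\x$, truncating to $q_1\leqslant N_1=(S^3R)^{2\theta}B^{\eta/50}$ with $\theta=\tfrac{1}{5}>\tfrac{3}{16}$; the Burgess tail then gives $N_1^{-1/2}|\varDelta(\x)|^{\theta}\ll B^{-\eta/100}$ uniformly, while on the truncated range the second term contributes $N_1^{1/2}/S\ll S^{4\theta-1}\ll S^{-1/5}$, a genuine saving. Two further points you gloss over are also handled carefully in the paper: the decomposition $a_0=\gcd(a,2\deltabad(\x))$ depends on $\x$, so one must first sort by $r=\deltabad(\x)$ (and detect this via M\"obius) before swapping; and the weight $\sigma_\infty(\x)/|\x|$ is \emph{not} slowly varying near the lines $L_i(\x)=0$, so a further dyadic decomposition $|L_{i_1}(\x)|\sim R$ and the derivative bounds of Lemma~\ref{le:derivative} are required for the partial summation.
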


This result is a direct analogue of \cite[Lemma 6.6]{duke}. However, in that setting a higher power  $|\x|^3$ appears in
the denominator, which has the effect of making the proof a relatively simple application of the large sieve for real characters. The 
 proof of Proposition~\ref{prop:fksum} is more delicate and we have divided it into several steps.

Following the template laid out to prove  Proposition \ref{prop:M1Xsmall}, we will  execute a dyadic decomposition of the range of $\xx$, according to the  smallest value of  $|L_i(\xx)|$.
Since $|L_{i}(\xx)|= |L_j(\xx)|$ for any indices $i\neq j$ only if $\xx$ takes values in a finite set, 
we see that there is an overall contribution $O(1)$ to the sum in the proposition from such $\xx$.
This therefore allows us to partition the $\xx$-sum into four sums where
$\min_{i\neq i_1}|L_i(\xx)|>|L_{i_1}(\xx)|,$
for $i_1\in \{1,\dots,4\}$.      We shall   assume, without loss of generality, that $i_1=1$. 
We introduce a dyadic parameter $S$ for $|\xx|$, and $R$ for $|L_1(\xx)|$, for $R\ll S$ and $B^{2\eta}\ll S\ll B^{\frac{1}{4}}$.
Let 
\begin{equation}\label{eq:SSR}
	\CS(S,R)=\left\{\xx\in\BZprim^2:|\xx|\sim S, ~|L_i(\xx)|>|L_1(\x)|\sim R \text{ for $i\geq 2$}\right\}.
\end{equation}
Then we shall be interested in bounding
$$
\Sigma(S,R)=
\sum_{\substack{\xx\in\CS(S,R)\\  \deltabad(\xx)\leqslant B^{\eta/1000} }}\frac{\sigma_{\infty}(\xx)}{|\xx|}\left(\sum_{B^{\eta/10}<q\leqslant B^{100}}\frac{S_q(\xx)}{q^4}\right),
$$
for given $R,S$
such that  $R\ll S$ and $B^{2\eta}\ll S\ll B^{\frac{1}{4}}$.
We shall prove the following result.

\begin{lemma}\label{prop:lace}
Let $\eta>0$ and 
let $R,S\geq 1$ be such that $B^{2\eta}\ll S\ll B^{\frac{1}{4}} $ and $R\ll S$. Then 
$$
\Sigma(S,R)= O_\eta(B^{-\eta/400}).
$$
\end{lemma}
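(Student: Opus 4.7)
The proof follows the spirit of \cite[Lemma~6.6]{duke}, but with a significant modification. In \cite{duke}, there are four $\xx$-variables, which permits an appeal to the large sieve for real characters; in our setting there are only two, and we must instead exploit cancellation coming from the fact that $\varDelta(\xx) = L_1(\xx)\cdots L_4(\xx)$ is a product of four pairwise non-proportional linear forms, via Lemma~\ref{lem:PV-prep}.

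The plan is as follows. First, since $\xx\in\CS(S,R)$, the bound \eqref{eq:brown-bag} yields $\sigma_\infty(\xx)\ll (\prod_i|L_i(\xx)|)^{-1/4}\ll (S^3R)^{-1/4}$. Breaking the range $B^{\eta/10}<q\leq B^{100}$ into $O(\log B)$ dyadic pieces $q\in[Q,2Q)$, it suffices to establish, for every such $Q$ and $S,R$ in the stated range, a bound of the form
\[
\frac{1}{S^{7/4}R^{1/4}}\sum_{\substack{\xx\in\CS(S,R)\\ \deltabad(\xx)\leq B^{\eta/1000}}}\left|\sum_{Q\leq q<2Q}\frac{S_q(\xx)}{q^4}\right| \ll_\eta B^{-\eta/300}.
\]

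For each $\xx$ in the range, factor $q=q_0 q'$ where $q_0\mid(2\deltabad(\xx))^\infty$ and $\gcd(q',2\deltabad(\xx))=1$; multiplicativity gives $S_q(\xx)=S_{q_0}(\xx)S_{q'}(\xx)$. The part $q_0$ is handled using the trivial bound $S_{q_0}(\xx)\ll q_0^{7/2}$ from Lemma~\ref{le:Sqxx}(i), together with the smallness $\deltabad(\xx)\leq B^{\eta/1000}$, so only a negligible proportion of the $q$-range is lost in imposing $q_0\ll B^{\eta/500}$, say. For the remaining factor $q'$, I further write $q'=mn^2$ with $m$ squarefree and $\gcd(m,n)=1$. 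Lemma~\ref{le:Sqxx}(ii) then expresses $S_{q'}(\xx)/q'^4$ as a product of local factors times the Jacobi symbol $(\frac{\varDelta(\xx)}{m})=\prod_{i=1}^4(\frac{L_i(\xx)}{m})$. The range of $n$ is $n\ll\sqrt{Q}$ and can be handled by an outer summation; for each fixed $n$ and $q_0$, the inner $m$-sum has length $\asymp Q/(q_0 n^2)$.

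The crux is the switch of order of summation: one is reduced to bounding, for each admissible $m$ in the relevant range and for each dyadic piece of $\xx$, the character sum
\[
\sum_{\xx\in\CS(S,R)}\left(\frac{L_1(\xx)\cdots L_4(\xx)}{m}\right).
\]
After fixing $x_1$, the remaining sum over $x_2$ falls into the scope of Lemma~\ref{lem:PV-prep} (with $k=4$, $q=1$, $\dd=(1,1,1,1)$, $r=m$); summing over $x_1$ and taking the Burgess exponent $\theta=\tfrac{7}{32}$, this yields savings of order $m^{-1/2+\theta+\ve}$ per $\xx$-pair relative to the trivial bound, provided $m$ exceeds a small power of $S$. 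The main obstacle is ensuring that the non-squarefree part $n^2$ and the bad part $q_0$ do not eat into the $m$-range too severely: one has to check that $Q/(q_0n^2)$ remains $\gg B^{c\eta}$ on average, which is guaranteed by $Q>B^{\eta/10}$ and the smallness of $q_0,\deltabad(\xx)$.

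Assembling these estimates and reinstating the factor $(S^3R)^{-1/4}/S$, the resulting bound for each dyadic $Q$ is $O_\eta(B^{-\eta/300})$, uniformly in $S,R$. Summing over the $O(\log B)$ choices of $Q$ and absorbing the $\log B$-factor into the exponential saving yields the claimed $O_\eta(B^{-\eta/400})$, completing the proof of Lemma~\ref{prop:lace}.
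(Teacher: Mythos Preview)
Your outline has the right overall shape --- split off the part of $q$ divisible only by bad primes, isolate the Jacobi symbol $(\frac{\varDelta(\xx)}{\cdot})$ on the remaining factor, and then exploit cancellation in the $\xx$-sum via Lemma~\ref{lem:PV-prep} --- but as written it contains a genuine gap at the ``switch of order'' step. In your displayed reduction you have already bounded $\sigma_\infty(\xx)/|\xx|$ by its supremum $(S^{7/4}R^{1/4})^{-1}$ and taken the absolute value of the inner $q$-sum, leaving $\sum_{\xx}\bigl|\sum_{Q\le q<2Q}S_q(\xx)/q^4\bigr|$. Once the modulus signs are in place you cannot pull the $\xx$-sum inside the $m$-sum to access the cancellation $\sum_{\xx}(\frac{\varDelta(\xx)}{m})$; the two operations are incompatible. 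A second, related issue is that your factorisation $q=q_0q'$ with $q_0\mid(2\deltabad(\xx))^\infty$ depends on $\xx$, so even without the absolute values the $q_0$-sum cannot be moved outside the $\xx$-sum in the way you describe.

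The paper repairs both points as follows. First it sorts the $\xx$-sum according to the exact value $r=\deltabad(\xx)$; for fixed $r$ the condition $q_2\mid(2r)^\infty$ is now independent of $\xx$, so the $q_2$- and $q_1$-sums genuinely commute with the $\xx$-sum. The exact condition $\deltabad(\xx)=r$ is then removed by M\"obius inversion, which is why Lemma~\ref{lem:PV-prep} is needed in its full form (with congruence modulus $[q_2,E]$ and divisibility conditions $D_i\mid L_i(\xx)$), not just with $q=1$ and $\dd=(1,1,1,1)$. Second, the smooth weight $\sigma_\infty(\xx)/|\xx|$ is never replaced by its supremum; instead, after switching, the inner $\xx$-sum carries this weight and is handled by partial summation, which requires the derivative bounds of Lemma~\ref{le:derivative}. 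Finally, your reference to ``the Burgess exponent $\theta=\tfrac{7}{32}$'' in connection with Lemma~\ref{lem:PV-prep} is misplaced: that lemma is a Weil-type estimate saving $m^{1/2}$, whereas the Burgess bound (Lemma~\ref{le:Burgess}) enters separately, at an earlier stage, to truncate the $q_1$-sum to length $N_1=(S^3R)^{2\theta}B^{\eta/50}$.
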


The statement of Proposition 
\ref{prop:fksum} is an easy consequence of this, on summing over dyadic intervals for $R$ and $S$.
Before proving it, we take the opportunity to record a basic estimate for the partial derivative of the real valued  analytic  function that weights our sum   $\Sigma(S,R)$.

\begin{lemma}\label{le:derivative}
Let $j\in \{1,2\}$ and let $K\in \RR[x_1,x_2]$ be a non-zero linear form. Then the following hold:
\begin{itemize}
\item[(i)] 
$$
		\frac{\partial\sigma_{\infty}(\xx)}{\partial x_j}
\ll |\varDelta(\xx)|^{-\frac{1}{3}}(\min|L_i(\xx)|)^{-\frac{2}{3}}.
$$
\item[(ii)] 
	$$\frac{\partial}{\partial x_j}\frac{\sigma_{\infty}(\xx)}{K(\xx)}\ll_K |K(\xx)|^{-1}|\varDelta(\xx)|^{-\frac{1}{3}}(\min|L_i(\xx)|)^{-\frac{2}{3}}+|K(\xx)|^{-2}|\varDelta(\xx)|^{-\frac{1}{4}},
	$$
\end{itemize}
\end{lemma}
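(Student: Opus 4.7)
The plan is to realise $\sigma_\infty(\xx)$ as a product-factorised iterated integral and then differentiate under the integral sign. Since the inner integral over $\yy\in[-1,1]^4$ separates as
\[
\int_{[-1,1]^4}\e(-\theta Q_{\xx}(\yy))\,\mathrm{d}\yy = \prod_{k=1}^{4}F(\theta L_k(\xx)), \qquad F(t):=\int_{-1}^{1}\e(-tu^2)\,\mathrm{d}u,
\]
the dependence on $\xx$ is smooth, even though the weight $w_0$ is only $L^\infty$. Standard stationary phase yields $|F(t)|\ll\min(1,|t|^{-1/2})$, and a single integration by parts in $u$ (using $\frac{\mathrm{d}}{\mathrm{d}u}\e(-tu^2)=-4\pi itu\cdot\e(-tu^2)$) gives the companion bound
\[
|G(t)|:=\left|\int_{-1}^{1}u^{2}\e(-tu^{2})\,\mathrm{d}u\right| \ll \min(1,|t|^{-1}).
\]

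For part (i), write $L_i(\xx) = a_i x_1 + b_i x_2$ and let $c_{ij}$ denote its $x_j$-coefficient. Differentiating term by term yields
\[
\partial_{x_j}\sigma_\infty(\xx) = -2\pi i\sum_{i=1}^{4}c_{ij}\int_{\RR}\theta\cdot G(\theta L_i(\xx))\prod_{k\neq i}F(\theta L_k(\xx))\,\mathrm{d}\theta.
\]
The companion bound above gives $|\theta\cdot G(\theta L_i(\xx))|\ll |L_i(\xx)|^{-1}$ uniformly in $\theta$, so, after reducing to $\theta\geq 0$ by symmetry,
\[
|\partial_{x_j}\sigma_\infty(\xx)|\ll\sum_{i=1}^{4}\frac{|c_{ij}|}{|L_i(\xx)|}\int_{0}^{\infty}\prod_{k\neq i}\min\bigl(1,|\theta L_k(\xx)|^{-1/2}\bigr)\mathrm{d}\theta.
\]
A direct computation gives $\int_0^{\infty}\min(1,|\theta L|^{-1/2})^{3}\,\mathrm{d}\theta \ll |L|^{-1}$; H\"older's inequality with exponents $(3,3,3)$ applied to the three factors $k\neq i$ therefore yields
\[
\int_0^\infty\prod_{k\neq i}\min\bigl(1,|\theta L_k(\xx)|^{-1/2}\bigr)\mathrm{d}\theta\ll\prod_{k\neq i}|L_k(\xx)|^{-1/3} = |\varDelta(\xx)|^{-1/3}|L_i(\xx)|^{1/3}.
\]
Summing over $i$ gives $|\partial_{x_j}\sigma_\infty(\xx)|\ll |\varDelta(\xx)|^{-1/3}\sum_{i}|L_i(\xx)|^{-2/3}$, which is $O(|\varDelta(\xx)|^{-1/3}(\min_i|L_i(\xx)|)^{-2/3})$, as required.

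For part (ii), the product rule gives
\[
\frac{\partial}{\partial x_j}\frac{\sigma_\infty(\xx)}{K(\xx)}=\frac{\partial_{x_j}\sigma_\infty(\xx)}{K(\xx)}-\frac{\sigma_\infty(\xx)\,\partial_{x_j}K(\xx)}{K(\xx)^{2}}.
\]
Since $K$ is linear, $\partial_{x_j}K=O_K(1)$, so part (i) bounds the first summand by $|K(\xx)|^{-1}|\varDelta(\xx)|^{-1/3}(\min_i|L_i(\xx)|)^{-2/3}$, while \eqref{eq:brown-bag} bounds the second by $|K(\xx)|^{-2}|\varDelta(\xx)|^{-1/4}$. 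The main technical obstacle is the non-smoothness of $w_0$, which is circumvented by the product factorisation above: it reduces all $\xx$-dependence to smooth functions of the scalar quantities $\theta L_k(\xx)$, whose size and derivatives are then controlled by elementary stationary phase estimates.
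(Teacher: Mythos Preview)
Your proof is correct and follows essentially the same route as the paper: factorise the inner integral as $\prod_k F(\theta L_k(\xx))$, differentiate under the integral, bound the differentiated factor by $|L_i(\xx)|^{-1}$ via an integration by parts, and then estimate $\int\prod_{k\neq i}\min(1,|\theta L_k|^{-1/2})\,\mathrm d\theta\ll\prod_{k\neq i}|L_k|^{-1/3}$. The only cosmetic difference is that the paper bounds this last integral by majorising the product by a single $\min(1,|\theta|^{-3/2}\prod_{k\neq i}|L_k|^{-1/2})$ rather than via H\"older, and it packages the integration by parts as $\partial_{x_j}I(-\theta L_i)=\tfrac{\partial_{x_j}L_i}{2L_i}\int_{-1}^{1}y\,\partial_y\e(-\theta L_i y^2)\,\mathrm dy=O(|L_i|^{-1})$; both arrive at the same bound. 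Part (ii) is handled identically in both via the product rule and \eqref{eq:brown-bag}.
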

\begin{proof}
We shall assume without loss of generality that $j=2$. 
	We have
	$$
\frac{\partial}{\partial x_2}\frac{\sigma_{\infty}(\xx)}{K(\xx)}
	\ll_K\left|\frac{\partial\sigma_{\infty}(\xx)}{\partial x_2}\frac{1}{K(\xx)}\right|+\left|\frac{\sigma_{\infty}(\xx)}{K(\xx)^2}\right|.$$
	In view of \eqref{eq:brown-bag}, the second term gives rise to the second error term in part (ii) of the lemma. 
	Thus part (ii) follows from part (i). 
	
	For any $\psi\in\BR$, we write $I(\psi)=\int_{-1}^{1}\operatorname{e}(\psi y^2)\operatorname{d}y$. 
	We have  $I(\psi)\ll \min(1,|\psi|^{-1/2}),$
	as recorded in  \cite[Lemma 4.4]{duke}, for example. 
	We have 
	$$\sigma_{\infty}(\xx)=\int_\BR\left(\prod_{i=1}^{4}\int_{-1}^{1}\operatorname{e}\left(-\theta L_i(\xx)y_i^2\right)\operatorname{d}y_i\right)\operatorname{d}\theta=\int_\BR\left(\prod_{i=1}^{4} I(-\theta L_i(\xx))\right)\operatorname{d}\theta.
	$$
	On the other hand, 
	$$\frac{\partial I(-\theta L_i(\xx))}{\partial x_2}=\frac{\partial L_i(\xx)}{\partial x_2}\frac{1}{2L_i(\xx)}\int_{-1}^{1}y\frac{\partial}{\partial y}\operatorname{e}(-\theta L_i(\xx)y^2)\operatorname{d}y.$$
	The integral on the right hand side is uniformly bounded, whence $$\frac{\partial I(-\theta L_i(\xx))}{\partial x_2}\ll |L_i(\xx)|^{-1}.$$
In now follows that 
	\begin{align*}
		\frac{\partial\sigma_{\infty}(\xx)}{\partial x_2}
		&\ll\sum_{i=1}^{4}|L_i(\xx)|^{-1}\int_\BR \min\left(1,|\theta|^{-\frac{3}{2}}|\prod_{j\neq i}|L_j(\xx)|^{-\frac{1}{2}}\right)\operatorname{d}\theta \ll |\varDelta(\xx)|^{-\frac{1}{3}}(\min|L_i(\xx)|)^{-\frac{2}{3}},
	\end{align*}
which establishes part (i).
\end{proof}

\begin{proof}[Proof of Lemma \ref{prop:lace}]
Throughout the proof we may assume that the parameter $\eta>0$ is fixed but arbitrarily small.
Let $\theta>\frac{3}{16}$ be a parameter to be decided upon in due course. It follows from part (iv) of
 Lemma~\ref{le:Sqxx} that 
 $$
 \sum_{q\leq B^{100}} \frac{|S_q(\x)|}{q^4}\ll_\ve B^\ve.
 $$
 Combining \eqref{eq:brown-bag} with Lemma \ref{lem:stain}, we deduce that the overall contribution to 
$\Sigma(S,R)$ from $\x$ such that $\deltabad(\x)>(SR)^\theta$ is 
$$
\ll_\ve\frac{B^\ve (SR)^{1-\frac{\theta}{8}}}{S^{\frac{7}{4}}R^{\frac{1}{4}}}\ll B^\ve S^{-\frac{\theta}{4}}\ll B^{-\frac{\theta \eta}{2}+\ve},
$$
since $R\ll S$ and $S\gg B^{2\eta}$. This is a satisfactory contribution since $\theta>\frac{3}{16}$.

	Using the multiplicativity of $S_q(\xx)$ in $q$, it follows from 
	part (ii) of Lemma~\ref{le:Sqxx}  that we may proceed under the assumption that
$$
\Sigma(S,R)=	\sum_{\substack{\xx\in\CS(S,R)\\ \deltabad(\xx)\leqslant \Theta}}
\frac{\sigma_{\infty}(\xx)}{|\xx|}\sum_{\substack{q_2\leqslant B^{100}\\ q_2\mid (2\deltabad(\xx))^\infty}}\frac{S_{q_2}(\xx)}{q_2^4}
\hspace{-0.4cm}
\sum_{\substack{B^{\eta/10}/q_2\leq q_1\leq B^{100}/q_2\\ 2\nmid q_1}}
\hspace{-0.3cm}
\left(\frac{\varDelta(\xx)}{q_1}\right)\frac{\phi^*(q_1)}{q_1},
$$
where
$$
\Theta=\min \left(B^{\eta/1000}, (SR)^{\theta}\right).
$$
We will need to show that the sums over $q_1$ and $q_2$ can be truncated satisfactorily. 
The inner sum over $q_1$ is $O(\log B)$. Hence part (i) of
Lemma~\ref{le:Sqxx}  
and \eqref{eq:brown-bag} 
implies that the contribution from  $q_2>B^{\eta/100}$ is 
\begin{align*}
\ll 
	\frac{\log B}{B^{\eta/200}} \sum_{\substack{\xx\in\CS(S,R)\\ \deltabad(\xx)\leqslant \Theta}}\frac{1}{|\xx||\Delta(\xx)|^{\frac{1}{4}}}
\#\{
q_2\leqslant B^{100}: q_2\mid (2\deltabad(\xx))^\infty \}
&\ll_\ve B^{-\eta/200+\ve} \frac{\#\CS(S,R)}{S^{\frac{7}{4}}R^\frac{1}{4}}\\
&\ll_\ve B^{-\eta/200+\ve},
\end{align*}
since $R\ll S$. Taking $\ve=\eta/400$, this is satisfactory for Lemma \ref{prop:lace}.
Next, we put 
	\begin{equation}\label{eq:N1new}
		N_1=(S^3 R)^{2\theta}B^{\eta/50}.
	\end{equation} 
	Then it follows from the  Burgess bound, in the form of Lemma \ref{le:Burgess}, that 	
	$$
	\sum_{\substack{N_1<q_1\leqslant B^{100}/q_2\\
	2\nmid q_1}}\left(\frac{\varDelta(\xx)}{q_1}\right)\frac{\phi^*(q_1)}{q_1}
	\ll_\theta N_1^{-\frac{1}{2}}|\varDelta(\xx)|^\theta\ll B^{-\eta/100},
	$$
	since $\theta>\frac{3}{16}$.
Since the contribution from the $q_2$ sum is $O_\ve(B^\ve)$, we obtain the overall contribution 
	$$\ll_\theta B^{-\eta/100+\varepsilon}\frac{\#\CS(S,R)}{S^{\frac{7}{4}}R^\frac{1}{4}}\ll B^{-\eta/100+\varepsilon}.$$ 
This is satisfactory for Lemma \ref{prop:lace} on  taking  $\varepsilon=\eta/200$.

In summary, it suffices to proceed under the assumption that 
$$
\Sigma(S,R)=	\sum_{\substack{\xx\in\CS(S,R)\\ \deltabad(\xx)\leqslant \Theta}}\frac{\sigma_{\infty}(\xx)}{|\xx|}\sum_{\substack{q_2\leqslant B^{\eta/100}\\ q_2\mid (2\deltabad(\xx))^\infty}}\frac{S_{q_2}(\xx)}{q_2^4}\sum_{\substack{q_1\in I_{q_2}\cap \ZZ\\ 2\nmid q_1}}\left(\frac{\varDelta(\xx)}{q_1}\right)\frac{\phi^*(q_1)}{q_1},
$$
where 
\begin{equation}\label{eq:int-q2}
I_{q_2}=\left[\frac{B^{\eta/10}}{q_2}~,~ \min\left(N_1,\frac{B^{100}}{q_2}\right)\right].
\end{equation}
Next, we sort this sum according to the value of  $\deltabad(\xx)$.
Thus 
\begin{equation}\label{eq:yellow-bach}
\Sigma(S,R)=
\sum_{\substack{r\leqslant \Theta\\ 
	r \text{ squarefull}}} \Sigma_r(S,R),
	\end{equation}
	where
$$
	\Sigma_r(S,R)=	\sum_{\substack{\xx\in\CS(S,R)\\ \deltabad(\xx)=r}}\frac{\sigma_{\infty}(\xx)}{|\xx|}\sum_{\substack{q_2\leqslant B^{\eta/100}\\ q_2\mid (2r)^\infty}}\frac{S_{q_2}(\xx)}{q_2^4}\sum_{\substack{q_1\in I_{q_2}\cap \ZZ\\ \gcd(q_1,2r)=1}}\left(\frac{\varDelta(\xx)}{q_1}\right)\frac{\phi^*(q_1)}{q_1}.
$$

The idea is to  now bring the  $\xx$-sum to the inside, in order to exploit  cancellation from the Jacobi symbol. 
To do so, we must  first exchange the $q_2$-sum and the $\xx$-sum, by sorting the $\xx$-sum into residue classes modulo $q_2$.
This  leads to the expression
\begin{equation}\label{eq:sumr}
\Sigma_r(S,R)=
\sum_{\substack{q_2\leqslant B^{\eta/100}\\ q_2\mid (2r)^\infty}}\frac{1}{q_2^4}
\sum_{\substack{\cc\in(\BZ/q_2\BZ)^2\\
\gcd(q_2,\cc)=1}}S_{q_2}(\cc)
\sum_{\substack{q_1\in I_{q_2}\cap \ZZ\\ \gcd(q_1,2r)=1}} \frac{\phi^*(q_1)}{q_1} \cdot U(q_1,q_2;\cc),
\end{equation}
where
$$
U(q_1,q_2;\cc)=
\sum_{\substack{\xx\in\CS(S,R)\\ \deltabad(\xx)=r\\ \xx\equiv \cc\bmod{q_2}}}\frac{\sigma_{\infty}(\xx)}{|\xx|}\left(\frac{\varDelta(\xx)}{q_1}\right).
$$
To handle the condition $\deltabad(\xx)=r$, we note that it is equivalent to the pair of conditions 
$
\gcd\left(r,\varDelta(\xx)/r\right)=1$ and $\mu^2\left(\varDelta(\xx)/r\right)=1$.
These can both be detected using the  M\"obius function, leading to 
$$
U(q_1,q_2;\cc)
=\sum_{d_1\mid r}\mu(d_1)\sum_{d_2}\mu(d_2) 
\sum_{\substack{\xx\in\CS(S,R)\\  r[d_1,d_2^2]\mid \varDelta(\xx)\\ 
\xx\equiv \cc \bmod{q_2}}}\frac{\sigma_{\infty}(\xx)}{|\xx|}\left(\frac{\varDelta(\xx)}{q_1}\right).
$$
Note $\gcd(q_1,d_1)=1$, since 
 $\gcd(q_1,r)=1$. Hence we have   $\gcd(q_1,rd_1d_2)=1$.

Clearly $\deltabad(\xx)\geqslant d_2^2$ and so we must have $d_2\leq \Theta^{1/2}\leq (SR)^{\theta/2}$.
We  therefore have 
$$
U(q_1,q_2;\cc)
=\sum_{d_1\mid r}\mu(d_1)\sum_{\substack{d_2\leq (SR)^{\eta/20}\\ \gcd(d_2,q_1)=1}}\mu(d_2) 
\sum_{\substack{\xx\in\CS(S,R)\\  r[d_1,d_2^2]\mid \varDelta(\xx)\\ 
\xx\equiv \cc \bmod{q_2}}}\frac{\sigma_{\infty}(\xx)}{|\xx|}\left(\frac{\varDelta(\xx)}{q_1}\right),
$$
where we recall that $\CS(S,R)$ is defined in \eqref{eq:SSR}.
Since $\x$ is primitive in the inner sum, 
it follows from 
\eqref{le:discform} that $\gcd(L_i(\x),L_j(\x))\mid \CD$ for $i\neq j$, where $\CD$ is defined in 
\eqref{eq:badprimes} and 
satisfies $\CD=O(1)$. 
We write 
$
r[d_1,d_2^2]=DE,
$
where $D$ only contains primes $p\nmid \CD$, while $p\mid E \Rightarrow p\mid \CD$.
We further break the $\x$-sum into congruences modulo $E$, finding that 
$$
\sum_{\substack{\xx\in\CS(S,R)\\  r[d_1,d_2^2]\mid \varDelta(\xx)\\ 
\xx\equiv \cc \bmod{q_2}}}\frac{\sigma_{\infty}(\xx)}{|\xx|}\left(\frac{\varDelta(\xx)}{q_1}\right)
=
\sum_{\substack{\mathbf{s}\bmod{E}\\
\gcd(\mathbf{s},E)=1\\
E\mid \Delta(\mathbf{s})
}} 
\sum_{\substack{\xx\in\CS(S,R)\\  D\mid \varDelta(\xx)\\ 
\xx\equiv \cc \bmod{q_2}\\ 
\xx\equiv \mathbf{s} \bmod{E}
}}\frac{\sigma_{\infty}(\xx)}{|\xx|}\left(\frac{\varDelta(\xx)}{q_1}\right).
$$
We claim that 
\begin{equation}\label{eq:tea}
\#\left\{ \mathbf{s}\in (\ZZ/E\ZZ)^2: \gcd(\mathbf{s},E)=1, ~E\mid \Delta(\mathbf{s})\right\} =O_\ve(E^{1+\ve}),
\end{equation}
for any $\ve>0$. By the Chinese remainder theorem it suffices to study the case where $E=p^e$ is a prime power. If $p^\lambda\mid \gcd(L_i(\mathbf{s}),L_j(\mathbf{s}))$ for $i\neq j$, then $p^\lambda \mid \mathcal{D}$. Thus the number of solutions modulo $p^\lambda$ is clearly $O(p^\lambda)$. The claimed bound \eqref{eq:tea} easily follows.

Since $\gcd(D,\CD)=1$, there is a bijection between  
$D\mid \Delta(\x)$ and vectors $(D_1,\dots,D_4)\in \NN^4$ 
with pairwise coprime coordinates, such  that $D_i\mid L_i(\x)$, for $1\leq i\leq 4$.
Thus 
$$
\sum_{\substack{\xx\in\CS(S,R)\\  D\mid \varDelta(\xx)\\ 
\xx\equiv \cc \bmod{q_2}\\ 
\xx\equiv \mathbf{s} \bmod{E}
}}\frac{\sigma_{\infty}(\xx)}{|\xx|}\left(\frac{\varDelta(\xx)}{q_1}\right)=
\sum_{\substack{D=D_1\cdots D_4\\ 
\gcd(D_i,D_j)=1
}}
\sum_{\substack{\xx\in\CS(S,R)\\  D_i\mid L_i(\xx)\\ 
\xx\equiv \cc \bmod{q_2}\\ 
\xx\equiv \mathbf{s} \bmod{E}
}}\frac{\sigma_{\infty}(\xx)}{|\xx|}\left(\frac{\varDelta(\xx)}{q_1}\right).
$$
We use the M\"obius function to remove the coprimality condition on $\x$, and we observe that 
$\sigma_\infty (k\x)=k^{-1}\sigma_\infty (\x)$ for any $k>0$. 
Thus  
\begin{equation}\label{eq:thread}
\begin{split}
U(q_1,q_2;\cc)
=~&\sum_{d_1\mid r}\mu(d_1)
\sum_{\substack{
d_2\leq (SR)^{\eta/20}\\ \gcd(d_2,q_1)=1}}\mu(d_2) 
\hspace{-0.3cm}
\sum_{r[d_1,d_2^2]=DE}
\sum_{\substack{\mathbf{s}\bmod{E}\\
\gcd(\mathbf{s},E)=1\\
E\mid \Delta(\mathbf{s})
}} 
\sum_{\substack{\DD\in \NN^4\\
D_1\cdots D_4=D\\
\gcd(D_i,D_j)=1}}
\sum_{\substack{k\ll R\\
\gcd(k,q_2)=1}
} \frac{\mu(k)}{k^2} 
U_{\DD',k},
\end{split}\end{equation}
where
$$
U_{\DD',k}=
\sum_{\substack{\xx\in \ZZ^2\\ 
|\x|\sim S', ~|L_1(\x)|\sim R'\\
|L_i(\x)|>|L_1(\x)| \text{ for $i\geq 2$}\\
D_i'\mid L_i(\x)\\
k\xx\equiv \cc \bmod{q_2}\\
k\xx\equiv \mathbf{s} \bmod{E}
}}\frac{\sigma_{\infty}(\xx)}{|\xx|}\left(\frac{\varDelta(\xx)}{q_1}\right),
$$
with 
$$
D_i'=\frac{D_i}{\gcd(D_i,k)}  \text{ for $1\leq i\leq 4$},  \quad S'=\frac{S}{k},
\quad R'=\frac{R}{k}.
$$
In particular, we clearly have $\gcd(D_i',D_j')=1$ for $i\neq j$ and, moreover,  
 $k $ is coprime to $q_2E$, since 
 $\cc$ is coprime to $q_2$
and $\mathbf{s}$ is coprime to $E$.

We now focus our attention 
on the sum $U_{\DD',k}$.
Suppose that   
$
L_1(x_1,x_2)=a_1x_1+b_1x_2,
$ 
for coprime integers $a_1,b_1$. Then there exists 
$\mathbf{M}\in \mathrm{SL}_2(\ZZ)$ with first row  equal to $(a_1,b_1)$. Making the change of variables 
$\y=\mathbf{M}\x$, we let $J_i(\yy)=L_i(\mathbf{M}^{-1}\yy)$, for $1\leq i\leq 4$,  and
$\varDelta^\prime(\yy)=J_1(\yy)\cdots J_4(\yy)$. 
Under this transformation, 
there exists $\mathbf{c}'\in \ZZ^2$ such that 
$$
U_{\DD',k}
=
\sum_{\substack{\yy\in\ZZ^2\cap \mathcal{R}\\
D_i'\mid J_i(\y)\\ 
\y\equiv \mathbf{c}'\bmod{[q_2,E]}\\
}}\frac{\sigma_{\infty}'(\yy)}{|\mathbf{M}^{-1}\yy|}\left(\frac{\varDelta'(\yy)}{q_1}\right),
$$
where  $\sigma_{\infty}^\prime(\yy)=\sigma_{\infty}(\mathbf{M}^{-1}\yy)$ and 
$$
\mathcal{R} =\left\{\y\in \RR^2:  
|y_1|\sim R', ~|\mathbf{M}^{-1}\y|\sim S',
 \text{ and $|J_i(\yy)|>|y_1|$ for $i\geqslant 2$}\right\}.
$$
Note that once $y_1$ is fixed, there exists an interval $K_{y_1}$ of length $O( S')$, such that 
$\y\in \mathcal{R}$ if and only if $y_2\in K_{y_1}$.
Hence
\begin{equation}\label{eq:ship}
U_{\DD',k}
=
\sum_{\substack{|y_1|\sim R'\\
y_1\equiv c_1' \bmod{[q_2,E]}}}
V(y_1),
\end{equation}
where
$$
V(y_1)=
\sum_{\substack{y_2\in K_{y_1}\cap \ZZ\\
D_i'\mid J_i(\y)\\ 
y_2\equiv c_2' \bmod{[q_2,E]}}}
\frac{\sigma_{\infty}'(\yy)}{|\mathbf{M}^{-1}\yy|}\left(\frac{\varDelta'(\yy)}{q_1}\right).
$$

We now seek to apply Lemma \ref{lem:PV-prep} to estimate $V(y_1)$. For this we recall that $\gcd(q_1,q_2D'E)=1$,
where $D'=D_1'\cdots D_4'$.
There exists a unique  factorisation $q_1=ut^2$, where $u$ is the largest square-free divisor of $q_1$.
We then deduce from Lemma \ref{lem:PV-prep} that 
$$
\sum_{\substack{y_2\in I\cap \ZZ	\\
D_i'\mid J_i(\y)\\ 
y_2\equiv c_2' \bmod{[q_2,E]}}}
\left(\frac{\varDelta'(\yy)}{q_1}\right)\ll_\ve
 \left(
		\frac{\vol(I)}{u^{\frac{1}{2}}[[q_2,E], D']} 
+				u^{\frac{1}{2}} \log(q_2D'E)	\right)
				u^{\ve}\gcd(y_1,u D'),
$$
for any $\ve>0$ and any  interval $I\subset \RR$. 
Note that 
$
[[q_2,E], D']\geq [E,D']=D'E,
$
since $D'$ and $E$ are coprime.

Armed with this bound,  it now follows from   Lemma \ref{le:derivative} and partial summation that 
\begin{align*}
V(y_1)
&\ll \sup_{I\subset K_{y_1}}\left|\sum_{\substack{y_2\in I\cap \ZZ\\  
D_i' \mid J_i(\y)\\
y_2\equiv c_2'\bmod{[q_2,E]}}}
\left(\frac{\varDelta^\prime(\yy)}{q_1}\right)\right| \cdot \sup_{y_2\in K_{y_1}} W(y_2),
\\ 
\end{align*}
where
\begin{align*}
W(y_2)
&=\vol(K_{y_1})\cdot \left|\frac{\partial }{\partial y_2}\frac{\sigma_{\infty}^\prime(\yy)}{|\mathbf{M}^{-1}\yy|}\right| +
\frac{\sigma_{\infty}'(\yy)}{|\mathbf{M}^{-1}\yy|}\ll \frac{1}{R'S'}.
\end{align*}
Hence
\begin{align*}
V(y_1)&\ll_\ve
 \left(
		\frac{S'}{u^{\frac{1}{2}}{D'E} }
+				u^{\frac{1}{2}} \log(q_2D'E)	\right)\frac{
				u^{\frac{\ve}{2}}\gcd(y_1,u D')}{R'S'}.
\end{align*}

On returning to \eqref{eq:ship} and summing over $y_1$, 
we obtain 
\begin{align*}
U_{\DD',k}
&\ll_\ve
\frac{
				u^{\frac{\ve}{2}}}{R'S'}
\left(
		\frac{S'}{u^{\frac{1}{2}}D'E} 
+				u^{\frac{1}{2}} \log(q_2D'E)	\right)
\tau(uD') \left(\frac{R'}{[q_2,E]}+1\right)\\
&\ll_\ve
\frac{
				(q_1q_2DE)^{\ve}}{R'S'}
\left(
		\frac{S'}{u^{\frac{1}{2}}D'E} 
+				u^{\frac{1}{2}}	\right)
\left(\frac{R'}{[q_2,E]}+1\right).
\end{align*}
But $$
D'E= \frac{DE}{\gcd(D_1,k)\cdots \gcd(D_4,k)}\gg \frac{DE}{k}\geq \frac{d_2^2}{k},
$$ 
since $DE=r[d_1,d_2^2]\geq d_2^2$.
Moreover, $R'/[q_2,E]\leq R'=R/k$ and $S'=S/k$. It therefore follows that 
\begin{align*}
U_{\DD',k}
&\ll_\ve 
kB^{2\ve}
\left(
		\frac{1}{u^{\frac{1}{2}}d_2^2} 
		+				\frac{u^{\frac{1}{2}}}{S}	\right),
\end{align*}
on noting that  $q_1q_2DE\leq B^2.$ 
We now insert this into \eqref{eq:thread} and apply \eqref{eq:tea}. 
Observe  that there are 
$O_\ve(B^\ve)$ choices for $D_1,\dots,D_4$, for fixed $D$, and that the sum over $k$  contributes $O(\log B)$.
Hence we find that
\begin{align*}
U(q_1,q_2;\cc)
&\ll_\ve B^{3\ve}\log B\sum_{d_1\mid r}
\sum_{\substack{
d_2\leq (SR)^{\eta/20}}}
\sum_{r[d_1,d_2^2]=DE} E^{1+\ve}
\left(
		\frac{1}{u^{\frac{1}{2}}d_2^2} 
		+				\frac{u^{\frac{1}{2}}}{S}	\right),
\end{align*}
where we recall that 
$\gcd(D,\CD)=1$ and 
$E$ is only divisible by primes dividing $\CD$.  In particular, the factorisation of 
$r[d_1,d_2^2]$ as $DE$ is uniquely determined. 
We factorise $d_2=d_2'd_2''$, where $\gcd(d_2',\CD)=1$ and $p\mid d_2'' \Rightarrow p\mid \CD$.
There are clearly $O_\ve(B^\ve)$ choices for $d_2''$. Moreover, we now have  
$E\leq rd_1(d_2'')^2\leq (rd_2'')^2$ and so we may sum over $d_2'$ and $d_2''$ to get
\begin{align*}
U(q_1,q_2;\cc)
&\ll_\ve B^{4\ve}\log B
r^{2+\ve}
\left(
		\frac{1}{u^{\frac{1}{2}}} 
		+				\frac{u^{\frac{1}{2}}(SR)^{\eta/20}}{S}	\right)
\ll_\ve r^{2+\ve} B^{5\ve}
\left(
		\frac{1}{u^{\frac{1}{2}}} 
		+				\frac{u^{\frac{1}{2}}(SR)^{\eta/20}}{S}	\right).		
\end{align*}

It remains to substitute this bound into \eqref{eq:sumr}. Recalling that 
$q_1=ut^2$, where $u$ is the largest square-free divisor of $q_1$, we observe  that 
$$
\sum_{Q_1<q_1\leq Q_2} \frac{1}{q_1u^{\frac{1}{2}}}\leq \sum_{t\leq \sqrt{Q_2}}
\frac{1}{t^2}
\sum_{u>Q_1/t^2} \frac{1}{u^{\frac{3}{2}}}\ll Q_1^{-\frac{1}{2}}\log Q_2,
$$
for any $Q_1\leq Q_2$. Similarly 
$$
\sum_{Q_1<q_1\leq Q_2} \frac{u^{\frac{1}{2}}}{q_1}\leq 
\sum_{q_1\leq Q_2} q_1^{-\frac{1}{2}}\ll Q_2^{\frac{1}{2}}.
$$
Recalling the definitions \eqref{eq:N1new}  and \eqref{eq:int-q2} of $N_1$ and $I_{q_2}$, respectively, it  follows that 
\begin{align*}
\sum_{\substack{q_1\in I_{q_2}\cap \ZZ\\ \gcd(q_1,2r)=1}} \frac{\phi^*(q_1)}{q_1} 
\left(
		\frac{1}{u^{\frac{1}{2}}} 
+				\frac{u^{\frac{1}{2}} (SR)^{\eta/20}}{S}	\right)
&\ll 
\frac{q_2^{\frac{1}{2}}\log B}{B^{\eta/20}}
+
\frac{N_1^{\frac{1}{2}}(SR)^{\eta/20}}{S}
\ll 
\frac{q_2^{\frac{1}{2}}\log B}{B^{\eta/20}}
+
\frac{B^{7\eta/100}}{S^{1-4\theta}},
\end{align*}
since $R\ll S$.
Appealing to part (i) of Lemma \ref{le:Sqxx} to estimate $S_{q_2}(\cc)$ we deduce from \eqref{eq:sumr} that 
\begin{align*}
\Sigma_r(S,R)
&\ll_\ve  r^{2+\ve}B^{2\ve}\left(
\frac{1}{B^{\eta/20}}
+
\frac{B^{7\eta/100}}{S^{1-4\theta}}\right)
\sum_{q_2\leqslant B^{\eta/100}}q_2^{2}\\
&\ll_\ve  r^{2+\ve} B^{2\ve}\left(
\frac{1}{B^{\eta/50}}
+
\frac{B^{\eta/10}}{S^{1-4\theta}}\right).
\end{align*}
This bound is valid for any choice of  $\theta>\frac{3}{16}$. Taking $\theta=\frac{1}{5}$ and recalling that $S\gg B^{2\eta}$, it therefore follows that
$$
\Sigma_r(S,R)\ll_\ve  r^{2+\ve} B^{2\ve}\left(
\frac{1}{B^{\eta/50}}
+
\frac{B^{9\eta/100}}{S^{1/5}}\right)\ll r^{2+\ve}
B^{-\eta/50+2\ve}.
$$
Summing over $r\leqslant \Theta\leq B^{\eta/1000}$ in \eqref{eq:yellow-bach} and taking $\ve$ sufficiently small, we finally conclude the proof of Lemma \ref{prop:lace}.
\end{proof}

\subsection{Proof of Proposition \ref{prop:lid}: final step}

We now have everything in place to analyse the asymptotic behaviour of 
$M(B)$, as defined in \eqref{eq:M1B}. Combining Lemmas \ref{le:smalldeltabad} and \ref{le:finiteqsum} with   Proposition \ref{prop:fksum}, we deduce that 
$$
M(B)=	\sum_{\substack{\xx\in\BZprim^2\\ B^{2\eta}\leqslant |\xx|\leqslant B^\frac{1}{4}\\ 
\deltabad(\xx)\leq B^{\eta/1000}
}}\frac{\sigma_{\infty}(\xx)\mathfrak{S}(\xx;B^{\eta/10})}{|\xx|} +O_\eta(1).
$$
The proof of Lemma \ref{le:smalldeltabad} applies in the same way to show 
that there is an  overall contribution $O_\eta(1)$ from $\xx$ such that 
$\deltabad(\xx)> B^{\eta/1000}$. 
Let 
$$
	c_q(a)=\sum_{\substack{x\bmod q\\ \gcd(x,q)=1}} \operatorname{e}_q(ax)
$$
be  the Ramanujan sum, for  $a,q\in\BN$.
Then, on opening up $\mathfrak{S}(\xx;B^{\eta/10})$ and rearranging the sums, we obtain
$$
M(B)=	
\sum_{q\leq B^{\eta/10}} q^{-4} \sum_{\mathbf{b}\in (\ZZ/q\ZZ)^4}
\sum_{\substack{\xx\in\BZprim^2\\ B^{2\eta}\leqslant |\xx|\leqslant B^\frac{1}{4}
}}c_q\left(\sum_{i=1}^4 L_i(\x)b_i^2\right)\frac{\sigma_{\infty}(\xx)}{|\xx|} +O_\eta(1).
$$
We break the $\xx$-sum into residue classes modulo $q$, leading to 
\begin{equation}\label{eq:yellow-rope}
	M(B)=\sum_{q\leqslant B^{\eta/10}}q^{-4} \sum_{\mathbf{b}\in (\ZZ/q\ZZ)^4}\sum_{\substack{\cc\in (\BZ/q\BZ)^2\\ \gcd(q,\cc)=1}}c_q\left(\sum_{i=1}^{4}L_i(\cc)b_i^2\right)
	U_q(\cc) +O_\eta(1),
\end{equation}
where
$$
U_q(\cc)=	\sum_{\substack{\xx\in\BZprim^2\\ \xx\equiv \cc\bmod q\\ B^{2\eta}\leqslant |\xx|\leqslant B^\frac{1}{4}}}\frac{\sigma_{\infty}(\xx)}{|\xx|}.
$$
The following result is concerned with the asymptotic evaluation of this sum. 

\begin{lemma}\label{lem:712}
We have 
$$
U_q(\cc)=
\frac{1}{q^2\zeta(2)}\prod_{p\mid q}\left(1-\frac{1}{p^2}\right)^{-1}
\int_{\{\mathbf{t}\in \RR^2: B^{2\eta}\leq |\mathbf{t}| \leq B^{\frac{1}{4}}\}} 
\frac{\sigma_\infty(\mathbf{t})}{|\mathbf{t}|} \mathrm{d} \mathbf{t}
 +O(B^{-\frac{6\eta}{7}}\log B),
$$
for any $\cc\in (\ZZ/q\ZZ)^2$ and $q\leq B^{\eta/10}$.
\end{lemma}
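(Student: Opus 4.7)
The plan is to apply M\"obius inversion on the primitivity condition $\gcd(x_1,x_2)=1$, approximate the resulting sums over residue classes by integrals using the derivative bounds of Lemma~\ref{le:derivative}, and recognise the Euler factor in the residual sum of M\"obius values. Writing $\xx=d\yy$, and noting that since $\gcd(\cc,q)=1$ the condition $d\yy\equiv\cc\bmod{q}$ forces $\gcd(d,q)=1$, the homogeneity $\sigma_{\infty}(d\yy)=d^{-1}\sigma_{\infty}(\yy)$ yields
\begin{equation*}
U_q(\cc)=\sum_{\substack{d\leq B^{\frac{1}{4}}\\\gcd(d,q)=1}}\frac{\mu(d)}{d^2}\,S_d(\cc),\qquad
S_d(\cc)=\sum_{\substack{\yy\in\ZZ^2\\\yy\equiv d^{-1}\cc\bmod{q}\\B^{2\eta}/d\leq |\yy|\leq B^{\frac{1}{4}}/d}}\frac{\sigma_{\infty}(\yy)}{|\yy|},
\end{equation*}
where the truncation $d\leq B^{\frac{1}{4}}$ is automatic since $S_d(\cc)$ is empty beyond that.

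Next, I would approximate $S_d(\cc)$ by the Riemann integral $q^{-2}\int_{B^{2\eta}/d\leq|\mathbf{t}|\leq B^{\frac{1}{4}}/d}\sigma_{\infty}(\mathbf{t})/|\mathbf{t}|\,\mathrm{d}\mathbf{t}$. For a smooth weight $f$ on a region $\mathcal{R}\subset\RR^2$ of area $A$ and perimeter $P$, tiling by cells of side $q$ and using the mean value theorem yields an error of size $O((P\sup_\mathcal{R}|f|+A\sup_\mathcal{R}|\nabla f|)/q)$. I would apply this in a dyadic decomposition of the annulus $\{|\yy|\sim T\}$, then sub-decompose along $\min_i|L_i(\yy)|\sim R$, so that in each piece the sup-norm $|\yy|$ reduces to a single coordinate and Lemma~\ref{le:derivative}(ii) applies directly. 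In the generic strip ($R\sim T$) this gives $|\nabla f|\ll T^{-3}$, $|f|\ll T^{-2}$, $A\sim T^2$, $P\ll T$, and hence a per-annulus contribution of $O((qT)^{-1})$; in each degenerate strip with $R\ll T$, combining $\sigma_{\infty}\ll(T^3R)^{-1/4}$ with the analogous derivative bound gives the same order $O((qT)^{-1})$, so the sub-decomposition in $R$ only costs a $\log T$ factor. Summing over dyadic $T\geq B^{2\eta}/d$ bounds the resulting error for $S_d(\cc)$ by $O((d/q)B^{-2\eta}\log B)$.

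Finally, the substitution $\mathbf{s}=d\yy$, combined with $\sigma_{\infty}(\mathbf{s}/d)=d\sigma_{\infty}(\mathbf{s})$ and Jacobian $d^{-2}$, shows that
\begin{equation*}
\frac{1}{q^2}\int_{B^{2\eta}/d\leq|\yy|\leq B^{\frac{1}{4}}/d}\frac{\sigma_{\infty}(\yy)}{|\yy|}\,\mathrm{d}\yy=\frac{I}{q^2},
\end{equation*}
where $I$ denotes the $d$-independent integral appearing in the lemma. The elementary identity $\sum_{\gcd(d,q)=1}\mu(d)/d^2=\zeta(2)^{-1}\prod_{p\mid q}(1-p^{-2})^{-1}$ then produces the stated leading term; the tail $d>B^{\frac{1}{4}}$ contributes only $O(q^{-2}B^{-\frac{1}{4}}\log B)$, and the accumulated sum-to-integral error is
\begin{equation*}
\sum_{d\leq B^{\frac{1}{4}}}\frac{1}{d^2}\cdot\frac{d}{q}\,B^{-2\eta}\log B\ll \frac{(\log B)^2}{qB^{2\eta}},
\end{equation*}
which is comfortably dominated by $B^{-\frac{6\eta}{7}}\log B$ for $q\leq B^{\eta/10}$.

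The principal obstacle is the handling of the degeneracy loci $L_i=0$, where $\sigma_{\infty}$ and its partial derivatives become large; the dyadic sub-decomposition in $R$ together with Lemma~\ref{le:derivative}(ii) absorbs this singular behaviour at the very modest cost of an extra $\log T$ factor, which the target error bound easily accommodates.
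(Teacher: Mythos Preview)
Your overall strategy---M\"obius inversion, then a sum-to-integral comparison with dyadic control of the singularity at $L_i=0$---is sound and close in spirit to the paper's, though the paper proceeds differently in the details: it first excises a $\delta$-neighbourhood $\{m(\xx)\leq\delta|\xx|\}$ of the singular lines (contributing $O(\delta^{3/4}\log B)$), applies M\"obius inversion, and then passes to the integral \emph{one variable at a time} via partial summation and Lemma~\ref{le:derivative}, finally choosing $\delta=B^{-8\eta/7}$. Your two-dimensional tiling with a dyadic sub-decomposition in $R=m(\yy)$ is a legitimate alternative route.

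There is, however, a slip in your degenerate-strip estimate. With $|\yy|\sim T$ and $m(\yy)\sim R\ll T$, the strip has area $\asymp TR$ but perimeter $\asymp T$, while $\sup|f|\ll T^{-7/4}R^{-1/4}$. Hence the boundary contribution $(P/q)\sup|f|$ is $\asymp q^{-1}T^{-3/4}R^{-1/4}$, which for small $R$ is larger than the $(qT)^{-1}$ you claim (only the gradient term $(A/q)\sup|\nabla f|$ gives $(qT)^{-1}$). Summing the corrected bound over dyadic $R\geq 1$ yields a per-annulus error $O(q^{-1}T^{-3/4})$, then over dyadic $T\geq B^{2\eta}/d$ and over $d$ (weighted by $d^{-2}$) gives $O(q^{-1}B^{-3\eta/2})$, still comfortably inside the stated $O(B^{-6\eta/7}\log B)$. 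Two minor loose ends also deserve a sentence each: after M\"obius inversion $\yy$ need not be primitive, so lattice points with $L_i(\yy)=0$ can occur (there $\sigma_\infty(\yy)\ll|\yy|^{-1}$ and such points contribute $O((qT)^{-1})$ per annulus); and the integral over the region $0<m(\yy)<1$, not covered by your dyadic scheme, contributes $O(q^{-2}T^{-3/4})$ per annulus. Both are harmless.
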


\begin{proof}
It will be convenient to define $m(\x)=\min_{1\leq i\leq 4}|L_i(\x)|$ in the proof of this result. Then, in view of Lemma \ref{le:smallvalueLi} and \eqref{eq:brown-bag}, we have the estimate
\begin{equation}\label{eq:grey-lead}
\sigma_\infty(\x)\ll |\x|^{-\frac{3}{4}}m(\x)^{-\frac{1}{4}}.
\end{equation}
Since there are no primitive vectors $\x\in \ZZ^2$ with $|x_1|=|x_2|$ and $|\x|>B^{2\eta}$, we may write
$$
U_q(\cc)=	
\sum_{\substack{\xx\in\BZprim^2,~ |x_1|<|x_2|\\ \xx\equiv \cc\bmod q\\ B^{2\eta}\leqslant |x_2|\leqslant B^\frac{1}{4}}}\frac{\sigma_{\infty}(\xx)}{|x_2|}+
\sum_{\substack{\xx\in\BZprim^2,~ |x_2|<|x_1|\\ \xx\equiv \cc\bmod q\\ B^{2\eta}\leqslant |x_1|\leqslant B^\frac{1}{4}}}\frac{\sigma_{\infty}(\xx)}{|x_1|}=U_q^{(1)}(\cc)+U_q^{(2)}(\cc),
$$
say. We focus our efforts on $U_q^{(1)}(\cc)$, the treatment of the remaining sum being identical.

We begin by handling the overall contribution to $U_q^{(1)}(\cc)$ from $\x$ such that $m(\x)\leq \delta |x_2|$, for a parameter $\delta$ that will be selected in due course, but which will tend to $0$ as $B\to \infty$.  In particular $m(\x)$ cannot be proportional to $x_2$ in this case.
Given $x_2\in \ZZ$, there are at most $O(L)$ values of $x_1\in \ZZ$ such that $m(\x)\leq L$, for any 
$L\leq \delta |x_2|$.
 Thus \eqref{eq:grey-lead} implies that 
$$
\sum_{\substack{\xx\in\BZprim^2, ~|x_1|<|x_2|\\
m(\x)\leq \delta |x_2|\\
B^{2\eta}\leqslant |x_2|\leqslant B^\frac{1}{4}}}\frac{\sigma_{\infty}(\xx)}{|x_2|}\ll   
\delta^{\frac{3}{4}}\log B.
$$
Since $\sigma_{\infty}(k\xx)=k^{-1}\sigma_{\infty}(\xx)$, for any $k>0$, we apply  M\"obius inversion to deal with  the coprimality of $\xx$, giving
$$
U_q^{(1)}(\cc)=\sum_{\substack{k\leq B^{\frac{1}{4}}\\ \gcd(k,q)=1}}
\frac{\mu(k)}{k^2}
	\sum_{\substack{\xx\in \ZZ^2,~|x_1|<|x_2|\\ \xx\equiv \bar{k}\cc\bmod q\\ B^{2\eta}\leqslant k|x_2|\leqslant B^\frac{1}{4}\\
	m(\xx)>\delta |x_2|
	}}\frac{\sigma_{\infty}(\xx)}{|x_2|} +O\left(\delta^{\frac{3}{4}}\log B\right).
$$
where $\bar k$ is the multiplicative inverse of $k$ modulo $q$.
It follows from  \eqref{eq:grey-lead} that the $\x$-sum is $O(\log B)$. Hence, the overall contribution to the main term from $k>B^{2\eta}$ is easily seen to be 
$O(B^{-2\eta}\log B)$. Hence
$$
U_q^{(1)}(\cc)=\sum_{\substack{k\leq B^{2 \eta}\\ \gcd(k,q)=1}}
\frac{\mu(k)}{k^2}
\hspace{-0.3cm}
\sum_{\substack{x_2\in \ZZ\\ x_2\equiv \bar{k} c_2\bmod{q}\\
B^{2\eta}/k\leqslant |x_2|\leqslant B^\frac{1}{4}/k}} \frac{1}{|x_2|}
	\sum_{\substack{x_1\in \ZZ\cap K_{x_2}\\ x_1\equiv \bar{k}c_1\bmod q}}\sigma_{\infty}(\xx)
	+O((B^{-2 \eta}+\delta^{\frac{3}{4}})\log B),
$$
where $K_{x_2}$ is the interval of $t\in \RR$ such that 
$|t|<|x_2|$ and $ m(t,x_2)>\delta|x_2|$.

Appealing to partial summation, together with \eqref{eq:grey-lead} and part (i) of Lemma~\ref{le:derivative},  it easily follows that 
$$
	\sum_{\substack{x_1\in \ZZ\cap K_{x_2}\\ x_1\equiv \bar{k}c_1\bmod q}}\sigma_{\infty}(\xx)
	=\frac{1}{q} \int_{K_{x_2}}
	\sigma_\infty(t_1,x_2)\mathrm{d} t_1  +O\left(E_1(x_2) \right),
$$
where
\begin{align*}
E_1(x_2)&= 
\sup_{t\in K_{x_2}}|\Delta(t,x_2)|^{-\frac{1}{4}}+
\int_{K_{x_2}} \left( |\Delta(t,x_2)|^{-\frac{1}{3}}m(t,x_2)^{-\frac{2}{3}}\right)\mathrm{d} t
\ll 
\frac{1}{|x_2| \delta}.
\end{align*}
For given $t_1$, let $I_{t_1}$ be  the interval of $t\in \RR$ cut out by the conditions  $m(t_1,t)>\delta|t|$,  $B^{2\eta}/k\leqslant |t|\leqslant B^\frac{1}{4}/k$ and $|t|> |t_1|$. We therefore obtain
\begin{equation}\label{eq:gold-bottle}
U_q^{(1)}(\cc)=\frac{1}{q}\sum_{\substack{k\leq B^{2\eta}\\ \gcd(k,q)=1}}
\frac{\mu(k)}{k^2} \int_{-B^{\frac{1}{4}}/k}^{B^{\frac{1}{4}}/k} S(t_1) \mathrm{d} t_1
	+O((B^{-2 \eta}+\delta^{\frac{3}{4}})\log B),
\end{equation}
where
$$
S(t_1)=
\sum_{\substack{x_2\in \ZZ \cap I_{t_1}\\ x_2\equiv \bar{k} c_2\bmod{q}}}
 \frac{\sigma_\infty(t_1,x_2)}{|x_2|}.
$$

We now once more use  partial summation, equipped with  \eqref{eq:grey-lead} and part (ii) of Lemma~\ref{le:derivative}. This leads to the conclusion that 
$$
S(t_1)
	=\frac{1}{q} \int_{I_{t_1}}\frac{\sigma_\infty(t_1,t_2)}{|t_2|}\mathrm{d} t_2  +O\left(E_2(t_1) \right),
$$
where
\begin{align*}
E_2(t_1)&= 
\sup_{t\in I_{t_1}}\frac{|\Delta(t_1,t)|^{-\frac{1}{4}}}{|t|}
 +\int_{I_{t_1}}
\left( \frac{|\Delta(t_1,t)|^{-\frac{1}{3}}m(t_1,t)^{-\frac{2}{3}}}{|t|} 
+\frac{|\Delta(t_1,t)|^{-\frac{1}{4}}}{|t|^2}
\right)\mathrm{d} t\\
&\ll 
\frac{1}{\max(B^{2\eta}/k, |t_1|)^2\delta}.
\end{align*}
Moreover, 
it is easily confirmed that 
$$
\int_{I_{t_1}}\frac{\sigma_\infty(t_1,t_2)}{|t_2|}\mathrm{d} t_2 =
\int_{J_{t_1}}\frac{\sigma_\infty(t_1,t_2)}{|t_2|}\mathrm{d} t_2 +O\left(
\frac{\delta^{\frac{3}{4}}}{
\max(B^{2\eta}/k, |t_1|)}
\right),
$$
where $J_{t_1}$ is defined as for $I_{t_1}$, but with the constraint 
$m(t_1,t_2)>\delta |t_2|$ removed. Hence it follows that 
$$
S(t_1)
	=\frac{1}{q} \int_{J_{t_1}}\frac{\sigma_\infty(t_1,t_2)}{|t_2|}\mathrm{d} t_2  +O\left(
\frac{1}{\max(B^{2\eta}/k, |t_1|)^2\delta}+
\frac{\delta^{\frac{3}{4}}}{
\max(B^{2\eta}/k, |t_1|)}\right).
$$
The contribution from this error term to \eqref{eq:gold-bottle} is 
\begin{align*}
&\ll \frac{1}{q} \sum_{k\leq B^{2 \eta}} \frac{1}{k^2} \left(\frac{k}{B^{2\eta}\delta}+
\delta^{\frac{3}{4}} \log B
\right)
\ll  \left(\frac{1}{B^{2\eta}\delta}+
\delta^{\frac{3}{4}} \right)\log B.
\end{align*}

An obvious  change of variables shows that 
$$
\int_{-B^{\frac{1}{4}}/k}^{B^{\frac{1}{4}}/k}
\int_{J_{t_1}}\frac{\sigma_\infty(t_1,t_2)}{|t_2|}\mathrm{d} t_2 
\mathrm{d} t_1 
= 
\int_{\{\substack{\mathbf{t}\in \RR^2: B^{2\eta}\leq |t_2| \leq B^{\frac{1}{4}}}, ~|t_2|>|t_1|\}} 
\frac{\sigma_\infty(\mathbf{t})}{|t_2|} \mathrm{d} \mathbf{t}.
$$
Hence, on returning to \eqref{eq:gold-bottle} and extending the $k$-sum to infinity, we readily obtain
\begin{align*}
U_q^{(1)}(\cc)=~&\frac{1}{q^2}\sum_{\substack{k=1\\ \gcd(k,q)=1}}^\infty
\frac{\mu(k)}{k^2} 
\int_{\{\substack{\mathbf{t}\in \RR^2: B^{2\eta}\leq |t_2| \leq B^{\frac{1}{4}}}, ~|t_2|>|t_1|\}} 
\hspace{-0.3cm}
\frac{\sigma_\infty(\mathbf{t})}{|t_2|} \mathrm{d} \mathbf{t}
	+O\left((B^{-2 \eta}\delta^{-1}+\delta^{\frac{3}{4}})\log B\right).
\end{align*}
Clearly
$$
\sum_{\substack{k=1\\ \gcd(k,q)=1}}^\infty
\frac{\mu(k)}{k^2} =\frac{1}{\zeta(2)}\prod_{p\mid q}\left(1-\frac{1}{p^2}\right)^{-1}.
$$
The statement of the lemma is now a consequence of combing this with the analogous estimate for $U_q^{(2)}(\cc)$, which follows by symmetry, and taking $\delta=B^{-\frac{8\eta}{7}}$.
\end{proof}

Before returning to our expression \eqref{eq:yellow-rope} for $M(B)$, we proceed by analysing 
the term
$$
\int_{\{\mathbf{t}\in \RR^2: B^{2\eta}\leq |\mathbf{t}| \leq B^{\frac{1}{4}}\}} 
\frac{\sigma_\infty(\mathbf{t})}{|\mathbf{t}|} \mathrm{d} \mathbf{t}=
\int_{\{\mathbf{t}\in \RR^2: 1\leq |\mathbf{t}| \leq B^{\frac{1}{4}}\}} 
\frac{\sigma_\infty(\mathbf{t})}{|\mathbf{t}|} \mathrm{d} \mathbf{t}+O(\eta \log B).
$$
Arguing as in the proof of \cite[Lemma 6.4]{duke}, it easily follows that 
$$
\int_{\{\mathbf{t}\in \RR^2: 1\leq |\mathbf{t}| \leq B^{\frac{1}{4}}\}} 
\frac{\sigma_\infty(\mathbf{t})}{|\mathbf{t}|} \mathrm{d} \mathbf{t}=\frac{1}{4}\tau_\infty \log B,
$$
where $\tau_\infty$ is defined in \eqref{eq:tau-inf}. Note that, as readily follows from 
\eqref{eq:brown-bag}, we have 
\begin{equation}\label{eq:tau-inf-upper}
\tau_\infty=O(1),
\end{equation}
for an implied constant that depends on $L_1,\dots,L_4$.

In summary, 
it follows from combining the previous calculation with \eqref{eq:yellow-rope} and Lemma \ref{lem:712} that
\begin{equation}\label{eq:train-red}
M(B)=
\frac{1}{\zeta(2)}\left(\sum_{q\leq B^{\eta/10}} \frac{A_q}{q^6}\prod_{p\mid q}\left(1-\frac{1}{p^2}\right)^{-1}
\right)
\left(\frac{\tau_\infty}{4} +O(\eta)\right) \log B
+O_\eta(1),
\end{equation}
where $A_q$ is given by \eqref{eq:defAq}. 
The final remaining task is to show that the sum over $q$ can be extended to infinity, with acceptable error.
Since $A_q$ is multiplicative in $q$, it will suffice to study it when $q$ is prime power, as in the following result.

\begin{lemma}\label{le:Phi}
	For any $r\in\BN$ and any  prime $p$, we have
	$$A_{p^r}=\phi(p^r)p^{2r}\left(\varrho(p^r)-p^2\varrho(p^{r-1})\right),
	$$
	where $\rho$ is defined in \eqref{eq:rho-q}.
\end{lemma}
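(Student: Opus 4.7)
The plan is to simplify the character sum $A_{p^r}$ by a change of variables that disentangles the role of $a$ and $\mathbf{c}$, and then apply orthogonality twice.

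First I would rewrite the phase. Since $L_i(\mathbf{c}) = a_i c_1 + b_i c_2$ and recalling \eqref{eq:Q1Q2}, we have
\[
\sum_{i=1}^{4}L_i(\mathbf{c})b_i^{2}= c_1\sum_{i=1}^{4}a_ib_i^{2}+c_2\sum_{i=1}^{4}b_ib_i^{2}=c_1Q_1(\mathbf{b})-c_2Q_2(\mathbf{b}).
\]
Thus
\[
A_{p^{r}}=\sum_{\mathbf{b}\in(\BZ/p^{r}\BZ)^{4}}\sum_{\substack{a\bmod p^{r}\\\gcd(a,p)=1}}\sum_{\substack{\mathbf{c}\in(\BZ/p^{r}\BZ)^{2}\\\gcd(\mathbf{c},p)=1}}\operatorname{e}_{p^{r}}\bigl(ac_1Q_1(\mathbf{b})-ac_2Q_2(\mathbf{b})\bigr).
\]

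Next I would substitute $\alpha=ac_1$, $\beta=-ac_2$. For each fixed invertible $a$, the map $\mathbf{c}\mapsto(\alpha,\beta)=(ac_1,-ac_2)$ is a bijection of $(\BZ/p^r\BZ)^2$, and it sends the condition $\gcd(\mathbf{c},p)=1$ to $\gcd((\alpha,\beta),p)=1$. Hence, summing also over $a\in(\BZ/p^r\BZ)^\times$, each pair $(\alpha,\beta)$ with $\gcd((\alpha,\beta),p)=1$ is obtained exactly $\phi(p^r)$ times, giving
\[
A_{p^{r}}=\phi(p^{r})\sum_{\mathbf{b}\in(\BZ/p^{r}\BZ)^{4}}\sum_{\substack{(\alpha,\beta)\in(\BZ/p^{r}\BZ)^{2}\\\gcd((\alpha,\beta),p)=1}}\operatorname{e}_{p^{r}}\bigl(\alpha Q_1(\mathbf{b})+\beta Q_2(\mathbf{b})\bigr).
\]

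Now I would evaluate the inner sum by inclusion–exclusion. Writing the coprimality constraint as the full sum minus the sum where both $\alpha,\beta$ are divisible by $p$, orthogonality yields
\[
\sum_{(\alpha,\beta)\in(\BZ/p^{r}\BZ)^{2}}\operatorname{e}_{p^{r}}\bigl(\alpha Q_1(\mathbf{b})+\beta Q_2(\mathbf{b})\bigr)=p^{2r}\mathbf{1}\bigl[p^{r}\mid Q_1(\mathbf{b}),\,p^{r}\mid Q_2(\mathbf{b})\bigr],
\]
while the substitution $\alpha=p\alpha'$, $\beta=p\beta'$ with $\alpha',\beta'\in\BZ/p^{r-1}\BZ$ gives
\[
\sum_{\substack{(\alpha,\beta)\in(\BZ/p^{r}\BZ)^{2}\\ p\mid\alpha,\,p\mid\beta}}\operatorname{e}_{p^{r}}\bigl(\alpha Q_1(\mathbf{b})+\beta Q_2(\mathbf{b})\bigr)=p^{2(r-1)}\mathbf{1}\bigl[p^{r-1}\mid Q_1(\mathbf{b}),\,p^{r-1}\mid Q_2(\mathbf{b})\bigr].
\]
Summing over $\mathbf{b}\in(\BZ/p^r\BZ)^{4}$, the first indicator counts $\varrho(p^r)$ points by \eqref{eq:rho-q}, while the second counts vectors modulo $p^r$ whose reductions modulo $p^{r-1}$ lie in the zero locus, giving $p^{4}\varrho(p^{r-1})$. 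Collecting everything,
\[
A_{p^{r}}=\phi(p^{r})\bigl[p^{2r}\varrho(p^{r})-p^{2(r-1)}\cdot p^{4}\varrho(p^{r-1})\bigr]=\phi(p^{r})p^{2r}\bigl(\varrho(p^{r})-p^{2}\varrho(p^{r-1})\bigr),
\]
as claimed. No step appears genuinely hard; the only place to be careful is the bookkeeping when lifting the second count from $\BZ/p^{r-1}\BZ$ to $\BZ/p^{r}\BZ$, which is where the extra factor $p^{4}$ enters.
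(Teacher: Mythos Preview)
Your proof is correct and follows essentially the same approach as the paper: both rewrite the phase as $c_1Q_1(\mathbf{b})-c_2Q_2(\mathbf{b})$, remove the primitivity constraint on $\mathbf{c}$ by inclusion--exclusion, and evaluate the resulting complete sums via orthogonality. The only cosmetic difference is that you first make the substitution $(\alpha,\beta)=(ac_1,-ac_2)$ to pull out the factor $\phi(p^r)$ immediately, whereas the paper keeps the $a$-sum inside (as a Ramanujan sum) and extracts the corresponding factor of $p$ when reducing the second piece from modulus $p^r$ to $p^{r-1}$.
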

\begin{proof}
We begin by observing that 
\begin{align*}
\sum_{\substack{\cc\in (\BZ/p^r\BZ)^2}}c_{p^r}\left(\sum_{i=1}^{4}L_i(\cc)b_i^2\right)
&=
\sum_{\substack{a\bmod{p^r}\\ \gcd(a,p)=1}} 
\sum_{\substack{\cc\in (\BZ/p^r\BZ)^2}}
\operatorname{e}_{p^r}\left(a(c_1Q_1(\bb)-c_2Q_2(\bb))\right)\\
&=
\begin{cases}
\phi(p^r)p^{2r} &\text{if $p^r\mid (Q_1(\bb),Q_2(\bb))$,}\\
0 &\text{ otherwise}.
\end{cases}
\end{align*}
On noting that $A_{p^r}$ can be written as the difference of sums
$$
 \sum_{\mathbf{b}\in  (\ZZ/p^r\ZZ)^4}\left(\sum_{\substack{\cc\in (\BZ/p^r\BZ)^2}}c_{p^r}\left(\sum_{i=1}^{4}L_i(\cc)b_i^2\right)- p
 \sum_{\substack{\cc\in (\BZ/p^{r-1}\BZ)^2}}c_{p^{r-1}}\left(\sum_{i=1}^{4}L_i(\cc)b_i^2\right)\right),
 $$
 the 
 lemma  readily follows.
\end{proof}

\begin{corollary}\label{cor:cor}
Let $\ve>0$ and let  $q=q_0q_1$, where $\gcd(q_0,\Delta)=1$ and $q_1\mid \Delta^\infty$. Then 
$
A_q\ll_\ve q_0^{\frac{9}{2}+\ve} q_1^{5+\ve}.
$
\end{corollary}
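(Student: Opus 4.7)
The plan is to reduce to prime powers by multiplicativity and then invoke Lemma \ref{le:Phi} together with the bounds in Lemma \ref{lem : loc2'}. Since the exponential sums in \eqref{eq:defAq} split under the Chinese remainder theorem, $A_q$ is multiplicative in $q$. Writing $q_0=\prod_{p\nmid\Delta}p^{r_p}$ and $q_1=\prod_{p\mid\Delta}p^{r_p}$, it therefore suffices to establish
$$
A_{p^r}\ll_\ve p^{(9/2+\ve)r}\quad (p\nmid\Delta),\qquad A_{p^r}\ll_\ve p^{(5+\ve)r}\quad (p\mid\Delta),
$$
for every $r\ge 1$. Both bounds will be derived from the identity of Lemma \ref{le:Phi}, namely
$A_{p^r}=\phi(p^r)p^{2r}\bigl(\varrho(p^r)-p^2\varrho(p^{r-1})\bigr)$.

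For the bad primes $p\mid\Delta$, the second part of Lemma \ref{lem : loc2'} gives the uniform bound $\varrho(p^r)\ll rp^{2r}$. Inserting this (and the analogue for $p^{r-1}$) into Lemma \ref{le:Phi} produces
$
|A_{p^r}|\ll r\,p^r\cdot p^{2r}\cdot p^{2r}=r\,p^{5r}.
$
Since $r\ll_\ve p^{\ve r}$ for any $\ve>0$, this yields $|A_{p^r}|\ll_\ve p^{(5+\ve)r}$, giving the factor $q_1^{5+\ve}$ upon multiplying over $p\mid\Delta$.

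For the good primes $p\nmid\Delta$ one exploits the smoothness of $Z$ over $\mathbb{F}_p$. Hensel's lemma and the independence of the gradients $\nabla Q_1,\nabla Q_2$ at each point of $Z(\mathbb{F}_p)$ yield the recurrence $\varrho^*(p^b)=p^{2(b-1)}\varrho^*(p)$ for every $b\ge 1$, while Corollary \ref{cor:stone} combined with the Hasse bound on the elliptic curve $Z$ refines Lemma \ref{lem : loc2'}(i) to $\varrho^*(p)=p^2+O(p^{3/2})$. I would decompose
$$
\varrho(p^r)=\sum_{s=0}^{r}N_s(p^r),\qquad N_s(p^r)=\#\bigl\{\y\bmod p^r:\ p^s\|\gcd(\y,p^r),\ Q_i(\y)\equiv 0\bmod p^r\bigr\},
$$
noting that $N_s(p^r)=p^{4s}\varrho^*(p^{r-2s})$ when $2s<r$ and that the quadric condition becomes vacuous when $2s\ge r$. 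Substituting $\varrho^*(p^{r-2s})=p^{2(r-2s-1)}\varrho^*(p)$, the Hensel-lifted contributions from the decompositions of $\varrho(p^r)$ and $p^2\varrho(p^{r-1})$ pair up and cancel in their main term $\propto p^{2r-2}\cdot p^2$; the surviving discrepancy is driven solely by the Weil error $\varrho^*(p)-(p^2-1)=O(p^{3/2})$.

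The core computational step---and the main obstacle---is to carry out this cancellation cleanly and obtain
$\varrho(p^r)-p^2\varrho(p^{r-1})\ll_\ve p^{(3/2+\ve)r}$,
uniformly in $r$; this saves a factor of $p^{1/2}$ per level over the trivial estimate and encodes all the arithmetic input from the genus-one curve $Z$. Once in hand, Lemma \ref{le:Phi} immediately gives $|A_{p^r}|\ll_\ve p^r\cdot p^{2r}\cdot p^{(3/2+\ve)r}=p^{(9/2+\ve)r}$, and multiplying over $p\nmid\Delta$ produces the claimed factor $q_0^{9/2+\ve}$, completing the proof.
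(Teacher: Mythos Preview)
For the bad primes your route via Lemma~\ref{le:Phi} and Lemma~\ref{lem : loc2'}(ii) is valid and yields $q_1^{5+\ve}$; the paper instead bounds $A_{q_1}=\sum_{\cc}S_{q_1}(\cc)$ directly through Lemma~\ref{le:Sqxx}(i) and \eqref{eq:tea}, but both arguments reach the same conclusion.

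The gap is at the good primes. The bound you single out as the ``main obstacle'',
\[
\varrho(p^r)-p^2\varrho(p^{r-1})\ll_\ve p^{(3/2+\ve)r},
\]
is \emph{false} for every $r\ge 2$. Already at $r=2$ your own (correct) stratification gives $\varrho(p^2)=\varrho^*(p^2)+p^4$, and combining this with $\varrho^*(p^2)=p^2\varrho^*(p)$ and $\varrho(p)=\varrho^*(p)+1$ yields
\[
\varrho(p^2)-p^2\varrho(p)=p^4-p^2,
\]
which is of exact order $p^4$ and exhibits no Weil-type saving whatsoever. The pairing you sketch does annihilate $N_s(p^r)$ against $p^2 N_s(p^{r-1})$ for $s\le (r-2)/2$, but the boundary strata with $2s\ge r-1$, where the congruence $p^{r-2s}\mid Q_i(\y')$ degenerates, contribute $\asymp p^{2r}$ and carry no arithmetic of $Z$ to exploit. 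It follows that $A_{p^r}\asymp p^{5r}$ for even $r\ge 2$, so the exponent $\tfrac{9}{2}+\ve$ on $q_0$ cannot hold as stated; the paper's own argument suffers from the same defect, stemming from a miswritten decomposition of $\varrho(p^r)$ (the coefficient should be $p^{4k}$, not $p^{2k}$). What is true, and still sufficient for the tail estimate after \eqref{eq:train-red}, is a bound that separates the square-free and square-full parts of $q_0$: writing $q_0=uv$ with $u$ square-free and $v$ square-full, one has $|A_{q_0}|\ll u^{9/2}v^{5}$, and since $\sum_{v\text{ square-full}}v^{-1}<\infty$ the series $\sum_{q}|A_q|q^{-6}\prod_{p\mid q}(1-p^{-2})^{-1}$ converges absolutely with the required power saving in the tail.
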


\begin{proof}
We have $A_q=A_{q_0}A_{q_1}$. 
Now it follows from part (i) of Lemma \ref{le:Sqxx}
and \eqref{eq:tea} that 
$$
A_{q_1}=\sum_{\substack{\mathbf{c}\in (\ZZ/q_1\ZZ)^2\\
\gcd(q_1,\mathbf{c})=1
}} S_{q_1}(\cc)\ll q_1^3
\sum_{\substack{\mathbf{c}\in (\ZZ/q_1\ZZ)^2\\
\gcd(q_1,\mathbf{c})=1
}}\gcd(q_1,\Delta(\cc))^{\frac{1}{2}}\ll_\ve q_1^3\sum_{d\mid q_1} d^{\frac{1}{2}} \cdot \left(\frac{q_1}{d}\right)^2 \cdot d^{1+\ve}.
$$
Thus $A_{q_1}=O_\ve(q_1^{5+\ve})$ on taking the trivial estimate for the divisor function. 

Turning to $A_{q_0}$, we study $A_{p^r}$ for $p\nmid \Delta$.
It follows from Lemma \ref{le:Phi} that 
$$
|A_{p^r}|\leq p^{3r}\left|\varrho(p^r)-p^2\varrho(p^{r-1})\right|,
$$
Extracting common divisors between $\y$ and $p^r$ it is easily checked that 
$$
\rho(p^r)=\sum_{0\leq k< \frac{r}{2}} p^{2k} \rho^*(p^{r-2k}) +p^{2(r-\lceil \frac{r}{2}\rceil)},
$$
in the notation of \eqref{eq:rho-q*}. (This follows from \cite[Eq.~(2.4)]{BM13}, for example.)
Since $p\nmid \Delta$, it follows from part (i) of 
 Lemma \ref{lem : loc2'} that 
$$
\rho(p^r)=p^{2r}\left(1+O(p^{-\frac{1}{2}})\right)\sum_{0\leq k< \frac{r}{2}} p^{-2k}  +p^{2(r-\lceil \frac{r}{2}\rceil)}.
$$
Similarly,
$$
p^2\rho(p^{r-1})=p^{2r}\left(1+O(p^{-\frac{1}{2}})\right)\sum_{0\leq k< \frac{r-1}{2}} p^{-2k}  +p^{2(r-1-\lceil \frac{r-1}{2}\rceil)}.
$$
Combining these, it easily follows that 
$
\varrho(p^r)-p^2\varrho(p^{r-1})\ll p^{r+1}\leq p^{\frac{3r}{2}},
$
if  $p\nmid \Delta$. The statement of the lemma follows.
\end{proof}

Taking $\ve=\frac{1}{4}$ in Corollary \ref{cor:cor}, it follows that 
\begin{align*}
\sum_{q> B^{\eta/10}} \frac{A_q}{q^6}\prod_{p\mid q}\left(1-\frac{1}{p^2}\right)^{-1}
\ll
\sum_{p\mid q_1\Rightarrow p\mid \Delta} q_1^{-\frac{3}{4}}
\sum_{q_0> B^{\eta/10}/q_1} q_0^{-\frac{5}{4}}
&\ll
B^{-\eta/40}\sum_{p\mid q_1\Rightarrow p\mid \Delta} q_1^{-\frac{1}{2}}\\
&\ll B^{-\eta/40}.
\end{align*}
In particular, this  implies that 
\begin{equation}\label{eq:SS2}
\mathfrak{S}_2=O(1),
\end{equation}
in the notation of \eqref{eq:def-SS2}.
Hence, returning to
\eqref{eq:train-red}, it now follows that 
\begin{align*}
M(B)
&=
\frac{\mathfrak{S}_2}{\zeta(2)}
\left(\frac{\tau_\infty}{4} +O(\eta)\right) \log B
+O_\eta(1),
\end{align*}
which easily leads to the statement of  Proposition \ref{prop:lid}.

\section{Comparison of the leading constants}\label{sec:finale}

In this section we complete the proof of Theorem \ref{t:main}. 
On recalling \eqref{eq:L} and \eqref{eq:cl1-3}, we see that 
$$
N_V(\Omega,B)=\frac{1}{4}\left(\#\CL_1(B)+\#\CL_2(B)+\#\CL_3(B)\right)
$$
 in \eqref{eq:step1}. We begin by analysing the main term in 
 Proposition \ref{pro:L1}.
 
 \begin{lemma}\label{lem:RHB}
Let  $Y_2>Y_1\geq 1$. Then 
$$
\int_{\substack{\y\in \RR^4\\ 
Y_1
\leq |\y|< Y_2}}
 \frac{\mathrm{d}\y}{|\y|^2\max(|Q_1(\y)|,|Q_2(\y)|)} = \tau_\infty\log \left(\frac{Y_2}{Y_1}\right),
$$
where $\tau_\infty$ is given by \eqref{eq:tau-inf}.
\end{lemma}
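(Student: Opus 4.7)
The strategy is to evaluate both sides in terms of the single angular integral
$$
\mathcal{I}:=\int_{S}\frac{\mathrm{d}\sigma(\uu)}{\max(|Q_1(\uu)|,|Q_2(\uu)|)},
$$
where $S=\{\uu\in\RR^4:|\uu|=1\}$ is the sup-norm unit sphere and $\sigma$ is the induced cone measure, normalized so that Lebesgue measure decomposes as $\mathrm{d}\y=r^{3}\,\mathrm{d}r\,\mathrm{d}\sigma(\uu)$ under $\y=r\uu$ with $r=|\y|$. The hypothesis $Z(\RR)=\emptyset$ guarantees that $\max(|Q_1(\uu)|,|Q_2(\uu)|)$ is bounded away from zero on $S$, so $\mathcal{I}$ is finite. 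For the left-hand side, the homogeneity identities $|\y|^{2}=r^{2}$ and $\max(|Q_1(\y)|,|Q_2(\y)|)=r^{2}\max(|Q_1(\uu)|,|Q_2(\uu)|)$ give the integrand as $r^{-4}\max(|Q_1(\uu)|,|Q_2(\uu)|)^{-1}$, and thus
$$
\int_{\substack{\y\in\RR^{4}\\Y_{1}\le|\y|<Y_{2}}}\frac{\mathrm{d}\y}{|\y|^{2}\max(|Q_1(\y)|,|Q_2(\y)|)}=\left(\int_{Y_{1}}^{Y_{2}}\frac{\mathrm{d}r}{r}\right)\mathcal{I}=\mathcal{I}\log\!\left(\frac{Y_{2}}{Y_{1}}\right).
$$

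It remains to establish $\tau_{\infty}=\mathcal{I}$. Using \eqref{eq:xQ} we rewrite the phase in \eqref{eq:tau-inf} as $L_{1}(\x)y_{1}^{2}+\cdots+L_{4}(\x)y_{4}^{2}=x_{1}Q_{1}(\y)-x_{2}Q_{2}(\y)$, and formal Fourier inversion in $\theta$ converts $\tau_\infty$ into
$$
\tau_{\infty}=\int_{[-1,1]^{4}}\int_{[-1,1]^{2}}\delta\bigl(x_{1}Q_{1}(\y)-x_{2}Q_{2}(\y)\bigr)\,\mathrm{d}\x\,\mathrm{d}\y.
$$
For fixed $\y\in[-1,1]^{4}$ with $(Q_{1}(\y),Q_{2}(\y))\ne(0,0)$, parameterising the line $\{x_{1}Q_{1}(\y)=x_{2}Q_{2}(\y)\}\cap[-1,1]^{2}$ (say by $x_{2}=s$, $x_{1}=sQ_{2}(\y)/Q_{1}(\y)$ with $|s|\le\min(1,|Q_{1}(\y)/Q_{2}(\y)|)$) and computing the Jacobian yields
$$
\int_{[-1,1]^{2}}\delta\bigl(x_{1}Q_{1}(\y)-x_{2}Q_{2}(\y)\bigr)\,\mathrm{d}\x=\frac{2}{\max(|Q_{1}(\y)|,|Q_{2}(\y)|)},
$$
a formula that is symmetric in the roles of $Q_{1},Q_{2}$ and therefore unambiguous. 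Switching to sup-norm polar coordinates in $\y$, the radial integral evaluates to $\int_{0}^{1}r\,\mathrm{d}r=\tfrac{1}{2}$, and we obtain $\tau_{\infty}=\mathcal{I}$. Combining with the previous display completes the proof.

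The main obstacle is the rigorous interpretation of the Fourier inversion step, since the $\theta$-integral in \eqref{eq:tau-inf} is only conditionally convergent. One standard route is to insert a Gaussian cutoff $\operatorname{e}^{-\ve\theta^{2}}$, apply Fubini (which is now legitimate since the integrand is absolutely integrable), and then let $\ve\to 0^{+}$; alternatively one may truncate to $|\theta|\le T$, integrate by parts once in a suitable $y_{i}$ to produce $\theta^{-1}$ decay, and let $T\to\infty$, paralleling the treatment of an analogous singular integral in \cite[Lemma~6.4]{duke}. The hypothesis $Z(\RR)=\emptyset$ is used throughout to ensure that every integral appearing in the passage to the limit is absolutely convergent with uniform control.
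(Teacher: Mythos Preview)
Your proof is correct and follows essentially the same route as the paper: both arguments reduce to the identity
\[
\int_{[-1,1]^2}\int_{-\infty}^{\infty}\e\bigl(\theta(x_1Q_1(\y)-x_2Q_2(\y))\bigr)\,\mathrm{d}\theta\,\mathrm{d}\x=\frac{2}{\max(|Q_1(\y)|,|Q_2(\y)|)},
\]
then exploit the degree-$2$ homogeneity of the right-hand side via sup-norm polar coordinates to match both sides to a common angular integral. The one notable difference is in how the conditionally convergent $\theta$-integral is handled: you invoke the Dirac-delta heuristic and then propose a regularization argument to justify it, whereas the paper simply performs the $\x$-integration first to obtain a product of sinc functions, after which the $\theta$-integral is absolutely convergent and is read off from a table (the identity $\int_{-\infty}^{\infty}\theta^{-2}\sin(a\theta)\sin(b\theta)\,\mathrm{d}\theta=\pi\min(a,b)$). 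This bypasses any need for cutoffs or integration by parts, and you may find it the cleaner way to close that gap.
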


This result will be established  at the end of this section. 
Taking it on faith for the moment, and arguing as 
\cite[\S~6.3]{duke}, it now 
follows from the union of  Propositions \ref{pro:L1}--\ref{pro:L3}  that 
$$
N_V(\Omega,B)\sim c B\log B,
$$
as $B\to \infty$, with 
\begin{equation}\label{eq:c}
 c=\frac{\tau_\infty}{4}\left(\frac{ \mathfrak{S}_1}{4}+ \frac{ \mathfrak{S}_2}{4\zeta(2)^2}\right).
\end{equation}
The following result confirms that this agrees with Peyre's  constant 
 \cite{Peyre}, as required to complete the proof of Theorem \ref{t:main}.

\begin{proposition}\label{pro:peyre}
We have $c=c_V$, where $c_V$ is the constant  predicted by Peyre.
\end{proposition}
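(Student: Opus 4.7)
The plan is to match the constant $c$ in \eqref{eq:c} with Peyre's prediction $c_V = \alpha(V)\,\beta(V)\,\tau(V)$ factor by factor. First I would compute the $\alpha$-invariant using the effective cone $\Eff_V = \BR_{\geqslant 0}H_1 + \BR_{\geqslant 0}(-H_1+2H_2)$. Writing $(t_1,t_2)$ for coordinates on $\Pic(V)^\vee_{\BR}$ dual to $(H_1,H_2)$, the Peyre polytope $\{t_1\geqslant 0,\ 2t_2\geqslant t_1,\ t_1+2t_2\leqslant 1\}$ is a triangle of area $\tfrac18$, so $\alpha(V) = \tfrac14$ in the convention where $\alpha$ equals the rank of $\Pic(V)$ times the polytope volume. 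Moreover $\Pic(\bar V) \cong \BZ^2$ carries trivial Galois action, since both generators $H_1,H_2$ are defined over $\QQ$, and hence $\beta(V)=1$.

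Next I would decompose the Tamagawa measure $\tau(V) = \tau_\infty(V)\prod_p \tau_p(V)$. For the archimedean factor, Lemma \ref{lem:RHB} is the essential input: applied to the volume integral in Proposition \ref{pro:L1} with dyadic endpoints $c_1B^{1/4+\eta/2}$ and $c_2B^{3/8-\eta/2}$, it produces $\tau_\infty\cdot(\tfrac18\log B + O(\eta\log B))$. Combined with the prefactor $2\mathfrak{S}_1 B$, this yields $\tfrac{\tau_\infty \mathfrak{S}_1}{4}B\log B$ for $\#\CL_1(B)$, while Proposition \ref{pro:L2} directly contributes $\tfrac{\tau_\infty\mathfrak{S}_2}{4\zeta(2)^2}B\log B$ for $\#\CL_2(B)$. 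The factor $\tau_\infty$ itself is to be identified with the real Tamagawa density of $V$ via the standard fibration parametrisation of $V(\BR)$ over $\BP^1(\BR)$ encoded in \eqref{eq:tau-inf}.

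For the non-archimedean factors, both $\mathfrak{S}_1$ and $\mathfrak{S}_2$ are Euler products encoding the $p$-adic densities of $V$, viewed through complementary descriptions: $\mathfrak{S}_2$ uses the bidegree $(1,2)$ equation \eqref{eq:quadbundle} via the exponential sums $A_q$ of \eqref{eq:defAq}, while $\mathfrak{S}_1$ uses the blow-up description through the pencil $Q_1=Q_2=0$ via the orbit counts $\#V_{p^a}^\times$. I would match each local factor to the normalised $p$-adic density $\tau_p(V) = (1-1/p)^{\rank\Pic(V)}\omega_p(V)$, tracking the M\"obius factors $\zeta(2)^{-1}$ that arise from the primitivity conditions on $\x$ and $\y$ (each primitivity condition contributing one such factor, which explains the $\zeta(2)^{-2}$ in front of $\mathfrak{S}_2$ but its absence in front of $\mathfrak{S}_1$, where primitivity of $\x$ has been absorbed into the geometry-of-numbers sum over $d$). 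Partitioning the Peyre polytope at $t_1 = 1/4$, where the small and large $|\x|$ regimes meet, each piece should then account for precisely one of the two contributions, giving the identity $\tfrac{\tau_\infty}{16}\bigl(\mathfrak{S}_1 + \mathfrak{S}_2/\zeta(2)^2\bigr) = \alpha(V)\,\tau(V)$.

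The main obstacle will be comparing the two $p$-adic expressions at bad primes $p \in \CP$. Away from these primes Corollary \ref{cor:stone} yields a clean asymptotic $\#V_{p^a}^\times = p^a(1+O(p^{-1/2}))$ matching the standard local density, and the match with $A_{p^a}/p^{6a}$ can be verified by combining Lemma \ref{le:Phi} with Lemma \ref{lem : loc2'}, both expressing $\rho(p^r)$ in terms of the local structure of the pencil. At bad primes the densities must be computed directly using Lemmas \ref{lem : loc} and \ref{lem : loc2'}, and the removal of the thin set $\Omega$ becomes essential: since $\Omega$ consists of rational points accumulating on the exceptional divisor $\pi_2^{-1}(Z)$, its excision is precisely what justifies the use of the effective cone rather than the nef cone in computing $\alpha(V)$, in agreement with the Lehmann--Sengupta--Tanimoto framework noted in the introduction.
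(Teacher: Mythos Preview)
Your computation of $\alpha(V)=\tfrac14$ is correct and matches the paper. However, the central mechanism you propose --- partitioning the Peyre polytope at $t_1=\tfrac14$ so that the two halves account separately for $\mathfrak{S}_1$ and $\mathfrak{S}_2/\zeta(2)^2$ --- is not what happens and would not work. The paper's proof hinges instead on the identity
\[
\mathfrak{S}_1 \;=\; \frac{\mathfrak{S}_2}{\zeta(2)^2} \;=\; \prod_p (1-p^{-1})(1-p^{-2})\tau_p,
\]
proved prime by prime (Lemmas~\ref{lem:8.3} and~\ref{lem:8.4}) via the common intermediary Lemma~\ref{lem:npt}, which expresses $\tau_p$ in terms of $\sum_{j\geqslant 1}\rho(p^j)/p^{3j}$. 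Once this equality is known, the two contributions in \eqref{eq:c} are simply equal, their sum is $\tfrac{\tau_\infty}{8}\mathfrak{S}_1$, and the match with $c_V=\alpha(V)\cdot\tfrac12\tau_\infty\cdot\prod_p(1-p^{-1})(1-p^{-2})\tau_p$ is immediate. No geometric splitting of the polytope is involved; indeed, cutting the triangle at $t_1=\tfrac14$ produces pieces of unequal volume, whereas $\#\CL_1(B)$ and $\#\CL_2(B)$ contribute equally.

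Two smaller points: the thin set $\Omega$ is the locus where $\prod_i L_i(\x)$ is a square (a double cover), not the exceptional divisor $\pi_2^{-1}(Z)$; and the use of $\Eff_V$ in the definition of $\alpha(V)$ is Peyre's original prescription rather than a consequence of removing $\Omega$. Neither of these misconceptions actually enters the verification of $c=c_V$, which is a purely local computation once Lemma~\ref{lem:RHB} and the formula~\eqref{eq:cVP} (via Schindler's Lemmas) are in hand.
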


The constant $c_V$ has been calculated by Elsenhans \cite{Elsenhans}, but we shall give more details here. 
Let $V\subset \PP^1\times \PP^3$ be the smooth threefold
\eqref{eq:xQ}, which we view as the blow-up of $\PP^3$ along the genus $1$ curve $Z$.
The Picard group $\Pic (V)$ is generated by the hyperplane
classes
 $H_1=\pi_1^*\mathcal{O}_{\PP^1}(1)$ and  $H_2=\pi_2^*\mathcal{O}_{\PP^3}(1)$.
On the other hand,  
we saw in \S \ref{s:intro} that 
 the effective cone  of divisors $\Eff_V$ is generated by 
 $H_1$
  and the exceptional divisor $E=-H_1+2H_2$. 
  Finally, the anticanonical divisor is $-K_V= 4H_2-H_1$.
The  constant $c_V$ predicted by Peyre \cite{Peyre} then takes the shape
$$
c_V=\alpha(V) \omega_\infty(V(\RR)) \prod_p \left(1-\frac{1}{p}\right)^2\omega_p(V(\QQ_p)),
$$
where 
\begin{equation}\label{eq:cone}
\alpha(V)=\rank \left(\Pic (V)\right) \cdot \vol\left(
\x\in \Eff_V^\vee: \langle \x, -K_V\rangle\leq 1
\right) 
\end{equation}
and, for each place $v$, the measure  $\omega_v$ is the local Tamagawa measure defined by Peyre \cite{Peyre}. 
The dual of the effective cone is 
$\Eff_V^\vee = \{(t_1,t_2)\in \RR^2 : t_1\ge 0, 2t_2 - t_1\ge 0\}$,
and so the volume in \eqref{eq:cone} is
\[
\vol\{(t_1,t_2)\in \RR^2 : t_1\ge 0, ~2t_2 - t_1\ge 0, ~4t_2 - t_1 \le 1\}.
\]
This is the volume of the triangle with vertices $(0,0)$, $(0,\frac{1}{4})$, and $(1,\frac{1}{2})$, so
$\alpha(V) =2\cdot  \frac{1}{8}=\frac{1}{4}$.
The quantities $\omega_\infty(V(\RR))$ and $\omega_p(V(\QQ_p))$ have been calculated by Schindler \cite[\S~3]{Schindler}. It follows from \cite[Lemma~3.2]{Schindler} that 
$
\omega_\infty(V(\RR))=\frac{1}{2} \tau_\infty,
$
where  $\tau_\infty$ is given by \eqref{eq:tau-inf}, and from \cite[Lemma~3.1]{Schindler} that 
$$
\omega_p(V(\QQ_p))=\left(1-\frac{1}{p}\right)^{-2}\left(1-\frac{1}{p}\right)\left(1-\frac{1}{p^2}\right)\tau_p,
$$
where
$$\tau_p=\lim_{t\to\infty}p^{-5t}\#\left\{(\xx,\yy)\in(\BZ/p^t\BZ)^6: L_1(\xx)y_1^2+\cdots + L_4(\xx)y_4^2\equiv 0\bmod p^t\right\}.
$$
In this way, we deduce that 
\begin{equation}\label{eq:cVP}
c_V=\frac{1}{8}\tau_\infty \prod_{p}(1-p^{-1})(1-p^{-2})\tau_p.
\end{equation}

At first glance, it is not perhaps clear that the Euler product converges in \eqref{eq:cVP}.
However,  Elsenhans gives an explicit formula for $\tau_p$ when $p\nmid \Delta$.
	Let $E$ be the elliptic curve cut out by the equation $y^2=\prod_{i=1}^4 L_i(x_1,x_2)$ in 
	$\PP(2,1,1)$.  Let $T_p(E)=p+1-\#C(\BF_p)$ be the Frobenian trace of $E$. Then it follows from  \cite[\S~3.1]{Elsenhans} that 
$$
\tau_p=\left(1+\frac{1}{p}\right)^{-1}\left(1
+\frac{2}{p}
-\frac{T_p(E)-2}{p^2}+\frac{1}{p^3}\right).
$$	
The Hasse--Weil bound gives $|T_p(E)|\leqslant 2\sqrt{p}$. 
Thus 
$$
\prod_{p\nmid \Delta}
(1-p^{-1})(1-p^{-2})\tau_p=
\prod_{p\nmid \Delta}
\left(1-\frac{1}{p}\right)^2
\left(1
+\frac{2}{p} +O\left(\frac{1}{p^{\frac{3}{2}}}\right)
\right),
$$
which is clearly  convergent.

For any prime $p$, we may write $\tau_p=\lim_{t\to \infty} p^{-5t}n(p^t)$, where
$$
n(p^t)=\#\left\{(\xx,\yy)\in(\BZ/p^t\BZ)^6: L_1(\xx)y_1^2+\cdots + L_4(\xx)y_4^2\equiv 0\bmod p^t\right\}.
$$
The following result provides a convenient formula for this quantity.

\begin{lemma}\label{lem:npt}
For any prime power $p^t$, we  have 
$$
p^{-5t} n(p^t)=1+\left(1-\frac{1}{p}\right)\sum_{j=1}^{t}\frac{\varrho(p^j)}{p^{3j}},
$$
where $\rho(p^t)$ is defined in \eqref{eq:rho-q}. In particular,
$$
\tau_p=1+\left(1-\frac{1}{p}\right)\sum_{j=1}^{\infty}\frac{\varrho(p^j)}{p^{3j}}.
$$
\end{lemma}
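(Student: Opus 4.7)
The plan is to carry out the standard $p$-adic unwinding, viewing the equation $\sum_i L_i(\mathbf{x})y_i^2=0$ in the form \eqref{eq:xQ}, namely $x_1 Q_1(\mathbf{y}) = x_2 Q_2(\mathbf{y})$, which follows immediately from $L_i(\mathbf{x})=a_ix_1+b_ix_2$ and \eqref{eq:Q1Q2}. With this rewriting, I will fix $\mathbf{y}\in(\ZZ/p^t\ZZ)^4$ and count the pairs $(x_1,x_2)\in(\ZZ/p^t\ZZ)^2$ satisfying the linear congruence $a x_1\equiv b x_2\bmod p^t$, where $a=Q_1(\mathbf{y})$ and $b=Q_2(\mathbf{y})$. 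The image of the $\ZZ/p^t\ZZ$-linear map $(x_1,x_2)\mapsto ax_1-bx_2$ is the cyclic subgroup generated by $\gcd(a,b,p^t)$, so its kernel has order $p^t\cdot\gcd(a,b,p^t)$, giving exactly $p^t\gcd(Q_1(\mathbf{y}),Q_2(\mathbf{y}),p^t)$ solutions.

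Summing over $\mathbf{y}$ yields
\[
n(p^t)=p^t\sum_{\mathbf{y}\in(\ZZ/p^t\ZZ)^4}p^{k(\mathbf{y})},\qquad k(\mathbf{y})=\min\bigl(v_p(Q_1(\mathbf{y})),v_p(Q_2(\mathbf{y})),t\bigr).
\]
Next I will apply the elementary identity $p^k=1+\sum_{j=1}^{k}(p^j-p^{j-1})$, valid for $0\le k\le t$, to linearise the exponent. Interchanging sums then gives
\[
\sum_{\mathbf{y}}p^{k(\mathbf{y})}=p^{4t}+\sum_{j=1}^{t}(p^j-p^{j-1})\,M_j,
\]
where $M_j=\#\{\mathbf{y}\in(\ZZ/p^t\ZZ)^4:p^j\mid Q_i(\mathbf{y}),\,i=1,2\}$. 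Since the congruences $p^j\mid Q_i(\mathbf{y})$ depend only on $\mathbf{y}\bmod p^j$, a lift counting argument yields $M_j=p^{4(t-j)}\varrho(p^j)$ with $\varrho$ as in \eqref{eq:rho-q}.

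Combining the two displays and using $(p^j-p^{j-1})\,p^{4(t-j)}=p^{4t-3j}(1-p^{-1})$, the factor $p^{4t}$ pulls out cleanly and I obtain
\[
p^{-5t}n(p^t)=1+\left(1-\frac{1}{p}\right)\sum_{j=1}^{t}\frac{\varrho(p^j)}{p^{3j}},
\]
which is the first claim. The formula for $\tau_p$ follows by taking $t\to\infty$, the convergence being guaranteed by part (ii) of Lemma \ref{lem : loc2'} (or by the explicit asymptotics for $\varrho^*(p^r)$ recalled in the proof of Corollary \ref{cor:cor}). There is no real obstacle: the only place to be careful is the precise count of solutions to $ax_1\equiv bx_2\bmod p^t$, which must correctly incorporate the case $p^t\mid\gcd(a,b)$ so that the identity works uniformly down to $k(\mathbf{y})=t$.
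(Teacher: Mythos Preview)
Your proof is correct and follows essentially the same approach as the paper: both fix $\mathbf{y}$, count the $p^t\gcd(Q_1(\mathbf{y}),Q_2(\mathbf{y}),p^t)$ solutions in $\mathbf{x}$, and stratify by the $p$-adic valuation of $\gcd(Q_1(\mathbf{y}),Q_2(\mathbf{y}))$, relating the strata to $\varrho(p^j)$. Your layer-cake identity $p^k=1+\sum_{j\le k}(p^j-p^{j-1})$ is a slightly cleaner repackaging of the paper's split into the cases $p^j\|\gcd$ followed by a telescoping sum, but the content is the same.
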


\begin{proof}
On  recalling the definition  \eqref{eq:rho-q} of $\rho$, we may write
\begin{align*}
	n(p^t)&=\#\{(\xx,\yy)\in(\BZ/p^t\BZ)^6:x_1Q_1(\yy)\equiv x_2Q_2(\yy)\bmod p^t\}\\ 
	&=\varrho(p^t)p^{2t}+\sum_{j=0}^{t-1}\sum_{\substack{\yy\bmod p^t\\ p^j\|\gcd(Q_1(\yy),Q_2(\yy))}}
	\hspace{-0.4cm}\#\{\xx\bmod p^t:x_1Q_1(\yy)\equiv x_2Q_2(\yy)\bmod p^t\}.
\end{align*}
For each $0\leqslant j\leqslant t-1$ and for each $\yy$ in the sum, any $\xx\bmod p^t$ to be counted must satisfy
$x_1p^{-j}{Q_1(\yy)} \equiv x_2p^{-j}{Q_2(\yy)}\bmod p^{t-j}.$
Since $p\nmid p^{-j}\gcd(Q_1(\yy), Q_2(\yy))$, the number of such $\xx\bmod p^{t-j}$ is $p^{t-j}$, giving 
$p^{t-j}\cdot p^{2j}=p^{t+j}$ values of  $\xx\bmod p^t$. 
Moreover, 
\begin{align*}
\sum_{\substack{\yy\bmod p^t\\ p^j\|\gcd(Q_1(\yy),Q_2(\yy))}} 1&=
\sum_{\substack{\yy\bmod p^t\\ p^j\mid \gcd(Q_1(\yy),Q_2(\yy))}}1-
\sum_{\substack{\yy\bmod p^t\\ p^{j+1}\mid\gcd(Q_1(\yy),Q_2(\yy))}}1\\
&=
p^{4(t-j)}\varrho(p^j)-p^{4(t-j-1)}\varrho(p^{j+1}).
\end{align*}
It therefore follows that 
\begin{align*}
	n(p^t)&=\varrho(p^t)p^{2t}+\sum_{j=0}^{t-1}p^{t+j}\left(p^{4(t-j)}\varrho(p^j)-p^{4(t-j-1)}\varrho(p^{j+1})\right)\\&=\varrho(p^t)p^{2t}+\sum_{j=0}^{t-1}p^{5t-3j}\left(\varrho(p^j)-p^{-4}\varrho(p^{j+1})\right),
\end{align*}
whence
\begin{align*}
	\frac{n(p^t)}{p^{5t}}&=\frac{\varrho(p^t)}{p^{3t}}+\sum_{j=0}^{t-1}\left(\frac{\varrho(p^j)}{p^{3j}}-\frac{1}{p}\frac{\varrho(p^{3(j+1)})}{p^{3(j+1)}}\right) =1+\left(1-\frac{1}{p}\right)\sum_{j=1}^{t}\frac{\varrho(p^j)}{p^{3j}}.
\end{align*}
as claimed.
\end{proof}

We now turn to the Euler product $\mathfrak{S}_1$
defined in 
\eqref{eq:series}, writing $ \mathfrak{S}_1=\prod_{p}\lambda_p$, say. 
The following result 
 confirms that the local factor
$\lambda_p$ matches the corresponding local factor in  
Peyre's constant 
\eqref{eq:cVP}.

\begin{lemma}\label{lem:8.3}
For any prime $p$, 
we have 
$
\lambda_p=
(1-p^{-1})(1-p^{-2})\tau_p.
$
\end{lemma}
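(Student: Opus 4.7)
The plan is to express both sides in terms of the local densities $\varrho$ and $\varrho^*$ of \eqref{eq:rho-q} and \eqref{eq:rho-q*}, reducing the claim to a single combinatorial identity between their Dirichlet series.

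By \eqref{eq:stone} we have $\#V_{p^a}^\times = \varrho^*(p^a)/(p^a(1-p^{-1}))$, so setting $R^* := \sum_{a\geq 1}\varrho^*(p^a)/p^{3a}$, the inner sum appearing in the definition of $\lambda_p$ becomes
\[
(1-p^{-1})^{2} \sum_{a\geq 1}\frac{\#V_{p^a}^\times}{p^{2a}} \;=\; (1-p^{-1})\, R^*.
\]
Thus $\lambda_p = (1-p^{-1})\bigl[(1-p^{-4}) + (1-p^{-1})R^*\bigr]$. Meanwhile Lemma~\ref{lem:npt} gives $\tau_p = 1 + (1-p^{-1})R$, where $R := \sum_{j\geq 1}\varrho(p^j)/p^{3j}$. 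A short algebraic manipulation then shows that the asserted equality $\lambda_p = (1-p^{-1})(1-p^{-2})\tau_p$ is equivalent to the identity
\[
(1-p^{-2})R \;=\; R^* + p^{-2} + p^{-3}. \qquad (\star)
\]

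To establish $(\star)$, the main step is to derive the local decomposition
\[
\varrho(p^r) \;=\; \sum_{0\leq k < r/2} p^{4k}\,\varrho^*(p^{r-2k}) + p^{4\lfloor r/2\rfloor}, \qquad r\geq 1,
\]
by stratifying the vectors counted by $\varrho(p^r)$ according to $k = v_p(\yy) \in \{0,1,\dots,r\}$. For $0 \leq k < r/2$, writing $\yy = p^k \zz$ gives a bijection with primitive $\zz \in (\ZZ/p^{r-k}\ZZ)^4$ satisfying $p^{r-2k} \mid Q_i(\zz)$; lifting the reduction of $\zz$ modulo $p^{r-2k}$ to one modulo $p^{r-k}$ in $p^{4k}$ ways contributes $p^{4k}\,\varrho^*(p^{r-2k})$. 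For $k \geq \lceil r/2\rceil$ the congruences $p^r\mid Q_i(\yy)$ hold automatically, and the corresponding counts telescope to $p^{4(r-\lceil r/2\rceil)} = p^{4\lfloor r/2\rfloor}$.

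Finally, dividing the decomposition by $p^{3r}$, substituting $j = r - 2k$ to swap the $r$- and $k$-summations, and summing the resulting geometric series in $r$ separately over even and odd values yields
\[
R \;=\; \frac{R^*}{1-p^{-2}} + \frac{p^{-2}+p^{-3}}{1-p^{-2}},
\]
which rearranges to $(\star)$. The only nontrivial ingredient is the local stratification of $\varrho(p^r)$; everything else is elementary series manipulation.
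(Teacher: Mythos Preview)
Your proof is correct and follows essentially the same path as the paper: both reduce the claim, via \eqref{eq:stone} and Lemma~\ref{lem:npt}, to the identity $(1-p^{-2})R = R^* + p^{-2} + p^{-3}$ relating $R=\sum_{j\ge 1}\varrho(p^j)p^{-3j}$ and $R^*=\sum_{a\ge 1}\varrho^*(p^a)p^{-3a}$. The only difference lies in how this identity is proved. You expand $\varrho(p^r)$ as a sum over $\varrho^*(p^{r-2k})$ and then sum the resulting double series; the paper instead uses the shorter two--term relation $\varrho^*(p^b)=\varrho(p^b)-p^{4}\varrho(p^{b-2})$ (with $\varrho^*(p)=\varrho(p)-1$), which is precisely the M\"obius inverse of your decomposition, and then sums directly to obtain $R^*=(1-p^{-2})R-p^{-2}-p^{-3}$. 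Both routes are equivalent; the paper's is a line or two shorter since the two--term recurrence avoids the parity split in the tail term.
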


\begin{proof}
Let $p$ be a prime.
We have
$$
\lambda_p=
\left(1-\frac{1}{p}\right)
\left(1-\frac{1}{p^4}+\left(1-\frac{1}{p}\right)^2\sum_{a=1}^{\infty}\frac{\#V_{p^{a}}^{\times}}{p^{2a}}
\right),
$$
where  $V_{p^a}^{\times}$ is given by
\eqref{eq:def-Vd}.  We observe that 
$$
\varrho^*(p^{b})=
\begin{cases}
\varrho(p)-1								&\text{ if $b=1$}\\
\varrho(p^{b})-p^{4}\varrho(p^{a-2}) 		&\text{ otherwise}.
\end{cases}
$$
Hence it follows from  \eqref{eq:stone}  that
\[
\begin{split}
\sum_{a=1}^{\infty}\frac{\#V_{p^{a}}^{\times}}{p^{2a}}&=\left(1-\frac{1}{p}\right)^{-1}\sum_{a=1}^{\infty}\frac{\varrho^{*} (p^{a})}{p^{3a}}\\
&=\left(1-\frac{1}{p}\right)^{-1}\left(\left(1-\frac{1}{p^{2}}\right)\sum_{a=1}^{\infty}\frac{\varrho (p^{a})}{p^{3a}}-\frac{1}{p^{2}}\left(1+\frac{1}{p}\right)\right)\\
&=\left(1+\frac{1}{p}\right)\left(\sum_{a=1}^{\infty}\frac{\varrho (p^{a})}{p^{3a}}-\frac{1}{p(p-1)}\right).
\end{split}
\]
Thus 
$\lambda_p=(1-\frac{1}{p})(1-\frac{1}{p^{2}})\lambda_{p}'$,
where
\[
\begin{split}
\lambda_{p}'=1+\frac{1}{p^{2}}+\left(1-\frac{1}{p}\right)
\sum_{a=1}^{\infty}\frac{\varrho(p^{a})}{p^{3a}}
-\frac{1-\frac{1}{p}}{p(p-1)}
&=1+\left(1-\frac{1}{p}\right)\sum_{a=1}^{\infty}\frac{\varrho(p^{a})}{p^{3a}}.
\end{split}
\]
Lemma \ref{lem:npt} confirms that the right hand side is 
$\tau_p$.
\end{proof}

It remains to examine the second term in \eqref{eq:c}. 
We recall that 
$$
\mathfrak{S}_2=\sum_{q=1}^\infty \frac{A_q}{q^6}\prod_{p\mid q}\left(1-\frac{1}{p^{2}}\right)^{-1}
$$
where   $A_q$  is defined in \eqref{eq:defAq}. 
Since $A_q$ is a multiplicative function of $q$, we can  represent the $q$-sum as an Euler product, finding that 
$$
\frac{\mathfrak{S}_2}{\zeta(2)^2}=
\prod_p 
\left(1-\frac{1}{p^2}\right)^2
\left(1+\left(1-\frac{1}{p^{2}}\right)^{-1}\sum_{r\geq 1} \frac{A_{p^r}}{p^{6r}}\right).
$$
We may now record the following result.

\begin{lemma}\label{lem:8.4}
For any prime $p$, 
we have 
$$
\left(1-\frac{1}{p^2}\right)^2\left(1+\left(1-\frac{1}{p^{2}}\right)^{-1}\sum_{r\geq 1} \frac{A_{p^r}}{p^{6r}}\right)=
(1-p^{-1})(1-p^{-2})\tau_p.
$$
\end{lemma}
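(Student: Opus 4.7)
The plan is to reduce the identity to a direct algebraic manipulation, combining the explicit formula for $A_{p^r}$ from Lemma \ref{le:Phi} with the formula for $\tau_p$ from Lemma \ref{lem:npt}. There is no conceptual obstacle here; the main work is a careful bookkeeping of Euler factors.

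First, I would substitute $\phi(p^r) = p^r(1-1/p)$ into Lemma \ref{le:Phi} to obtain
\[
\frac{A_{p^r}}{p^{6r}} = \left(1-\frac{1}{p}\right)\frac{\varrho(p^r) - p^2 \varrho(p^{r-1})}{p^{3r}}.
\]
Summing over $r \geq 1$, I would split the sum and shift the index in the second piece using $\varrho(1)=1$:
\[
\sum_{r\geq 1}\frac{A_{p^r}}{p^{6r}} = \left(1-\frac{1}{p}\right)\left[\sum_{r\geq 1}\frac{\varrho(p^r)}{p^{3r}} - \frac{1}{p}\left(1 + \sum_{r\geq 1}\frac{\varrho(p^r)}{p^{3r}}\right)\right].
\]

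Next I would invoke Lemma \ref{lem:npt}, which gives $(1-1/p)\sum_{r\geq 1}\varrho(p^r)/p^{3r} = \tau_p - 1$, to rewrite the previous display as
\[
\sum_{r\geq 1}\frac{A_{p^r}}{p^{6r}} = (\tau_p - 1) - \frac{1}{p}(1 - 1/p) - \frac{1}{p}(\tau_p-1) = \left(1 - \frac{1}{p}\right)\left(\tau_p - 1 - \frac{1}{p}\right).
\]

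Finally, I would substitute this expression into the left-hand side of the lemma. Factoring $(1-1/p^2) = (1-1/p)(1+1/p)$, the left-hand side becomes
\[
(1-1/p)^2(1+1/p)\Bigl[(1+1/p) + (\tau_p - 1 - 1/p)\Bigr] = (1-1/p)^2(1+1/p)\tau_p,
\]
which equals $(1-1/p)(1-1/p^2)\tau_p$, matching the right-hand side. The only point that requires any care is the index shift in $\sum_{r\geq 1}\varrho(p^{r-1})/p^{3r}$, where one must remember to extract the $r=1$ term $\varrho(1)/p^3 = 1/p^3$ separately so that the remaining tail can be expressed in terms of $\tau_p$.
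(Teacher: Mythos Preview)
Your proof is correct and follows essentially the same route as the paper: both apply Lemma~\ref{le:Phi} to express $A_{p^r}/p^{6r}$, perform the index shift in the sum over $r$, and then invoke Lemma~\ref{lem:npt} to recognise $\tau_p$. The only cosmetic difference is that you substitute $\tau_p-1$ for $(1-1/p)\sum_{r\ge 1}\varrho(p^r)/p^{3r}$ one step earlier than the paper does.
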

\begin{proof}
We need to prove that 
$$
\left(1-\frac{1}{p^2}\right)\left(1+\left(1-\frac{1}{p^{2}}\right)^{-1}\sum_{r\geq 1} \frac{A_{p^r}}{p^{6r}}\right)=
(1-p^{-1})\tau_p.
$$
But 
Lemma \ref{le:Phi} implies that the left hand side is 
\begin{align*}
1-\frac{1}{p^2}+\sum_{r\geq 1} \frac{A_{p^r}}{p^{6r}}
&=1-\frac{1}{p^2}+ \left(1-\frac{1}{p}\right)^2\sum_{r=1}^\infty \frac{\rho(p^r)}{p^{3r}}-\frac{1}{p}\left(1-\frac{1}{p}\right)\\
&=\left(1-\frac{1}{p}\right) \left(1+\left(1-\frac{1}{p}\right)\sum_{r=1}^\infty \frac{\rho(p^r)}{p^{3r}}
\right).
\end{align*}
The desired equality now follows from  Lemma \ref{lem:npt}.
\end{proof}

Combining Lemmas 
\ref{lem:8.3} and 
\ref{lem:8.4} in \eqref{eq:c}, we therefore conclude that $c=c_V$, as claimed in Proposition \ref{pro:peyre}, subject to the verification of Lemma \ref{lem:RHB}.

\begin{proof}[Proof of Lemma \ref{lem:RHB}]
Let $\bfy\in [-1,1]^4$ and define
$$
\rho_\infty(\y)=
\int_{-\infty}^\infty\int_{[-1,1]^{2}}\e\left(\theta(x_{1}Q_{1}(\bfy)-x_{2}Q_{2}(\bfy))\right)\mathrm{d}\bfx\mathrm{d} \theta.
$$
Note that   $\max(|Q_{1}(\bfy)|,|Q_{2}(\bfy)|)>0$, since $Z(\RR)= \emptyset$. Hence
\[
\begin{split}
\int_{[-1,1]^{2}}\e\left(\theta(x_{1}Q_{1}(\bfy)-x_{2}Q_{2}(\bfy))\right)\mathrm{d}\bfx&=
\prod_{i=1,2} \int_{-1}^{1}\e\left(\theta x Q_{i}(\bfy)\right)\mathrm{d}x 
=
\prod_{i=1,2}\frac{\sin(2\pi \theta |Q_{i}(\bfy)|)}{\pi\theta |Q_{i}(\bfy)|}.
\end{split}
\]
Hence it follows from \cite[\S~3.741]{g-r} that
\[
\begin{split}
\rho_\infty(\y)=
\frac{1}{\pi^{2}|Q_{1}(\bfy)||Q_{2}(\bfy)|}\int_{-\infty}^{\infty}\frac{\sin(2\pi \theta |Q_{1}(\bfy)|)\sin(2\pi \theta |Q_{2}(\bfy)|)}{\theta^{2}}\mathrm d\theta 
&=\frac{2\pi^{2}\min_{i=1,2}|Q_{i}(\bfy)|}{\pi^{2}|Q_{1}(\bfy)||Q_{2}(\bfy)|}\\&=\frac{2}{\max_{i=1,2}|Q_{i}(\bfy)|}.
\end{split}
\]
Let  $Y_2>Y_1\geq 1$.  We may now conclude that 
$$
\int_{\substack{\y\in \RR^4\\ 
Y_1
\leq |\y|< Y_2}}
 \frac{\mathrm{d}\y}{|\y|^2\max(|Q_1(\y)|,|Q_2(\y)|)}=
 \frac{1}{2}
 \int_{\substack{\y\in \RR^4\\ 
Y_1
\leq |\y|< Y_2}}
 \frac{\rho_\infty(\y)}{|\y|^2
 }\mathrm{d}\y.
 $$
 Let us first consider the contribution from $\y$ for which $|\y|=|y_4|$. 
 Writing 
$t_i=y_i/|y_4|$ for $1\leq i\leq 3$, we obtain the contribution
$$
 \int_{
Y_1}^{Y_2}
\frac{\mathrm{d} y_4}{y_4} \int_{[-1,1]^3} 
\rho_\infty(t_1,t_2,t_3,1)\mathrm{d}\mathbf{t}=
\log\left(\frac{Y_2}{Y_1}\right)  \int_{[-1,1]^3} 
\rho_\infty(t_1,t_2,t_3,1)\mathrm{d}\mathbf{t},
$$
where $\mathbf{t}=(t_1,t_2,t_3)$.
On adding in the remaining three 	contributions, and observing that 
$$
\int_{[-1,1]^4}\rho_\infty (\y )\mathrm{d}\y= \int_{[-1,1]^3} 
\rho_\infty(t_1,t_2,t_3,1)\mathrm{d}\mathbf{t}
+\cdots+
\int_{[-1,1]^3} 
\rho_\infty(1,t_2,t_3,t_4)\mathrm{d}\mathbf{t},
$$
the statement of the lemma easily follows.
\end{proof}

\end{document}